\documentclass{article}%
\usepackage[utf8]{inputenc}%
\usepackage[english]{babel}%
\usepackage{graphicx}%
\usepackage{amsmath}%
\usepackage{caption}
\usepackage{amssymb}%
\usepackage{amsthm}%
\usepackage{amsfonts}%
\usepackage{hyperref}
\captionsetup{labelformat=empty,aboveskip=5pt}
\newtheorem{thm}{Theorem}[section]
\newtheorem{prop}[thm]{Proposition}
\newtheorem{df}[thm]{Definition}
\newtheorem{rmk}[thm]{Remark}
\newtheorem{corollary}[thm]{Corollary}
\newtheorem{lemma}[thm]{Lemma}
\newcommand{\bysame}{\leavevmode\hbox to3em{\hrulefill}\thinspace}
\newcommand{\mq}{\mathsf{q}}

\newcommand{\Fc}{\mathcal{F}}
\newcommand{\si}{\sigma}
\newcommand{\pd}{\partial}
\newcommand{\p}{\mathbf{p}}
\newcommand{\g}{\mathbf{g}}
\newcommand{\gh}{\mathbf{\hat g}}
\renewcommand{\phi}{\varphi}
\newcommand{\basf}{\varepsilon}
\newcommand{\mQ}{\mathcal{Q}}
\newcommand{\trp}{\theta}
\newcommand{\Rc}{\mathcal{R}}
\newcommand{\al}{\alpha}
\newcommand{\la}{\lambda}
\newcommand{\Sb}{\mathbb{S}}
\newcommand{\N}{\mathbb{Z}_{>0}}
\newcommand{\Nn}{\mathbb{Z}_{\ge0}}
\newcommand{\mi}{\mathrm{i}}
\newcommand{\mj}{\mathrm{j}}
\newcommand{\mc}{\mathrm{c}}
\newcommand{\X}[1]{\mathbf{X}_{#1}}
\newcommand{\sk}{\Omega_+}
\newcommand{\x}{\mathsf{x}}
\newcommand{\dhh}{\mathsf{h}}
\newcommand{\Df}{\mathsf{D}}
\begin{document}
\title{Random Walks on Strict Partitions}
\author{Leonid Petrov\thanks{Kharkevich Institute for Information Transmission Problems.}}
\date{}
\maketitle

\begin{abstract}
	We consider 
	a certain sequence of random walks.
	The state space of the $n$th random walk is the set of all strict partitions
	of $n$ (that is, partitions without equal parts).
	We prove that, as $n$ goes to infinity, 
	these random walks converge to a continuous-time 
	Markov process. 
	The state space of this process is the infinite-dimensional simplex
	consisting of all nonincreasing infinite sequences of nonnegative numbers
	with sum less than or equal to one.
	The main result about the limit process is the expression
	of its the pre-generator as a formal 
	second order differential operator 
	in a polynomial algebra. 
	
	Of separate interest is the generalization of Kerov interlacing coordinates 
	to the case of shifted Young diagrams.
\end{abstract}

\section{Introduction}

\subsection{The Schur graph and its boundary}
Let $\Sb_n$, $n\in\Nn$, denote the finite set 
consisting of all strict partitions (that is, partitions
without equal parts) of $n$.\footnote{The set $\Sb_0$ consists of
the empty partition $\varnothing$.} Strict partitions are represented by
shifted Young diagrams \cite[Ch. I, \S1, Example 9]{Macdonald1995}. 
The Schur graph 
is the graded graph consisting of 
all shifted Young diagrams (with edge multiplicities given 
by (\ref{mult}) below).

As can be proved exactly as in \cite{Kerov1998} 
using the results of \cite{IvanovNewYork3517-3530},\footnote{Another
proof can be found in the paper \cite{Nazarov1992}.}
the Martin boundary of the 
Schur graph can be identified with the infinite-dimensional ordered simplex 
\begin{equation*}
        \sk:=\left\{ \x=(\x_1,\x_2,\dots)\colon \x_1\ge\x_2\ge\dots\ge0,\ \sum_{i}\x_i\le1 \right\}.
\end{equation*}
The simplex $\sk$ viewed as a subspace
of the infinite-dimensional cube $\left[ 0,1 \right]^{\infty}$
(which in turn is equipped with the product topology)
is a compact, metrizable and separable space.

\subsection{Projective representations of symmetric groups}\label{s0.2}
Each $\Sb_n$, $n\ge1$, may be regarded as a projective dual 
object to the symmetric group $\mathfrak{S}_n$
in the sense that $\Sb_n$ parametrizes the 
irreducible projective representations of $\mathfrak{S}_n$ \cite{Hoffman1992,Schur1911}.
The simplex $\sk$ can be viewed as a kind of projective dual to 
the infinite symmetric group $\mathfrak{S}_\infty$.
That is, the points of $\sk$ parametrize the indecomposable normalized
projective characters of the 
group $\mathfrak{S}_\infty$ \cite{Nazarov1992}.

The theory of projective representations of symmetric groups is in many 
aspects similar to the theory of ordinary representations.
Let us indicate some of them:
\begin{itemize}
	\item The (ordinary) dual object to $\mathfrak{S}_n$
		is the set of all ordinary (i.e., not necessary strict)
		partitions of~$n$;
	\item The role of $\sk$ in the theory
		of ordinary representations 
		is played by the {\em{}Thoma simplex\/} $\Omega$ that
		consists of couples
		$(\omega;\omega')\in\left[ 0,1 \right]^{\infty}\times\left[ 0,1 \right]^{\infty}$ 
		satisfying the following conditions:
		\begin{equation*}
			\omega_1\ge\omega_2\ge\dots\ge0,\qquad
			\omega'_1\ge\omega'_2\ge\dots\ge0,\qquad
			\sum_{i}\omega_i+
			\sum_{j}\omega'_j\le1.
		\end{equation*}
		The points of $\Omega$ parametrize the indecomposable normalized
		characters of $\mathfrak{S}_\infty$ \cite{Thoma1964}. 
	\item The role that Schur's $\mQ$-functions play in the theory of projective
		representations \cite{Hoffman1992} is taken by ordinary Schur functions.
\end{itemize}

There is a natural embedding 
of $\sk$ into $\Omega$ introduced in 
\cite{GnedinIntern.Math.ResearchNotices2006Art.ID5196839pp.}.
This map sends $\x=(\x_1,\x_2,\dots)\in\sk$ to $(\omega,\omega')\in\Omega$
with $\omega=\omega'=(\x_1/2,\x_2/2,\dots)$.  
In \S\ref{s8.1} below we discuss this embedding in more detail.

\subsection{Multiplicative measures}\label{s0.3}
In \cite{Borodin1997} 
A.~Borodin introduced 
multiplicative 
coherent systems of measures
(or central measures in the sense of \cite{Kerov1990})
on the
Schur graph.
This is a sequence of probability measures $M_n^\al$ on $\Sb_n$, $n\in\Nn$, 
depending on one real parameter $\al\in(0,+\infty)$.
Below we call this object simply the {\em{}multiplicative measures\/}.

According to a general formalism (explained, e.g., in \cite{Kerov1998})
the multiplicative measures $\left\{ M_n^\al \right\}$ give rise to a one-parameter 
family of probability measures $\mathsf P^{(\al)}$ on $\sk$.
Namely, every set $\Sb_n$ can be embedded into $\sk$:
a strict partition $\la\in\Sb_n$ maps to a point $(\la_1/n,\la_2/n,\dots)\in\sk$, where $\la_i$
are the components of $\la$.
As $\al$ remains fixed and $n$ goes to infinity,
the images of $M_n^\al$ 
under these embeddings weakly converge to $\mathsf P^{(\al)}$.

\subsection{The model of random walks}
To a coherent system of measures on a graded graph
one can associate a sequence of random walks on the floors of the graph.\footnote{We
assume that a graded graph
satisfies some additional conditions
(listed, e.g., in \cite[\S3]{Fulman2007}) that allow to consider coherent systems on it.
In fact, the Schur graph satisfies them.
See also \cite[\S1]{Borodin2007}.}
These random walks are called the {\em{}up-down Markov chains\/}.

The up/down Markov chains first appeared in a paper by J.~Fulman~\cite{Fulman2005}.
He was interested in such questions as 
their eigenstructure, eigenvectrors and convergence rates.
In the papers \cite{Fulman2005,Fulman2007}
many examples of up/down Markov chains associated to various coherent systems
on various graphs are studied
including
the Plancherel measures and the $z$-measures on the Young
graph, the Ewens-Pitman's partition structures
on the Kingman graph,\footnote{The Young
graph is the graph consisting of all ordinary Young diagrams
(they are identified with ordinary partitions as in \cite[Ch. I, \S1]{Macdonald1995}).
The $z$-measures originated 
from the problem of harmonic analysis for the 
infinite symmetric group $\mathfrak{S}_\infty$ \cite{Kerov1993,Kerov2004} and were
studied in detail by 
A.~Borodin and G.~Olshanski, see the bibliography in \cite{Borodin2007}.

The set of vertices of the Kingman graph is the same as of the Young graph, but the edge 
multiplicities are different. The Ewens-Pitman's partition structure
(this is a special name for the coherent system of measures
on the Kingman graph, the term is 
due to J.~F.~C.~Kingman \cite{Kingman1978})
was introduced
in \cite{Ewens1979,Pitman1992}. It is closely related to the Poisson-Dirichlet
measure (see, e.g., \cite{Pitman2002} and bibliography therein).}
and the Plancherel
measures on the Schur graph.\footnote{We give the
definition of the Plancherel measures on the Schur graph in \S\ref{s1.4} below.}

In \cite{Borodin2007,Petrov2007,Olshanski2009} 
the limit behaviour of various up/down Markov chains 
is studied. The paper \cite{Borodin2007} deals with the limit behaviour 
of the chains associated to the 
$z$-measures on the Young graph. In \cite{Petrov2007}
the chains corresponding to the Ewens-Pitman's partition structure are studied, and
in \cite{Olshanski2009} the general case of the Young graph with Jack edge multiplicities
is considered.

In this paper we consider a sequence of up/down Markov
chains associated to the multiplicative 
measures on the Schur graph.
These chains depend on the parameter $\al$.
The $n$th chain lives on $\Sb_n$ and preserves the 
probability measure $M_n^\al$. We study the limit behaviour of these 
random walks as $n\to\infty$.

\subsection{The limit diffusion and its pre-generator}
Assume that $\al\in(0,+\infty)$ is fixed.
Let us embed each set $\Sb_n$ into $\sk$ as described above in \S\ref{s0.3},
and let the discrete
time of the $n$th up/down chain be scaled by the factor $n^{-2}$.
We show that under these space and time scalings
the up/down chains 
converge, as $n\to\infty$,
to a diffusion process\footnote{By a diffusion process
we mean a strong Markov process with continuous sample paths.} $\X{\al}(t)$, $t\ge0$,
in the simplex $\sk$. 
We also show that $\X{\al}(t)$ preserves the measure $\mathsf{P}^{(\al)}$, 
is reversible and ergodic with respect to it.

The main result of the present paper is the formula
for the pre-generator of the process $\X\al(t)$. To formulate the result we
need some notation.

By $C(\sk)$ denote the Banach algebra of real-valued continuous functions on $\sk$
with pointwise operations and the uniform norm.
Let $\Fc$ be a dense subspace of $C(\sk)$ freely 
generated (as a commutative unital algebra) by the
algebraically independent continuous functions
$\mq_{2k}(\x):=\sum_{i=1}^{\infty}\x_i^{2k+1}$, $k=1,2,\dots$.
Define an operator $A\colon\Fc\to\Fc$ depending on the parameter $\al$:
\begin{equation}\label{f0.1}
        \left.
        \begin{array}{l}
                \displaystyle
                A=\sum_{i,j=1}^{\infty}(2i+1)(2j+1)\left( \mq_{2i+2j}-\mq_{2i}\mq_{2j} \right)
                \frac{\partial^2}{\partial\mq_{2i}\partial\mq_{2j}}\\\displaystyle\qquad+
                2\sum_{i,j=0}^{\infty}\left( 2i+2j+3 \right)\mq_{2i}\mq_{2j}\frac{\partial}{\partial \mq_{2i+2j+2}}
                -\sum_{i=1}^{\infty}(2i+1)\left( 2i+\frac\al2 \right)
                \mq_{2i}\frac{\partial}{\partial\mq_{2i}},
        \end{array}
        \right.
\end{equation}
where, by agreement, $\mq_0=1$. This is a formal differential operator in the polynomial
algebra $\Fc=\mathbb{R}\left[ \mq_2,\mq_4,\mq_6,\dots \right]$.

We show that 
the operator $A$ is closable in $C(\sk)$ and that the process
$\X{\al}(t)$ is generated by the closure $\overline A$ of the operator $A$.

\subsection{The method}
The formulation of the results is given in probabilistic terms.
However, the use of probabilistic technique in the proofs
generally reduces to the application of certain results from the paper
\cite{Trotter1958} and the book \cite{Ethier1986} concerning approximations of continuous
semigroups by discrete ones.

The essential part of the paper consists of the 
computations in a polynomial algebra.
To obtain the formula (\ref{f0.1}) for the pre-generator
we use the methods similar to those of
\cite{Olshanski2009}.
This involves the restatement 
of some of the results concerning ordinary
Young diagrams to our situation. 
In particular, we introduce Kerov interlacing coordinates of shifted Young diagrams
which are similar to
interlacing coordinates of ordinary Young diagrams 
introduced and studied by S.~Kerov in \cite{Kerov2000}.
Kerov interlacing coordinates of shifted Young diagrams are of separate interest.

We also give an alternative 
expression 
for the pre-generator $A$. Namely,
we
compute the action of
$A$ on Schur's $\mQ$-functions
(Proposition \ref{p6.8} (2) below).
This is done in \S\ref{s2} exactly as in 
\cite[\S4]{Borodin2007} 
with ordinary Schur functions replaced by
Schur's $\mQ$-functions.
In this argument we use the
formula (\ref{f36}) for dimension of skew shifted 
Young diagrams which is due to V.~Ivanov \cite{IvanovNewYork3517-3530}.
Note that the formula (\ref{f86}) for the action of $A$ on Schur's $\mQ$-functions
is not formally necessary for the rest of the results of the present paper
(see Remark \ref{p90.1} below). 

\subsection{Organization of the paper}
In \S\ref{s1.1}--\ref{s1.3} we recall the definition of the Schur graph.
We also recall coherent systems
associated to this graph and the corresponding up/down Markov chains.
In \S\ref{s1.4} we recall the multiplicative measures
on the Schur graph
introduced by A.~Borodin \cite{Borodin1997}.
They depend on a parameter $\al\in(0,+\infty)$.\footnote{Note that
in \cite{Borodin1997} the parameter $\al$ is denoted by $x$.}

In \S\ref{s20} we introduce 
Kerov interlacing coordinates of shifted Young diagrams and
study their properties. Here we restate 
some of the results of the paper \cite{Kerov2000} and apply them
to our situation.

In \S\ref{s2.1} we consider the polynomial algebra $\Gamma$ 
generated by the odd Newton power sums. 
The basis for $\Gamma$ is formed by Schur's $\mQ$-functions $\mQ_\la$
(indexed by strict partitions).
In \S\ref{s2.2}--\ref{s2.3} we
prove a useful formula for the action of the $n$th up/down Markov chain transition operator
(corresponding to the multiplicative measures on the Schur graph)
on Schur's $\mQ$-functions. 
The argument here is the same as in \cite[\S4]{Borodin2007}.

In \S\ref{s3} we prove some facts
concerning the algebra $\Gamma$ that are used in \S\ref{s4}--\ref{s5}.

In \S\ref{s4}--\S\ref{s5} we compute the ``differential'' form of 
the $n$th up/down Markov chain
transition operator corresponding to the multiplicative measures
(see Theorem \ref{p5.1} below for the exact result).

In \S\ref{s6} we use the general results 
of the paper \cite{Trotter1958} and the book \cite{Ethier1986}
to prove the convergence, as $n\to\infty$, of our up/down Markov
chains to a continuous time Markov process $\X{\al}(t)$ in $\sk$.
We also prove the differential formula for the pre-generator of this process
and study some other properties
of $\X{\al}(t)$.

\subsection{Acknowledgement}
I am very grateful to Grigori Olshanski for the setting of the problem,
permanent attention and fruitful discussions, and to Vladimir Ivanov for helpful discussions.

\section{Multiplicative measures}\label{s1}
\subsection{The Schur graph}\label{s1.1}
A {\em{}partition\/} is an infinite non-increasing sequence of nonnegative integers
\begin{equation*}
	\la=(\la_1,\la_2,\dots,\la_{\ell(\la)},0,0,\dots),\qquad
	\la_1\ge\la_2\ge\dots\ge\la_{\ell(\la)}>0,\qquad \la_i\in\N,
\end{equation*}
having only finitely many nonzero members.
Their number $\ell(\la)\ge0$ is called the 
{\em{}length of the partition\/}.
The {\em{}weight of the partition\/}
is
$|\la|:=\sum_{i=1}^{\ell(\la)}\la_i$.
A partition $\la$ is called {\em{}strict\/} if
it does not contain similar terms:
$\la_1>\la_2>\dots>\la_{\ell(\la)}>0$.
We denote strict partitions by $\la,\mu,\nu,\varkappa,\dots$,
and ordinary (i.e., not necessary strict) partitions 
by $\sigma,\rho,\tau,\dots$.

As explained in \cite[Ch. I, \S1, Example 9]{Macdonald1995}, to every strict partition corresponds a {\em{}shifted Young diagram\/}.
The shifted Young diagram of the form 
$\la$ consists of $\ell(\la)$ rows.
Each $i$th row ($i=1,\dots,\ell(\la)$) has $\la_i$ boxes,
and for $j=1,\dots,\ell(\la)-1$ the first box of the $(j+1)$th 
row is right under the second box of the $j$th row.
We identify strict partitions and corresponding shifted Young diagrams.
For example,
Figure \ref{fig1}
shows the shifted Young diagram of the form $(6,5,3,1)$.
\begin{figure}[htpb]
	\begin{center}
		\includegraphics{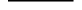}
	\end{center}
	\caption{Figure \ref{fig1}.}
	\label{fig1}
\end{figure}

If $\la$ and $\mu$ are shifted Young diagrams and $\la$ is obtained from $\mu$
by adding one box, then we write $\la\searrow\mu$ (or, equivalently, $\mu\nearrow\la$).
Denote this box (that distinguishes $\la$ and $\mu$) by $\la/\mu$.

For two shifted Young diagrams $\mu$ and $\la$ 
such that $|\la|=|\mu|+1$ we set
\begin{equation}\label{mult}
	\kappa(\mu,\la):=\left\{
	\begin{array}{ll}
		2,&\mbox{if $\mu\nearrow\la$ and $\ell(\la)=\ell(\mu)$};\\
		1,&\mbox{if $\mu\nearrow\la$ and $\ell(\la)=\ell(\mu)+1$};\\
		0,&\mbox{otherwise}.
	\end{array}
	\right.
\end{equation}

All shifted Young diagrams are organized in a graded set 
$\Sb=\bigsqcup_{n=0}^{\infty}\Sb_n$, where
$\Sb_n:=\left\{ \la\colon|\la|=n \right\}$, $n\in\N$, and
$\Sb_0:=\left\{ \varnothing \right\}$.
This set is equipped with the structure of a graded
graph. It has edges only between consecutive floors $\Sb_n$ and $\Sb_{n+1}$, $n\in\Nn$.
If $\mu\in\Sb_n$ and $\la\in\Sb_{n+1}$, then we draw $\kappa(\mu,\la)$
edges between $\mu$ and $\la$. Let edges be oriented in the direction from $\Sb_n$ to $\Sb_{n+1}$.
We call this oriented graded graph 
the {\em{}Schur graph\/}.\footnote{Sometimes (e.g., in \cite{Borodin1997})
the same graph with simple edges
is called the Schur graph.
These two graphs have the same down transition functions (see \S\ref{s1.3} below),
hence for us the difference between them is inessential.}

By $\dhh(\mu,\la)$ denote the total number 
of (oriented) paths from $\mu$ to $\la$ 
in the graph $\Sb$.
Clearly, $\dhh(\mu,\la)$ vanishes unless
$\mu\subset\la$ (that is, unless the shifted Young diagram
$\mu$ is a subset of the shifted Young diagram $\la$).
Set $\dhh(\la):=\dhh(\varnothing,\la)$.
This function has the form \cite[Ch. III, \S8, Example 12]{Macdonald1995}:\footnote{The factor
$2^{|\la|-\ell(\la)}$ (that does not enter the corresponding formula
in \cite{Macdonald1995}) appears due to the edge multiplicities (\ref{mult})
in our version of the Schur graph.}
\begin{equation}\label{f1}
	\dhh(\la)=2^{|\la|-\ell(\la)}\cdot\frac{|\la|!}{\la_1!\la_2!\dots\la_{\ell(\la)}!}
	\prod_{1\le i<j\le\ell(\la)}\frac{\la_i-\la_j}{\la_i+\la_j},\qquad\la\in\Sb.
\end{equation}
Note that if $\la$ is not strict, then this formula reduces to $\dhh(\la)=0$.
There is also an explicit
formula for the function $\dhh(\mu,\la)$, 
it was proved in \cite{IvanovNewYork3517-3530}.
We recall this result below, see (\ref{f36}).

\subsection{Coherent systems and up/down Markov chains}\label{s1.3}
Here we give definitions of a coherent system on the Schur graph and 
of up/down Markov chains associated to it. We follow
\cite[\S1]{Borodin2007}.

The {\em{}down transition function\/}
for $\mu,\la\in\Sb$ such that $|\la|=|\mu|+1$
is 
\begin{equation}\label{f2}
	p^\downarrow(\la,\mu):=\frac{\dhh(\mu)}{\dhh(\la)}\kappa(\mu,\la).
\end{equation}
It can be easily checked that
\begin{itemize}
	\item $p^\downarrow(\la,\mu)\ge0$ for all $\mu,\la\in\Sb$ such that $|\la|=|\mu|+1$;
	\item $p^\downarrow(\la,\mu)$ vanishes unless $\mu\nearrow\la$;
	\item if $|\la|=n\ge1$, then $\sum_{\mu\colon|\mu|=n-1}p^\downarrow(\la,\mu)=1$.
\end{itemize}

\begin{df}\rm{}
	A coherent system on $\Sb$
	is a system of
	probability
	measures $M_n$ on $\Sb_n$, $n\in\Nn$, consistent with the down transition function:
	\begin{equation}\label{f3}
		M_n(\mu)=\sum_{\la\colon\la\searrow\mu}p^\downarrow(\la,\mu)M_{n+1}(\la)\qquad
		\mbox{for all $n\in\Nn$ and $\mu\in\Sb_n$}.
	\end{equation}
	Here by $M_n(\mu)$ we denote the measure of a singleton $\left\{ \mu \right\}$.
\end{df}

Fix a coherent system $\left\{ M_{n} \right\}$.
The {\em{}up transition function\/}
for $\la,\nu\in\Sb$ such that 
$|\la|=n$, $|\nu|=n+1$, $n\in\Nn$, and $M_n(\la)\ne0$ is 
\begin{equation*}
	p^\uparrow(\la,\nu):=\frac{M_{n+1}(\nu)}{M_{n}(\la)}p^\downarrow(\nu,\la).
\end{equation*}
The up transition function depends on the choice of a coherent system.
Moreover, $\left\{ M_n \right\}$ and $p^\uparrow$
are consistent in a sense similar to (\ref{f3}):
\begin{equation}\label{f4}
	M_{n+1}(\nu)=\sum_{\textstyle\genfrac{}{}{0pt}{}{\la\colon\la\nearrow\nu}{M_n(\la)\ne0}}
	p^\uparrow(\la,\nu)M_n(\la)\qquad\mbox{for all $n\in\Nn$ and $\nu\in\Sb_{n+1}$}.
\end{equation}
\begin{df}\label{p1.6}\rm{}
	A system of measures $\left\{ M_n \right\}$, where $M_n$ is a probability measure on $\Sb_n$, $n\in\Nn$,
	is called {\em{}nondegenerate\/}, if $M_n(\la)>0$ for all $n\in\Nn$ and $\la\in\Sb_n$.
\end{df}
Let $\left\{ M_n \right\}$ be a nondegenerate coherent system on $\Sb$.
For all $n\in\N$ we define a Markov chain $T_n$ on the set $\Sb_n$
with the following
transition matrix:
\begin{equation*}
	T_n(\la,\widetilde\la):=\sum_{\nu\colon|\nu|=n+1}p^\uparrow(\la,\nu)p^\downarrow(\nu,\widetilde\la),\qquad
	\la,\widetilde\la\in\Sb_n.
\end{equation*}
This is the composition of the up and down transition functions,
from $\Sb_n$ to $\Sb_{n+1}$ and then back to $\Sb_n$.
From (\ref{f3}) and (\ref{f4}) it follows that 
$M_n$ is a stationary distribution for $T_n$. It can be readily shown
that the matrix $M_n(\la)T_n(\la,\widetilde\la)$
is symmetric with respect to the substitution $\la\leftrightarrow\widetilde\la$.
This means that the chain $T_n$ is reversible with respect to $M_n$.

\subsection{Multiplicative measures}\label{s1.4}
In this subsection we recall some definitions and results from \cite{Borodin1997}
concerning multiplicative measures on the Schur graph.
\begin{df}\label{p1.7}\rm{}
	For $n\in\Nn$ the {\em{}Plancherel measure\/} on the set $\Sb_n$ is defined as
	\begin{equation*}
		\mathrm{Pl}_n(\la):=\frac{\dhh(\la)^2}{n!}2^{\ell(\la)-n},\qquad \la\in\Sb_n,
	\end{equation*}
	where $\dhh(\la)$ is given by (\ref{f1}).
\end{df}	
The Plancherel measures form a nondegenerate 
coherent system $\left\{ \mathrm{Pl}_n \right\}$ on $\Sb$.
\begin{df}\rm{}\label{p1.8}
	Let $M_n$ be a probability measure on $\Sb_n$ for all $n\in\Nn$.
	The system of measures $\left\{ M_n \right\}$ is called {\em{}multiplicative\/} if 
	\begin{equation}\label{f5}
		M_n(\la)=\mathrm{Pl}_n(\la)\cdot\frac1{Z(n)}\cdot\prod_{\square\in\la}
		f\left( \mi(\square),\mj(\square) \right)\quad
		\mbox{for all $n\in\Nn$ and $\la\in\Sb_n$}
	\end{equation}
	for some functions
	$f\colon\Nn^2\to\mathbb{C}$ and $Z\colon\Nn\to\mathbb{C}$.
	Here the product is taken
	over all boxes in the shifted diagram $\la$, and the numbers
	$\mi(\square)$ and $\mj(\square)$ are the 
	row and column numbers of the box $\square$, respectively.\footnote{The
	row number is counted from up to down, and the column number is counted from
	left to right.}
\end{df}
\begin{thm}[Borodin \cite{Borodin1997}]
	A nondegenerate 
	multiplicative system of probability measures $\left\{ M_n \right\}$
	is coherent if and only if 
	the functions $f(i,j)$ and $Z(n)$ from (\ref{f5}) have the form
	\begin{equation}\label{f6}
		\begin{array}{rcrcl}
			f(i,j)&=& f_\al(i,j)&:=& (j-i)(j-i+1)+\al,\\
			Z(n)&=& Z_\al(n)&:=& \al(\al+2)(\al+4)\dots(\al+2n-2)
		\end{array}
	\end{equation}
	for some parameter $\al\in(0,+\infty]$.
\end{thm}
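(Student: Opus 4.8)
The plan is to first convert the coherence relation (\ref{f3}) into a single ``master'' identity that does not refer to the measures any more, and then to settle the two implications separately, the hard one being the forward direction.

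\emph{Reduction.} Fix $\mu\in\Sb_n$ and substitute the multiplicative form (\ref{f5}) together with $\mathrm{Pl}_n(\la)=\dhh(\la)^2 2^{\ell(\la)-n}/n!$ and $p^\downarrow(\la,\mu)=\dhh(\mu)\kappa(\mu,\la)/\dhh(\la)$ into (\ref{f3}). Writing $F(\la):=\prod_{\square\in\la}f(\mi(\square),\mj(\square))$ and using $F(\la)=F(\mu)f(\square_{\la/\mu})$ for $\la\searrow\mu$, all the factors $\dhh(\mu)$, $F(\mu)$, $2^{\ell(\mu)-n}$ cancel once one notices the uniform identity $\kappa(\mu,\la)\,2^{\ell(\la)-n-1}=2^{\ell(\mu)-n}$ (the case $\ell(\la)=\ell(\mu)$, $\kappa=2$ and the case $\ell(\la)=\ell(\mu)+1$, $\kappa=1$ give the same value). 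By nondegeneracy these factors are nonzero, so coherence is \emph{equivalent} to the master identity
\[
	(n+1)\,\frac{Z(n+1)}{Z(n)}=\sum_{\la\colon\la\searrow\mu}\frac{\dhh(\la)}{\dhh(\mu)}\,f\bigl(\square_{\la/\mu}\bigr),
\]
required to hold for every $n$ and every $\mu\in\Sb_n$; in particular the right-hand side must be independent of $\mu$.

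\emph{The ``if'' direction.} Assume $f=f_\al$, $Z=Z_\al$ with $\al<\infty$ (the case $\al=\infty$ gives $M_n=\mathrm{Pl}_n$, already known to be coherent). From (\ref{f1}) one computes the two kinds of ratios: extending row $i$ (added box of content $\mu_i$) gives $\dhh(\la)/\dhh(\mu)=2(n+1)(\mu_i+1)^{-1}\prod_{q\ne i}\frac{(\mu_i+1-\mu_q)(\mu_i+\mu_q)}{(\mu_i-\mu_q)(\mu_i+1+\mu_q)}$, while opening a new row (content $0$) gives $(n+1)\prod_q\frac{\mu_q-1}{\mu_q+1}$. A non-addable row $i$ kills the factor $q=i-1$, and $\mu_\ell=1$ kills the new-row factor, so one may sum over all rows with no addability bookkeeping. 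After dividing by $n+1$ the master identity becomes the rational identity $2S+N=\al+2n$, where $S=\sum_i\frac{f_\al(\mu_i)}{\mu_i+1}\prod_{q\ne i}(\cdots)$ and $N=\al\prod_q\frac{\mu_q-1}{\mu_q+1}$. I would prove this by residue calculus: put $a_q:=\mu_q$ and
\[
	\tilde R(z):=\frac{f_\al(z)(2z+1)}{z(z+1)}\prod_{q}\frac{(z+1-a_q)(z+a_q)}{(z-a_q)(z+1+a_q)},\qquad f_\al(z)=z(z+1)+\al.
\]
One checks $\mathrm{Res}_{z=a_i}\tilde R=2\,(\text{row-}i\text{ summand})$ and $\mathrm{Res}_{z=0}\tilde R=\mathrm{Res}_{z=-1}\tilde R=N$; moreover $\tilde R(-1-z)=-\tilde R(z)$ (since $f_\al$ and $z(z+1)$ are invariant while $2z+1$ changes sign), whence $\mathrm{Res}_{z=-1-a_i}\tilde R=\mathrm{Res}_{z=a_i}\tilde R$. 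Since the residues of a rational function sum to zero and, from $\tilde R(z)=2z+1+(4n+2\al)z^{-1}+O(z^{-2})$ (using $\sum_q a_q=n$), one has $\mathrm{Res}_{z=\infty}\tilde R=-(2\al+4n)$, collecting the finite residues gives $4S+2N=2\al+4n$, i.e.\ $2S+N=\al+2n=Z_\al(n+1)/Z_\al(n)$, exactly the master identity.

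\emph{The ``only if'' direction.} Conversely, suppose a nondegenerate multiplicative system is coherent, so the master identity holds for all $\mu$. Taking $\mu=\varnothing$ forces $Z(1)/Z(0)=f(1,1)$, and I set $\al:=f(1,1)$. Granting for the moment that $f(i,j)$ depends only on the content $c=j-i$, the remaining values are pinned down by a clean induction on $n$: the diagram $(n-1,1)$ (for $n\ge4$) has all addable boxes of content $<n$, so the master identity determines $Z(n+1)/Z(n)$ from already-known $f$-values, and then the single-row diagram $(n)$ gives $Z(n+1)/Z(n)=2f(n)+(n-1)\al$, which solves for $f(n)$; comparing with the ``if'' computation shows $f(n)=n(n+1)+\al$ and $Z=Z_\al$. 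The main obstacle is precisely the content-dependence of $f$: within one diagram all addable boxes have \emph{distinct} contents (strictness forces distinct parts), so equal-content boxes in different rows can only be compared across diagrams, and I expect this to require a careful induction relating diagrams with $\mu_1=c$ to those with $\mu_i=c$ for $i\ge2$. Once content-dependence is secured, the residue-at-infinity analysis above, applied to the error $g:=f-f_\al$, shows that $g$ can contribute only a $\mu$-independent term for every $n$ and hence must vanish, completing the proof.
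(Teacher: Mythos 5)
First, note that the paper contains no proof of this theorem to compare against: it is imported verbatim from Borodin \cite{Borodin1997}, so your proposal has to stand on its own. Your reduction to the master identity is correct (the cancellation $\kappa(\mu,\la)\,2^{\ell(\la)-n-1}=2^{\ell(\mu)-n}$ does hold in both cases of (\ref{mult})), and your ``if'' direction is complete: I checked the two $\dhh$-ratios against (\ref{f1}), the residues of $\tilde R$ at $z=a_i$, $z=0$, $z=-1$, the antisymmetry $\tilde R(-1-z)=-\tilde R(z)$, and the expansion $\tilde R(z)=2z+1+(2\al+4n)z^{-1}+O(z^{-2})$; summing residues indeed yields $2S+N=\al+2n=Z_\al(n+1)/Z_\al(n)$, with the non-addable rows correctly killed by the vanishing numerator factors. (Your residue identity is essentially equivalent to $\sum_x\big(x(x+1)+\al\big)\trp_x^\uparrow(\la)=(2n+\al)$, which the paper's own machinery — Proposition \ref{p3.7} together with (\ref{f54}) — delivers in terms of Kerov coordinates; the paper, however, uses Borodin's theorem as input rather than proving it.)

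The ``only if'' direction has a genuine gap, and in fact the step you defer is unfixable in the form you state it. The representation (\ref{f5}) is not unique: the box $(1,1)$ lies in every nonempty shifted diagram and $(1,2)$ in every diagram with $n\ge2$, so replacing $f_\al(1,1)$ by $c\,f_\al(1,1)$ and $Z_\al(n)$ by $c\,Z_\al(n)$ for $n\ge1$ produces \emph{the same} coherent measures $M_n^\al$ with an $f$ that is neither content-dependent nor of the form (\ref{f6}). Hence no argument can recover content-dependence, or the pointwise values of $f$, from coherence alone; the theorem must be read as asserting that a representation of the form (\ref{f6}) \emph{exists}, and any induction has to run on gauge-invariant data (ratios $F(\la)/F(\la')$ with $|\la|=|\la'|$, i.e.\ the measures themselves) rather than on $f$ itself. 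Your closing claim fails for the same reason: the master identities are invariant under the global rescaling $f\mapsto\gamma f$, $Z(n)\mapsto\gamma^n Z(n)$, so with $\al:=f(1,1)$ the error $g=f-f_\al$ need not vanish — for $f=\gamma f_{\al_0}$ one gets $g(c)=(\gamma-1)c(c+1)$, which contributes exactly the $\mu$-independent amount $(\gamma-1)\cdot2n(n+1)$ at level $n$ (absorbed into $Z$) without being zero. ``Contributes only a $\mu$-independent term'' is precisely what coherence \emph{permits}, not a contradiction. The correct endgame is available from your own residue computation: since the identity $2S+N=\al+2n$ holds for a continuum of $\al$, matching coefficients gives that both $\sum_\square r_\square=n+1$ and $\sum_\square r_\square\,c_\square(c_\square+1)=2n(n+1)$ are $\mu$-independent; consequently (once content-dependence is established \emph{for the measures}, after gauge-fixing) the solution space is exactly the two-parameter family $f(c)=\gamma\,c(c+1)+\beta$, $Z(n+1)/Z(n)=2\gamma n+\beta$, and the measures coincide with $M_n^{\beta/\gamma}$; the form (\ref{f6}) is the normalization $\gamma=1$. (A small slip in passing: your single-row relation should read $(n+1)Z(n+1)/Z(n)=2f(n)+(n-1)\al$ — the factor $n+1$ from the master identity was dropped.)
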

We denote the multiplicative coherent system corresponding to
$\al$ by $\left\{ M_n^\al \right\}$
Below we call this object simply the {\em{}multiplicative measures\/}.\footnote{
Note that for all $\la\in\Sb_n$ 
the ratio $\prod_{\square\in\la}f_{\al}(\mi(\square),\mj(\square))/Z_{\al}(n)$ tends to one as $\al\to+\infty$.
Thus, one can say that $\left\{ M_n^\infty \right\}$ 
coincides with the Plancherel coherent system.}
The up transition function
corresponding to $\left\{ M_n^\al \right\}$
can be written out explicitly:
\begin{equation}\label{f7}
	p_\al^\uparrow(\la,\nu)=
	\frac{\mc(\nu/\la)\left( \mc(\nu/\la)+1 \right)+\al}{2|\la|+\al}\cdot
	\frac{\dhh(\nu)}{\dhh(\la)\left( |\la|+1 \right)}.
\end{equation}
where $\al\in(0,+\infty]$ and $\mc(\square):=\mj(\square)-\mi(\square)$ is the {\em{}content\/} of the box $\square$.

\begin{rmk}\rm{}
	One can consider {\em{}degenerate multiplicative measures\/}.
	That is, for certain negative values of $\al$
	the formulas (\ref{f5})--(\ref{f6}) define a system of measures $\left\{ M_n \right\}$
	not on the whole Schur graph $\Sb$, but on a certain finite subset of $\Sb$.

	Namely, if $\al=\al_N:=-N(N+1)$ for some $N\in\N$,
	then $M_n^{\al_N}$ is a probability measure on $\Sb_n$ for 
	all $n=0,1,\dots,\frac{N(N+1)}2$. The system $\left\{ M_n^{\al_N} \right\}$
	satisfies (\ref{f3}) for $n=0,1,\dots,\frac{N(N+1)}2$.
	It is clear from (\ref{f5})--(\ref{f6}) that $M_n^{\al_N}(\la)>0$ iff $\la_1\le N$.

	Thus, one can say that $\left\{ M_n^{\al_N} \right\}_{n=0,1,\dots,\frac{N(N+1)}2}$
	is a coherent system of measures on the finite graded graph 
	$\Sb(N):=\left\{ \la\in\Sb\colon\la_1\le N \right\}\subset \Sb$.
\end{rmk}
The existence of 
degenerate multiplicative coherent systems 
is a useful observation, but in the present paper we concentrate on the case $\al\in(0,+\infty)$.
\begin{df}\label{p10.6}\rm{}
	In the rest of the paper the parameter $\al$ takes values in 
	$(0,+\infty)$. From now on by $T_n$ we denote the one-step transition operator 
	of the $n$th up/down Markov chain corresponding to the multiplicative 
	measures
	with parameter $\al$. This operator $T_n$ acts on functions on $\Sb_n$ (see \S\ref{s2.3} below
	for more detail).
\end{df}

\section{Kerov interlacing coordinates\\ of shifted Young diagrams}\label{s20}
In this section we introduce Kerov interlacing coordinates of shifted
Young diagrams and study their basic properties. 
These coordinates are similar to interlacing coordinates of ordinary Young diagrams 
introduced by S.~Kerov,
see \cite{Kerov2000}. 
In \S\ref{s20.2} and \S\ref{s20.3} we express the Schur graph's Plancherel
up transition function $p_{\infty}^{\uparrow}$ and the down transition function
$p^{\downarrow}$, respectively, 
in terms of Kerov interlacing coordinates.
This approach is similar to that 
explained in \cite{Kerov2000} and used in \cite{Olshanski2009},
but there are some significant differences.

\subsection{Definition and basic properties}\label{s20.1}
Let $\la\in\Sb_n$, $n\ge1$.
Denote by $X(\la)$ the set of numbers $\left\{ \mc(\nu/\la)\colon\nu\searrow\la \right\}$,
that is, $X(\la)$ is the set of 
contents of all boxes that can be added to the shifted Young diagram $\la$.
For every $x\in X(\la)$ there exists a unique shifted 
diagram $\nu\searrow\la$ such that $\mc(\nu/\la)=x$.
Denote this diagram $\nu$ by $\la+\square(x)$.
Similarly, let $Y(\la):=\left\{ \mc(\la/\mu)\colon\mu\nearrow\la \right\}$ be the 
set of contents of all boxes that can be removed from the shifted Young diagram $\la$.
For every $y\in Y(\la)$
there exists a unique shifted diagram $\mu\nearrow\la$ such that $\mc(\la/\mu)=y$. Denote 
this diagram $\mu$ by~$\la-\square(y)$.

For $\la=\varnothing$ we set
$X(\varnothing):=\left\{ 0 \right\}$,
$Y(\varnothing):=\varnothing$.

\begin{df}\rm{}\label{p1.1}
	Let $\la\in\Sb$.
	Suppose that the sets $X(\la)$ and $Y(\la)$ are written in ascending order.
	The numbers $\left[ X(\la);Y(\la) \right]$ are called 
	{\em{}Kerov coordinates\/} of a shifted Young diagram $\la$.
\end{df}

Figure \ref{fig1a} shows Kerov coordinates of two different shifted Young diagrams.
Namely, for $\mu=(6,5,1)$ Kerov coordinates are $X(\mu)=\left\{ 1,6 \right\}$ and 
$Y(\mu)=\left\{ 0,4 \right\}$ (Figure \ref{fig1a}a-b); and for $\nu=(6,5,3)$ these coordinates 
are
$X(\nu)=\left\{ 0,3,6 \right\}$ and 
$Y(\nu)=\left\{ 2,4 \right\}$ (Figure \ref{fig1a}c-d).
\begin{figure}[htpb]
	\begin{center}
		\includegraphics{abcd.eps}
	\end{center}
	\caption{Figure \ref{fig1a}.}
	\label{fig1a}
\end{figure}
\begin{prop}[The interlacing property]\label{p1.2}
	Let $\la$ be a shifted Young diagram.

	{\bf{}(a)\/} If $\la$ contains a one-box row 
	(see, for example, Figure \ref{fig1a}a-b), then for some integer 
	$d\ge1$ we have
	\begin{equation*}
		X(\la)=\left\{ x_1,\dots,x_d \right\},\qquad Y(\la)=\left\{ 0,y_2,\dots,y_d \right\}
	\end{equation*}
	and
	\begin{equation*}
		0=y_1<x_1<y_2<x_2<\dots<y_d<x_d.
	\end{equation*}

	{\bf{}(b)\/} If $\la$ does not contain a one-box row
	(see, for example, Figure \ref{fig1a}c-d), then for some 
	integer $d\ge0$ we have\footnote{Note that $d=0$ only for $\la=\varnothing$.}
	\begin{equation*}
		X(\la)=\left\{ 0,x_1,\dots,x_d \right\},\qquad Y(\la)=\left\{ y_1,\dots,y_d \right\}
	\end{equation*}
	and
	\begin{equation*}
		0=x_0<y_1<x_1<y_2<x_2<\dots<y_d<x_d.
	\end{equation*}
\end{prop}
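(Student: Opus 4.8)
The plan is to make the two sets completely explicit and then read off the interlacing from a single combinatorial observation about unit gaps. Write $\la=(\la_1>\la_2>\dots>\la_\ell>0)$ and adopt the conventions $\la_0:=+\infty$ and $\la_{\ell+1}:=0$. First I would note that every box that can be added to $\la$ sits at the right end of some row $i$ with $1\le i\le \ell+1$, where $i=\ell+1$ means starting a new one-box row on the main diagonal; such a box has content $\la_i$, and a short geometric check shows it is addable (the result is again a shifted diagram) exactly when $\la_{i-1}-\la_i\ge2$. Dually, every removable box is the last box of some row $i$ with $1\le i\le\ell$, has content $\la_i-1$, and is removable exactly when $\la_i-\la_{i+1}\ge2$ for $i<\ell$, while the last box of the bottom row $i=\ell$ is removable unconditionally. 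This gives
\[
X(\la)=\{\la_i\colon 1\le i\le \ell+1,\ \la_{i-1}-\la_i\ge2\},\qquad
Y(\la)=\{\la_\ell-1\}\cup\{\la_i-1\colon 1\le i\le \ell-1,\ \la_i-\la_{i+1}\ge2\}.
\]

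Next I would place all candidate contents on a line. Because $\la_1>\la_2>\dots>\la_\ell$, the candidate $X$-values $\la_i$ and candidate $Y$-values $\la_i-1$ interleave as
\[
\la_1>\la_1-1\ge\la_2>\la_2-1\ge\dots\ge\la_\ell>\la_\ell-1,
\]
and a coincidence $\la_i-1=\la_{i+1}$ occurs precisely at a gap $\la_i-\la_{i+1}=1$. The heart of the argument is the observation that such a unit gap simultaneously deletes the $Y$-candidate $\la_i-1$ and the $X$-candidate $\la_{i+1}$, which are equal and adjacent in the interleaved list; thus every deletion removes one adjacent, coinciding $(Y,X)$ pair. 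Deleting such pairs from the strictly alternating sequence $\la_1,\la_1-1,\la_2,\dots,\la_\ell,\la_\ell-1$ preserves strict alternation, leaves all surviving values distinct, and keeps the extreme entries $\la_1\in X$ (its gap above is $+\infty$) and $\la_\ell-1\in Y$ (never paired) intact. Hence the surviving row-contents strictly alternate from top to bottom, beginning with $\la_1\in X$ and ending with $\la_\ell-1\in Y$, so in particular $x_d=\la_1$.

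It then remains to insert the diagonal box of content $0$, i.e.\ the $i=\ell+1$ candidate, whose membership in $X$ is governed by $\la_\ell-\la_{\ell+1}=\la_\ell\ge2$. If $\la$ contains a one-box row then $\la_\ell=1$: the diagonal box is not addable, while the bottom removable box already has content $\la_\ell-1=0$, the global minimum; reversing the decreasing chain to increasing order gives part (a) with $y_1=0$. If $\la$ contains no one-box row then $\la_\ell\ge2$: the diagonal box is addable, contributes $0$ to $X$, and since $0<\la_\ell-1$ it becomes the new global minimum, prepending an $X$-value below the alternating chain and yielding part (b) with $x_0=0$ (the convention $X(\varnothing)=\{0\}$, $Y(\varnothing)=\varnothing$ being the degenerate $d=0$ instance). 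I expect the main obstacle to be not the alternation itself but the careful bookkeeping of the three boundary phenomena that break the uniform pattern—the top row (always addable, supplying $x_d=\la_1$), the always-removable bottom box, and the diagonal box whose presence in $X$ is exactly what separates cases (a) and (b)—together with verifying cleanly that unit gaps delete $X$- and $Y$-candidates in matched adjacent pairs.
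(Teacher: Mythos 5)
Your proof is correct, but it takes a genuinely different route from the paper. The paper disposes of this proposition in two sentences: induction on the number of boxes, adding one box at a time and observing that each change in the number of one-box rows toggles between cases (a) and (b); the same induction is invoked again for the uniqueness statement in Remark \ref{p1.4}. You instead compute $X(\la)$ and $Y(\la)$ explicitly in terms of the parts — $X(\la)=\{\la_i\colon \la_{i-1}-\la_i\ge2,\ 1\le i\le\ell+1\}$ with $\la_0=+\infty$, $\la_{\ell+1}=0$, and $Y(\la)=\{\la_\ell-1\}\cup\{\la_i-1\colon \la_i-\la_{i+1}\ge2\}$ — and then observe that a unit gap $\la_i-\la_{i+1}=1$ deletes the coinciding, adjacent $(Y,X)$-candidate pair $(\la_i-1,\la_{i+1})$ from the strictly alternating list $\la_1,\la_1-1,\la_2,\dots,\la_\ell,\la_\ell-1$, which preserves alternation; the diagonal candidate of content $0$ (addable iff $\la_\ell\ge2$) then cleanly separates cases (a) and (b), since $\la_\ell=1$ puts $0$ into $Y$ via the bottom box while $\la_\ell\ge2$ puts $0$ into $X$. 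I checked the boundary bookkeeping (top row always addable, bottom box always removable, deleted pairs pairwise disjoint) and it all holds. What your approach buys: it is fully self-contained where the paper's argument is only a sketch, and your explicit formulas immediately yield more — they reprove Remark \ref{p1.4} (a diagram is recovered from $[X(\la);Y(\la)]$, since the gaps $\ge2$ and the surviving contents reconstruct the parts) and make the block structure of Remark \ref{p8.9} transparent, essentially for free. What the paper's induction buys is brevity and a dynamic picture (how $X$ and $Y$ evolve under adding a box) that is reused implicitly elsewhere, e.g.\ in the proof of Lemma \ref{p2.4}.
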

\begin{proof}
	This can be simply proved by induction on the number of boxes of $\la$,
	by consecutively adding a box to the diagram.
	During this procedure the change
	of the number of one-box rows in the diagram\footnote{This number is 
	always zero or one.}
	leads to transition from the case
	{\bf{}(a)\/} to the case {\bf{}(b)\/} and vice versa.	
\end{proof}
\begin{rmk}\label{p1.3}\rm{}
	In the case of ordinary Young diagrams (see, e.g., \cite{Kerov2000,Olshanski2009})
	the number of elements in the set $X(\la)$ is always greater by one than the number
	of elements in the set $Y(\la)$. 
	In our case it is not always true.
	
	Let us define $X'(\la):=X(\la)\setminus\left\{ 0 \right\}$. 
	It is clear that
	the numbers of elements in the sets
	$X'(\la)$ and $Y(\la)$ are equal for all shifted diagrams $\la$.
	We will use this fact below.
\end{rmk}
\begin{rmk}\label{p1.4}\rm{}
	As can be also proved by induction on the number of boxes (similarly to 
	Proposition \ref{p1.2}),
	a shifted Young diagram $\la$ is uniquely
	determined by its Kerov coordinates $\left[ X(\la);Y(\la) \right]$, or,
	equivalently, by the pair of sequences $X'(\la)$ and $Y(\la)$.
\end{rmk}
\begin{rmk}\label{p8.9}\rm{}
	Let $\la$ be a nonempty shifted Young diagram, and
	\begin{equation*}
		X'(\la)=\left\{ x_1,\dots,x_d \right\},\qquad Y(\la)=\left\{ y_1,\dots,y_d \right\}
	\end{equation*}
	for some integer $d\ge1$. 
	It can be easily seen that $\la$ has the form
	\begin{equation*}
		\la=(x_d,x_d-1,\dots,y_d+1,x_{d-1},x_{d-1}-1,\dots,y_{d-1}+1,\dots,x_1,x_1-1,\dots,y_1+1)
	\end{equation*}
	(see Figure \ref{fig2}).
	Here for all $j$ the numbers $x_j,x_j-1,\dots,y_j+1$ are
	consecutive
	decreasing integers 
	(for some $j$ it can happen that $x_j=y_j+1$).
	Note that $y_1$ can be zero, this corresponds
	to the case {\bf{}(a)\/} in Proposition~\ref{p1.2}. 
	\begin{figure}[htpb]
		\begin{center}
			\includegraphics{sum-sum.eps}
		\end{center}
		\caption{Figure \ref{fig2}.}
		\label{fig2}
	\end{figure}
\end{rmk}
\begin{prop}\label{p1.5}
	For every shifted Young diagram $\la$ we have
	\begin{equation*}
		\sum_{x\in X(\la)}x(x+1)-\sum_{y\in Y(\la)}y(y+1)=2|\la|.
	\end{equation*}
\end{prop}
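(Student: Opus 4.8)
The plan is to prove the identity directly from the explicit description of $\la$ in terms of its Kerov coordinates furnished by Remark \ref{p8.9}, rather than by induction. The case $\la=\varnothing$ is trivial, since then $X(\la)=\{0\}$ and $Y(\la)=\varnothing$, so both sides vanish; thus assume $\la\neq\varnothing$. I first observe that the summand $x(x+1)$ vanishes at $x=0$, so the possible element $0\in X(\la)$ (case \textbf{(b)} of Proposition \ref{p1.2}) contributes nothing to $\sum_{x\in X(\la)}x(x+1)$, and likewise a possible $0\in Y(\la)$ (case \textbf{(a)}) contributes nothing to $\sum_{y\in Y(\la)}y(y+1)$. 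Writing $X'(\la)=\{x_1,\dots,x_d\}$ and $Y(\la)=\{y_1,\dots,y_d\}$ as in Remark \ref{p8.9} (recall from Remark \ref{p1.3} that these two sets always have the same cardinality $d\ge1$), the left-hand side of the claimed identity therefore equals $\sum_{j=1}^{d}\bigl(x_j(x_j+1)-y_j(y_j+1)\bigr)$.

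Next I compute $|\la|$ from the same description. By Remark \ref{p8.9} the parts of $\la$ are exactly the consecutive blocks of integers $x_j,x_j-1,\dots,y_j+1$ for $j=1,\dots,d$, so that, using the standard evaluation of a sum of consecutive integers,
\[
  |\la|=\sum_{j=1}^{d}\ \sum_{k=y_j+1}^{x_j}k
       =\sum_{j=1}^{d}\Bigl(\tfrac{x_j(x_j+1)}{2}-\tfrac{y_j(y_j+1)}{2}\Bigr).
\]
Multiplying by $2$ and comparing with the previous paragraph yields the assertion at once.

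I do not expect a genuine obstacle: the identity is essentially a repackaging of Remark \ref{p8.9}. The one point requiring care is the bookkeeping of the diagonal content $0$. One must check that in each of the cases \textbf{(a)} and \textbf{(b)} the value $0$ appears in at most one of the two lists $X(\la)$, $Y(\la)$ and always sits at the very bottom of the interlacing chain, so that discarding it (i.e.\ passing from $X(\la)$ to $X'(\la)$, as in Remark \ref{p1.3}) is harmless for the quadratic moment. This is precisely what Proposition \ref{p1.2} guarantees, so the reduction in the first paragraph is justified.

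As an alternative one could argue by induction on $|\la|$, in the spirit of the proof of Proposition \ref{p1.2}, by tracking how $X(\la)$ and $Y(\la)$ change when a single box is added and verifying that $\sum_{x\in X}x(x+1)-\sum_{y\in Y}y(y+1)$ increases by exactly $2$. That route works as well, but it is more delicate precisely because of the transitions between cases \textbf{(a)} and \textbf{(b)} near the diagonal, so I would present the closed-form computation above instead.
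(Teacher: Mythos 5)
Your proof is correct and is essentially the paper's own argument: both reduce via Remark \ref{p8.9} to $\sum_{j=1}^{d}\bigl(x_j(x_j+1)-y_j(y_j+1)\bigr)$, and your evaluation of the consecutive-integer sums is just the algebraic form of the paper's observation that each such term is twice the area of the block of rows $x_j,x_j-1,\dots,y_j+1$. Your explicit bookkeeping of the content $0$ (passing from $X(\la)$ to $X'(\la)$) is a welcome bit of extra care that the paper leaves implicit.
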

\begin{proof}
	If $\la=\varnothing$, the claim is obvious.
	Suppose $\la\ne0$. We have
	\begin{equation*}
		\sum_{x\in X(\la)}x(x+1)-\sum_{y\in Y(\la)}y(y+1)=
		\sum_{j=1}^{d}\big(x_j(x_j+1)-y_j(y_j+1)\big),
	\end{equation*}
	where the notation is as in Remark \ref{p8.9}.
	For all~$j$ the value $x_j(x_j+1)-y_j(y_j+1)$ clearly equals twice the area
	of the part of the shifted Young diagram $\la$ formed by the rows $x_j,x_j-1,\dots,y_j+1$
	(see Figure~\ref{fig2}).
	This concludes the proof.	
\end{proof}

\subsection{The Plancherel up transition function}\label{s20.2}
The up transition function corresponding to the Plancherel 
coherent system on~$\Sb$ (see \S\ref{s1.4})
can be written in terms of Kerov interlacing coordinates
of shifted Young diagrams.

Let $\la$ be an arbitrary shifted Young diagram and $v$ be a complex variable. By definition, put
\begin{equation}\label{f9.9}
	\Rc^\uparrow(v;\la):=\frac{\prod_{y\in Y(\la)}(v-y(y+1))}{v\cdot\prod_{x\in X'(\la)}(v-x(x+1))}.
\end{equation}
It follows from Remark \ref{p1.3} that 
the degree of the denominator is always
greater than the degree of the numerator 
by one.
Next, from Proposition \ref{p1.2} it follows that
if $\la$ contains a one-box row, then the numerator and the denominator
of $\Rc^\uparrow(v;\la)$ can both be divided by the factor $v$, and if $\la$ does not contain
a one-box row, then the fraction in the RHS of (\ref{f9.9}) is irreducible.
In either case, the denominator of the irreducible form of the fraction
$\Rc^\uparrow(v;\la)$ 
is equal to $\prod_{x\in X(\la)}(v-x(x+1))$.

Let $\trp^\uparrow_x(\la)$, $x\in X(\la)$, be the following expansion coefficients
of 
$\Rc^\uparrow(v;\la)$ 
as a sum of partial fractions:
\begin{equation}\label{f10}
	\Rc^\uparrow(v;\la)=
	\sum_{x\in X(\la)}\frac{\trp_x^\uparrow(\la)}{v-x(x+1)}.
\end{equation}

\begin{prop}\label{p1.10}
	For every shifted Young diagram $\la$ and all $x\in X(\la)$ we have
	\begin{equation*}
		\trp^\uparrow_x(\la)=p_{\infty}^{\uparrow}(\la,\la+\square(x)),
	\end{equation*}
	where $p_\infty^\uparrow(\cdot,\cdot)$ is the up transition function
	corresponding 
	to the Plancherel coherent system on $\Sb$ (see \S\ref{s1.4}).
\end{prop}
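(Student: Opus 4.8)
The plan is to reduce the claim to an explicit identity between a partial-fraction residue and a ratio of the dimension function $\dhh$, and then verify it by a telescoping computation organized along the block decomposition of $\la$ given in Remark \ref{p8.9}. First I would observe that the Plancherel up transition function is obtained from the general formula (\ref{f7}) by letting $\al\to+\infty$: the factor $\bigl(\mc(\nu/\la)(\mc(\nu/\la)+1)+\al\bigr)/(2|\la|+\al)$ tends to $1$, so that
\begin{equation*}
	p_\infty^\uparrow(\la,\la+\square(x))=\frac{\dhh(\la+\square(x))}{\dhh(\la)\,(|\la|+1)}.
\end{equation*}
Thus the proposition becomes the statement that the residue $\trp_x^\uparrow(\la)$ of $\Rc^\uparrow(v;\la)$ at the simple pole $v=x(x+1)$ equals this ratio.

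Next I would compute the two sides separately. On the analytic side, since the poles $\{x'(x'+1)\colon x'\in X(\la)\}$ are distinct (this is exactly the strict interlacing of Proposition \ref{p1.2}), the residue at $v=x(x+1)$ for $x\ne0$ is
\begin{equation*}
	\trp_x^\uparrow(\la)=\frac{\prod_{y\in Y(\la)}\bigl(x(x+1)-y(y+1)\bigr)}{x(x+1)\prod_{x'\in X'(\la),\,x'\ne x}\bigl(x(x+1)-x'(x'+1)\bigr)},
\end{equation*}
with the evident reading (the residue at $v=0$) when $x=0$. On the combinatorial side I would plug the explicit formula (\ref{f1}) into the ratio $\dhh(\la+\square(x))/\dhh(\la)$ and track the three factors (the power of $2$, the multinomial coefficient, and the product $\prod_{i<j}\frac{\la_i-\la_j}{\la_i+\la_j}$) under the addition of one box. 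Here two cases arise: $x=0$, where a new one-box row is created and $\ell(\la)$ increases by one, and $x>0$, where the box extends the unique row of length $x$ and $\ell(\la)$ is unchanged.

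The heart of the argument is the identity $z(z+1)-w(w+1)=(z-w)(z+w+1)$, which I would use to rewrite both sides in terms of the modified contents $z\mapsto z(z+1)$. Using the description of $\la$ as a union of blocks of consecutive row lengths $y_j+1,\dots,x_j$ (Remark \ref{p8.9}), the products over all rows appearing in $\dhh(\la+\square(x))/\dhh(\la)$ telescope block by block: each complete block contributes precisely a factor $\bigl(x(x+1)-y_j(y_j+1)\bigr)/\bigl(x(x+1)-x_j(x_j+1)\bigr)$, and the block $j_0$ interrupted by the added box, together with the prefactor $2/(x+1)$ coming from the power of $2$ and the multinomial coefficient, collapses to $\bigl(x(x+1)-y_{j_0}(y_{j_0}+1)\bigr)/x(x+1)$. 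Assembling these factors reproduces the residue formula exactly; the $x=0$ case telescopes similarly to $\prod_j y_j(y_j+1)\big/\prod_j x_j(x_j+1)$, matching the residue at $v=0$.

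I expect the main obstacle to be the bookkeeping in this telescoping step rather than any conceptual difficulty: one must correctly split the product $\prod_{i<j}\frac{\la_i-\la_j}{\la_i+\la_j}$ into the contributions of rows lying above and below the added box, check that the two (a priori different) telescoping formulas for blocks above and below both produce the same block factor $\bigl(x(x+1)-y_j(y_j+1)\bigr)/\bigl(x(x+1)-x_j(x_j+1)\bigr)$ after the substitution $z\mapsto z(z+1)$, and treat the partial block interrupted by the added row. Keeping track of the lone factor of $2$ (which comes from $2^{|\la|-\ell(\la)}$ and behaves differently in the $x=0$ and $x>0$ cases) is the one place where an error would be easy to make.
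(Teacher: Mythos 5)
Your proposal is correct and follows essentially the same route as the paper's own proof: take the $\al\to+\infty$ limit in (\ref{f7}), extract the residue $\trp_x^\uparrow(\la)$ from (\ref{f10}), compute the ratio $\dhh(\la+\square(x))/\dhh(\la)$ from (\ref{f1}), and telescope the row products block by block via Remark \ref{p8.9} (including your correct treatment of the interrupted block combining with the prefactor $2/(x+1)$ to give $\bigl(x(x+1)-y_{j_0}(y_{j_0}+1)\bigr)/x(x+1)$, and the separate bookkeeping of the power of $2$ in the $x=0$ case where $\ell$ increases by one). The paper performs exactly this computation, with the same two-case split and the same telescoped block factors.
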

\begin{proof}
	It follows from (\ref{f10}) by the residue formula that 
	\begin{equation}\label{f800}
		\trp_{\widehat x}^\uparrow(\la)=\left\{
		\begin{array}{ll}
			\displaystyle\frac{\prod_{y\in Y(\la)}
			\left( \widehat x(\widehat x+1)-y(y+1) \right)}
			{\widehat x(\widehat x+1)\prod_{x \in X'(\la),\ x\ne\widehat x}
			\left( \widehat x(\widehat x+1)-x(x+1) \right)},&
			\mbox{if $\widehat x\ne0$};
			\\\rule{0pt}{22pt}
			\displaystyle
			\frac{\prod_{y\in Y(\la)}y(y+1)}{\prod_{x\in X'(\la)}x(x+1)},&
			\mbox{if $\widehat x=0$}
		\end{array}
		\right.
	\end{equation}
	for every $\widehat x\in X(\la)$.

	Taking the limit as $\al\to+\infty$ in (\ref{f7}),
	we obtain the following expression for the Plancherel up transition function:
	\begin{equation*}
		p_{\infty}^{\uparrow}(\la,\la+\square(\widehat x))=\frac{\dhh(\la+\square(\widehat x))}{\dhh(\la)\left( |\la|+1 \right)},\qquad
		\widehat x\in X(\la),
	\end{equation*}
	where $\dhh$ is given by (\ref{f1}).
	
	Let us check that the two above expressions coincide. Assume first that $\widehat x\ne0$.
	Let $\la=(\la_1,\dots,\la_{\ell})$
	and $\la+\square(\widehat x)=(\la_1,\dots,\la_{k-1},\la_k+1,\la_{k+1},\dots,\la_{\ell})$
	for some $1\le k\le \ell$. Note that $\la_k=\widehat x$. 
	Using (\ref{f1}), we have
	\begin{equation*}
		\left.
		\begin{array}{l}
			\displaystyle
			p_{\infty}^{\uparrow}(\la,\la+\square(\widehat x))=\frac{\dhh(\la+\square(\widehat x))}{\dhh(\la)\left( |\la|+1 \right)}
			\\\displaystyle\qquad=\rule{0pt}{20pt}
			\frac{2^{|\la|+1-\ell(\la+\square(\widehat x))}(|\la|+1)!}{(\widehat x+1)!\cdot\prod_{i\ne k}\la_i!}
			\times\\\displaystyle\qquad\quad\times
			\prod_{i=1}^{k-1}\frac{\la_i-\widehat x-1}{\la_i+\widehat x+1}
			\cdot\prod_{j=k+1}^{\ell}\frac{\widehat x+1-\la_j}{\widehat x+1+\la_j}
			\cdot\prod_{\textstyle\genfrac{}{}{0pt}{}{1\le i<j\le \ell}{i,j\ne k}}
			\frac{\la_i-\la_j}{\la_i+\la_j}\times\\\displaystyle\qquad\quad\times
			\frac{\widehat x!\cdot\prod_{i\ne k}\la_i!}{2^{|\la|-\ell(\la)}(|\la|+1)\cdot|\la|!}
			\cdot\prod_{i=1}^{k-1}\frac{\la_i+\widehat x}{\la_i-\widehat x}
			\cdot\prod_{j=k+1}^{\ell}\frac{\widehat x+\la_j}{\widehat x-\la_j}
			\cdot\prod_{\textstyle\genfrac{}{}{0pt}{}{1\le i<j\le \ell}{i,j\ne k}}
			\frac{\la_i+\la_j}{\la_i-\la_j}\\\displaystyle\qquad=
			\frac{2^{\ell(\la)-\ell(\la+\square(\widehat x))+1}}{\widehat x+1}
			\cdot
			\prod_{\textstyle\genfrac{}{}{0pt}{}{1\le i\le \ell}{i\ne k}}
			\frac{\widehat x(\widehat x+1)-\la_i(\la_i-1)}{\widehat x(\widehat x+1)-\la_i(\la_i+1)}.
		\end{array}
		\right.
	\end{equation*}
	Using Remark \ref{p8.9}, one can decompose the last product as follows:
	\begin{equation*}
		\prod_{\textstyle\genfrac{}{}{0pt}{}{1\le i\le \ell}{i\ne k}}
		\frac{\widehat x(\widehat x+1)-\la_i(\la_i-1)}{\widehat x(\widehat x+1)-\la_i(\la_i+1)}=
		\prod_{m=1}^{d}
		\prod_{\textstyle\genfrac{}{}{0pt}{}{r=y_m+1}{r\ne \widehat x}}^{x_m}
		\frac{\widehat x(\widehat x+1)-r(r-1)}{\widehat x(\widehat x+1)-r(r+1)},
	\end{equation*}
	where $X'(\la)=\left\{ x_1,\dots,x_d \right\}$ and $Y(\la)=\left\{ y_1,\dots,y_d \right\}$.

	Fix $m=1,\dots,d$. It can be readily verified that
	\begin{equation*}
		\prod_{\textstyle\genfrac{}{}{0pt}{}{r=y_m+1}{r\ne \widehat x}}^{x_m}
		\frac{\widehat x(\widehat x+1)-r(r-1)}{\widehat x(\widehat x+1)-r(r+1)}=
		\left\{
		\begin{array}{ll}\displaystyle
			\frac{\widehat x(\widehat x+1)-y_m(y_m+1)}{\widehat x(\widehat x+1)-x_m(x_m+1)},&\widehat x\ne x_m;\\
			\rule{0pt}{22pt}
			\displaystyle
			\frac1{2\widehat x}\big(\widehat x(\widehat x+1)-y_m(y_m+1)\big),&\widehat x=x_m.
		\end{array}
		\right.
	\end{equation*}

	Observe that if $\widehat x\ne0$, then $\ell(\la)=\ell(\la+\square(\widehat x))$.
	It can be readily verified that the above expression (\ref{f800})
	for $\trp^\uparrow_{\widehat x}(\la)$ coincides with 
	\begin{equation*}
		p_{\infty}^{\uparrow}(\la,\la+\square(\widehat x))=
		\frac{2}{\widehat x+1}
		\prod_{m=1}^{d}
		\prod_{\textstyle\genfrac{}{}{0pt}{}{r=y_m+1}{r\ne \widehat x}}^{x_m}
		\frac{\widehat x(\widehat x+1)-r(r-1)}{\widehat x(\widehat x+1)-r(r+1)}.
	\end{equation*}

	The case $\widehat x=0$ can be considered similarly
	with the observation that in this case $\ell(\la+\square(\widehat x))=\ell(\la)+1$.
	This concludes the proof.
\end{proof}	
\begin{rmk}\rm{}\label{p1.11}
	If we take the limit transition as $v\to\infty$ in (\ref{f10}), we obtain
	\begin{equation*}
		\lim_{v\to\infty}v\Rc^\uparrow(v;\la)=1=\lim_{v\to\infty}
		\sum_{x\in X(\la)}\frac{v\cdot p_{\infty}^{\uparrow}(\la,\la+\square(x))}{v+x(x+1)}	
		=\sum_{x\in X(\la)}p_{\infty}^{\uparrow}(\la,\la+\square(x))
	\end{equation*}
	for all $\la\in\Sb$ and $x\in X(\la)$, as it should be.
\end{rmk}
Note that now (\ref{f7}) can be rewritten as (here $\al\in(0,+\infty]$)
\begin{equation}\label{f14}
	p_\al^\uparrow(\la,\la+\square(x))=\frac{x(x+1)+\al}{2|\la|+\al}\cdot\trp^\uparrow_x(\la)\qquad
	\mbox{for all $\la\in\Sb$ and $x\in X(\la)$}.
\end{equation}

\subsection{The down transition function}\label{s20.3}
The down transition function of the Schur graph
can be written in terms of Kerov interlacing coordinates of shifted Young diagrams.

Let $\la$ be an arbitrary nonempty shifted Young diagram and $v$ be a complex
variable. By definition, put
\begin{equation*}
	\Rc^\downarrow(v;\la):=\frac1{v\Rc^\uparrow(v;\la)}=
	\frac{\prod_{x\in X'(\la)}(v-x(x+1))}{\prod_{y\in Y(\la)}(v-y(y+1))}.
\end{equation*}
Observe that the numerator and the denominator
both have $v^d$ as the term of maximal degree in $v$, where $d\ge0$ is the number of elements
in the set $X'(\la)$ (or, equivalently, in $Y(\la)$, see Remark \ref{p1.3}).

Let $\trp^\downarrow_y(\la)$, $y\in Y(\la)$,
be the following expansion coefficients of
$\Rc^\downarrow(v;\la)$ 
as a sum of partial fractions:
\begin{equation}\label{f15}
	\Rc^\downarrow(v;\la)=1-\sum_{y\in Y(\la)}\frac{\trp_y^{\downarrow}(\la)}{v-y(y+1)}.
\end{equation}
\begin{prop}\label{p1.12}
	For every nonempty shifted Young diagram $\la$ we have
	\begin{equation*}
		\trp_{y}^\downarrow(\la)=2|\la|\cdot p^\downarrow(\la,\la-\square(y)),\qquad y\in Y(\la),
	\end{equation*}
	where $p^\downarrow(\cdot,\cdot)$ is the down transition
	function (see \S\ref{s1.3} for the definition).
\end{prop}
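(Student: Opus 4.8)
The plan is to evaluate both sides of the asserted identity explicitly in terms of the Kerov coordinates of $\la$ and match them. Fix $\widehat y\in Y(\la)$ and put $\mu:=\la-\square(\widehat y)$, so that $\mu\nearrow\la$ and $|\mu|=|\la|-1$. First I extract $\trp^\downarrow_{\widehat y}(\la)$ from the partial fraction expansion (\ref{f15}): by the interlacing of Proposition \ref{p1.2}, $\Rc^\downarrow(v;\la)$ has a simple pole at $v=\widehat y(\widehat y+1)$, and the residue there equals $-\trp^\downarrow_{\widehat y}(\la)$, whence
\begin{equation*}
	\trp^\downarrow_{\widehat y}(\la)=-\frac{\prod_{x\in X'(\la)}\big(\widehat y(\widehat y+1)-x(x+1)\big)}{\prod_{y\in Y(\la),\,y\ne\widehat y}\big(\widehat y(\widehat y+1)-y(y+1)\big)}.
\end{equation*}
This is the easy half; the substance is to show that $2|\la|\,p^\downarrow(\la,\mu)$ equals the same expression.

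For the second half I mimic the computation in the proof of Proposition \ref{p1.10}, but in the downward direction. Write $\la=(\la_1,\dots,\la_\ell)$ with the removed box at the end of row $k$, so that $\la_k=\widehat y+1$. Substituting the dimension formula (\ref{f1}) into $p^\downarrow(\la,\mu)=\tfrac{\dhh(\mu)}{\dhh(\la)}\kappa(\mu,\la)$ (see (\ref{f2})), the factorials, the explicit power of $2$, and the factor $\kappa(\mu,\la)$ combine so that, after multiplication by $2|\la|$, they contribute the scalar $2(\widehat y+1)$, while the double products over pairs of rows reduce to a single product over $i\ne k$. The algebraic heart is the pair of content identities
\begin{equation*}
	(\la_i-\widehat y)(\la_i+\widehat y+1)=\la_i(\la_i+1)-\widehat y(\widehat y+1),\qquad
	(\la_i+\widehat y)(\la_i-\widehat y-1)=\la_i(\la_i-1)-\widehat y(\widehat y+1),
\end{equation*}
together with their analogues for the rows below row $k$, which turn that product into
\begin{equation*}
	2|\la|\,p^\downarrow(\la,\mu)=2(\widehat y+1)\prod_{i\ne k}\frac{\widehat y(\widehat y+1)-\la_i(\la_i+1)}{\widehat y(\widehat y+1)-\la_i(\la_i-1)}.
\end{equation*}

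It remains to evaluate the product by the run decomposition of Remark \ref{p8.9}. Grouping the rows of $\la$ into the consecutive runs $\{y_m+1,\dots,x_m\}$, each full run telescopes, since the numerator at $r$ cancels the denominator at $r+1$, to $\frac{\widehat y(\widehat y+1)-x_m(x_m+1)}{\widehat y(\widehat y+1)-y_m(y_m+1)}$. The single run $m_0$ that contains row $k$ is the only one missing its bottom term $r=\widehat y+1$; its truncated product telescopes instead to $\frac{\widehat y(\widehat y+1)-x_{m_0}(x_{m_0}+1)}{-2(\widehat y+1)}$, and the denominator $-2(\widehat y+1)$ exactly cancels the prefactor $2(\widehat y+1)$ and supplies the overall minus sign. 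Collecting the runs, $2|\la|\,p^\downarrow(\la,\mu)$ becomes precisely the residue expression for $\trp^\downarrow_{\widehat y}(\la)$ obtained above, which proves the claim.

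I expect the main obstacle to be the bookkeeping in the distinguished run $m_0$: that is where the sign, the surviving factor $\widehat y+1$, and the cancellation of the prefactor must all line up, and an off-by-one in the telescoping there would spoil the match. A secondary point needing care is the boundary case $\widehat y=0$ (a one-box row, Proposition \ref{p1.2}(a)), in which $\kappa(\mu,\la)=1$ and $\ell(\mu)=\ell(\la)-1$ rather than $\kappa=2$ and $\ell(\mu)=\ell(\la)$; one checks that $\kappa(\mu,\la)\cdot 2^{\,\ell(\mu)-\ell(\la)}=1$ in both cases, so a single computation covers them uniformly.
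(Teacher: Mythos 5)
Your proof is correct and is precisely the paper's own argument carried out in detail: the paper likewise extracts $\trp^\downarrow_{\widehat y}(\la)$ from (\ref{f15}) by the residue formula, rewrites $p^\downarrow(\la,\la-\square(\widehat y))=\frac{\dhh(\la-\square(\widehat y))}{\dhh(\la)}\,2^{1-\delta(\widehat y)}$ using (\ref{f2}) and (\ref{f1}), and matches the two expressions via the run decomposition of Remark \ref{p8.9} ``exactly as in the proof of Proposition \ref{p1.10}'' --- and your signed residue formula (with the minus supplied by the distinguished run) is in fact the correct one, the unsigned display in the paper's proof being a sign typo. One small slip in your closing remark: the case-uniform combination is $\kappa(\mu,\la)\cdot 2^{\ell(\la)-\ell(\mu)-1}=1$ (equivalently $\kappa(\mu,\la)=2^{1-\delta(\widehat y)}$), not $\kappa(\mu,\la)\cdot 2^{\ell(\mu)-\ell(\la)}=1$, which would give $2$ and $\tfrac12$ in the two cases; since your displayed unified formula $2|\la|\,p^\downarrow(\la,\mu)=2(\widehat y+1)\prod_{i\ne k}\frac{\widehat y(\widehat y+1)-\la_i(\la_i+1)}{\widehat y(\widehat y+1)-\la_i(\la_i-1)}$ is nevertheless right in both cases, this is a misstated exponent, not a gap.
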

\begin{proof}
	It follows from (\ref{f15}) by the residue formula that
	\begin{equation*}
		\trp_{\widehat y}^\downarrow(\la)=
		\frac{\prod_{x\in X'(\la)}\left( \widehat y(\widehat y+1)-x(x+1) \right)}
		{\prod_{y\in Y(\la),\ y\ne\widehat y}\left( \widehat y(\widehat y+1)-y(y+1) \right)}\qquad
		\mbox{for every $\widehat y\in Y(\la)$}.
	\end{equation*}
	Next, we can rewrite the definition of the down transition function (\ref{f2}) as
	\begin{equation*}
		p^{\downarrow}(\la,\la-\square(\widehat y))=\frac{\dhh(\la-\square(\widehat y))}{\dhh(\la)}\cdot 2^{1-\delta(\widehat y)},
	\end{equation*}
	where $\delta(\cdot)$ is the Kronecker delta, and the function $h$ is given by (\ref{f1}).

	It can be shown exactly as in the proof of Proposition \ref{p1.10}
	(using Remark~\ref{p8.9})
	that the two above expressions coincide.
\end{proof}

\section{The up/down Markov chains\\ and doubly symmetric functions}\label{s2}
In this section we 
compute the action of the operators $T_n$ from Definition \ref{p10.6}
on doubly symmetric functions (Theorem \ref{p2.7}).
We argue similarly to \cite[\S4]{Borodin2007}.

\subsection{Doubly symmetric functions}\label{s2.1}
In this subsection we briefly recall the definitions
of the algebra of doubly symmetric functions and some related objects.
Exact definitions and proofs concerning this subject 
can be found, e.g., in the paper by V.~Ivanov \cite{IvanovNewYork3517-3530}. 
See also \cite[Ch. III, \S8]{Macdonald1995} and~\cite{Stembridge1985}.

Let $\Lambda$ denote the algebra 
of real symmetric functions
in (formal) variables $y_1,y_2,\dots$.
This algebra is freely generated (as a commutative unital algebra)
by Newton power sums
$p_k:=\sum_{i=1}^{\infty}y_i^k$, $k\in\N$.
We write $\Lambda=\mathbb{R}\left[ p_1,p_2,p_3,\dots \right]$.
By $\Gamma$ we denote the subalgebra of $\Lambda$ generated by the odd Newton power
sums, $\Gamma=\mathbb{R}\left[ p_1,p_3,p_5,\dots \right]$.
We call $\Gamma$ the {\em{}algebra of doubly symmetric functions\/}.
\begin{rmk}\rm{}\label{p2.1a}
  The subalgebra of $\Lambda$ 
  generated by the odd Newton power sums was studied by various authors.
  However, there is no common notation for it. For example, in 
  \cite{IvanovNewYork3517-3530} and 
  \cite{Macdonald1995}
  it is denoted by $\Gamma$, in \cite{Hoffman1992}~---~by~$\Delta$, 
  in the papers \cite{Stembridge1989,Stembridge1992}
  --- by $\Omega$, and in a recent paper \cite{Berele2009} --- by~$\mathcal{D}$.
  In \cite{IvanovNewYork3517-3530}
  it is called the algebra of supersymmetric functions, and in \cite{Berele2009}
  --- the algebra of doubly symmetric functions. 
  In the present paper we adopt the latter term and
  the notation $\Gamma$ for this algebra.

  We do not use the term ``supersymmetric functions'' because it was used
  by J.~Stembridge \cite{Stembridge1985} in a different sense.
  Namely, he   studied the unital algebra generated by 
  the following {\em{}supersymmetric power sums\/}\footnote{The
  definitions and related discussions 
  can also be found in \cite{Macdonald1995}.} in two sets of variables $u_i$ and $v_j$:
  \begin{equation*}
    p_k(u_1,u_2,\dots;v_1,v_2,\dots)=\sum_{i=1}^{\infty}u_i^k-\sum_{j=1}^{\infty}v_j^k,\qquad
    k=1,2,\dots.
  \end{equation*}
  The algebra $\Gamma$ defined above is generated by 
  supersymmetric power sums in variables 
  $\left\{ {y_1},{y_2},\dots \right\}$ and 
  $\left\{ -y_1,-y_2,\dots \right\}$.
  Clearly, $\Gamma$ can also be viewed as a subalgebra of that supersymmetric algebra.	

  The algebra 
  $\Gamma$ consists of all $f\in\Lambda$ such that
  for every $1\le i<j$ the expression 
  \begin{equation*}
    f(y_1,\dots,y_{i-1},z,y_{i+1},\dots,y_{j-1},-z,y_{j+1},\dots)
  \end{equation*}
  does not depend on $z$ (here $z$ is another independent formal variable).\footnote{It 
  is clear that the odd Newton power sums satisfy that property 
  and the even do not. The fact that every $f\in\Lambda$ satisfying
  that property is a polynomial in the odd Newton power sums 
  follows from \cite{Stembridge1985}.}
  From \cite{Berele2009} it follows that $\Gamma$ can be viewed as the quotient
  of $\Lambda$ by the ideal generated by all $s_\si-s_{\si'}$, where $s_\si$
  is the ordinary Schur function, $\si$ runs over all ordinary partitions
  and $\si'$ denotes the conjugate of the partition $\si$.
\end{rmk}

There is a natural filtration of the algebra $\Lambda$ by degrees of polynomials
in formal variables $y_i$. This filtration is
determined by setting $\deg p_k=k$, $k\in\N$.
The subalgebra $\Gamma\subset\Lambda$ 
inherits this filtration from $\Lambda$ and thus becomes a filtered algebra
with the filtration determined by setting $\deg p_{2m-1}=2m-1$, $m\in\N$.
More precisely, 
$$
\Gamma=\bigcup_{m=0}^{\infty}\Gamma^{(m)},\qquad\Gamma^{(0)}\subset\Gamma^{(1)}\subset\Gamma^{(2)}\subset\ldots\subset \Gamma,
$$
where $\Gamma^{(m)}$ is the finite-dimensional subspace of $\Gamma$ consisting of
elements of degree $\le m$:
\begin{equation*}
	\Gamma^{(0)}=\mathbb{R}1,\qquad \Gamma^{(m)}=\mathrm{span}\left\{ p_1^{r_1}p_3^{r_3}\dots\colon
	r_1+3r_3+\dots\le m\right\},\quad m=1,2,\dots.
\end{equation*}
Finite products of the form
$p_1^{r_1}p_3^{r_3}\dots$
constitute a linear 
basis for $\Gamma$ as a vector space over $\mathbb{R}$.
Every element 
$p_1^{r_1}p_3^{r_3}\dots$ is homogeneous.
We will need two more linear bases for $\Gamma$, 
of which one is also homogeneous and the other is not.

\begin{df}[Schur's $\mQ$-functions]\rm{}\label{p2.2}
	Let $\la=(\la_1,\la_2,\dots,\la_{\ell(\la)},0,0,\dots)$ be an arbitrary strict partition.
	For every $n\ge\ell(\la)$ set
	\begin{equation}\label{f20}
		R_{\la\mid n}(y_1,\dots,y_n):=y_1^{\la_1}\dots y_{\ell(\la)}^{\la_{\ell(\la)}}
		\cdot\prod_{\textstyle\genfrac{}{}{0pt}{}{i\le\ell(\la)}{i<j\le n}}
		\frac{y_i+y_j}{y_i-y_j}.
	\end{equation}
	If $n\ge\ell(\la)$, define\footnote{Here $\mathfrak{S}_n$ is the symmetric group.}
	\begin{equation}\label{f21}
		\mQ_\la(y_1,\dots,y_n,0,\dots):=
		\frac{2^{\ell(\la)}}{(n-\ell(\la))!}\sum_{w\in\mathfrak{S}_n}
		R_{\la\mid n}(y_{w(1)},\dots,y_{w(n)}),
	\end{equation}
	and 
	$\mQ_\la(y_1,\dots,y_n,0,\dots):=0$ otherwise.
	The expressions
	$\mQ_\la(y_1,\dots,y_n,0,\dots)$, $n\in\N$,
	define a doubly symmetric function 
	$\mQ_\la\in\Gamma$.\footnote{This follows from \cite{IvanovNewYork3517-3530}. Note 
	that that paper deals with
	Schur's $\mathcal{P}$-functions. They are linear multiples of the
	$\mQ$-functions: $\mathcal{P}_\la=2^{-\ell(\la)}\mQ_\la$,
	$\la\in\Sb$.} It is called {\em{}Schur's $\mQ$-function\/}.
\end{df}
Each $\mQ_\la$, $\la\in\Sb$, is a homogeneous element of degree $|\la|$. The system 
$\left\{ \mQ_\la \right\}_{\la\in\Sb}$ is a linear basis for the algebra $\Gamma$ over $\mathbb{R}$.
\begin{df}[Factorial Schur's $\mQ$-functions]\rm{}\label{p2.3}
	The factorial analogues  
	of Schur's $\mQ$-functions are defined as in (\ref{f20})--(\ref{f21}),
	with $R_{\la\mid n}$ replaced by
	\begin{equation*}
		R_{\la\mid n}^*(y_1,\dots,y_n):=y_1^{\downarrow\la_1}\dots y_{\ell(\la)}^{\downarrow\la_{\ell(\la)}}
		\cdot\prod_{\textstyle\genfrac{}{}{0pt}{}{i\le\ell(\la)}{i<j\le n}}
		\frac{y_i+y_j}{y_i-y_j}.
	\end{equation*}
	Here $y_i^{\downarrow \la_i}$
	is the decreasing factorial power defined as
	$a^{\downarrow k}:=a(a-1)\dots(a-k+1)$, $k\in\N$, $a^{\downarrow 0}:=1$.
	The functions $\mQ^*_\la$, $\la\in\Sb$, are called {\em{}factorial Schur's $\mQ$-functions\/}.
\end{df}
For all $\la\in\Sb$ we have
$\mQ_\la^*=\mQ_\la+g$,
where $g$ is a doubly symmetric function with $\deg g<|\la|=\deg\mQ_\la$.
It follows that the system
$\left\{ \mQ^*_\la \right\}_{\la\in\Sb}$ is also a linear
basis for $\Gamma$ as a vector space over $\mathbb{R}$.

\subsection{A representation of $\mathfrak{sl}(2,\mathbb{C})$}\label{s2.2}
By $\mathrm{Fun}_0(\Sb)$ denote the algebra of real finitely supported functions
on the Schur graph $\Sb$ with pointwise operations.
A natural basis for $\mathrm{Fun}_0(\Sb)$ is $\left\{ \basf_\mu \right\}_{\mu\in\Sb}$,
where
\begin{equation*}
	\basf_\mu(\la)=\left\{
	\begin{array}{ll}
		1,&\mbox{if $\la=\mu$};\\
		0,&\mbox{otherwise}.
	\end{array}
	\right.
\end{equation*}
Let $E$, $F$, and $H$ be the following operators in $\mathrm{Fun}_0(\Sb)$
which are similar to Kerov's operators (see \cite{Okounkov2001a} 
for definition):\footnote{Here $\delta(\cdot)$
is the Kronecker delta.}
\begin{equation*}
	\left.
	\begin{array}{rcl}
		E\basf_\la&:=& \displaystyle\sum_{x\in X(\la)}2^{-\delta(x)}
		\left( x(x+1)+\al \right)\basf_{\la+\square(x)};\\
		F\basf_\la&:=& \displaystyle
		-\sum_{y\in Y(\la)}\basf_{\la-\square(y)};\\
		H\basf_\la&:=& \displaystyle
		\left( \frac\al2+2|\la| \right)\basf_\la.
	\end{array}
	\right.
\end{equation*}
\begin{lemma}\label{p2.4}
	For all $\al\in\mathbb{R}$
	these operators satisfy the commutation relations
	\begin{equation}\label{f30}
		\left[ E,H \right]=-2E,\qquad
		\left[ F,H \right]=2F,\qquad
		\left[ E,F \right]=H.
	\end{equation}
\end{lemma}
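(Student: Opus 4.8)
The plan is to verify each of the three relations in (\ref{f30}) by evaluating both sides on an arbitrary basis vector $\basf_\la$, $\la\in\Sb$, and comparing coefficients. The first two are purely a matter of the grading by $|\la|$: since $H$ acts on $\basf_\la$ as the scalar $\tfrac\al2+2|\la|$ while $E$ raises $|\la|$ by one, we get
\[
[E,H]\basf_\la=\sum_{x\in X(\la)}2^{-\delta(x)}\big(x(x+1)+\al\big)\Big(\big(\tfrac\al2+2|\la|\big)-\big(\tfrac\al2+2|\la|+2\big)\Big)\basf_{\la+\square(x)}=-2E\basf_\la ,
\]
because $|\la+\square(x)|=|\la|+1$; the computation of $[F,H]=2F$ is identical, now with $|\la-\square(y)|=|\la|-1$. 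So the work is entirely in the relation $[E,F]=H$. Expanding $EF\basf_\la$ and $FE\basf_\la$ on the basis, every $\basf_\mu$ that occurs is obtained from $\la$ by deleting one box and inserting one box; hence either $\mu=\la$ (the two boxes coincide) or $\mu\ne\la$ differs from $\la$ by a single box-swap, deleting a box $b_1\subset\la$ of content $y$ and inserting a box $b_2\subset\mu$ of content $x\ne y$. I would treat these two cases separately.

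Off-diagonal case ($\mu\ne\la$). Here $\basf_\mu$ appears in $EF\basf_\la$ with coefficient $-2^{-\delta(x)}(x(x+1)+\al)$ precisely when $y\in Y(\la)$ and $x\in X(\la-\square(y))$, and it appears in $FE\basf_\la$ with the \emph{same} coefficient precisely when $x\in X(\la)$ and $y\in Y(\la+\square(x))$ (the weight depends only on the content $x$ of $b_2$, which is intrinsic to its cell, so the two weights agree, as do the two Kronecker factors $2^{-\delta(x)}$). Thus these terms cancel in $[E,F]\basf_\la=EF\basf_\la-FE\basf_\la$ as soon as the two validity conditions are shown to be equivalent, i.e.\ that ``delete $b_1$'' and ``insert $b_2$'' commute whenever $b_1\ne b_2$. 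Since distinct contents place $b_1,b_2$ on distinct diagonals, the two operations can interfere only when the cells lie in neighbouring rows or columns, and in that situation both conditions are governed by the same shape/strictness inequality between the two affected rows. I would record this as a short \emph{exchange lemma}, proved either by this finite local check or by induction on $|\la|$ in the style of Proposition~\ref{p1.2}.

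Diagonal case ($\mu=\la$). Collecting the ``delete then re-insert the same box'' terms of $EF$ and the ``insert then delete the same box'' terms of $FE$, the coefficient of $\basf_\la$ in $[E,F]\basf_\la$ equals
\[
\sum_{x\in X(\la)}2^{-\delta(x)}\big(x(x+1)+\al\big)-\sum_{y\in Y(\la)}2^{-\delta(y)}\big(y(y+1)+\al\big).
\]
By the interlacing Proposition~\ref{p1.2}, exactly one of the sets $X(\la),Y(\la)$ contains $0$ (it is $Y$ in case (a) and $X$ in case (b)), so the factor $2^{-\delta}$ is nontrivial on exactly one summand. Using $|X'(\la)|=|Y(\la)|$ (Remark~\ref{p1.3}) to balance the $\al$-terms and Proposition~\ref{p1.5} for $\sum_{x\in X(\la)}x(x+1)-\sum_{y\in Y(\la)}y(y+1)=2|\la|$, both cases collapse to $\tfrac\al2+2|\la|$, which is exactly the eigenvalue of $H$ on $\basf_\la$; a direct check for $\la=\varnothing$ finishes the verification.

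The main obstacle is the exchange lemma behind the off-diagonal cancellation: I must rule out any box-swap that contributes to only one of $EF$, $FE$. I expect the one delicate point to be the strictness constraint peculiar to shifted diagrams (equal parts are forbidden), and the decisive observation is that this constraint enters the ``insert'' and the ``delete'' conditions symmetrically — the tightness that would block inserting $b_2$ simultaneously blocks deleting $b_1$, and conversely — so it never breaks the cancellation.
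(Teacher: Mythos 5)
Your proposal is correct and follows essentially the same route as the paper, whose proof is only a pointer to the results of \S\ref{s20.1} and to the analogous Lemma 4.2 of \cite{Borodin2007}: the grading argument for $[E,H]$ and $[F,H]$, the off-diagonal cancellation via commuting box deletion/insertion, and the diagonal computation combining Proposition \ref{p1.2}, Remark \ref{p1.3} and Proposition \ref{p1.5} to produce $\frac\al2+2|\la|$ are exactly the ingredients the citation delegates. The one piece you leave schematic, the exchange lemma, is indeed the routine local check (both orders of the swap reduce to the single strictness inequality $\la_i-1>\la_{i+1}+1$ between the affected neighbouring rows), so your treatment fills in the paper's proof rather than deviating from it.
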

\begin{proof}
	The proof uses the results of 
	\S\ref{s20.1} and
	is similar to the proof of Lemma 4.2 of the paper \cite{Borodin2007}.
\end{proof}
\begin{corollary}\rm{}\label{p2.5}
	The correspondence
	\begin{equation*}
		\left(
		\begin{array}{cc}
			0&1\\0&0
		\end{array}
		\right)\to E,\qquad
		\left(
		\begin{array}{cc}
			0&0\\1&0
		\end{array}
		\right)\to F,\qquad
		\left(
		\begin{array}{cc}
			1&0\\0&-1
		\end{array}
		\right)\to H
	\end{equation*}
	defines a representation of the Lie algebra $\mathfrak{sl}(2,\mathbb{C})$
	in the space $\mathrm{Fun}_0(\Sb)$.
\end{corollary}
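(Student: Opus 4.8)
The plan is to observe that this corollary is essentially a formal restatement of Lemma~\ref{p2.4}, so that the only work is to match the abstract Lie algebra relations with the operator relations already established. Write $e$, $f$, $h$ for the three displayed matrices (the standard raising, lowering and Cartan generators of $\mathfrak{sl}(2,\mathbb{C})$), so that the proposed correspondence is the linear map $\rho$ sending $e\mapsto E$, $f\mapsto F$, $h\mapsto H$. Since $e$, $f$, $h$ form a basis of $\mathfrak{sl}(2,\mathbb{C})$, the assignment $\rho$ extends uniquely to a linear map $\rho\colon\mathfrak{sl}(2,\mathbb{C})\to\mathrm{End}(\mathrm{Fun}_0(\Sb))$, and to show that $\rho$ is a representation it suffices to check that $\rho$ intertwines the Lie brackets. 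Because the bracket on $\mathfrak{sl}(2,\mathbb{C})$ and the commutator on $\mathrm{End}(\mathrm{Fun}_0(\Sb))$ are both bilinear and antisymmetric, it is enough to verify the identity $\rho([a,b])=[\rho(a),\rho(b)]$ on the three unordered pairs of basis vectors $\{e,h\}$, $\{f,h\}$ and $\{e,f\}$.

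First I would record the matrix commutators. A direct computation gives $[e,h]=-2e$, $[f,h]=2f$ and $[e,f]=h$. Applying $\rho$ to the left-hand sides and using linearity, the three required identities become $[E,H]=-2E$, $[F,H]=2F$ and $[E,F]=H$. But these are precisely the relations asserted in Lemma~\ref{p2.4}. Hence $\rho$ preserves brackets on every pair of basis elements, and by bilinearity it preserves all brackets, so $\rho$ is a Lie algebra homomorphism, that is, a representation.

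There is no genuine obstacle at this stage: the entire substance of the statement is carried by the commutation relations~(\ref{f30}) of Lemma~\ref{p2.4}, whose proof (via the Kerov-coordinate descriptions of the up and down transition functions in \S\ref{s20}) is where the real computation lives. The only points to keep in mind are bookkeeping ones~--- that the three matrices are indeed a basis, that the sign and factor conventions in~(\ref{f30}) line up with the matrix brackets computed above, and that antisymmetry of both brackets lets one pass from ordered to unordered pairs~--- none of which requires any further calculation.
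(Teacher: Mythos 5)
Your proposal is correct and matches the paper's intent exactly: the paper states this as an unproved corollary of Lemma~2.4, since the relations $[E,H]=-2E$, $[F,H]=2F$, $[E,F]=H$ from (30) coincide with the brackets of the standard basis $e,f,h$ of $\mathfrak{sl}(2,\mathbb{C})$, and linear extension then gives the representation. Your explicit verification of the matrix commutators and the bilinearity/antisymmetry bookkeeping is precisely the routine argument the paper leaves implicit.
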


\begin{lemma}\label{p2.6}
	Fix $N\in\N$.
	Let
	$V_N$
	be the finite-dimensional subspace of $\mathrm{Fun}_0(\Sb)$
	spanned by the basis vectors $\basf_\la$ with $\la_1\le N$.\footnote{In fact, 
	$\dim V_N=N(N+1)/2$.}
	If $\al=-N(N+1)$, then $V_N$ is
	invariant under the action of the operators $E$, $F$, and $H$, and 
	the action of $\mathfrak{sl}(2,\mathbb{C})$ in $V_N$ defined in Corollary \ref{p2.5}
	lifts to a representation of the group $SL(2,\mathbb{C})$ in $V_N$.
\end{lemma}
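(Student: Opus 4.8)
The plan is to prove the two assertions in turn. First I would check directly from the definitions that $V_N$ is invariant under $E$, $F$, and $H$; this is where the special value $\al=-N(N+1)$ enters, and it is the real content of the lemma. Having this, the integration of the $\mathfrak{sl}(2,\mathbb{C})$-action to a representation of $SL(2,\mathbb{C})$ is then a standard consequence of the finite-dimensionality of $V_N$, so only the commutation relations of Lemma \ref{p2.4} (equivalently Corollary \ref{p2.5}) are needed as input.

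For the invariance, $H$ is diagonal in the basis $\left\{ \basf_\la \right\}$, hence preserves $V_N$ trivially. The operator $F$ sends $\basf_\la$ to a linear combination of the $\basf_{\la-\square(y)}$, and removing a box cannot increase the first part, so $(\la-\square(y))_1\le\la_1\le N$ and $F(V_N)\subseteq V_N$. The only delicate case is $E$. Since $E$ only adds boxes, the first part $\la_1$ can be increased only by the box extending the first row; adding any other box (to a lower row, or as a new bottom row) leaves $\la_1$ unchanged. Thus the sole way a term of $E\basf_\la$ can leave $V_N$ is via the first-row extension when $\la_1=N$, and by strictness this box is always addable in that case. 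I would then compute that this extending box lies in row $1$, column $\la_1+1$, so its content is $\mc=\la_1=N$; the coefficient attached to it in the definition of $E$ is $2^{-\delta(N)}\bigl(N(N+1)+\al\bigr)=N(N+1)+\al$, which vanishes precisely when $\al=-N(N+1)$. Hence the offending term drops out, every surviving term of $E\basf_\la$ has first part $\le N$, and $E(V_N)\subseteq V_N$.

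With invariance established, $V_N$ (or its complexification) is a finite-dimensional module over $\mathfrak{sl}(2,\mathbb{C})$ through Corollary \ref{p2.5}, and to lift it to the group I would invoke that $SL(2,\mathbb{C})$ is simply connected, so every finite-dimensional representation of its Lie algebra integrates to a representation of the group. Alternatively one can argue concretely: on $V_N$ the operators $E$ and $F$ strictly raise and lower the grading $|\la|$, which is bounded on $V_N$ (by $N(N+1)/2$, the size of the staircase $(N,N-1,\dots,1)$), so $E$ and $F$ are nilpotent; consequently $\exp(tE)$ and $\exp(tF)$ are polynomial in $t$ and well defined, while $H$ acts with the integer eigenvalues $2|\la|-N(N+1)/2$, and Weyl complete reducibility decomposes $V_N$ into the standard irreducible $\mathfrak{sl}(2,\mathbb{C})$-modules, each the restriction of an $SL(2,\mathbb{C})$-representation. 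The main obstacle is the invariance under $E$: one must recognize that among all addable boxes only the first-row extension can violate $\la_1\le N$, pin its content down to exactly $N$, and observe that the weight $x(x+1)+\al$ appearing in $E$ is engineered to vanish there for $\al=-N(N+1)$; the integration step is then routine representation theory of $\mathfrak{sl}(2,\mathbb{C})$.
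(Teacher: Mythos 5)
Your proof is correct and takes essentially the approach the paper intends: the paper's own ``proof'' is just a citation to Lemma 4.3 of \cite{Borodin2007}, whose argument is exactly your coefficient-vanishing observation (the unique addable box that can raise $\la_1$ past $N$ sits at the end of the first row, has content $N$, and carries the weight $N(N+1)+\al=0$) followed by the standard integration of a finite-dimensional $\mathfrak{sl}(2,\mathbb{C})$-module to $SL(2,\mathbb{C})$ via nilpotency of $E,F$ and simple connectedness. Your version is a faithful, complete expansion of that reference, so nothing further is needed.
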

\begin{proof}
	This can be proved exactly as Lemma 4.3 of the paper \cite{Borodin2007}.
\end{proof}

\subsection{The action of $T_n$ on factorial Schur's $\mQ$-functions}\label{s2.3}
For every set $\mathfrak{X}$ by $\mathrm{Fun}(\mathfrak{X})$
denote the algebra of real-valued 
functions on $\mathfrak{X}$
with pointwise operations.

Consider an embedding of the algebra $\Gamma$ described in \S\ref{s2.1}
into the algebra $\mathrm{Fun}(\Sb)$.
This embedding is defined on the generators 
of $\Gamma$:
\begin{equation*}
	p_k\to p_k(\la):=\sum_{i=1}^{\ell(\la)}\la_i^k,\qquad k=1,3,5,\dots.
\end{equation*}
Thus, to every element $f\in\Gamma$ corresponds a function from $\mathrm{Fun}(\Sb)$.
Denote this function by $f(\la)$.
We identify the (abstract) algebra $\Gamma$ 
with its image under this embedding, that is, with
the algebra of functions
$\left\{ f(\cdot)\in\mathrm{Fun}(\Sb)\colon f\in\Gamma \right\}$.
\begin{rmk}\label{90.9}\rm{}
	In \cite{Borodin2007} the role of $\Gamma$ is 
	played by the algebra generated by supersymmetric
	power sums (see Remark \ref{p2.1a}) in
	$a_i$ and $-b_j$, where $a_i$ and $b_j$ are the modified Frobenius coordinates
	of an ordinary Young diagram.
	The paper \cite{Olshanski2009} deals
	with Jack deformations of these power sums.
\end{rmk}
For any $f\in\Gamma$, by $f_n$ denote the restriction of the function $f(\cdot)$
to $\Sb_n\subset\Sb$. It can be easily checked that the algebra 
$\Gamma\subset\mathrm{Fun}(\Sb)$
separates points of $\Sb$.
It follows that the functions of the form $f_n$, with $f\in\Gamma$, 
exhaust the (finite-dimensional) space $\mathrm{Fun}(\Sb_n)$,
$n\in\Nn$.

Our aim in this section is to prove the following
\begin{thm}\label{p2.7}
	Let $T_n\colon\mathrm{Fun}(\Sb_n)\to\mathrm{Fun}(\Sb_n)$, $n\in\N$,
	be the operator from Definition \ref{p10.6}.
	Its action on the functions $(\mQ_\mu^*)_n$, $\mu\in\Sb$,
	is as follows:
	\begin{equation}\label{f32}
		\left.
		\begin{array}{l}
			\displaystyle
			(T_n-{\bf1})(\mQ_\mu^*)_n=\frac1{(n+1)(n+\al/2)}\Bigg[
			-|\mu|\left( |\mu|+\al/2-1 \right)(\mQ_\mu^*)_n\\\qquad\qquad\qquad\qquad\ \displaystyle+
			(n-|\mu|+1)\sum_{y\in Y(\mu)}\left( y(y+1)+\al \right)
			(\mQ_{\mu-\square(y)}^*)_n
			\Bigg],
		\end{array}
		\right.
	\end{equation}
	where ${\bf1}$ denotes the identity operator.
\end{thm}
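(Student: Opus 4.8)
The plan is to factor the transition operator as $T_n=U_n D_n$, where $D_n\colon\mathrm{Fun}(\Sb_n)\to\mathrm{Fun}(\Sb_{n+1})$ is assembled from the down transition function $p^\downarrow$ and $U_n\colon\mathrm{Fun}(\Sb_{n+1})\to\mathrm{Fun}(\Sb_n)$ from the up transition function $p^\uparrow_\al$, so that $(U_nD_n g)(\la)=\sum_{\nu,\widetilde\la}p^\uparrow_\al(\la,\nu)p^\downarrow(\nu,\widetilde\la)g(\widetilde\la)$ is exactly $T_n g$. I would compute the two factors separately on $(\mQ^*_\mu)_n$ and then compose. The bridge between the algebraic object $\mQ^*_\mu$ and the combinatorics of the graph is an evaluation formula of Okounkov--Olshanski type, which in our setting reads $(\mQ^*_\mu)(\la)=c_\mu\,n^{\downarrow|\mu|}\,\dhh(\mu,\la)/\dhh(\la)$ for $\la\in\Sb_n$ (and vanishes unless $\mu\subseteq\la$); here $c_\mu$ depends only on $\mu$ and the identity rests on Ivanov's explicit formula (\ref{f36}) for the skew numbers $\dhh(\mu,\la)$. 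Recording this identity first converts every statement about $(\mQ^*_\mu)_n$ into a statement about the harmonic functions $\la\mapsto\dhh(\mu,\la)/\dhh(\la)$.

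The down factor is the easy one. Writing $(\mQ^*_\mu)_n=c_\mu\,n^{\downarrow|\mu|}\dhh(\mu,\cdot)/\dhh(\cdot)$ and using $p^\downarrow(\nu,\widetilde\la)=\dhh(\widetilde\la)\kappa(\widetilde\la,\nu)/\dhh(\nu)$, the weights $\dhh(\widetilde\la)$ cancel and the sum $\sum_{\widetilde\la\nearrow\nu}\kappa(\widetilde\la,\nu)\dhh(\mu,\widetilde\la)$ collapses to $\dhh(\mu,\nu)$ by counting paths $\mu\to\nu$ according to their last edge. This yields $D_n(\mQ^*_\mu)_n=\frac{n-|\mu|+1}{n+1}(\mQ^*_\mu)_{n+1}$, i.e. $D_n$ acts as a scalar on each factorial $\mQ$-function. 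Consequently $T_n(\mQ^*_\mu)_n=\frac{n-|\mu|+1}{n+1}\,U_n(\mQ^*_\mu)_{n+1}$, and the whole problem is reduced to the up factor.

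The up factor is the crux. Using (\ref{f14}) I would express $U_n$ through Kerov interlacing coordinates, so that $U_n(\mQ^*_\mu)_{n+1}(\la)$ becomes, up to the constant $c_\mu$, the quantity $\frac{(n+1)^{\downarrow|\mu|}}{(2n+\al)(n+1)\,\dhh(\la)}\sum_{x\in X(\la)}\big(x(x+1)+\al\big)\,\dhh(\mu,\la+\square(x))$. The task is then to evaluate the content-weighted sum $\Sigma(\la):=\sum_{x\in X(\la)}(x(x+1)+\al)\dhh(\mu,\la+\square(x))$ and to show that it is a diagonal multiple of $\dhh(\mu,\la)$ plus a combination of the lower skew numbers $\dhh(\mu-\square(y),\la)$, $y\in Y(\mu)$, with weights proportional to $y(y+1)+\al$. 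Two ingredients drive this: first, Ivanov's formula (\ref{f36}) makes $\dhh(\mu,\la+\square(x))$ explicit as a function of the content $x$ of the added box; second, the partial-fraction machinery of \S\ref{s20.2} (the generating function $\Rc^\uparrow(v;\la)$, the residue formula (\ref{f800}), and in particular the moment identity $\sum_{x}x(x+1)\trp^\uparrow_x(\la)=2|\la|$ extracted from Proposition \ref{p1.5}) lets one evaluate such sums over addable boxes in closed form. Equivalently, and more structurally, the operators $E,F,H$ of \S\ref{s2.2} realize $U_n$ and $D_n$ as explicit scalar multiples of the conjugated adjoints $w^{-1}E^*w$ and $w^{-1}F^*w$ with $w=2^{\ell(\cdot)}\dhh(\cdot)$ (the factor $2^{-\delta(x)}$ in $E$ cancelling the $2$-power hidden in the ratio $\dhh(\la+\square(x))/\dhh(\la)$); the commutation relations of Lemma \ref{p2.4}, together with the integrated $SL(2,\mathbb{C})$-action of Lemma \ref{p2.6} which legitimizes the manipulations and permits an argument by polynomiality in $\al$, then determine the action of $E^*$ on the factorial $\mQ$-basis. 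I expect this up-operator computation --- pinning down both the diagonal eigenvalue and the exact off-diagonal coefficients --- to be the main obstacle, precisely because the up moves do not respect the path count $\dhh(\mu,\cdot)$ the way the down moves do.

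Finally I would reassemble. Substituting the closed form of $\Sigma(\la)$ back, converting the resulting skew numbers $\dhh(\mu,\la)$ and $\dhh(\mu-\square(y),\la)$ into $(\mQ^*_\mu)_n$ and $(\mQ^*_{\mu-\square(y)})_n$ via the same evaluation formula (which also disposes of the constants $c_\mu$), and multiplying by the scalar $\frac{n-|\mu|+1}{n+1}$ from the down step, I would collect terms. The off-diagonal part then acquires the factor $(n-|\mu|+1)(y(y+1)+\al)$ as in (\ref{f32}), while the diagonal part, after subtracting the identity operator, simplifies to $-|\mu|(|\mu|+\al/2-1)\big/\big[(n+1)(n+\al/2)\big]$; the appearance of $\al/2$ here reflects the eigenvalue of $H$ and the relation $[E,F]=H$. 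A consistency check using $\sum_{x}\trp^\uparrow_x(\la)=1$ (Remark \ref{p1.11}) confirms the normalization.
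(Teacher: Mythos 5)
Your proposal follows essentially the same route as the paper: factor $T_n=U_{n,n+1}\circ D_{n+1,n}$, translate $(\mQ^*_\mu)_n$ into the ratios $\dhh(\mu,\cdot)/\dhh(\cdot)$ via Ivanov's formula (\ref{f36}), settle the down step by the path-count recurrence (the paper's Lemma \ref{p2.9}), and reduce the up step to exactly the content-weighted identity that the paper's Lemma \ref{p2.10} proves via the $\mathfrak{sl}(2)$ operators of \S\ref{s2.2} and polynomiality in $\al$ following \cite[Thm. 4.1]{Borodin2007} --- your ``structural'' realization $U_{n,n+1}\propto w^{-1}E^*w$, $D_{n+1,n}\propto w^{-1}F^*w$ with $w=2^{\ell(\cdot)}\dhh(\cdot)$ is precisely the mechanism behind that argument (and is the sound one of your two suggested options, since $\dhh(\mu,\la+\square(x))$ is not merely a polynomial in $x(x+1)$ times $\trp^\uparrow_x(\la)$, so the bare moment identities would not suffice). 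The reassembly, including the diagonal coefficient $-|\mu|(|\mu|+\al/2-1)$ and the factor $n-|\mu|+1$ on the off-diagonal terms, correctly reproduces (\ref{f32}).
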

\begin{rmk}\rm{}\label{p2.8}
	The above Theorem states that $(T_n-{\bf1})(\mQ_\mu^*)_n$ (for all $\mu\in\Sb$)
	is a linear combination of the function $(\mQ_\mu^*)_n$
	and the functions of the form $(\mQ_\varkappa^*)_n$,
	where $\varkappa$ runs over all shifted diagrams 
	that can be obtained from $\mu$ by deleting one box.
	Recall (\S\ref{s20.1}) that these diagrams are indexed by the set $Y(\mu)$.
\end{rmk}

The proof of Theorem \ref{p2.7} uses the technique 
from \cite[\S4]{Borodin2007}. We do not want to repeat all details
and in the rest of the section we give a scheme of the proof.

Fix arbitrary $n\in\N$ and $\al\in(0,+\infty)$. 
We write $T_n$ as the composition of ``down'' 
$D_{n+1,n}\colon\mathrm{Fun}(\Sb_{n})\to\mathrm{Fun}(\Sb_{n+1})$
and ``up''
$U_{n,n+1}\colon\mathrm{Fun}(\Sb_{n+1})\to\mathrm{Fun}(\Sb_{n})$
operators acting on functions:
\begin{equation}\label{f33}
	\left.
	\begin{array}{rcll}
		\left( D_{n+1,n}f_n \right)(\la)&:=& 
		\displaystyle
		\sum_{\mu\colon\mu\nearrow\la}
		p^\downarrow(\la,\mu)f_n(\mu),&\la\in\Sb_{n+1};\\
		\left( U_{n,n+1}f_{n+1} \right)(\nu)&:=&
		\displaystyle
		\sum_{\varkappa\colon\varkappa\searrow\nu}
		p^\uparrow_\al(\nu,\varkappa)f_{n+1}(\varkappa),&\nu\in\Sb_n.
	\end{array}
	\right.
\end{equation}
The operator $D_{n+1,n}$ is constructed using the down
transition function $p^\downarrow$ and does not depend 
on the parameter $\al$.
The operator $U_{n,n+1}$ is constructed using the 
up transition function $p^\uparrow_\al$
and therefore depends on the parameter $\al$.

\begin{rmk}\rm{}
	These ``down'' and ``up'' operators act on functions.
	They are adjoint to the corresponding operators
	acting on measures. The latter act in accordance
	with their names, for example, the operator
	$D_{n+1,n}^*$
	maps $\mathcal{M}(\Sb_{n+1})$ into $\mathcal{M}(\Sb_n)$,
	where 
	$\mathcal{M}(\mathfrak{X})$ denotes the space of 
	measures on $\mathfrak{X}$.
\end{rmk}

It clearly follows from the definition of 
the $n$th up/down Markov chain (\S\ref{s1.3}) that 
$T_n=U_{n,n+1}\circ D_{n+1,n}\colon\mathrm{Fun}(\Sb_n)\to\mathrm{Fun}(\Sb_n)$.
We deal with the operators $D_{n+1,n}$ and $U_{n,n+1}$ separately.

\begin{lemma}[The operator $D$]\label{p2.9}
	There exists a unique operator
	$D\colon\Gamma\to\Gamma$ such that
	\begin{equation*}
		D_{n+1,n}f_n=\frac1{n+1}\left( Df \right)_{n+1}
	\end{equation*}
	for all $n\in\Nn$ and $f\in\Gamma$. In the basis
	$\{\mQ_\mu^*\}_{\mu\in\Sb}$ for the algebra $\Gamma$ this operator has the form
	\begin{equation}\label{f34}
		D\mQ_\mu^*=(p_1-|\mu|)\mQ_\mu^*.
	\end{equation}
\end{lemma}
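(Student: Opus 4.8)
The plan is to reduce the statement to a computation on a single floor $\Sb_{n+1}$ and to exploit the homogeneity structure of the factorial functions $\mQ_\mu^*$. First I would establish the uniqueness and well-definedness of $D$. Since the products $p_1^{r_1}p_3^{r_3}\cdots$ form a basis of $\Gamma$ and the functions $f_n$ for $f\in\Gamma$ exhaust the finite-dimensional space $\mathrm{Fun}(\Sb_n)$ (as recalled just before Theorem \ref{p2.7}), an operator $D$ satisfying $D_{n+1,n}f_n=\tfrac1{n+1}(Df)_{n+1}$ for all $n$ is forced to be consistent across floors: the identity on each $\Sb_{n+1}$ pins down $Df$ as a function on all of $\Sb$, and since $\Gamma$ separates points, if such a $D$ exists in $\Gamma$ it is unique. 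To prove existence I would simply \emph{verify} the proposed formula (\ref{f34}) and check that it is compatible with the defining relation for all $n$ simultaneously.

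The heart of the argument is to show, for every $\mu\in\Sb$ and every $\la\in\Sb_{n+1}$,
\begin{equation*}
	\left( D_{n+1,n}(\mQ_\mu^*)_n \right)(\la)
	=\sum_{\nu\colon\nu\nearrow\la}p^\downarrow(\la,\nu)\,\mQ_\mu^*(\nu)
	=\frac1{n+1}\,(p_1(\la)-|\mu|)\,\mQ_\mu^*(\la).
\end{equation*}
The idea, following \cite[\S4]{Borodin2007}, is to interpret the left-hand sum via the down transition function written in the Kerov coordinates of \S\ref{s20.3}. By Proposition \ref{p1.12} one has $p^\downarrow(\la,\la-\square(y))=\trp_y^\downarrow(\la)/(2|\la|)$ for $y\in Y(\la)$, so the sum over $\nu\nearrow\la$ becomes a sum over $y\in Y(\la)$ weighted by the residues $\trp_y^\downarrow(\la)$ of the rational function $\Rc^\downarrow(v;\la)$ from (\ref{f15}). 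The key algebraic input is a \emph{branching rule} for the factorial functions: the value $\mQ_\mu^*(\la-\square(y))$ should be expressible through the values $\mQ_\mu^*(\la)$ and the interlacing data, and after summing against the partial-fraction coefficients $\trp_y^\downarrow(\la)$ the whole expression collapses. Concretely, I expect that summing the residues produces exactly the factor $p_1(\la)-|\mu|$, with the degree-lowering property $\mQ_\mu^*=\mQ_\mu+(\text{lower order})$ ensuring that the factorial (rather than ordinary) $\mQ$-functions are the right basis in which this identity is clean.

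I would organize the computation as a residue calculation: express $\sum_{y\in Y(\la)}\trp_y^\downarrow(\la)\,\mQ_\mu^*(\la-\square(y))$ as a contour integral of $\Rc^\downarrow(v;\la)$ against a generating function built from the $\mQ_\mu^*$, and evaluate it by collecting residues at $v=y(y+1)$ and at $v=\infty$. The value at infinity contributes the leading term $p_1(\la)\,\mQ_\mu^*(\la)$ (using that $\Rc^\downarrow(v;\la)\to1$ as $v\to\infty$, noted after the definition of $\Rc^\downarrow$), and the finite residues assemble into $-|\mu|\,\mQ_\mu^*(\la)$. The main obstacle will be precisely this branching identity for $\mQ_\mu^*$ under removal of a box: I would need the analogue, in the shifted/doubly-symmetric setting, of the skew-function expansion used in \cite[\S4]{Borodin2007} for ordinary Schur functions. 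Establishing that the factorial $\mQ$-functions interact correctly with the Kerov down-coordinates — so that the $y$-dependence factors through $\trp_y^\downarrow(\la)$ in just the right way — is where the real work lies; the final collapse to $(p_1-|\mu|)$ is then a matter of identifying the sum of residues with the coefficient of $v^{-1}$ in a product that is manifestly $p_1(\la)-|\mu|$. Once (\ref{f34}) is verified on every floor, the $n$-independence of the formula immediately gives the single operator $D\colon\Gamma\to\Gamma$ claimed in the statement.
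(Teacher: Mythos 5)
Your uniqueness argument is fine, and your reduction to the floor-by-floor identity $\sum_{\nu\colon\nu\nearrow\la}p^\downarrow(\la,\nu)\,\mQ_\mu^*(\nu)=\frac{n+1-|\mu|}{n+1}\,\mQ_\mu^*(\la)$ is the right target. But the proof has a genuine gap exactly where you flag ``the real work'': the branching rule for $\mQ_\mu^*$ under removal of a box is never supplied, and your residue scheme cannot be completed without it. The contour-integral method requires the $y$-dependence of $\mQ_\mu^*(\la-\square(y))$ to factor through a rational function of $y(y+1)$ multiplying $\mQ_\mu^*(\la)$; no such multiplicative rule holds for the factorial $\mQ$-functions. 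The paper possesses such a rule only for the generating functions $\Phi(v;\la)$ (Lemma \ref{p3.8}), and that is precisely why the Kerov-coordinate/partial-fraction machinery of \S\ref{s20} is reserved for computing $D$ and $U$ in the basis $\left\{ \g_\rho \right\}$ in \S\ref{s4}, not in the basis $\left\{ \mQ_\mu^* \right\}$. So your proposal conflates two computations the paper deliberately keeps separate, and the step on which everything hinges is exactly the unproven one.

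The paper's actual proof avoids Kerov coordinates entirely and is a two-line telescoping computation. Ivanov's formula (\ref{f36}) identifies the factorial $\mQ$-function with a relative dimension: $(\mQ_\mu^*)_n(\nu)=2^{|\mu|}\,n(n-1)\cdots(n-|\mu|+1)\,\dhh(\mu,\nu)/\dhh(\nu)$ for $\nu\in\Sb_n$. Substituting this and the definition (\ref{f2}) of $p^\downarrow$ into the sum, the factors $\dhh(\nu)$ cancel, and the recurrence $\dhh(\mu,\la)=\sum_{\nu\colon\nu\nearrow\la}\dhh(\mu,\nu)\kappa(\nu,\la)$ (immediate from the definition of $\dhh$ as a path count) collapses the sum to $\dhh(\mu,\la)$. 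Applying (\ref{f36}) once more on the floor $\Sb_{n+1}$, the ratio of falling factorials produces the factor $\frac{n+1-|\mu|}{n+1}$, which is $\frac1{n+1}(p_1-|\mu|)$ evaluated on $\Sb_{n+1}$ since $p_1\equiv n+1$ there. In other words, the correct key input is not a branching identity for $\mQ_\mu^*$ but the fact that $\mQ_\mu^*$ \emph{is} (up to explicit normalization) the function $\la\mapsto\dhh(\mu,\la)/\dhh(\la)$, after which coherence of the path counts does all the work. Your guess that the value at $v=\infty$ yields $p_1(\la)\mQ_\mu^*(\la)$ is consistent with $\sum_{y\in Y(\la)}\trp_y^\downarrow(\la)=\gh_1(\la)=2|\la|$ (Proposition \ref{p3.7} and (\ref{f54})), but only for $\mu=\varnothing$ does this settle anything; for general $\mu$ your route would first require proving the very identity you set out to establish.
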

\begin{proof}
	The proof is exactly the same as the proof of Theorem 4.1 (1) of the paper \cite{Borodin2007},
	but instead of the facts about Frobenius-Schur functions we refer to the following formula 
	which is due to V.~Ivanov \cite{IvanovNewYork3517-3530}.
	
	Let $|\la|=n$, $\mu\in\Sb$ and $|\mu|\le n$. Then
	\begin{equation}\label{f36}
		\frac{\dhh(\mu,\la)}{\dhh(\la)}=2^{-|\mu|}
		\frac{(\mQ_\mu^*)_n(\la)}{n(n-1)\dots(n-|\mu|+1)}.
	\end{equation}
	We also use the recurrence relations for 
	the function
	$\dhh(\mu,\la)$ which directly
	follow from its definition (\S\ref{s1.1}):
	\begin{equation*}
		\dhh(\mu,\nu)=\sum_{\la\colon\la\nearrow\nu}
		\dhh(\mu,\la)\kappa(\la,\nu)
		\qquad\mbox{for all $\mu,\nu\in\Sb$}.
	\end{equation*}
	The rest of the proof repeats that of \cite[Theorem 4.1 (1)]{Borodin2007}.
\end{proof}
\begin{lemma}[The operator $U$]\label{p2.10}
	For every $\al\in(0,+\infty)$ there exists a unique operator 
	$U\colon\Gamma\to\Gamma$ depending on $\al$ such that
	\begin{equation*}
		U_{n,n+1}f_{n+1}=\frac{1}{n+\al/2}(Uf)_n
	\end{equation*}
	for all $n\in\Nn$ and $f\in\Gamma$.
	In the basis
	$\{\mQ_\mu^*\}_{\mu\in\Sb}$ for the algebra $\Gamma$ this operator has the form
	\begin{equation}\label{f37}
		\left.
		\begin{array}{l}
			\displaystyle
			U\mQ_\mu^*=
			\left( p_1+|\mu|+\frac\al2 \right)\mQ_\mu^*+
			\sum_{y\in Y(\mu)}\big(y(y+1)+\al\big)\mQ_{\mu-\square(y)}^*.
		\end{array}
		\right.
	\end{equation}
\end{lemma}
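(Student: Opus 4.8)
The plan is to compute the action of the up operator $U_{n,n+1}$ on the factorial Schur's $\mQ$-functions by exploiting the $\mathfrak{sl}(2,\mathbb{C})$-representation structure established in \S\ref{s2.2}, following the strategy of \cite[\S4]{Borodin2007}. First I would observe that the up transition function $p^\uparrow_\al(\nu,\varkappa)$ factors, via (\ref{f14}) and Proposition~\ref{p1.10}, as a product of a Plancherel part $\trp^\uparrow_x(\nu)$ and the multiplicative correction $\frac{x(x+1)+\al}{2n+\al}$. The operators $E$, $F$, $H$ from \S\ref{s2.2} are designed precisely so that $E$ encodes the numerator $2^{-\delta(x)}(x(x+1)+\al)$ appearing in the up-moves and $H$ encodes the normalization $2|\la|+\al$. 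Thus the first key step is to express $U_{n,n+1}$ as an operator built from $E$ acting on the basis $\{\basf_\la\}$ of $\mathrm{Fun}_0(\Sb)$, passing between the measure-side operators and the function-side operators by duality (the remark after (\ref{f33})).

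The second step is the crux: I would identify how the abstract operator $U\colon\Gamma\to\Gamma$ acts on the basis $\{\mQ^*_\mu\}$. The natural approach is to compute the \emph{adjoint} action. Since $T_n$ is reversible with respect to $M_n^\al$ and $D$ was already diagonalized in the $\mQ^*_\mu$-basis by Lemma~\ref{p2.9}, one expects $U$ to be, up to the reversibility weights, adjoint to $D$. Concretely I would set up the pairing between $\Gamma$ (as functions on $\Sb$) and the measures, and use the branching rule for factorial Schur's $\mQ$-functions — the fact that $\mQ^*_\mu$ restricted down the Schur graph expands into $\mQ^*_{\mu-\square(y)}$ with coefficients $y(y+1)+\al$ — to read off (\ref{f37}). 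The leading term $(p_1+|\mu|+\tfrac\al2)\mQ^*_\mu$ should emerge from combining the degree-raising multiplication by $p_1$ (which corresponds to adding a box generically) with the eigenvalue contributions $|\mu|$ and $\tfrac\al2$ coming from the $H$-type normalization $2n+\al$ evaluated against the $\mQ^*$-grading.

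The third step is to prove existence and uniqueness of the operator $U$ satisfying $U_{n,n+1}f_{n+1}=\frac{1}{n+\al/2}(Uf)_n$ for all $n$ simultaneously. Uniqueness follows because the functions $f_{n+1}$, $f\in\Gamma$, exhaust $\mathrm{Fun}(\Sb_{n+1})$ (as noted in \S\ref{s2.3}) and the family of restrictions as $n$ varies determines an element of $\Gamma$ uniquely — this is a consistency-across-levels argument. For existence, I would verify that the right-hand side of (\ref{f37}) is independent of $n$ after the prescribed rescaling, which reduces to checking that the stated formula is compatible with the up-transition probabilities at every level $n$; this is where the precise coefficients $y(y+1)+\al$ and the factorial (rather than ordinary) nature of $\mQ^*_\mu$ must conspire correctly.

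The hard part will be verifying the exact coefficient of the diagonal term $\mQ^*_\mu$, namely $p_1+|\mu|+\tfrac\al2$, since this requires tracking how the non-homogeneous lower-order corrections in $\mQ^*_\mu=\mQ_\mu+(\text{lower degree})$ interact with the up-operator and with the normalization factor $\frac1{n+\al/2}$. The combinatorial identity underlying this is essentially the dual of the addition formula used in Lemma~\ref{p2.9}, and I expect it to follow from Ivanov's formula (\ref{f36}) together with the recurrence for $\dhh(\mu,\la)$, just as in the proof of the $D$-operator. Once (\ref{f37}) is established, Theorem~\ref{p2.7} follows immediately by composing $T_n=U_{n,n+1}\circ D_{n+1,n}$ and using the two rescaling factors $\frac1{n+1}$ and $\frac1{n+\al/2}$, so the real work is entirely contained in pinning down the $U$-action above.
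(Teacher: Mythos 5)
Your scaffolding agrees with the paper's at the top level (reduce to the pointwise identity (\ref{f40}), translate via Ivanov's formula (\ref{f36}), finish Theorem \ref{p2.7} by composing with Lemma \ref{p2.9}), and your uniqueness argument via separation of points is fine. But the central step of your proposal has a genuine gap. The ``branching rule'' you invoke --- that $\mQ^*_\mu$ restricted down the Schur graph ``expands into $\mQ^*_{\mu-\square(y)}$ with coefficients $y(y+1)+\al$'' --- is not a known fact about factorial Schur's $\mQ$-functions: the parameter $\al$ belongs to the measure, not to the symmetric functions, and this expansion is precisely a reformulation of (\ref{f37}), the statement to be proved. Citing it as input is circular. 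The adjointness heuristic does not repair this: reversibility expresses $U_{n,n+1}$ as the conjugate of the transpose of $D_{n+1,n}$ by multiplication by the densities $M_n^\al(\la)$, but these densities do not lie in $\Gamma$, and converting (\ref{f34}) into (\ref{f37}) through this conjugation would require the Gram structure of $\{\mQ^*_\mu\}$ with respect to $M_n^\al$, which you do not have and which the paper never uses.

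Your fallback --- that the required identity follows ``from Ivanov's formula (\ref{f36}) together with the recurrence for $\dhh(\mu,\la)$, just as in the proof of the $D$-operator'' --- also fails. The $D$ case needs only the elementary recurrence $\dhh(\mu,\nu)=\sum_{\la\colon\la\nearrow\nu}\dhh(\mu,\la)\kappa(\la,\nu)$, whereas (\ref{f40}) is equivalent to
\begin{equation*}
	\sum_{x\in X(\la)}\big(x(x+1)+\al\big)\dhh(\mu,\la+\square(x))
	=2\Big(n+k+\frac\al2\Big)(n-k+1)\,\dhh(\mu,\la)
	+\sum_{y\in Y(\mu)}\big(y(y+1)+\al\big)\dhh(\mu-\square(y),\la),
\end{equation*}
with $|\mu|=k$, $|\la|=n$, which is of a different nature and is not a consequence of that recurrence. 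In the paper it is verified as in Borodin--Olshanski's Theorem 4.1\,(2), using the machinery of \S\ref{s2.2}: the commutation relations of Lemma \ref{p2.4}, the integration of the $\mathfrak{sl}(2,\mathbb{C})$-action to $SL(2,\mathbb{C})$ on $V_N$ at the degenerate values $\al=-N(N+1)$ (Lemma \ref{p2.6}), and the observation that both sides are polynomial in $\al$, so checking infinitely many degenerate values suffices. Your first paragraph names the operators $E$, $F$, $H$ but never deploys them in an actual argument; without this step (or a substitute for it) both the coefficients $y(y+1)+\al$ and the diagonal term $p_1+|\mu|+\al/2$ remain unverified.
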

\begin{proof}
	The proof is similar to that of Theorem 4.1 (2) of the paper \cite{Borodin2007}.

	We must prove that
	\begin{equation}\label{f40}
		\left.
		\begin{array}{l}
			\displaystyle
			\left( n+\frac\al2 \right)
			(U_{n,n+1}(\mQ_\mu^*)_{n+1})(\la)=
			\left( n+k+\frac\al2 \right)(\mQ_\mu^*)_n(\la)
			\\\displaystyle\qquad\qquad\qquad
			+
			\sum_{y\in Y(\mu)}
			\big(y(y+1)+\al\big)(\mQ_{\mu-\square(y)}^*)_n(\la)
		\end{array}
		\right.
	\end{equation}
	for all $\mu,\la\in\Sb$ such that $|\mu|=k$
	and $|\la|=n\ge k$.

	If $|\mu|=0$, that is, $\mu$ is an empty partition,
	then $\mQ_\mu^*\equiv1$ and (\ref{f40}) clearly holds.

	Now let $|\mu|=k\ge1$. Using (\ref{f7}), (\ref{f36}) and the definition of
	$U_{n,n+1}$ one can
	reduce (\ref{f40}) to the following equivalent combinatorial
	identity:
	\begin{equation*}
		\begin{array}{l}
			\displaystyle
			\sum_{x\in X(\la)}\big(x(x+1)+\al\big)\dhh(\mu,\la+\square(x))\\\displaystyle\qquad
			=
			2\left( n+k+\frac\al2 \right)\left( n-k+1 \right)\dhh(\mu,\la)\\\displaystyle\qquad\qquad+
			\sum_{y\in Y(\mu)}\big(y(y+1)+\al\big)\dhh(\mu-\square(y),\la),
		\end{array}
	\end{equation*}
	where $|\la|=n$ and $|\mu|=k\le n$.

	This combinatorial identity is verified exactly as the corresponding
	identity from the proof of \cite[Theorem 4.1 (2)]{Borodin2007}. 
	In our case one must use the results formulated in \S\ref{s2.2}.
\end{proof}
Theorem \ref{p2.7} now follows from Lemmas \ref{p2.9} and \ref{p2.10} 
and the fact that 
\begin{equation}\label{f41}
	(T_n-{\bf1})f_n=-f_n+
	\frac{(UDf)_n}{(n+1)(n+\al/2)},\qquad f\in\Gamma.
\end{equation}

\section{Doubly symmetric functions\\ on shifted Young diagrams}\label{s3}
In this section we study the algebra $\Gamma\subset\mathrm{Fun}(\Sb)$ 
(defined in \S\ref{s2}) in more detail.

Let $\la$ be an arbitrary shifted Young diagram and $u$ be a complex variable.
By definition, put
\begin{equation*}
	\phi(u;\la):=\prod_{i=1}^{\infty}\frac{u+\la_i}{u-\la_i}.
\end{equation*}
Note that this product is actually finite, because any strict partition $\la$
has only finitely many nonzero terms.
Note also that 
$\phi(u;\la)$ is a rational function in $u$  
taking value $1$ at $u=\infty$.
\begin{prop}\label{p3.1}
	The algebra $\Gamma\subset\mathrm{Fun}(\Sb)$ 
	coincides with the commutative 
	unital subalgebra of $\mathrm{Fun}(\Sb)$
	generated by the Taylor expansion coefficients of $\phi(u;\la)$
	(or, equivalently, of $\log\phi(u;\la)$) at $u=\infty$ with respect to $u^{-1}$.
\end{prop}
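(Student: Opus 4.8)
The plan is to expand $\log\phi(u;\la)$ as a power series in $u^{-1}$ and identify the coefficients directly with the odd Newton power sums $p_1,p_3,p_5,\dots$ evaluated on $\la$. Writing
\begin{equation*}
	\log\phi(u;\la)=\sum_{i=1}^{\ell(\la)}\Big(\log(u+\la_i)-\log(u-\la_i)\Big)
	=\sum_{i=1}^{\ell(\la)}\Big(\log(1+\la_i/u)-\log(1-\la_i/u)\Big),
\end{equation*}
I would use the standard expansion $\log(1+t)-\log(1-t)=2\sum_{m\ge0}t^{2m+1}/(2m+1)$, valid for $|t|<1$ (i.e.\ for $u$ near $\infty$). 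Exchanging the two finite/convergent sums gives
\begin{equation*}
	\log\phi(u;\la)=2\sum_{m=0}^{\infty}\frac{1}{2m+1}\,u^{-(2m+1)}\sum_{i=1}^{\ell(\la)}\la_i^{2m+1}
	=2\sum_{m=0}^{\infty}\frac{p_{2m+1}(\la)}{2m+1}\,u^{-(2m+1)},
\end{equation*}
where $p_{2m+1}(\la)=\sum_i\la_i^{2m+1}$ is exactly the image of the generator $p_{2m+1}\in\Gamma$ under the embedding of \S\ref{s2.3}. Thus the Taylor coefficient of $u^{-(2m+1)}$ in $\log\phi(u;\la)$ is $2p_{2m+1}(\la)/(2m+1)$, and the even coefficients vanish identically because the expansion of $\log\phi$ contains only odd powers of $u^{-1}$.

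From this computation the two inclusions follow. First, the subalgebra generated by the coefficients of $\log\phi$ is contained in $\Gamma$: each nonzero coefficient is a scalar multiple of some $p_{2m+1}$, and these are precisely the generators of $\Gamma$. Conversely, $\Gamma\subseteq$ the generated subalgebra, since every generator $p_{2m-1}$ is recovered (up to the nonzero scalar $2/(2m-1)$) as a coefficient. This proves the claim for $\log\phi$; the equivalence with the coefficients of $\phi$ itself is then a formal consequence, because $\phi=\exp(\log\phi)$ and $\log\phi=\log(1+(\phi-1))$ with $\phi-1$ vanishing to positive order in $u^{-1}$, so the two families of coefficients generate the same unital algebra by the usual triangular (Newton-type) change of generators between a power series with constant term $1$ and its logarithm. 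I would spell this out by noting that the degree-$k$ coefficient of $\phi$ is a polynomial in the coefficients of $\log\phi$ of total degree $\le k$ (and vice versa), with the top term appearing linearly, so neither change of generators enlarges or shrinks the generated subalgebra.

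The only genuinely delicate point is the convergence/interchange justifying the termwise expansion, but this is harmless: for fixed $\la$ the sum over $i$ is finite, so no interchange of infinite sums is needed and the identity is an equality of formal (indeed convergent, for large $|u|$) power series in $u^{-1}$ coefficient by coefficient. Hence the main content is simply the observation that $\log\phi$ is an odd series in $u^{-1}$ whose coefficients are the odd power sums; everything else is the routine algebraic bookkeeping of passing between a series with unit constant term and its logarithm. I expect the verification that the two generating sets (coefficients of $\phi$ versus of $\log\phi$) give the same subalgebra to be the part requiring the most care, though it is entirely formal.
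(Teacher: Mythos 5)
Your proof is correct and follows essentially the same route as the paper: expand $\log\phi(u;\la)$ at $u=\infty$ to get $2\sum_{k\ \mathrm{odd}}\frac{p_k(\la)}{k}u^{-k}$ and observe that $\Gamma$ is freely generated by the odd power sums $p_1,p_3,\dots$. Your extra care about the triangular change of generators between the coefficients of $\phi$ and of $\log\phi$ is a worthwhile detail that the paper leaves implicit, but it does not change the argument.
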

\begin{proof}
	The Taylor expansion of $\log\phi(u;\la)$ at $u=\infty$ has the form
	\begin{equation}\label{f44}
		\log\phi(u;\la)=2\sum_{k\ge1\ \mbox{odd}}\frac{p_k(\la)}{k}u^{-k},
	\end{equation}
	where $p_k(\la)=\sum_{i=1}^{\ell(\la)}\la_i^{k}$ are the Newton power sums.
	The algebra $\Gamma$ is freely generated by the functions $p_1,p_3,\ldots\in\mathrm{Fun}(\Sb)$,
	see \S\ref{s2}.
\end{proof}

By definition, put\footnote{Here $v$ is an independent
complex variable.} 
\begin{equation*}
	\Phi(v;\la):=\prod_{i=1}^{\infty}\frac{v-\la_i(\la_i-1)}{v-\la_i(\la_i+1)}.
\end{equation*}
The product here is also actually finite.
Clearly, $\Phi(v;\la)$
is a rational function in $v$ taking value $1$ at $v=\infty$.
It can be readily verified that 
\begin{equation*}
	\Phi(u^2-u;\la)=\frac{\phi(u-1;\la)}{\phi(u;\la)}.
\end{equation*}
\begin{df}\rm{}\label{p3.2}
	Let $\p_m(\cdot), \g_m(\cdot), \gh_m(\cdot)\in\mathrm{Fun}(\Sb)$, $m\in\N$,
	be the following Taylor expansion coefficients at $v=\infty$ with respect to $v^{-1}$:
	\begin{equation*}
		\left.
		\begin{array}{rcl}
			\log\Phi(v;\la)&=&\displaystyle
			\sum_{m=1}^{\infty}\frac{\p_m(\la)}{m}v^{-m};\\
			\Phi(v;\la)&=& \displaystyle 1+\sum_{m=1}^{\infty}\g_m(\la)v^{-m};\\
			\displaystyle\frac1{\Phi(v;\la)}&=& \displaystyle
			1-\sum_{m=1}^{\infty}\gh_m(\la)v^{-m}.
		\end{array}
		\right.
	\end{equation*}
\end{df}
Recall that the algebra $\Gamma$ has a natural filtration (defined in \S\ref{s2.1}) which is 
determined by setting 
\begin{equation}\label{f45}
	\deg p_{2m-1}=2m-1,\qquad m=1,2,\dots.
\end{equation}
\begin{prop}\label{p3.3}
	The functions $\p_m(\la)$ belong to the algebra $\Gamma$.
	More precisely, 
	\begin{equation*}
		\p_m(\la)=2m\cdot p_{2m-1}(\la)+\dots,\qquad m\in\N,
	\end{equation*}
	where dots stand for lower degree terms 
	in the algebra $\Gamma$, which are a
	linear combination of 
	$p_{2l-1}(\la)$, where $1\le l\le m-1$.
\end{prop}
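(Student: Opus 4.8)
The plan is to extract $\p_m(\la)$ directly from the product definition of $\Phi(v;\la)$, bypassing the relation with $\phi$, and then to read off both membership in $\Gamma$ and the leading term from an explicit binomial expansion. First I would take the logarithm of the product, writing
\begin{equation*}
	\log\Phi(v;\la)=\sum_{i=1}^{\infty}\Big[\log\big(v-\la_i(\la_i-1)\big)-\log\big(v-\la_i(\la_i+1)\big)\Big],
\end{equation*}
and expand each summand at $v=\infty$ using $\log(v-a)=\log v-\sum_{m\ge1}\frac{a^m}{m}v^{-m}$. The $\log v$ terms cancel pairwise, and comparing the resulting series with the defining expansion $\log\Phi(v;\la)=\sum_{m\ge1}\frac{\p_m(\la)}{m}v^{-m}$ yields the closed form
\begin{equation*}
	\p_m(\la)=\sum_{i=1}^{\infty}\Big[\big(\la_i(\la_i+1)\big)^m-\big(\la_i(\la_i-1)\big)^m\Big],\qquad m\in\N.
\end{equation*}

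Next I would simplify the summand by factoring out $\la_i^m$ and applying the binomial theorem to $(\la_i+1)^m-(\la_i-1)^m$. Only the odd-index binomial terms survive, so
\begin{equation*}
	\big(\la_i(\la_i+1)\big)^m-\big(\la_i(\la_i-1)\big)^m
	=\la_i^m\sum_{j\ \mathrm{odd}}2\binom{m}{j}\la_i^{m-j}
	=2\sum_{j\ \mathrm{odd}}\binom{m}{j}\la_i^{2m-j}.
\end{equation*}
Summing over $i$ gives $\p_m(\la)=2\sum_{j\ \mathrm{odd}}\binom{m}{j}p_{2m-j}(\la)$. Since $2m-j$ is odd whenever $j$ is odd, every power sum on the right is an odd one, so $\p_m\in\Gamma$. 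The $j=1$ term contributes $2m\,p_{2m-1}$, and all other terms have exponent $2m-j\le 2m-3$, i.e. they are the functions $p_{2l-1}$ with $1\le l\le m-1$; this is exactly the asserted leading-term decomposition, and each such term has degree $2l-1<2m-1$ in the filtration (\ref{f45}).

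The argument involves no genuine obstacle: the one point that must be checked carefully is the parity bookkeeping — namely that the difference of two $m$-th powers $(a+1)^m-(a-1)^m$ retains only the odd-degree monomials, which is what forces every surviving power sum $p_{2m-j}$ to be indexed by an odd integer and hence to lie in $\Gamma$. As an alternative route one could instead exploit the identity $\Phi(u^2-u;\la)=\phi(u-1;\la)/\phi(u;\la)$ together with the expansion (\ref{f44}), substituting $v=u^2-u$ and matching coefficients; but this requires reorganizing a double series in $u$ and is less transparent than the direct computation above, so I would use the direct approach.
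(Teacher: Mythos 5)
Your proof is correct, and it takes a genuinely different route from the paper's. The paper deduces the statement from the functional relation $\Phi(u^2-u;\la)=\phi(u-1;\la)/\phi(u;\la)$ together with the expansion (\ref{f44}): it substitutes $v=u^2-u$, expands both $\frac1{(u-1)^{2k-1}}-\frac1{u^{2k-1}}$ and $\frac1{(u^2-u)^m}$ in powers of $u^{-1}$, and matches coefficients of $u^{-2m}$ — precisely the ``alternative route'' you mention and set aside. That comparison only exhibits the triangular structure (the coefficient of $u^{-2m}$ mixes $\p_m$ with lower $\p_l$'s and lower odd power sums, so extracting the claim implicitly rests on the resulting triangularity), whereas your direct logarithmic expansion of the product defining $\Phi(v;\la)$ yields the exact closed formula
\begin{equation*}
	\p_m(\la)=2\sum_{j\ \mathrm{odd},\ 1\le j\le m}\binom{m}{j}\,p_{2m-j}(\la),
\end{equation*}
from which membership in $\Gamma$ (all exponents $2m-j$ are odd), the leading coefficient $2m$ at $j=1$, and the fact that the remainder is a linear combination of $p_{2l-1}$ with $1\le l\le m-1$ all drop out at once; as a sanity check it reproduces $\p_1=2p_1$, consistent with (\ref{f54}), and $\p_2=4p_3$. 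What your approach buys is explicitness and self-containedness (the interchange of the finite sum over parts with the Taylor expansion is harmless since the product is finite); what the paper's approach buys is economy, reusing the already-established expansion (\ref{f44}) and the identity linking $\Phi$ to $\phi$ rather than redoing an expansion from scratch. Both are complete proofs of the proposition.
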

\begin{proof}
	On one hand, 
	by the definition of $\Phi$ and by (\ref{f44}) we have
	\begin{equation*}
		\left.
		\begin{array}{rcl}\displaystyle
			\log\Phi(u^2-u;\la)&=& \log\phi(u-1;\la)-\log\phi(u;\la)\\&=&\displaystyle
			2\sum_{k=1}^{\infty}\frac{p_{2k-1}(\la)}{2k-1}
			\left( \frac1{(u-1)^{2k-1}}-\frac1{u^{2k-1}} \right)
		\end{array}
		\right.
	\end{equation*}
	for all $\la\in\Sb$.
	Observe that
	\begin{equation*}
		\frac1{(u-1)^{2k-1}}-\frac1{u^{2k-1}}=
		(2k-1)u^{-2k}\left( 1+\frac{k}{u}+\dots \right),
	\end{equation*}
	where $k\in\N$ and dots stand for terms containing $u^{-2},u^{-3},\dots$.

	On the other hand, by Definition \ref{p3.2} we have
	\begin{equation*}
		\log\Phi(u^2-u;\la)=\sum_{m=1}^{\infty}\frac{\p_m(\la)}{m}\frac1{(u^2-u)^m}
	\end{equation*}
	for all $\la\in\Sb$. Observe that
	\begin{equation*}
		\frac1{(u^2-u)^{m}}=u^{-2m}\left( 1-\frac mu+\dots \right),
	\end{equation*}
	where $m\in\N$ and again dots stand for 
	terms containing $u^{-2},u^{-3},\dots$.

	Thus, we get the following identity:
	\begin{equation*}
		\begin{array}{l}\displaystyle
			2\sum_{k=1}^{\infty}u^{-2k}p_{2k-1}(\la)\left( 1+\frac ku+\dots \right)
			=\sum_{m=1}^{\infty}u^{-2m}\frac{\p_m(\la)}{m}\left( 1-\frac{m}{u}+\dots \right).
		\end{array}
	\end{equation*}
	Comparing the coefficients of $u^{-2m}$ in both sides, we get the claim.
\end{proof}
\begin{prop}\label{p3.4}
	We have\footnote{Here and below	
	we sometimes omit the argument $\la$ to shorten the notation.}
	\begin{equation*}
		\g_1=\gh_1=\p_1
	\end{equation*}
	and 
	\begin{equation*}
		k\g_k=\p_k+\p_{k-1}\g_1+\dots+\p_1\g_{k-1},\qquad
		\gh_k=\g_k-\g_{k-1}\gh_1-\dots-\g_1\gh_{k-1}
	\end{equation*}
	for all $k=2,3,\dots$.
\end{prop}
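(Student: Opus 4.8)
The plan is to regard all three generating functions in Definition \ref{p3.2} as formal power series in the single variable $v^{-1}$ (with coefficients in $\mathrm{Fun}(\Sb)$; equivalently, one fixes $\la$ and works with numerical power series) and to read off the three assertions by comparing coefficients. Throughout I set $\g_0:=1$, so that $\Phi(v;\la)=\sum_{m\ge0}\g_m(\la)v^{-m}$.

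For the recurrence expressing $\g_k$, I would start from the logarithmic-derivative identity $\Phi'=\Phi\cdot(\log\Phi)'$, where $'$ denotes differentiation with respect to the formal variable $v^{-1}$; this is valid because $\Phi$ has constant term $1$. From the two series in Definition \ref{p3.2} one computes $(\log\Phi)'=\sum_{m\ge1}\p_m(v^{-1})^{m-1}$ and $\Phi'=\sum_{m\ge1}m\,\g_m(v^{-1})^{m-1}$. Multiplying the series for $\Phi$ by that for $(\log\Phi)'$ and comparing the coefficient of $(v^{-1})^{k-1}$ on both sides gives
\[
	k\g_k=\sum_{j=1}^{k}\p_j\g_{k-j},
\]
which, upon substituting $\g_0=1$, is precisely the stated identity $k\g_k=\p_k+\p_{k-1}\g_1+\dots+\p_1\g_{k-1}$ for $k\ge2$. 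Specializing to $k=1$ yields $\g_1=\p_1\g_0=\p_1$.

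For the recurrence expressing $\gh_k$, I would simply use $\Phi\cdot\Phi^{-1}=1$ together with the expansion $\Phi^{-1}(v;\la)=1-\sum_{m\ge1}\gh_m(\la)v^{-m}$ from Definition \ref{p3.2}. Writing the coefficient of $(v^{-1})^m$ in $\Phi^{-1}$ as $1$ for $m=0$ and as $-\gh_m$ for $m\ge1$, and comparing the coefficient of $(v^{-1})^k$ for $k\ge1$ in the product, gives
\[
	-\gh_k+\g_k-\sum_{j=1}^{k-1}\g_j\gh_{k-j}=0,
\]
that is, $\gh_k=\g_k-\g_{k-1}\gh_1-\dots-\g_1\gh_{k-1}$ for $k\ge2$ after reindexing the sum. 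The case $k=1$ reads $\gh_1=\g_1$, and combined with the previous paragraph this gives $\g_1=\gh_1=\p_1$, completing the first asserted equality.

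I do not expect any genuine obstacle: all three statements are the classical Newton--Girard type relations among the coefficients of a formal power series, its logarithm, and its reciprocal, and they hold as formal identities in $\mathbb{R}[[v^{-1}]]$ independently of the particular meaning of $\p_m$, $\g_m$, $\gh_m$ here. The only points needing minor care are the bookkeeping of constant terms (the convention $\g_0=1$ in the first computation and the explicit sign in $\Phi^{-1}=1-\sum\gh_m v^{-m}$ in the second) and the index shift produced by formal differentiation.
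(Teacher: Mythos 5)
Your proposal is correct and takes essentially the same route as the paper: the paper's proof (following Macdonald, Ch.~I, \S2) likewise sets $w=v^{-1}$, obtains the first recurrence by differentiating $\sum_{m\ge1}\frac{\p_m}{m}w^m=\log\bigl(1+\sum_{k\ge1}\g_k w^k\bigr)$ and comparing coefficients of $w^{k-1}$, and obtains the second from $\bigl(1+\sum_{k\ge1}\g_k w^k\bigr)\bigl(1-\sum_{k\ge1}\gh_k w^k\bigr)=1$. Writing the first step as $\Phi'=\Phi\cdot(\log\Phi)'$ is only a cosmetic repackaging of the same computation, and your coefficient bookkeeping (including $\g_0=1$ and the sign in $\Phi^{-1}$) checks out.
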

\begin{proof}
	The technique of this proof is similar to \cite[Ch. I, \S2]{Macdonald1995}.
	Let $w$ be an independent variable.
	Observe that
	\begin{equation*}
		\sum_{m=1}^{\infty}\frac{\p_m(\la)}mw^m=\log\left( 1+\sum_{k=1}^{\infty}\g_k(\la)w^k \right).
	\end{equation*}
	If we take $d/dw$ of both sides
	and compare the coefficients by $w^{k-1}$, we get the desired relation between
	$\p_k$'s and $\g_k$'s.

	To prove the remaining 
	relation between $\g_k$'s and $\gh_k$'s observe that
	\begin{equation*}
		\left( 1+\sum_{k=1}^{\infty}\g_k(\la)w^k \right)
		\left( 1-\sum_{k=1}^{\infty}\gh_k(\la)w^k \right)=1.
	\end{equation*}
	This concludes the proof.
\end{proof}
\begin{corollary}\label{p3.5}
	Each of the three families
	$\left\{ \p_1,\p_2,\p_3,\dots \right\}$,
	$\left\{ \g_1,\g_2,\g_3,\dots \right\}$ and
	$\left\{ \gh_1,\gh_2,\gh_3,\dots \right\}$
	is a system of algebraically independent 
	generators of the algebra $\Gamma$. Under the identification
	of $\Gamma$ with any of the algebras of polynomials
	\begin{equation*}
		\mathbb{R}\left[ \p_1,\p_2,\dots \right],\quad
		\mathbb{R}\left[ \g_1,\g_2,\dots \right]\quad\mbox{and}\quad
		\mathbb{R}\left[ \gh_1,\gh_2,\dots \right],
	\end{equation*}
	the natural filtration (\ref{f45}) of $\Gamma$ is determined by setting
	\begin{equation*}
		\left.
		\begin{array}{rcl}
			\deg\p_m(\la)&=& 2m-1,\\
			\deg\g_m(\la)&=& 2m-1,\\
			\deg\gh_m(\la)&=& 2m-1,\qquad m\in\N,
		\end{array}
		\right.
	\end{equation*}
	respectively.
\end{corollary}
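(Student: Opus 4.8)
My plan is to reduce the entire corollary to a single principle about filtered algebras, after computing the principal symbols of the three families. Recall from \S\ref{s2.1} that $\Gamma=\mathbb{R}[p_1,p_3,p_5,\dots]$ is freely generated by the odd power sums with the filtration (\ref{f45}) given by $\deg p_{2m-1}=2m-1$; thus the associated graded algebra $\mathrm{gr}\,\Gamma$ is the polynomial algebra on the classes $\overline p_{2m-1}$, $m\in\N$. The principle I will invoke is standard: if the principal symbols of a family of elements of $\Gamma$ form a system of algebraically independent generators of $\mathrm{gr}\,\Gamma$, then the family itself is a system of algebraically independent generators of $\Gamma$, and the filtration is exactly the one assigning to each element the degree of its symbol. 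For the family $\{\p_m\}$ the required symbol is read off at once from Proposition \ref{p3.3}: there $\p_m$ is a linear combination of $p_1,p_3,\dots,p_{2m-1}$ with top coefficient $2m\ne0$, so $\deg\p_m=2m-1$ and the symbol of $\p_m$ is $2m\,\overline p_{2m-1}$.

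For $\{\g_m\}$ and $\{\gh_m\}$ I would argue by induction on $m$ from the recursions of Proposition \ref{p3.4} together with a degree count. The base cases $\g_1=\gh_1=\p_1$ have degree $1$ and symbol $2\,\overline p_1$. For $k\ge2$, assume $\deg\g_j=\deg\gh_j=2j-1$ for all $j<k$. In $k\g_k=\p_k+\sum_{j=1}^{k-1}\p_{k-j}\g_j$ each cross term has degree $(2(k-j)-1)+(2j-1)=2k-2$, strictly below $\deg\p_k=2k-1$, so the top part of $k\g_k$ comes only from $\p_k$; hence $\deg\g_k=2k-1$ and the symbol of $\g_k$ is $2\,\overline p_{2k-1}$. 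The same count applied to $\gh_k=\g_k-\sum_{j=1}^{k-1}\g_{k-j}\gh_j$, whose cross terms again have degree $2k-2$, gives $\deg\gh_k=2k-1$ with symbol $2\,\overline p_{2k-1}$.

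Applying the graded principle to each family now delivers the corollary in full: algebraic independence, generation, the equalities $\deg\p_m=\deg\g_m=\deg\gh_m=2m-1$, and the statement that the filtration of $\Gamma$ is determined by these degrees. One may instead avoid the graded language and note that the recursions present $\{\g_m\}$ and $\{\gh_m\}$ as triangular polynomial automorphic images of $\{\p_m\}$, via $\g_k=\tfrac1k\p_k+(\text{a polynomial in }\p_1,\dots,\p_{k-1})$ and $\gh_k=\g_k+(\text{a polynomial in }\g_1,\dots,\g_{k-1})$, each invertible and filtration-preserving; this gives independence and generation, while the degree claims still come from the count above. The only delicate point, and the main (though routine) obstacle, is this uniform degree bookkeeping: one must verify that in both recursions every cross term lands exactly in degree $2k-2$, so that the leading symbol $2\,\overline p_{2k-1}$ is never cancelled.
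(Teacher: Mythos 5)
Your proposal is correct and follows essentially the same route as the paper, which leaves the corollary without a separate proof precisely because it is the triangularity-plus-degree-count consequence of Propositions \ref{p3.3} and \ref{p3.4}: $\p_m=2m\,p_{2m-1}+{}$lower terms gives the claim for $\{\p_m\}$, and the recursions of Proposition \ref{p3.4}, whose cross terms $\p_{k-j}\g_j$ and $\g_{k-j}\gh_j$ all have degree $2k-2<2k-1$, propagate it inductively to $\{\g_m\}$ and $\{\gh_m\}$ (compare Lemma \ref{p5.4}\,(1), where the resulting leading term $\g_k=2p_{2k-1}+\cdots$ is recorded). Your graded-symbol packaging is a clean formalization of this same bookkeeping, and your degree count is exactly the verification the paper tacitly relies on.
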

\begin{prop}\label{p3.6}
	Let $\la$ be
	an arbitrary shifted Young diagram with Kerov interlacing coordinates
	$\left[ X(\la);X(\la) \right]$ (see \S\ref{s20.1}).
	Then
	\begin{equation*}
		\Phi(v;\la)=
		\frac{\prod_{y\in Y(\la)}(v-y(y+1))}{\prod_{x\in X'(\la)}(v-x(x+1))}=
		v\cdot\Rc^\uparrow(v;\la).
	\end{equation*}
	Here the function $\Rc^\uparrow$ is defined by (\ref{f9.9}).
	Recall that $X'(\la)=X(\la)\setminus\left\{ 0 \right\}$.
\end{prop}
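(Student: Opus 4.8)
The plan is to reduce the whole statement to an elementary telescoping computation, since the second equality carries no content. Indeed, by the definition (\ref{f9.9}) of $\Rc^\uparrow$, multiplying by $v$ simply cancels the lone factor of $v$ in the denominator, so $v\cdot\Rc^\uparrow(v;\la)=\prod_{y\in Y(\la)}(v-y(y+1))\big/\prod_{x\in X'(\la)}(v-x(x+1))$ with no further work. Thus the entire content of the proposition lies in the first equality, which relates the product $\Phi(v;\la)=\prod_i\frac{v-\la_i(\la_i-1)}{v-\la_i(\la_i+1)}$ taken over the parts of $\la$ to the product taken over the Kerov coordinates of $\la$.

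To establish it, I would first dispose of the trivial case $\la=\varnothing$, where $X'(\varnothing)=Y(\varnothing)=\varnothing$ and both sides equal $1$ as empty products. For a nonempty $\la$, the key input is the explicit description of the parts of $\la$ from Remark~\ref{p8.9}: writing $X'(\la)=\{x_1,\dots,x_d\}$ and $Y(\la)=\{y_1,\dots,y_d\}$, the parts of $\la$ are exactly the integers $r$ occurring in one of the $d$ consecutive runs $x_m,x_m-1,\dots,y_m+1$, for $m=1,\dots,d$. Substituting these into the definition of $\Phi$ and grouping the factors by runs yields
\begin{equation*}
	\Phi(v;\la)=\prod_{m=1}^{d}\ \prod_{r=y_m+1}^{x_m}\frac{v-r(r-1)}{v-r(r+1)}.
\end{equation*}

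The one computational point is then that each inner product telescopes. The identity $r(r+1)=(r+1)\bigl((r+1)-1\bigr)$ shows that the denominator of the factor indexed by $r$ coincides with the numerator of the factor indexed by $r+1$; hence within each run all intermediate factors cancel, leaving
\begin{equation*}
	\prod_{r=y_m+1}^{x_m}\frac{v-r(r-1)}{v-r(r+1)}=\frac{v-y_m(y_m+1)}{v-x_m(x_m+1)}.
\end{equation*}
Taking the product over $m=1,\dots,d$ recombines the surviving numerators and denominators into $\prod_{y\in Y(\la)}(v-y(y+1))$ and $\prod_{x\in X'(\la)}(v-x(x+1))$, which is precisely the asserted formula. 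I do not expect any genuine obstacle; the only point requiring care is the bookkeeping of the run decomposition from Remark~\ref{p8.9}, namely that the interlacing $y_1<x_1<y_2<\dots<y_d<x_d$ of Proposition~\ref{p1.2} guarantees the runs are disjoint and exhaust the parts of $\la$, so that the grouping above is legitimate.
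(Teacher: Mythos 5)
Your proof is correct and is essentially the paper's own argument: the paper proves Proposition \ref{p3.6} by pointing to the method of Proposition \ref{p1.10} together with Remark \ref{p8.9}, which is precisely your decomposition of the parts of $\la$ into the consecutive runs $x_m,x_m-1,\dots,y_m+1$ followed by the telescoping cancellation based on $r(r+1)=(r+1)\bigl((r+1)-1\bigr)$ (your version is even slightly cleaner, since at generic $v$ no factor $r=\widehat x$ needs to be excluded). The treatment of the empty diagram, of the second (definitional) equality, and of the boundary case $y_1=0$ is all handled correctly.
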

\begin{proof}
	This can be proved exactly as Proposition \ref{p1.10} using Remark \ref{p8.9}. 
\end{proof}
Using this Proposition one can express
the functions $\p_m,\g_m,\gh_m$, $m\in\N$, through Kerov coordinates:
\begin{prop}\label{p3.7}
	Let $\la\in\Sb$ and $m\in\N$. Then\footnote{Recall that the numbers 
	$\{ \trp_x^\uparrow(\la) \}_{x\in X(\la)}$ and 
	$\{ \trp_y^\downarrow(\la) \}_{y\in Y(\la)}$ were introduced in \S\ref{s20}.}
	\begin{equation*}
		\left.
		\begin{array}{rcl}
			\displaystyle
			\p_m(\la)&=& \displaystyle
			\sum_{x\in X(\la)}\left( x(x+1) \right)^{m}-
			\sum_{y\in Y(\la)}\left( y(y+1) \right)^{m};\\
			\g_m(\la)&=& \displaystyle
			\sum_{x\in X(\la)}\trp_x^\uparrow(\la)\cdot\left( x(x+1) \right)^{m};\\
			\gh_m(\la)&=& \displaystyle
			\sum_{y\in Y(\la)}\trp_y^\downarrow(\la)\cdot
			\left( y(y+1) \right)^{m-1}.
		\end{array}
		\right.
	\end{equation*}
\end{prop}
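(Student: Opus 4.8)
The plan is to obtain all three families of formulas by expanding, as Taylor series in $v^{-1}$ at $v=\infty$, the rational-function identities already established for $\Phi(v;\la)$. The essential inputs are Proposition \ref{p3.6}, which identifies $\Phi(v;\la)$ both with $v\,\Rc^\uparrow(v;\la)$ and with the explicit ratio of products over $Y(\la)$ and $X'(\la)$, together with the partial-fraction expansions (\ref{f10}) of $\Rc^\uparrow$ and (\ref{f15}) of $\Rc^\downarrow$. The only elementary expansions I need are the geometric series $\frac{1}{v-c}=\sum_{m\ge1}c^{m-1}v^{-m}$ and $\frac{v}{v-c}=\sum_{m\ge0}c^{m}v^{-m}$, both valid for large $|v|$.

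First I would treat $\g_m$. Combining Proposition \ref{p3.6} with (\ref{f10}) gives
\[
	\Phi(v;\la)=v\,\Rc^\uparrow(v;\la)=\sum_{x\in X(\la)}\trp_x^\uparrow(\la)\,\frac{v}{v-x(x+1)}
	=\sum_{x\in X(\la)}\trp_x^\uparrow(\la)\sum_{m\ge0}\frac{\left(x(x+1)\right)^m}{v^m}.
\]
Reading off the coefficient of $v^{-m}$ and comparing with the generating series for $\g_m$ in Definition \ref{p3.2} yields $\g_m(\la)=\sum_{x\in X(\la)}\trp_x^\uparrow(\la)\left(x(x+1)\right)^m$; as a consistency check the constant term equals $\sum_{x\in X(\la)}\trp_x^\uparrow(\la)=1$ by Remark \ref{p1.11}, matching $\Phi(\infty;\la)=1$. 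The computation of $\gh_m$ is entirely parallel: since $\Rc^\downarrow(v;\la)=1/\Phi(v;\la)$ by the definition in \S\ref{s20.3} and Proposition \ref{p3.6}, the expansion (\ref{f15}) together with $\frac{1}{v-y(y+1)}=\sum_{m\ge1}\left(y(y+1)\right)^{m-1}v^{-m}$ gives $\gh_m(\la)=\sum_{y\in Y(\la)}\trp_y^\downarrow(\la)\left(y(y+1)\right)^{m-1}$ directly.

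For $\p_m$ I would instead pass to logarithms. Using the product form from Proposition \ref{p3.6},
\[
	\log\Phi(v;\la)=\sum_{y\in Y(\la)}\log\left(v-y(y+1)\right)-\sum_{x\in X'(\la)}\log\left(v-x(x+1)\right),
\]
and expanding each summand as $\log(v-c)=\log v-\sum_{m\ge1}\frac{c^m}{m}v^{-m}$. The $\log v$ terms cancel because $|Y(\la)|=|X'(\la)|$ (Remark \ref{p1.3}), and comparing the coefficient of $\frac1m v^{-m}$ with Definition \ref{p3.2} gives $\p_m(\la)=\sum_{x\in X'(\la)}\left(x(x+1)\right)^m-\sum_{y\in Y(\la)}\left(y(y+1)\right)^m$. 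To recover the stated formula I would then replace the sum over $X'(\la)$ by the sum over $X(\la)$, which is harmless since the possibly-present content $x=0$ contributes $\left(0\cdot1\right)^m=0$. In truth there is no genuine obstacle here; the whole proposition is a careful bookkeeping of coefficients, and the only points requiring a word of justification are the cancellation of the $\log v$ terms and this $X'\leftrightarrow X$ replacement, both resting on the equal-cardinality observation of Remark \ref{p1.3} and the vanishing of the content-zero term.
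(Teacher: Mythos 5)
Your proposal is correct and takes essentially the same route as the paper: your treatment of $\g_m$ (partial fractions from (\ref{f10}), the geometric expansion of $v/(v-x(x+1))$, and comparison with Definition \ref{p3.2} via Proposition \ref{p3.6}) is exactly the paper's displayed computation, and the paper handles the other two claims by remarking that $\p_m$ is a ``straightforward consequence of Proposition \ref{p3.6}'' and that $\gh_m$ is ``verified similarly.'' Your logarithmic derivation of the $\p_m$ formula --- with the cancellation of the $\log v$ terms via the equal cardinalities of $X'(\la)$ and $Y(\la)$ (Remark \ref{p1.3}) and the harmless passage from $X'(\la)$ to $X(\la)$ because the content-zero term vanishes --- simply spells out the details the paper leaves implicit.
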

\begin{proof}
	The first claim is a straightforward consequence of Proposition \ref{p3.6}.

	Let us prove the second claim.
	On one hand, from the definition of the numbers $\{\trp_x^{\uparrow}\}$
	(see \S\ref{s20.2}) we have
	\begin{equation*}
		\left.
		\begin{array}{l}\displaystyle
			v\cdot\Rc^\uparrow(v;\la)=v\sum_{x\in X(\la)}\frac{\trp_x^\uparrow(\la)}{v-x(x+1)}
			=\sum_{x\in X(\la)}\frac{\trp_x^\uparrow(\la)}{1-\frac{x(x+1)}v}\\
			\displaystyle\qquad=
			\sum_{x\in X(\la)}\trp_x^\uparrow(\la)\sum_{k=0}^{\infty}
			\left( \frac{x(x+1)}v \right)^{k}
			\\\displaystyle\qquad=
			\sum_{k=0}^{\infty}v^{-k}\sum_{x\in X(\la)}\trp_x^\uparrow(\la)\cdot\left( x(x+1) \right)^{k}.
		\end{array}
		\right.
	\end{equation*}
	On the other hand, it follows from 
	Proposition \ref{p3.6} that
	$v\cdot\Rc^\uparrow(v;\la)=\Phi(v;\la)$.
	Using the definition of the functions $\g_m$ (Definition \ref{p3.2})
	and comparing it to the above formula for $v\cdot\Rc^\uparrow(v;\la)$,
	we get the second claim.

	The third claim can be verified similarly.
\end{proof}
It follows from Propositions \ref{p1.5}, \ref{p3.4} and \ref{p3.7} that 
\begin{equation}\label{f54}
	\p_1(\la)=\g_1(\la)=\gh_1(\la)=2|\la|,\qquad \la\in\Sb.
\end{equation}
\begin{lemma}\label{p3.8}
	Let $\la$ be an arbitrary 
	nonempty shifted Young diagram,
	$x\in X(\la)$ and $y\in Y(\la)$. 
	Then
	\begin{equation*}
		\begin{array}{rcl}\displaystyle
			\frac{\Phi(v;\la+\square(x))}{\Phi(v;\la)}&=& \displaystyle
			\frac{(v-x(x+1))^{2}}{\left( v-x(x+1) \right)^{2}-2(v+x(x+1))}
		\end{array}
	\end{equation*}
	and
	\begin{equation*}
		\begin{array}{rcl}\displaystyle
			\frac{\Phi(v;\la-\square(y))}{\Phi(v;\la)}&=& \displaystyle
			\frac{\left( v-y(y+1) \right)^{2}-2(v+y(y+1))}{(v-y(y+1))^{2}}.
		\end{array}
	\end{equation*}
\end{lemma}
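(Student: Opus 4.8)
The plan is to work directly with the product formula $\Phi(v;\la)=\prod_{i\ge1}\frac{v-\la_i(\la_i-1)}{v-\la_i(\la_i+1)}$ and to exploit the fact that passing from $\la$ to $\la+\square(x)$ alters exactly one factor of this product. First I would recall from \S\ref{s20.1} how the content $x$ of an added box locates it inside the shifted diagram: row $i$ begins in column $i$, so the box appended to the end of row $k$ has content $\la_k$, while the box starting a brand-new one-box row has content $0$. Thus for $x\ne0$ the operation $\la\mapsto\la+\square(x)$ replaces the part $\la_k=x$ by $x+1$ and leaves $\ell(\la)$ and all other parts unchanged, whereas for $x=0$ it simply appends a new part equal to $1$.

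In the generic case $x\ne0$ only the $k$th factor of the product changes, so the ratio collapses to
\begin{equation*}
	\frac{\Phi(v;\la+\square(x))}{\Phi(v;\la)}=
	\frac{(v-(x+1)x)\,(v-x(x+1))}{(v-(x+1)(x+2))\,(v-x(x-1))}
	=\frac{(v-x(x+1))^{2}}{(v-(x+1)(x+2))(v-x(x-1))}.
\end{equation*}
Setting $w:=v-x(x+1)$ and using $(x+1)(x+2)=x(x+1)+2(x+1)$ together with $x(x-1)=x(x+1)-2x$, the denominator becomes $(w-2x-2)(w+2x)=w^{2}-2w-4x(x+1)$, which equals $(v-x(x+1))^{2}-2(v+x(x+1))$ after substituting back $v=w+x(x+1)$. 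This is exactly the claimed right-hand side. The degenerate case $x=0$ is a one-line check: appending a part equal to $1$ multiplies $\Phi$ by the single factor $\tfrac{v}{v-2}$, which coincides with the target $\tfrac{v^{2}}{v^{2}-2v}$.

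For the second identity I would avoid repeating the computation by the following observation. If $y\in Y(\la)$ and $\mu:=\la-\square(y)$, then $\la=\mu+\square(y)$ and $y\in X(\mu)$, so
\begin{equation*}
	\frac{\Phi(v;\la-\square(y))}{\Phi(v;\la)}=
	\left(\frac{\Phi(v;\mu+\square(y))}{\Phi(v;\mu)}\right)^{-1},
\end{equation*}
and the first part, applied to $\mu$ with added content $y$, gives precisely the reciprocal of $\frac{(v-y(y+1))^{2}}{(v-y(y+1))^{2}-2(v+y(y+1))}$, which is the asserted formula. The only genuine care required anywhere is the bookkeeping that isolates the single changing factor and the separate treatment of the diagonal box ($x=0$ or $y=0$), where the number of rows changes; the remaining manipulations are the routine quadratic simplification carried out above, so I expect no real obstacle beyond keeping this case distinction straight.
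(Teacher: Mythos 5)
Your proof is correct, but it takes a genuinely different route from the paper's. The paper disposes of the lemma in one line by invoking Proposition \ref{p3.6}: once $\Phi(v;\la)$ is factored over the Kerov interlacing coordinates as $\prod_{y\in Y(\la)}(v-y(y+1))\big/\prod_{x\in X'(\la)}(v-x(x+1))$, the ratios $\Phi(v;\la+\square(x))/\Phi(v;\la)$ and $\Phi(v;\la-\square(y))/\Phi(v;\la)$ are read off from how the sets $X(\la)$, $Y(\la)$ transform when a box is added or removed (the same mechanism as in Proposition \ref{p1.10} and Remark \ref{p8.9}). You instead work straight from the defining product $\Phi(v;\la)=\prod_i\frac{v-\la_i(\la_i-1)}{v-\la_i(\la_i+1)}$: adding the box of content $x\ne0$ replaces the single part $\la_k=x$ by $x+1$ (and $x=0$ appends a part equal to $1$), so exactly one factor changes, and your substitution $w=v-x(x+1)$ reduces the rest to the identity $(w-2x-2)(w+2x)=w^2-2w-4x(x+1)$, which I have checked; the removal identity then comes for free by inverting the addition identity at $\mu=\la-\square(y)$. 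Your route is more elementary, since it bypasses the interlacing coordinates entirely and with them any bookkeeping about how $X$ and $Y$ merge and split under box addition; what the paper's route buys is coherence with the surrounding machinery, as Proposition \ref{p3.6} is already available and the Kerov-coordinate phrasing is exactly what feeds the computations with $\Rc^\uparrow$ and the coefficients $\trp^\uparrow_x(\la)$, $\trp^\downarrow_y(\la)$ in \S\ref{s4}. One small point to keep straight in your inversion step: when $\la=(1)$ and $y=0$, the diagram $\mu=\la-\square(0)$ is empty, which the lemma's hypothesis formally excludes; this is harmless because your direct computation of the addition identity (the single factor $v/(v-2)$ created by an appended part equal to $1$) applies verbatim to $\mu=\varnothing$, where $\Phi(v;\varnothing)=1$, but it deserves a sentence.
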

\begin{proof}
	This directly follows from Proposition \ref{p3.6} 
	and the definitions of the diagrams $\la+\square(x)$ and $\la-\square(y)$ (\S\ref{s20.1}).
\end{proof}

\section{The up and down operators\\ in differential form}\label{s4}
The aim of this section is to write the operators
$D$ and $U$ in the algebra $\Gamma$
(they were defined in Lemmas
\ref{p2.9} and \ref{p2.10}, respectively)
in differential form. 
Here we use the results of \S\ref{s20} and \S\ref{s3}.
Our approach is inspired by the paper \cite{Olshanski2009}, but in our situation significant 
modifications are required.

\subsection{Formulation of the theorem}\label{s4.1}
We identify $\Gamma$ with the polynomial algebra
$\mathbb{R}\left[ \g_1,\g_2,\dots \right]$. 
Recall that $\Gamma$ is a filtered 
algebra, and under this identification the filtration
is determined by setting (see Corollary \ref{p3.5})
$\deg\g_m=2m-1$, $m\in\N$.
\begin{df}\label{p4.1}\rm{}
	We say that an operator $R\colon\Gamma\to\Gamma$ has degree $\le r$, 
	where $r\in\mathbb{Z}$, if
	$\deg(Rf)\le\deg f+r$ for any $f\in\Gamma$.
\end{df}
\begin{rmk}\rm{}\label{p40.2}
	Observe that any
	operator in the algebra of polynomials 
	(in finitely or countably many variables)
	can be written as a differential 
	operator with polynomial coefficients
	--- a formal infinite sum of differential monomials.
	This fact is well known and can be readily proved.
	We do not need it but it is useful to keep it in mind while 
	reading the formulation and the proof of Theorem~\ref{p4.2}.
\end{rmk}
\begin{thm}\label{p4.2}
	{\rm{}(1)\/}
	The operator $D\colon\Gamma\to\Gamma$ defined in Lemma \ref{p2.9} has degree $1$ with respect
	to the filtration of $\Gamma$ and looks as
	\begin{equation*}
		\left.
		\begin{array}{l}
			\displaystyle
			D=\frac12\g_1+
			\sum_{r,s\ge1}(2r-1)(2s-1)\g_{r+s-1}\frac{\partial^2}{\partial\g_r\partial\g_s}
			\\\displaystyle\qquad-
			\sum_{r\ge1}(2r-1)\g_r\frac{\partial}{\partial\g_r}+
			\sum_{r,s\ge1}(r+s)\g_r\g_s\frac{\partial}{\partial\g_{r+s}}\\\displaystyle\qquad{}+{}
			\mbox{\rm{}operators of degree $\le-2$};
		\end{array}
		\right.
	\end{equation*}

	{\rm{}(2)\/}
	For any fixed $\al\in(0,+\infty)$
	the operator $U\colon\Gamma\to\Gamma$ defined in Lemma \ref{p2.10} has degree $1$ with respect
	to the filtration of $\Gamma$ and looks as
	\begin{equation*}
		\left.
		\begin{array}{l}
			\displaystyle
			U=\frac12\g_1+\frac12\al+\al\frac{\partial}{\partial\g_1}+
			\sum_{r,s\ge1}(2r-1)(2s-1)\g_{r+s-1}\frac{\partial^2}{\partial\g_r\partial\g_s}
			\\\displaystyle\qquad+
			\sum_{r\ge1}(2r-1)\g_r\frac{\partial}{\partial\g_r}+
			\sum_{r,s\ge1}(r+s-1)\g_r\g_s\frac{\partial}{\partial\g_{r+s}}\\\displaystyle\qquad{}+{}
			\mbox{\rm{}operators of degree $\le-2$}.
		\end{array}
		\right.
	\end{equation*}
\end{thm}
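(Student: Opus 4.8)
The plan is to compute the action of $D$ and $U$ on the generators $\g_m$ of $\Gamma$, working modulo operators of degree $\le -2$, and then to reconstruct the differential operators from this action. The crucial tool is the pair of explicit formulas in Lemma \ref{p3.8}, which describe how the generating function $\Phi(v;\la)$ changes when one box is added or removed. From the recursions in Lemmas \ref{p2.9} and \ref{p2.10}, the operator $D$ (up to the factor $(n+1)^{-1}$) replaces a function $f$ by its weighted sum over boxes that can be removed, with weights $p^\downarrow$, while $U$ involves a weighted sum over boxes that can be added, with weights $p^\uparrow_\al$; by Propositions \ref{p1.10} and \ref{p1.12} both families of weights are exactly the partial-fraction coefficients $\trp^\uparrow_x$, $\trp^\downarrow_y$ encoded in $\Rc^\uparrow$, $\Rc^\downarrow$.

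First I would translate the action of $D$ and $U$ on the variables $\g_m$ into an identity at the level of the generating series $\Phi(v;\la)$, using the expansion $\Phi(v;\la)=1+\sum_{m\ge1}\g_m(\la)v^{-m}$ from Definition \ref{p3.2}. Concretely, to find $D\g_m$ and $U\g_m$ it suffices to compute the expansion coefficients of the operators applied to $\Phi(v;\cdot)$ itself. Here Lemma \ref{p3.8} gives the ratios $\Phi(v;\la\pm\square)/\Phi(v;\la)$ as explicit rational functions in $v$; expanding these in powers of $v^{-1}$ and summing against the weights $\trp^\uparrow_x(\la)$ and $\trp^\downarrow_y(\la)$ produces, via Proposition \ref{p3.7}, expressions for the increments of the $\g$'s in terms of the $\g$'s again. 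The identities (\ref{f54}) fixing $\g_1=2|\la|$ and Proposition \ref{p1.5} will be used to handle the normalizing factors $|\mu|$, $n$ and $\al/2$ that appear in Lemmas \ref{p2.9}--\ref{p2.10}, converting ``$n$'' into the operator of multiplication by $\tfrac12\g_1$ acting on the top-degree part.

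Having obtained $D\g_m$ and $U\g_m$ as concrete polynomials in the $\g_k$ modulo degree $\le -2$, I would then match these against the action on $\g_m$ of the proposed differential operators in the statement: each of the stated terms ($\g_{r+s-1}\partial^2/\partial\g_r\partial\g_s$, $\g_r\partial/\partial\g_r$, $\g_r\g_s\partial/\partial\g_{r+s}$, and for $U$ the extra $\al\,\partial/\partial\g_1$ and constants) contributes a known polynomial when applied to $\g_m$, and comparing coefficients of each $\g_k$ verifies the formulas on generators. Since an operator of degree $\le 1$ on a polynomial algebra is determined by its action on the generators together with the Leibniz rule, and since the second-order structure is what remains after stripping degree-$\le -2$ corrections, matching on the $\g_m$ fixes the top-degree symbol and the relevant lower terms. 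The main obstacle I anticipate is bookkeeping: correctly expanding the rational ratios from Lemma \ref{p3.8} in $v^{-1}$, summing the resulting polynomials $(x(x+1))^k$ against the $\trp$-weights, and reassembling products like $\g_r\g_s$ (which arise from the quadratic $v^{-1}$-expansion of $\Phi^{\pm1}$) so that the quadratic differential coefficients come out with the exact combinatorial prefactors $(2r-1)(2s-1)$ and $(r+s)$ (respectively $(r+s-1)$). Keeping precise track of the discrepancy between $D$ and $U$ in the signs of the $\sum_r(2r-1)\g_r\partial_{\g_r}$ term and in the $(r+s)$ versus $(r+s-1)$ coefficient — which ultimately reflects the difference between adding and removing a box in Lemma \ref{p3.8} — is where the computation must be done most carefully.
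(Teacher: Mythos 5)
Your preparatory steps are the same as the paper's (Lemma \ref{p3.8} for the ratios $\Phi(v;\la\pm\square)/\Phi(v;\la)$, Propositions \ref{p1.10}, \ref{p1.12} and \ref{p3.7} for the $\trp$-weights, and (\ref{f54}) to absorb the normalizing factors into $\frac12\g_1$), but the reconstruction step contains a genuine gap. You propose to compute only $D\g_m$ and $U\g_m$ on the single generators --- equivalently, to apply the operators to the one-variable series $\Phi(v;\cdot)$ --- and then to recover the full differential operators by matching on the $\g_m$, invoking the claim that ``an operator of degree $\le1$ on a polynomial algebra is determined by its action on the generators together with the Leibniz rule.'' That claim is false in this setting: $D$ and $U$ are Markov averaging operators, not derivations, so no Leibniz rule is available, and their differential form contains genuine second-order terms. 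Every second-order monomial $\g_{r+s-1}\,\partial^2/\partial\g_r\partial\g_s$ annihilates each generator $\g_m$ (the generators are linear in the $\g$'s), and an operator such as $\g_1\g_2\,\partial^2/\partial\g_1\partial\g_2$ has degree $0\le1$ in the sense of Definition \ref{p4.1} yet vanishes on all generators. Consequently your single-variable computation can at best fix the first-order terms $\g_r\partial/\partial\g_r$ and $\g_r\g_s\partial/\partial\g_{r+s}$; the coefficients $(2r-1)(2s-1)$ of the second-order part --- the heart of both formulas --- are entirely invisible to it, and the assertion that the remainder has degree $\le-2$ must in any case be verified on all basis monomials $\g_\rho$, not just on the $\g_m$.

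This is precisely why the paper works with products $\Phi(v_1;\la)\Phi(v_2;\la)\cdots$ in an \emph{arbitrary finite number} of independent variables: by (\ref{f55}) such products assemble all basis elements $\g_\rho$, so the closed relations (\ref{f60}) with the multipliers $F^\uparrow$, $F^\downarrow$ encode the action of $U$ and $D$ on the whole linear basis of $\Gamma$. Extracting coefficients via the monomial symmetric functions and their structure constants $c^\rho_{\si\tau}$ (Lemma \ref{p4.6}) and running the degree estimates of Lemmas \ref{p4.7}, \ref{p4.12} and \ref{p4.14} over the cases $\ell(\si)=0,1,2$ (with $\ell(\si)\ge3$ negligible) is exactly what produces the second-order terms: they arise from $\ell(\si)=2$, that is, from pairing one added or removed box against \emph{two} of the variables $v_l$ --- a phenomenon that cannot occur when there is only one $v$. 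To repair your argument you would have to upgrade the generating-series computation from one variable to arbitrarily many before the matching step can even be formulated; once that is done, your plan essentially becomes the paper's proof.
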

\par\noindent{\em{}Scheme of proof.\/}
The functions $\g_k\in\Gamma$, $k\in\N$, generate the algebra $\Gamma$.
However, we will 
not be dealing with actions of $D$ and $U$ on these generators.
Instead, we consider the products of the form
$\Phi(v_1;\la)\Phi(v_2;\la)\dots$, 
where $v_1,v_2,\dots$
are independent complex variables in a finite number
(here $\Phi(v;\la)$ is defined in \S\ref{s3}).
It follows from Definition \ref{p3.2}
that the products of the form
$\Phi(v_1;\la)\Phi(v_2;\la)\dots$
assemble 
various products of the generators, which in turn
constitute a linear basis for $\Gamma$. 
Thus, we know the action of our operators if we
know how they transform such products.
It turns out that the transformation
of the products 
$\Phi(v_1;\la)\Phi(v_2;\la)\dots$
can be written down in a closed form. From
this we can extract all the necessary information.

\subsection{Action of $D$ and $U$ on generating series}\label{s4.2}
It follows from Definition \ref{p3.2}
that for any finite collection of independent complex variables $v_1,v_2,\dots$
(we prefer not to indicate their number explicitly) we have
\begin{equation}\label{f55}
	\Phi(v_1;\la)\Phi(v_2;\la)\ldots=\sum_{\rho}m_\rho(v_1^{-1},v_2^{-1},\dots)\g_\rho(\la),
\end{equation}
where the sum is taken over all ordinary partitions $\rho$ such that $\ell(\rho)$
does not exceed the number of $v_i$'s,
$m_\rho$ is the monomial symmetric function
and $\g_\rho=\g_{\rho_1}\dots\g_{\rho_{\ell(\rho)}}$.
It is convenient to set $\g_0:=1$.
\begin{rmk}\rm{}\label{p4.3}
	Observe that
	$m_\rho(v_1^{-1},v_2^{-1},\dots)$ vanishes if
	$\ell(\rho)$ is greater than the number of $v_i$'s.
	It follows that here and below
	in the sums
	similar to (\ref{f55})
	we can 
	let $\rho$ run over all ordinary partitions.
\end{rmk}
We regard the LHS of (\ref{f55}) as a generating series for the 
elements $\g_\rho$ that constitute a linear basis for $\Gamma$.
Thus, the action of an operator in $\Gamma$ on the LHS
is determined by its action on the functions $\g_\rho\in\Gamma$ in the
RHS.

Occasionally, it will be convenient to omit the
argument $\la$ in $\Phi(v;\la)$. 
Recall from \S\ref{s2.3}
the notation $(\dots)_n$ for
the restriction of a function from $\Gamma\subset\mathrm{Fun}(\Sb)$
to the subset $\Sb_n\subset\Sb$.

We start with the operators $U_{n,n+1}$ and $D_{n+1,n}$ defined by (\ref{f33}).
By the very definition of 
$U_{n,n+1}$ we have
\begin{equation*}
	\begin{array}{l}
		\displaystyle
		\left(
			U_{n,n+1}
			\Big(
				\prod_l\Phi(v_l)
			\Big)_{n+1}
		\right)(\la)
		\\\displaystyle\qquad
		=
		\sum_{x\in X(\la)}^{\phantom{A}}p_\al^\uparrow(\la,\la+\square(x))
		\prod_l\Phi(v_l;\la+\square(x)),
		\qquad\la\in\Sb_n.
	\end{array}
\end{equation*}
Using (\ref{f14}) and Lemma \ref{p3.8}, we get (note that $|\la|=n$):
\begin{equation}\label{f56}
	\begin{array}{l}
		\displaystyle
		\left(
			(n+\al/2)U_{n,n+1}
			\Big(
				\prod_l\Phi(v_l)
			\Big)_{n+1}\right)(\la)
		=F^\uparrow(v_1,v_2,\dots;\la)\cdot
		\prod_l\Phi(v_l;\la),
	\end{array}
\end{equation}
where
\begin{equation}\label{f57}
	\begin{array}{l}\displaystyle
		F^\uparrow(v_1,v_2,\dots;\la)
		\\\displaystyle\qquad:=\sum_{x\in X(\la)}\frac{x(x+1)+\al}2
		\prod_l\frac{(v_l-x(x+1))^2}
		{(v_l-x(x+1))^2-2(v_l+x(x+1))}
		\trp_x^\uparrow(\la).
	\end{array}
\end{equation}

Likewise, for the operator $D_{n+1,n}$ we have
\begin{equation*}
	\begin{array}{l}
		\displaystyle
		\left(
			D_{n+1,n}
			\Big(
				\prod_l\Phi(v_l)
			\Big)_{n}
		\right)(\la)
		\\\displaystyle\qquad=
		\sum_{y\in Y(\la)}^{\phantom{A}}
		p^\downarrow(\la,\la-\square(y))
		\prod_l\Phi(v_l;\la-\square(y)),
		\qquad\la\in\Sb_{n+1}.
	\end{array}
\end{equation*}
Using Proposition \ref{p1.12} and Lemma \ref{p3.8}, we get (note that now $|\la|=n+1$):
\begin{equation}\label{f58}
\begin{array}{l}
	\displaystyle
	\left(
		(n+1)D_{n+1,n}
		\Big(
			\prod_l\Phi(v_l)
		\Big)_{n}
	\right)(\la)
	=F^\downarrow(v_1,v_2,\dots;\la)\cdot
	\prod_l\Phi(v_l;\la),
\end{array}
\end{equation}
where
\begin{equation}\label{f59}
	\left.
	\begin{array}{l}
		\displaystyle
		F^\downarrow(v_1,v_2,\dots;\la):=\sum_{y\in Y(\la)}\frac12
		\prod_l\frac{(v_l-y(y+1))^2-2(v_l+y(y+1))}
		{(v_l-y(y+1))^2}
		\trp_y^\downarrow(\la).
	\end{array}
	\right.
\end{equation}
\begin{lemma}\label{p4.4}
	As functions in $\la$, both $F^\uparrow(v_1,v_2,\dots;\la)$
	and $F^\downarrow(v_1,v_2,\dots;\la)$ are elements of
	the algebra $\Gamma$. 
	More precisely, the both expressions can be viewed as 
	elements of $\Gamma\big[ [v_1^{-1},v_2^{-1},\dots] \big]$.
\end{lemma}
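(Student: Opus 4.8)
The strategy is to expand each of the two expressions as a formal power series in the variables $v_l^{-1}$ and to recognize every coefficient, regarded as a function of $\la$, as a finite linear combination of the generators $\g_m$ (resp.\ $\gh_m$) of $\Gamma$ furnished by Proposition~\ref{p3.7}. Throughout one uses that $X(\la)$, $Y(\la)$, and the collection of the $v_l$'s are all finite, so that every sum and product below is finite and no convergence question arises; the whole content of the lemma is the dependence of the coefficients on~$\la$.

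First I would treat $F^\uparrow$. Writing $c:=x(x+1)$, the $l$th factor in (\ref{f57}) is
\begin{equation*}
	\frac{(v_l-c)^2}{(v_l-c)^2-2(v_l+c)}
	=\frac{1-2c\,v_l^{-1}+c^2 v_l^{-2}}{1-(2c+2)v_l^{-1}+(c^2-2c)v_l^{-2}}.
\end{equation*}
Since the denominator has constant term $1$ as a series in $v_l^{-1}$, its reciprocal expands as a formal power series in $v_l^{-1}$ whose coefficients are polynomials in $c$ (obtained from the evident linear recurrence); multiplying by the numerator, the factor takes the form $1+\sum_{k\ge1}a_k(c)\,v_l^{-k}$ with every $a_k(c)\in\mathbb{R}[c]$. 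Taking the finite product over $l$ and multiplying by $(c+\al)/2$, I obtain a power series in the $v_l^{-1}$ each of whose coefficients is a polynomial $Q(c)$. Hence the coefficient of a fixed monomial $\prod_l v_l^{-k_l}$ in $F^\uparrow$ equals $\sum_{x\in X(\la)}\trp_x^\uparrow(\la)\,Q\bigl(x(x+1)\bigr)$ for that polynomial~$Q$.

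Now I would invoke Proposition~\ref{p3.7}, which gives $\sum_{x\in X(\la)}\trp_x^\uparrow(\la)\,(x(x+1))^m=\g_m(\la)$ for $m\ge1$, together with the case $m=0$, namely $\sum_{x\in X(\la)}\trp_x^\uparrow(\la)=1$ (Remark~\ref{p1.11}). Consequently $\sum_{x}\trp_x^\uparrow(\la)\,Q(x(x+1))$ is a finite $\mathbb{R}$-linear combination of $\g_0=1,\g_1,\g_2,\dots$, and so lies in $\Gamma$; this proves $F^\uparrow\in\Gamma\big[ [v_1^{-1},v_2^{-1},\dots] \big]$. The expression $F^\downarrow$ is handled identically: with $c:=y(y+1)$ the $l$th factor in (\ref{f59}) is $1-2(v_l+c)/(v_l-c)^2$, which again expands as a power series in $v_l^{-1}$ with coefficients in $\mathbb{R}[c]$, and summing over $y$ against $\trp_y^\downarrow(\la)$ one uses the identity $\sum_{y\in Y(\la)}\trp_y^\downarrow(\la)\,(y(y+1))^{m-1}=\gh_m(\la)$ of Proposition~\ref{p3.7} (for $m\ge1$) to write each coefficient as a finite linear combination of $\gh_1,\gh_2,\dots\in\Gamma$.

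The one point demanding genuine care---and the main obstacle---is the verification that each factor truly expands into a series in $v_l^{-1}$ with \emph{polynomial} (not merely rational) coefficients in $c$, and the attendant bookkeeping that each resulting coefficient is a \emph{finite} combination of the $\g_m$ (resp.\ $\gh_m$). Both are immediate from the normalization that the factors equal $1$ at $v_l^{-1}=0$: indeed, by Lemma~\ref{p3.8} these factors are precisely the ratios $\Phi(v_l;\la+\square(x))/\Phi(v_l;\la)$ and $\Phi(v_l;\la-\square(y))/\Phi(v_l;\la)$, each tending to $1$ as $v_l\to\infty$, so a fixed monomial in the $v_l^{-1}$ collects contributions from only finitely many powers of $c$, keeping the resulting element of $\Gamma$ a finite sum of generators.
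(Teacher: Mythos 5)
Your proof is correct and follows essentially the same route as the paper's: expand the products over $l$ in (\ref{f57}) and (\ref{f59}) as formal power series in the $v_l^{-1}$ with coefficients polynomial in $x(x+1)$ (resp.\ $y(y+1)$), then average against $\trp_x^\uparrow(\la)$ (resp.\ $\trp_y^\downarrow(\la)$) and invoke Proposition~\ref{p3.7} to identify each coefficient as a finite linear combination of the generators $\g_m$ (resp.\ $\gh_m$) of $\Gamma$. Your extra details (the explicit expansion of each factor and the $m=0$ case via Remark~\ref{p1.11}) only make explicit what the paper leaves implicit.
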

\begin{proof}
	Observe that the products on $l$ in (\ref{f57}) and (\ref{f59})
	can be viewed as elements of
	$\mathbb R[x(x+1)]\big[ [v_1^{-1},v_2^{-1},\dots] \big]$
	and 
	$\mathbb R[y(y+1)]\big[ [v_1^{-1},v_2^{-1},\dots] \big]$,
	respectively.\footnote{Here and below $\mathbb{R}\left[ z(z+1) \right]$ denotes the 
	algebra of polynomials in $z(z+1)$.}
	Moreover, if 
	$f\left( x(x+1) \right)$
	is a polynomial in $x(x+1)$,
	then the expression
	$$
		\sum_{x\in X(\la)}
		f\left( x(x+1) \right)
		\trp_x^\uparrow(\la)
	$$
	as function in $\la$ belongs to $\Gamma$ (this follows from Proposition \ref{p3.7}).
	Letting $f$ be the corresponding 
	formal power series in $v_1^{-1}, v_2^{-1},\dots$, we
	get the claim about $F^\uparrow(v_1,v_2,\dots;\la)$.
	The remaining claim about $F^\downarrow(v_1,v_2,\dots;\la)$ is verified similarly.
\end{proof}
Now we proceed to the operators $D$ and $U$ in the algebra $\Gamma$
defined in Lemmas \ref{p2.9} and \ref{p2.10}, respectively.
Using these Lemmas, we rewrite (\ref{f56}) and (\ref{f58}) as
\begin{equation}\label{f60}
	\begin{array}{rcl}
		U(\Phi(v_1)\Phi(v_2)\dots)&=& F^\uparrow(v_1,v_2,\dots)\Phi(v_1)\Phi(v_2)\dots;\\
		D(\Phi(v_1)\Phi(v_2)\dots)&=& F^\downarrow(v_1,v_2,\dots)\Phi(v_1)\Phi(v_2)\dots.
	\end{array}
\end{equation}
These formulas contain in a compressed form
all the information about the action
of $U$ and $D$ on the basis elements
$\g_\rho$, where $\rho$ runs over all ordinary partitions.
Our next step is to extract from 
(\ref{f60}) some explicit expressions
for $U\g_\rho$ and $D\g_\rho$
using (\ref{f55}) and Proposition \ref{p3.7}.

\subsection{Action of $U$ and $D$ in the basis $\left\{ \g_\rho \right\}$}\label{s4.3}
Let us first introduce some extra notation.
Let $v$ and $\xi$ be independent variables.
Consider the following expansions at $v=\infty$ with respect to $v^{-1}$:
\begin{equation}\label{f62}
	\begin{array}{lr}
		\displaystyle
		\frac{(v-\xi)^2}{(v-\xi)^2-2(v+\xi)}=
		\sum_{s=0}^\infty a_s(\xi)v^{-s},&
		a_s\in\mathbb R[\xi];
		\\\displaystyle\qquad\qquad
		\frac{(v-\xi)^2-2(v+\xi)}{(v-\xi)^2}=
		\sum_{s=0}^\infty b_s(\xi)v^{-s},&
		b_s\in\mathbb R[\xi].
	\end{array}
\end{equation}
\begin{lemma}\label{p4.5}
We have
	\begin{equation*}
		a_0(\xi)=b_0(\xi)\equiv1,
	\end{equation*}
	and $a_s(\xi)$ and $b_s(\xi)$ have degree $s-1$ for $s\ge1$. 
	More precisely,
	\begin{equation*}
		b_s(\xi)=-2(2s-1)\xi^{s-1},\qquad s\ge1,
	\end{equation*}
	and $a_s(\xi)$ has the form
	\begin{equation*}
		a_s(\xi)=2(2s-1)\xi^{s-1}+\mbox{\rm{}terms of degree $\le (s-2)$ in the variable $\xi$},\qquad s\ge1.
	\end{equation*}
\end{lemma}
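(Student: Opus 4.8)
The plan is to treat the two series of (\ref{f62}) in turn, starting with the $b_s$ because they admit a clean closed form, and then to deduce the statement about the $a_s$ from the single observation that the two rational functions in (\ref{f62}) are reciprocals of each other. First I would rewrite the second fraction as
\[
\frac{(v-\xi)^2-2(v+\xi)}{(v-\xi)^2}=1-\frac{2(v+\xi)}{(v-\xi)^2},
\]
which immediately gives $b_0(\xi)\equiv1$. For the remaining coefficients I expand $(v-\xi)^{-2}$ at $v=\infty$ as $(v-\xi)^{-2}=\sum_{k\ge0}(k+1)\xi^k v^{-k-2}$, multiply by $(v+\xi)$, and reindex; the two resulting pieces contribute $s\cdot\xi^{s-1}$ and $(s-1)\xi^{s-1}$ to the coefficient of $v^{-s}$, so that
\[
\frac{v+\xi}{(v-\xi)^2}=\sum_{s\ge1}(2s-1)\xi^{s-1}v^{-s}.
\]
This yields $b_s(\xi)=-2(2s-1)\xi^{s-1}$ for $s\ge1$, exactly the claimed formula, and in particular $\deg b_s=s-1$.

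For the $a_s$ I would use that the first fraction in (\ref{f62}) is the reciprocal of the second, so the power series $A(v)=\sum a_s v^{-s}$ and $B(v)=\sum b_s v^{-s}$ satisfy $A(v)B(v)=1$. This forces $a_0=b_0=1$ together with the convolution identities $\sum_{j=0}^s a_j b_{s-j}=0$ for $s\ge1$, equivalently the recurrence
\[
a_s=-\sum_{j=0}^{s-1}a_j b_{s-j},\qquad s\ge1.
\]
I then argue by induction on $s$ that $\deg a_s=s-1$ with leading coefficient $2(2s-1)$. In the recurrence the $j=0$ term equals $-b_s=2(2s-1)\xi^{s-1}$ and has degree $s-1$, while every term with $j\ge1$ has degree $(j-1)+(s-j-1)=s-2$ by the inductive hypothesis on $a_j$ and the known degree $s-j-1$ of $b_{s-j}$. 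Hence only the $j=0$ term reaches the top degree, which establishes the leading coefficient and shows all lower contributions have degree $\le s-2$.

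The computation is entirely elementary, so there is no serious obstacle; the statement is really a warm-up lemma feeding the degree estimates in Theorem \ref{p4.2}. The two points to get right are the reindexing that produces the uniform formula $(2s-1)\xi^{s-1}$ (the two halves of $(v+\xi)(v-\xi)^{-2}$ combine as $s+(s-1)$ for $s\ge2$, and coincidentally give the value $1=2\cdot1-1$ at $s=1$ as well), and the degree bookkeeping in the induction, which must confirm that nothing of degree exceeding $s-2$ can perturb the leading coefficient $2(2s-1)$.
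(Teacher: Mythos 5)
Your proposal is correct and takes essentially the same route as the paper's proof: an explicit expansion at $v=\infty$ yielding $b_s(\xi)=-2(2s-1)\xi^{s-1}$, followed by the reciprocity relation $\bigl(\sum_{s}a_s(\xi)v^{-s}\bigr)\bigl(\sum_{s}b_s(\xi)v^{-s}\bigr)=1$ to extract the leading term of $a_s$. The only differences are cosmetic: you expand $(v+\xi)(v-\xi)^{-2}$ directly rather than splitting it as $\frac{4\xi}{(v-\xi)^2}+\frac{2}{v-\xi}$ as the paper does, and you spell out the convolution recurrence and degree induction that the paper leaves implicit.
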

\begin{proof}
	First, we compute explicitly $b_s(\xi)$:
	\begin{equation*}
		\begin{array}{l}
			\displaystyle
			\frac{(v-\xi)^2-2(v+\xi)}{(v-\xi)^2}=
			1-2\frac{v+\xi}{(v-\xi)^2}\\
			\displaystyle\qquad
			=1-\frac{4\xi}{(v-\xi)^2}-\frac2{v-\xi}=
			1-4\sum_{s=1}^\infty s\xi^s v^{-s-1}-
			2\sum_{s=0}^\infty \xi^s v^{-s-1}\\
			\displaystyle\qquad
			=1-\frac2v-\sum_{s=1}^\infty(4s+2)\xi^s v^{-s-1}=
			1-\sum_{s=1}^\infty2(2s-1)\xi^{s-1}v^{-s}.
		\end{array}
	\end{equation*}
	Next, observe that 
	$(\sum_{s=0}^\infty a_s(\xi)v^{-s})
	(\sum_{s=0}^\infty b_s(\xi)v^{-s})=1$,
	therefore, $a_0(\xi)=1$ and for $s\ge1$ the top degree term
	of $a_s(\xi)$ is equal to $2(2s-1)\xi^{s-1}$.
\end{proof}
For an ordinary partition
$\si=(\si_1,\si_2,\dots,\si_{\ell(\si)})$ we set
\begin{equation*}
	a_\si(\xi):=\prod_{i=1}^{\ell(\si)}a_{\si_i}(\xi),\qquad
	b_\si(\xi):=\prod_{i=1}^{\ell(\si)}b_{\si_i}(\xi).
\end{equation*}

Using (\ref{f62}) and the above definition, we get
the following expressions for the products on $l$ in (\ref{f57}) and (\ref{f59}):
\begin{equation}\label{f63}
	\begin{array}{lcr}
		\displaystyle
		\prod_l\frac{(v_l-x(x+1))^2}{(v_l-x(x+1))^2-2(v_l+x(x+1))}=
		\sum_\si a_\si(x(x+1))m_\si(v_1^{-1},v_2^{-1},\dots);
		\\\displaystyle
		\prod_l\frac{(v_l-y(y+1))^2-2(v_l-y(y+1))}{(v_l-y(y+1))^2)}=
		\sum_\si b_\si(y(y+1))m_\si(v_1^{-1},v_2^{-1},\dots).
	\end{array}
\end{equation}
Here the sums in the right-hand sides are taken over all 
ordinary partitions.

Next, introduce two linear maps
\begin{equation*}
	\left.
	\begin{array}{ll}
		\mathbb{R}\left[ x(x+1) \right]\to\Gamma,& f\mapsto\langle f\rangle^\uparrow;\\
		\mathbb{R}\left[ y(y+1) \right]\to\Gamma,& h\mapsto\langle h\rangle^\downarrow
	\end{array}
	\right.
\end{equation*}
by setting
\begin{equation}\label{f64}
	\big\langle(x(x+1))^m\big\rangle^\uparrow:=\g_m,
	\quad
	\big\langle(y(y+1))^m\big\rangle^\downarrow:=\gh_{m+1},
	\qquad
	m\in\Nn,
\end{equation}
where, by agreement, $\g_0=1$.
This definition is inspired by Proposition \ref{p3.7}.

Finally, let $c_{\si\tau}^\rho$
be the structure constants of the algebra $\Lambda$ 
of all symmetric functions
in the basis of monomial symmetric functions:
\begin{equation*}
	m_\si m_\tau=\sum_\rho c^\rho_{\si\tau}m_\rho.
\end{equation*}
Note that $c^\rho_{\si\tau}$ can be nonzero only if $|\rho|=|\si|+|\tau|$.
Here $\rho$, $\sigma$, and $\tau$ are ordinary partitions.

Now we are in a position to compute $U\g_\rho$ and $D\g_\rho$.
\begin{lemma}\label{p4.6}
	With the notation introduced above we have
	\begin{equation*}
		\begin{array}{rcl}
			\displaystyle
			U\g_\rho&=&\displaystyle
			\sum_{\si,\tau\colon
			|\si|+|\tau|=|\rho|}\frac12
			c^\rho_{\si\tau}\Big\langle\big(x(x+1)+\al\big)\cdot
			a_\si\big(x(x+1)\big)\Big\rangle^\uparrow
			\g_\tau;
			\\\displaystyle
			D\g_\rho&=& \displaystyle\sum_{\si,\tau\colon
			|\si|+|\tau|=|\rho|}\frac12
			c^\rho_{\si\tau}
			\Big\langle b_\si\big(y(y+1)\big)\Big\rangle^\downarrow\g_\tau.
		\end{array}
	\end{equation*}
\end{lemma}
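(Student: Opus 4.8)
The plan is to read off $U\g_\rho$ and $D\g_\rho$ from the generating-series identities (\ref{f60}) by expanding both sides in the linear basis $\{m_\rho(v_1^{-1},v_2^{-1},\dots)\}$ of symmetric functions in the variables $v_l^{-1}$, and then matching coefficients. I would treat $U$ first; the case of $D$ is entirely parallel. Since $U\colon\Gamma\to\Gamma$ is $\mathbb{R}$-linear and (\ref{f55}) expands $\Phi(v_1)\Phi(v_2)\dots$ as $\sum_\rho m_\rho(v_1^{-1},v_2^{-1},\dots)\g_\rho$ with coefficients in $\Gamma$, the left-hand side of the first line of (\ref{f60}) becomes $\sum_\rho m_\rho(v_1^{-1},v_2^{-1},\dots)\,U\g_\rho$. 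It remains to expand the right-hand side $F^\uparrow(v_1,v_2,\dots)\,\Phi(v_1)\Phi(v_2)\dots$ in the same basis.

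For this I would start from the definition (\ref{f57}) of $F^\uparrow$ and substitute the expansion (\ref{f63}) of the product over $l$. As $X(\la)$ is finite, the sum over $x$ and the formal sum over $\si$ may be interchanged, giving
\begin{equation*}
	F^\uparrow(v_1,v_2,\dots;\la)=\sum_\si m_\si(v_1^{-1},v_2^{-1},\dots)\sum_{x\in X(\la)}\frac{x(x+1)+\al}{2}\,a_\si\big(x(x+1)\big)\,\trp_x^\uparrow(\la).
\end{equation*}
By Proposition \ref{p3.7} together with the definition (\ref{f64}) of $\langle\cdot\rangle^\uparrow$, the inner sum over $x$ is exactly $\frac12\big\langle(x(x+1)+\al)\,a_\si(x(x+1))\big\rangle^\uparrow\in\Gamma$. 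Multiplying the resulting series by $\sum_\tau m_\tau(v_1^{-1},v_2^{-1},\dots)\g_\tau$ and using $m_\si m_\tau=\sum_\rho c^\rho_{\si\tau}m_\rho$, the coefficient of $m_\rho(v_1^{-1},v_2^{-1},\dots)$ on the right equals $\sum_{\si,\tau}\frac12 c^\rho_{\si\tau}\big\langle(x(x+1)+\al)a_\si(x(x+1))\big\rangle^\uparrow\g_\tau$, a sum in which only the terms with $|\si|+|\tau|=|\rho|$ survive. Equating this with $U\g_\rho$ gives the first formula. Repeating the argument with (\ref{f59}), the coefficients $b_\si$ from (\ref{f63}), and the map $\langle\cdot\rangle^\downarrow$ from (\ref{f64}) produces the formula for $D\g_\rho$.

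The only genuine obstacle is justifying the step of matching coefficients of $m_\rho$. The identities above live in $\Gamma\big[[v_1^{-1},v_2^{-1},\dots]\big]$ for an arbitrary finite number of variables $v_l$, and the monomial symmetric functions $m_\rho$ with $\ell(\rho)$ not exceeding the number of variables are linearly independent over $\Gamma$ (cf. Remark \ref{p4.3}); hence for each fixed $\rho$ the coefficient $U\g_\rho$ is unambiguously determined once at least $\ell(\rho)$ variables are used, and one checks that the extracted value does not depend on how many are taken. Finally, since $c^\rho_{\si\tau}\ne0$ forces $|\si|+|\tau|=|\rho|$, each extracted sum is finite, which simultaneously re-confirms that $U\g_\rho$ and $D\g_\rho$ indeed lie in $\Gamma$, in agreement with Lemma \ref{p4.4}.
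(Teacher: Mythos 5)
Your proposal is correct and follows essentially the same route as the paper: both expand the identities (\ref{f60}) in the basis $\{m_\rho(v_1^{-1},v_2^{-1},\dots)\}$, identify the coefficients $F_\si^\uparrow$ and $F_\si^\downarrow$ of $F^\uparrow$ and $F^\downarrow$ via (\ref{f57}), (\ref{f59}), (\ref{f63}) together with Proposition \ref{p3.7} and the definition (\ref{f64}) of $\langle\cdot\rangle^\uparrow$, $\langle\cdot\rangle^\downarrow$, and then match coefficients using the structure constants $c^\rho_{\si\tau}$. Your explicit justification of the coefficient-matching step (linear independence of the $m_\rho$ with $\ell(\rho)$ at most the number of variables, and stability under changing that number) is a detail the paper leaves implicit, but it does not change the argument.
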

\begin{proof}
	Let us write 
	\begin{equation*}
		\begin{array}{rcll}
			\displaystyle
			F^\uparrow(v_1,v_2,\dots)&=& \displaystyle\sum_\sigma F_\si^\uparrow \cdot
			m_\si(v_1^{-1},v_2^{-1},\dots),&\quad F_\si^\uparrow\in\Gamma;
			\\
			\displaystyle
			F^\downarrow(v_1,v_2,\dots)&=& \displaystyle\sum_\si F_\si^\downarrow \cdot
			m_\si(v_1^{-1},v_2^{-1},\dots),&\quad F_\si^\downarrow\in\Gamma,
		\end{array}
	\end{equation*}
	where sums are taken over all ordinary partitions $\si$.

	Using (\ref{f55}), we get 
	\begin{equation*}
		\begin{array}{l}
			\displaystyle
			\sum_\rho m_\rho(v_1^{-1},v_2^{-1},\dots)
			U\g_\rho\\
			\displaystyle\qquad
			=
			\bigg( \sum_\si 
			F_\si^\uparrow m_\si(v_1^{-1},v_2^{-1},\dots)\bigg)
			\bigg(\sum_\tau m_\tau(v_1^{-1},v_2^{-1},\dots)\g_\tau\bigg),
		\end{array}
	\end{equation*}
	which implies
	\begin{equation*}
		U\g_\rho=
		\sum_{\si,\tau\colon|\si|+|\tau|=|\rho|}c^\rho_{\si\tau}F_\si^\uparrow\g_\tau.
	\end{equation*}
	Similarly we obtain
	\begin{equation*}
		D\g_\rho=
		\sum_{\si,\tau\colon|\si|+|\tau|=|\rho|}c^{\rho}_{\si\tau}F_\si^\downarrow\g_\tau.
	\end{equation*}
	The facts that 
	\begin{equation*}
		F_\si^\uparrow=\Big\langle\frac12\big(x(x+1)+\al\big)\cdot
		a_\si\big(x(x+1)\big)\Big\rangle^\uparrow;\qquad
		F_\si^\downarrow=\Big\langle
		\frac12b_\si\big(y(y+1)\big)\Big\rangle^\downarrow
	\end{equation*}
	follow directly from (\ref{f57}), (\ref{f59}) and (\ref{f63}).
\end{proof}

\subsection{The operator $D$ in differential form}\label{s4.4}
In this subsection we prove claim {\rm{(1)}\/} of Theorem
\ref{p4.2}, that is, compute the top degree terms of the operator $D\colon\Gamma\to\Gamma$.

By virtue of Lemma \ref{p4.6}, we can write
\begin{equation}\label{f65}
	D=\sum_\si D_\si,\qquad
	D_\si\g_\rho:=\sum_{\tau\colon|\tau|=|\rho|-|\si|}
	\frac12
	\Big\langle b_\si\big(y(y+1)\big)\Big\rangle^\downarrow
	c^\rho_{\si\tau}\g_\tau.
\end{equation}
\begin{lemma}\label{p4.7}
	Let $\si$ be a nonempty ordinary partition. 
	Then
	\begin{equation*}
		\deg D_\si\le
		\max_{\rho,\tau}(\ell(\rho)-\ell(\tau)-2\ell(\si)+1),
	\end{equation*}
	where the maximum is taken over all pairs $(\rho,\tau)$
	such that $c^\rho_{\si\tau}\ne0$.

	A more rough but simpler estimate is
	\begin{equation*}
		\deg D_\si\le-\ell(\si)+1.
	\end{equation*}
\end{lemma}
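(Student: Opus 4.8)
The plan is to reduce the abstract estimate to an explicit degree count, exploiting the fact that $\langle b_\si(\,\cdot\,)\rangle^\downarrow$ collapses to a scalar multiple of a single generator $\gh_m$, whose filtration degree we can read off directly from Corollary \ref{p3.5}.

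First I would compute $b_\si(\xi)$ in closed form. By Lemma \ref{p4.5}, for a nonempty partition $\si$ each factor is $b_{\si_i}(\xi)=-2(2\si_i-1)\xi^{\si_i-1}$, so their product is the single monomial
\[
	b_\si(\xi)=(-2)^{\ell(\si)}\Big(\prod_{i}(2\si_i-1)\Big)\,\xi^{\,|\si|-\ell(\si)}
\]
of degree $|\si|-\ell(\si)$ in $\xi$. Applying the linear map $\langle\,\cdot\,\rangle^\downarrow$ from (\ref{f64}), which sends $\xi^{m}\mapsto\gh_{m+1}$, I get $\langle b_\si(y(y+1))\rangle^\downarrow$ equal to a nonzero scalar times $\gh_{\,|\si|-\ell(\si)+1}$ (the index is $\ge1$ since $\si\ne\varnothing$). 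By Corollary \ref{p3.5} its filtration degree is exactly $2(|\si|-\ell(\si)+1)-1=2|\si|-2\ell(\si)+1$.

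Next I would estimate degrees termwise in the definition (\ref{f65}) of $D_\si$. Since $\Gamma$ is filtered, $\deg\g_\tau\le 2|\tau|-\ell(\tau)$ and the degree of a product is at most the sum of degrees, so for $|\tau|=|\rho|-|\si|$ each summand $\langle b_\si\rangle^\downarrow c^\rho_{\si\tau}\g_\tau$ has degree at most $(2|\si|-2\ell(\si)+1)+(2(|\rho|-|\si|)-\ell(\tau))=2|\rho|-2\ell(\si)+1-\ell(\tau)$. Subtracting $\deg\g_\rho=2|\rho|-\ell(\rho)$ gives the bound $\ell(\rho)-\ell(\tau)-2\ell(\si)+1$ for that term, and since $\{\g_\rho\}$ is a basis (so the degree of a linear combination is the maximum of the degrees of its terms) it suffices to test the operator on each $\g_\rho$; taking the maximum over all $(\rho,\tau)$ with $c^\rho_{\si\tau}\ne0$ yields the first claimed estimate.

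Finally, the rougher bound follows from a purely combinatorial property of the structure constants: $c^\rho_{\si\tau}\ne0$ forces $\ell(\rho)\le\ell(\si)+\ell(\tau)$. This is immediate from $m_\si m_\tau=\sum_\rho c^\rho_{\si\tau}m_\rho$, because every monomial occurring on the left is the product of a monomial of $m_\si$ (supported on $\ell(\si)$ variables) and one of $m_\tau$ (supported on $\ell(\tau)$ variables), so its support, hence $\ell(\rho)$, uses at most $\ell(\si)+\ell(\tau)$ variables. Thus $\ell(\rho)-\ell(\tau)\le\ell(\si)$, and substituting into the first estimate gives $\ell(\rho)-\ell(\tau)-2\ell(\si)+1\le-\ell(\si)+1$. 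The only step needing genuine care is this length inequality for the monomial expansion; the rest is bookkeeping of filtration degrees, so I expect no serious obstacle.
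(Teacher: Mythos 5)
Your proposal is correct and follows essentially the same route as the paper's proof: collapse $\big\langle b_\si\big(y(y+1)\big)\big\rangle^\downarrow$ to a nonzero scalar multiple of $\gh_{|\si|-\ell(\si)+1}$ of filtration degree $2|\si|-2\ell(\si)+1$, do the degree bookkeeping on the basis elements $\g_\rho$ to get the first estimate, and use $c^\rho_{\si\tau}\ne0\Rightarrow\ell(\rho)\le\ell(\si)+\ell(\tau)$ for the rougher one. Your only addition is a short justification (via supports of monomials in $m_\si m_\tau$) of that length inequality, which the paper states without proof.
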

\begin{proof}
	We have
	\begin{equation*}
		\begin{array}{l}
			\displaystyle
			\deg D_\si\le
			\max_{\rho,\tau}\big(\deg
			\left\langle b_\si\big(y(y+1)\big)\right\rangle^\downarrow
			+\deg\g_\tau-\deg\g_\rho\big)\\
			\displaystyle\qquad
			=\max_{\rho,\tau}\big(\deg
			\left\langle b_\si\big(y(y+1)\big)\right\rangle^\downarrow
			+2|\tau|-\ell(\tau)-2|\rho|+\ell(\rho)\big)\\
			\displaystyle\qquad
			\le
			\max_{\rho,\tau}\big(\deg\left\langle 
			b_\si\big(y(y+1)\big)\right\rangle^\downarrow
			-2|\si|-\ell(\tau)+\ell(\rho)\big),
		\end{array}
	\end{equation*}
	where maximums are taken over all pairs of ordinary partitions 
	$(\rho,\tau)$ such that $c^\rho_{\si\tau}\ne0$.
	The first line follows from the very definition
	of $\deg D_\si$, the second line holds 
	because $\deg\g_m=2m-1$, $m=1,2,\dots$
	(Corollary \ref{p3.5}), and therefore
	\begin{equation*}
		\deg\g_\rho=2|\rho|-\ell(\rho),\qquad \deg\g_\tau=2|\tau|-\ell(\tau)
	\end{equation*}
	for any ordinary partitions $\rho$ and $\tau$, and
	the
	third line holds because $c^{\rho}_{\si\tau}\ne0$ 
	only if $|\rho|=|\si|+|\tau|$.

	By assumption, $\si$ is nonempty, therefore, $\ell(\si)\ge1$. 
	Set $\si=(\si_1,\dots,\si_{\ell(\si)})$, where $\si_i\ge1$ for $i=1,\dots,\ell(\si)$,
	and write $\left\langle b_\si\big(y(y+1)\big)\right\rangle^\downarrow$ in more detail:
	\begin{equation*}
		\begin{array}{l}
			\displaystyle
			\left\langle b_\si\big(y(y+1)\big)\right\rangle^\downarrow
			\\\displaystyle\qquad=
			\left\langle \prod_{i=1}^{\ell(\si)} 
			b_{\si_i}\big(y(y+1)\big)\right\rangle^\downarrow
			=\left\langle \prod_{i=1}^{\ell(\si)}
			-2(2\si_i-1)\big(y(y+1)\big)^{\si_i-1}\right\rangle^\downarrow.
		\end{array}
	\end{equation*}
	We see that the polynomial $b_\si\big(y(y+1)\big)$ has degree $|\si|-\ell(\si)$
	in $y(y+1)$, therefore $\left\langle b_\si\big(y(y+1)\big)\right\rangle^\downarrow$
	is equal, within a nonzero scalar factor, to $\gh_{|\si|-\ell(\si)+1}$
	(Proposition \ref{p3.7}).
	Note that $\deg\gh_{|\si|-\ell(\si)+1}=2|\si|-2\ell(\si)+1$,
	therefore,
	\begin{equation*}
		\deg D_\si\le\max_{\rho,\tau}\big(2|\si|-2\ell(\si)+1-2|\si|-\ell(\tau)+\ell(\rho)\big),
	\end{equation*}
	which is the first estimate. To prove the second one, 
	observe that $c^\rho_{\si\tau}\ne0$ implies $\ell(\rho)\le\ell(\si)+\ell(\tau)$.
\end{proof}
From the second estimate of the above lemma follows a
\begin{corollary}\label{p4.8}
	If $\ell(\si)\ge3$, then $\deg D_\si\le-2$.
\end{corollary}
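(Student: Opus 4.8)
The plan is to read off the conclusion directly from the second (cruder) estimate already established in Lemma \ref{p4.7}. That lemma asserts, for any nonempty ordinary partition $\si$, the bound
\begin{equation*}
	\deg D_\si\le-\ell(\si)+1.
\end{equation*}
Since all the substantive work---tracking the degree of $\big\langle b_\si(y(y+1))\big\rangle^\downarrow$ through Proposition \ref{p3.7}, using $\deg\g_\rho=2|\rho|-\ell(\rho)$ and $\deg\gh_m=2m-1$, and invoking $\ell(\rho)\le\ell(\si)+\ell(\tau)$ when $c^\rho_{\si\tau}\ne0$---has been carried out in the proof of that lemma, nothing new needs to be proved here.

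First I would simply instantiate the estimate under the hypothesis $\ell(\si)\ge3$. Substituting gives
\begin{equation*}
	\deg D_\si\le-\ell(\si)+1\le-3+1=-2,
\end{equation*}
which is exactly the assertion. There is no residual calculation and no case analysis.

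Because of this, there is effectively no main obstacle: the corollary is an immediate numerical specialization of an inequality that is already in hand. The only point worth noting is that one must use the \emph{second}, rougher estimate of Lemma \ref{p4.7} rather than the first (sharper) one, since it is the former that packages the combinatorial constraint $\ell(\rho)\le\ell(\si)+\ell(\tau)$ into a clean dependence on $\ell(\si)$ alone; the sharper estimate, involving a maximum over admissible pairs $(\rho,\tau)$, would require additional bookkeeping to reach the same conclusion and is unnecessary for this purpose.
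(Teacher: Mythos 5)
Your proposal is correct and matches the paper exactly: the corollary is stated there as an immediate consequence of the second (rougher) estimate $\deg D_\si\le-\ell(\si)+1$ of Lemma \ref{p4.7}, and your substitution $-\ell(\si)+1\le-3+1=-2$ is the entire argument. Your remark about preferring the rough estimate over the sharper one is also consistent with the paper's own phrasing ("From the second estimate of the above lemma follows").
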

By this corollary, it suffices to examine 
the operators $D_\si$ with $\ell(\si)=0$ (that is, $\si=\varnothing$),
$\ell(\si)=2$, and $\ell(\si)=1$.
The next three lemmas 
consider these cases consequently.
\begin{lemma}\label{p4.9}
	$D_\varnothing=\displaystyle\frac12\g_1$.
\end{lemma}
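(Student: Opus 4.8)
The plan is to read off $D_\varnothing$ directly from the defining formula (\ref{f65}) with $\si=\varnothing$ and to recognize it as multiplication by $\frac12\g_1$. First I would note that $b_\varnothing$ is an empty product, so $b_\varnothing\big(y(y+1)\big)\equiv1$; and since $1=\big(y(y+1)\big)^0$, the definition (\ref{f64}) of the map $\langle\,\cdot\,\rangle^\downarrow$ at $m=0$ gives $\big\langle b_\varnothing\big(y(y+1)\big)\big\rangle^\downarrow=\gh_1$. Invoking (\ref{f54}), which asserts $\gh_1=\g_1=2|\la|$ as elements of $\Gamma$, this angle bracket is exactly $\g_1$.

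The second ingredient is the behaviour of the structure constants $c^\rho_{\varnothing\tau}$. Since $m_\varnothing=1$, one has $m_\varnothing m_\tau=m_\tau$, whence $c^\rho_{\varnothing\tau}=\delta_{\rho\tau}$. Substituting both observations into (\ref{f65}) with $\si=\varnothing$ (so that $\tau$ ranges over the partitions with $|\tau|=|\rho|$) collapses the sum to a single surviving term:
\begin{equation*}
	D_\varnothing\g_\rho=\frac12\,\g_1\sum_{\tau\colon|\tau|=|\rho|}\delta_{\rho\tau}\,\g_\tau=\frac12\,\g_1\g_\rho.
\end{equation*}

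Since $\left\{ \g_\rho \right\}_\rho$ is a linear basis for $\Gamma$ and $D_\varnothing$ agrees with multiplication by $\frac12\g_1$ on every basis element, the two operators coincide; that is, $D_\varnothing=\frac12\g_1$. I do not expect any real obstacle here: the statement is immediate once the definitions are unwound. The only points requiring care are the convention $\g_0=1$ (equivalently the $m=0$ case of (\ref{f64})) and the identification $\gh_1=\g_1$ supplied by (\ref{f54}); overlooking the latter would leave the answer phrased in terms of $\gh_1$ rather than the desired $\g_1$.
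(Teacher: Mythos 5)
Your proof is correct and follows essentially the same route as the paper's: with $\si=\varnothing$ one gets $\tau=\rho$, $c^\rho_{\varnothing\tau}=1$, and $\langle 1\rangle^\downarrow=\gh_1$, which equals $\g_1$ (the paper cites Proposition \ref{p3.4}, you cite (\ref{f54}), which records the same identity). No gaps.
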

\begin{proof}
	Suppose that $\si=\varnothing$ in (\ref{f65}). 
	Then
	$\tau=\rho$ and $c^\rho_{\si\tau}=1$.
	Moreover, since $b_\si=1$, then
	$\left\langle b_\si\big(y(y+1)\big)\right\rangle^\downarrow=\langle1\rangle^\downarrow=\gh_1$.
	By virtue of Proposition \ref{p3.4}, $\gh_1=\g_1$. 
	This concludes the proof.
\end{proof}
\begin{lemma}\label{p4.10}
	\begin{equation*}
		\sum_{\si\colon\ell(\si)=2}D_\si=
		\sum_{r,s\ge1}(2r-1)(2s-1)\g_{r+s-1}
		\frac{\partial^2}{\partial\g_r\partial\g_s}{}+{}
		\mbox{\rm{}operators of degree $\le-2$}.
	\end{equation*}
\end{lemma}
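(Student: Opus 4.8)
The plan is to extract the degree-$(-1)$ part of $\sum_{\si\colon\ell(\si)=2}D_\si$ from the decomposition (\ref{f65}) and match it against the stated second-order operator, exactly in the spirit of Lemma \ref{p4.9}. First I would fix $\si=(\si_1,\si_2)$ with $\si_1\ge\si_2\ge1$ and evaluate the scalar factor $\frac12\langle b_\si(y(y+1))\rangle^\downarrow$. By Lemma \ref{p4.5} one has $b_{\si_i}(\eta)=-2(2\si_i-1)\eta^{\si_i-1}$, whence $b_\si(\eta)=4(2\si_1-1)(2\si_2-1)\eta^{\si_1+\si_2-2}$ with $\eta=y(y+1)$; applying (\ref{f64}) and then Proposition \ref{p3.4} gives
\[
\tfrac12\big\langle b_\si(y(y+1))\big\rangle^\downarrow=2(2\si_1-1)(2\si_2-1)\gh_{\si_1+\si_2-1}=2(2\si_1-1)(2\si_2-1)\g_{\si_1+\si_2-1}+(\text{lower degree}),
\]
the last step because $\gh_k=\g_k+(\text{products of lower-degree generators})$. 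So, modulo lower-degree terms, $D_\si$ acts as multiplication of the linear map $\g_\rho\mapsto\sum_\tau c^\rho_{\si\tau}\g_\tau$ by the monomial $2(2\si_1-1)(2\si_2-1)\g_{\si_1+\si_2-1}$.

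Next I would identify the top-degree part of that skew-multiplication. Since $\deg\g_\kappa=2|\kappa|-\ell(\kappa)$ (Corollary \ref{p3.5}) and $c^\rho_{\si\tau}\ne0$ forces $|\tau|=|\rho|-|\si|$ and $\ell(\rho)\le\ell(\tau)+2$, the degree change $-2|\si|+\ell(\rho)-\ell(\tau)$ is largest precisely when $\ell(\rho)=\ell(\tau)+2$. Examining the product $m_\si m_\tau$ of monomial symmetric functions, a term $x^\rho$ with $\ell(\rho)=\ell(\tau)+2$ nonzero exponents can arise only when the two variables supporting $\si$ are disjoint from those supporting $\tau$; hence the unique contributing partition is $\tau=\rho$ with the two parts $\si_1,\si_2$ deleted, and a direct count of the factorizations $x^\rho=x^\alpha x^\beta$ yields $c^\rho_{\si\tau}=m_{\si_1}(\rho)\,m_{\si_2}(\rho)$ when $\si_1>\si_2$, and $c^\rho_{\si\tau}=\binom{m_a(\rho)}{2}$ when $\si_1=\si_2=a$, where $m_j(\rho)$ denotes the multiplicity of the part $j$ in $\rho$.

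Finally I would apply both operators to an arbitrary basis monomial $\g_\rho$. On the differential side, $\frac{\partial^2}{\partial\g_r\partial\g_s}$ deletes a pair of parts equal to $r$ and $s$, contributing $m_r(\rho)m_s(\rho)$ for $r\ne s$ and $m_r(\rho)(m_r(\rho)-1)$ for $r=s$, after which $\g_{r+s-1}$ restores the degree. For an unordered pair of distinct values $\{\si_1,\si_2\}$ the two ordered pairs $(r,s)=(\si_1,\si_2),(\si_2,\si_1)$ together give $2(2\si_1-1)(2\si_2-1)m_{\si_1}(\rho)m_{\si_2}(\rho)$, which is exactly the leading action of $D_\si$; for a repeated value the single ordered pair $(a,a)$ gives $(2a-1)^2 m_a(\rho)(m_a(\rho)-1)=2(2a-1)^2\binom{m_a(\rho)}{2}$, again matching, since $\g_{r+s-1}=\g_{\si_1+\si_2-1}$ in both cases. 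Thus the degree-$(-1)$ parts agree, while the discarded pieces---the lower-degree tail of $\gh_{\si_1+\si_2-1}$ together with the contributions having $\ell(\rho)-\ell(\tau)<2$---are of degree $\le-2$ by Lemma \ref{p4.7}. I expect the main obstacle to be the clean evaluation of the structure constants $c^\rho_{\si\tau}$ on this top-degree stratum and, relatedly, the factor-of-two bookkeeping between the unordered index $\si$ and the ordered summation over $(r,s)$, which is precisely where the diagonal case $\si_1=\si_2$ must be reconciled with the identity $2\binom{m}{2}=m(m-1)$.
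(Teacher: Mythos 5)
Your proposal is correct and follows the paper's own route step for step: the decomposition (\ref{f65}), Lemma \ref{p4.5} to evaluate $b_\si$ and get the factor $2(2\si_1-1)(2\si_2-1)\gh_{\si_1+\si_2-1}$, Lemma \ref{p4.7} to discard all contributions with $\ell(\rho)-\ell(\tau)<2$, and Proposition \ref{p3.4} to trade $\gh_k$ for $\g_k$ at the cost of degree $\le-2$ terms. The only difference is that where the paper simply asserts the identity (\ref{f68}), you prove it, by computing $c^\rho_{\si\tau}=m_{\si_1}(\rho)m_{\si_2}(\rho)$ (resp. $\binom{m_a(\rho)}{2}$ in the diagonal case) and matching this against the action of $\partial^2/\partial\g_r\partial\g_s$ with the correct factor-of-two bookkeeping --- a welcome unpacking, but not a different argument.
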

\begin{proof}
	Suppose that in (\ref{f65}) we have
	$\ell(\si)=2$, that is,
	$\si=(\si_1,\si_2)$, $\si_1\ge\si_2\ge1$.
	Lemma \ref{p4.7} shows that it must be 
	$\ell(\rho)=\ell(\tau)+2$, otherwise 
	the corresponding distribution to $D_\si$ has degree $\le-2$.
	This means that 
	\begin{equation*}
		\si_1=\rho_i,\quad \si_2=\rho_j,\qquad \tau=
		\left( \rho_1,\dots,\rho_{i-1},\rho_{i+1},\dots,\rho_{j-1},\rho_{j+1},\dots,\rho_{\ell(\rho)} \right)
	\end{equation*}
	for some $1\le i<j\le\ell(\rho)$.
	Using Lemma \ref{p4.5}, we get
	\begin{equation*}
		\big\langle b_\si\left( y\big(y+1\big) \right)\big\rangle^\downarrow=
		4(2\si_1-1)(2\si_{2}-1)\gh_{\si_1+\si_2-1}.
	\end{equation*}
	It follows that
	\begin{equation*}
		\left( \sum_{\si\colon\ell(\si)=2}D_\si \right)
		\g_\rho=
		\sum_{1\le i<j\le\ell(\rho)}
		2(2\rho_i-1)(2\rho_j-1)
		c^{\rho}_{\si\tau}
		\gh_{\rho_i+\rho_j-1}
		\g_{\rho\setminus\left\{ \rho_i,\rho_j \right\}}.
	\end{equation*}
	Note that for such $\rho$, $\si$, and $\tau$ as described above we have
	\begin{equation}\label{f68}
		c^{\rho}_{\si\tau}\g_{\rho\setminus\left\{ \rho_i,\rho_j \right\}}
		=\left\{
		\begin{array}{ll}
			\displaystyle
			\frac{\partial^2\g_{\rho}}{\partial\g_{\rho_i}\partial\g_{\rho_j}},
			&\text{if $\rho_i\ne\rho_j$};\\
			\rule{0pt}{24pt}
			\displaystyle
			\frac12\frac{\partial^2\g_{\rho}}{\partial\g_{\rho_i}^2},&
			\text{if $\rho_i=\rho_j$}.
		\end{array}
		\right.
	\end{equation}
	It follows that we can write
	\begin{equation*}
		\begin{array}{l}
			\displaystyle
			\sum_{\si\colon\ell(\si)=2}D_\si=
			\sum_{r_1>r_2\ge1}2(2r_1-1)(2r_2-1)\gh_{r_1+r_2-1}
			\frac{\partial^2}{\partial\g_{r_1}\g_{r_2}}
			\\
			\displaystyle\qquad
			+\frac12\sum_{r\ge1}2(2r-1)^2\gh_{2r-1}\frac{\partial^2}{\partial\g_r^2}.
		\end{array}
	\end{equation*}
	Using Proposition \ref{p3.4}, we can substitute each of $\gh_k$'s
	above by $\g_k$.
	Indeed, since
	$\gh_k=\g_k+\mbox{terms of degree $\le 2k-2$}$,
	$k=1,2,\dots$,
	then 
	replacing $\gh_k$ by $\g_k$ affects 
	only negligible terms (that is, summands of degree $\le-2$
	in the expression for the operator $D$).
	The result of this substitution is the desired expression.
\end{proof}
\begin{lemma}\label{p4.11}
	\begin{equation*}
		\begin{array}{l}
			\displaystyle
			\sum_{\si\colon\ell(\si)=1}D_\si=
			-\sum_{r\ge1}(2r-1)\g_r\frac{\partial}{\partial\g_r}
			\\\displaystyle\qquad
			+\sum_{r,s\ge1}(r+s)\g_r\g_s\frac\partial{\partial\g_{r+s}}
			{}+{}
			\mbox{\rm{}operators of degree $\le-2$}.
		\end{array}
	\end{equation*}
\end{lemma}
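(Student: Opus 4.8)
The plan is to follow the template of Lemmas \ref{p4.9} and \ref{p4.10}, now applying the decomposition (\ref{f65}) to the one-part partitions $\si=(s)$, $s\ge1$. By Lemma \ref{p4.5} the polynomial $b_{(s)}(\xi)=-2(2s-1)\xi^{s-1}$ is \emph{exact}, and the convention (\ref{f64}) gives $\langle(y(y+1))^{s-1}\rangle^\downarrow=\gh_s$ exactly, so $\tfrac12\langle b_{(s)}(y(y+1))\rangle^\downarrow=-(2s-1)\gh_s$. Hence for every ordinary partition $\rho$,
\begin{equation*}
	D_{(s)}\g_\rho=-(2s-1)\,\gh_s\sum_{\tau\colon|\tau|=|\rho|-s}c^\rho_{(s)\tau}\,\g_\tau,
\end{equation*}
where $\gh_s$ acts by multiplication.

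First I would identify the structure constants $c^\rho_{(s)\tau}$, i.e.\ the expansion of $m_{(s)}m_\tau=p_s\,m_\tau$ in the monomial basis; this is the one-part analogue of (\ref{f68}). For a fixed $\rho$ exactly two families of $\tau$ occur: deleting a part equal to $s$, giving $\tau=\rho\setminus\{s\}$ with $\ell(\tau)=\ell(\rho)-1$ and $c^\rho_{(s)\tau}=m_s(\rho)$; and lowering a part $p>s$ to $p-s$, giving $\tau=\rho-\{p\}+\{p-s\}$ with $\ell(\tau)=\ell(\rho)$ and $c^\rho_{(s)\tau}=m_p(\rho)$. Here $m_p(\rho)$ is the number of parts of $\rho$ equal to $p$, and $m_p(\rho)\,\g_{\rho\setminus\{p\}}=\pd\g_\rho/\pd\g_p$. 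The two families therefore package into the exact operator identity
\begin{equation*}
	\sum_{\tau}c^\rho_{(s)\tau}\,\g_\tau=\Bigl(\frac{\pd}{\pd\g_s}+\sum_{r\ge1}\g_r\frac{\pd}{\pd\g_{r+s}}\Bigr)\g_\rho,
\end{equation*}
so that $D_{(s)}=-(2s-1)\,\gh_s\bigl(\pd/\pd\g_s+\sum_{r\ge1}\g_r\,\pd/\pd\g_{r+s}\bigr)$ on all of $\Gamma$. Lemma \ref{p4.7} confirms that the deletion family contributes degree $0$ and the lowering family degree $-1$.

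The delicate point---and the step I expect to be the genuine obstacle---is the degree truncation. The leading term $-(2s-1)\g_s\,\pd/\pd\g_s$ has degree $0$, so, unlike in Lemma \ref{p4.10}, one may \emph{not} simply replace $\gh_s$ by $\g_s$: the subleading part of $\gh_s$ lies only one degree below the top and so still feeds the degree $-1$ output. Using Proposition \ref{p3.4} I would write $\gh_s=\g_s-\sum_{k=1}^{s-1}\g_k\g_{s-k}+(\text{terms of degree}\le2s-3)$, substitute, and keep everything of degree $\ge-1$ (the degree $\le-2$ remainder is discarded, the contributions with $\ell(\si)\ge3$ having already been removed by Corollary \ref{p4.8}). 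A bookkeeping of degrees leaves three surviving pieces: $-(2s-1)\g_s\,\pd/\pd\g_s$ of degree $0$; $-(2s-1)\sum_{r\ge1}\g_s\g_r\,\pd/\pd\g_{r+s}$ of degree $-1$, from the top of $\gh_s$ against the lowering operator; and $+(2s-1)\sum_{k=1}^{s-1}\g_k\g_{s-k}\,\pd/\pd\g_s$ of degree $-1$, from the subleading part of $\gh_s$ against the deletion operator.

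Finally I would sum over $s\ge1$ and reconcile coefficients. The degree $0$ pieces assemble to $-\sum_{s\ge1}(2s-1)\g_s\,\pd/\pd\g_s$, the first asserted term (which acts on $\g_\rho$ as multiplication by $-(2|\rho|-\ell(\rho))=-\deg\g_\rho$, a reassuring consistency check for a down operator). For the degree $-1$ pieces I would reindex the third family by $r=k$, $t=s-k$, turning it into $\sum_{r,t\ge1}(2r+2t-1)\g_r\g_t\,\pd/\pd\g_{r+t}$, and symmetrize the second family in $r\leftrightarrow s$, replacing its coefficient $(2s-1)$ by $(r+s-1)$ since $\g_r\g_s\,\pd/\pd\g_{r+s}$ is symmetric. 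Their sum is $\sum_{r,s\ge1}\bigl[(2r+2s-1)-(r+s-1)\bigr]\g_r\g_s\,\pd/\pd\g_{r+s}=\sum_{r,s\ge1}(r+s)\g_r\g_s\,\pd/\pd\g_{r+s}$, which is exactly the second asserted term.
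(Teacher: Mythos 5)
Your proof is correct and takes essentially the same route as the paper's: the same decomposition (\ref{f65}) restricted to $\si=(s)$, the same two families of $\tau$ (deleting a part equal to $s$ versus lowering a part $r+s$ to $r$), and the same substitution $\gh_s=\g_s-\sum_{k=1}^{s-1}\g_k\g_{s-k}+(\text{terms of degree}\le 2s-3)$ from Proposition \ref{p3.4}, correctly keeping the subleading part of $\gh_s$ where it feeds the degree $-1$ output. Your closed form $D_{(s)}=-(2s-1)\,\gh_s\bigl(\pd/\pd\g_s+\sum_{r\ge1}\g_r\,\pd/\pd\g_{r+s}\bigr)$ and the explicit reindexing/symmetrization at the end merely spell out the ``simple transformation'' the paper leaves to the reader.
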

\begin{proof}
	Suppose that in (\ref{f65}) we have $\ell(\si)=1$, that is, $\si=(s)$ 
	for some $s\in\N$.

	It follows from Lemma \ref{p4.7} 
	that either $\ell(\rho)=\ell(\tau)+1$, or $\ell(\rho)=\ell(\tau)$.
	Let us examine these cases separately.

	Assume first that $\ell(\rho)=\ell(\tau)+1$. 
	This means that
	$$
		s=\rho_i,\qquad\tau=(\rho_1,\dots,\rho_{i-1},\rho_{i+1},\dots,\rho_{\ell(\rho)})
	$$
	for some $1\le i\le \ell(\rho)$.
	Using Lemma \ref{p4.5}, we get
	\begin{equation*}
		\left\langle
		b_\si\big(y(y+1)\big)\right\rangle^\downarrow
		=-2(2s-1)\gh_s.
	\end{equation*}
	Therefore, the case $\ell(\rho)=\ell(\tau)+1$ gives rise to the terms
	\begin{equation*}
		-\sum_{i=1}^{\ell(\rho)}(2\rho_i-1)c^{\rho}_{\si\tau}
		\gh_{\rho_i} \g_{\rho\setminus\left\{ \rho_i \right\}}.
	\end{equation*}
	Note that in this case we have
	$c^{\rho}_{\si\tau}\g_{\rho\setminus\left\{ \rho_i \right\}}=
	\partial\g_\rho/\partial\g_{\rho_i}$, and it follows that we obtain terms
	\begin{equation*}
		-\sum_{r\ge1}(2r-1)\gh_r\frac{\partial}{\partial\g_r}
	\end{equation*}
	contributing to $\sum_{\si\colon\ell(\si)=1}D_\si$.

	Now assume that $\ell(\rho)=\ell(\tau)$. This means that $\tau$
	is obtained from $\rho$ by subtracting $s$ from one of the parts
	$\rho_i$ of $\rho$; moreover, this part $\rho_i=r$ 
	should be $\ge s+1$. 
	This gives rise to the terms
	\begin{equation*}
		-\sum_{\textstyle\genfrac{}{}{0pt}{}{r\ge2}{1\le s\le r-1}}
		(2s-1)\gh_s\g_{r-s}\frac{\partial}{\partial\g_r},
	\end{equation*}

	Finally, we get
	\begin{equation*}
		\sum_{\si\colon\ell(\si)=1}D_\si
		=-\sum_{r\ge1}(2r-1)\gh_r\frac\partial{\partial\g_r}
		-\sum_{\textstyle\genfrac{}{}{0pt}{}{r\ge2}{1\le s\le r-1}}
		(2s-1)\gh_s\g_{r-s}\frac\partial{\partial\g_r}.
	\end{equation*}
	It remains to express $\gh_k$'s above in terms of $\g_i$'s
	using Proposition \ref{p3.4}. 
	In the first sum we should do this as
	\begin{equation*}
		\gh_r\to\g_r-\g_{r-1}\g_1-\dots-\g_1\g_{r-1}{}+{}
		\mbox{terms of degree $\le 2r-2$},\qquad r=1,2,\dots,
	\end{equation*}
	and in the second sum as
	\begin{equation*}
		\gh_s\to\g_s{}+{}\mbox{terms of degree $\le 2s-1$},\qquad s=1,2,\dots.
	\end{equation*}
	It is clear that this substitution affects only negligible terms
	(that is, summands of degree $\le-2$ in the expression for the operator $D$).
	
	To conclude the proof it remains to perform a simple transformation.
\end{proof}
Theorem \ref{p4.2} (1) now follows from
Lemmas \ref{p4.9}, \ref{p4.10} and \ref{p4.11}.

\subsection{The operator $U$ in differential form}\label{s4.5}
Here we prove claim (2)
of Theorem \ref{p4.2}, that is, compute the top degree terms
of the operator $U\colon\Gamma\to\Gamma$.
The argument here is similar to that of the previous subsection.
However, there are two differences which
require us to go through the proof in full detail:

$\bullet$ There is a difference between the expressions for $U\g_\rho$ and $D\g_\rho$ (Lemma \ref{p4.6});

$\bullet$ The behaviour of the degree of $\langle(x(x+1))^{m}\rangle^{\uparrow}=\g_m$ differs from the behaviour
of the degree of $\langle(y(y+1))^{m}\rangle^{\downarrow}=\gh_{m+1}$.
Indeed, the expression $\deg\langle(y(y+1))^{m}\rangle^{\downarrow}=2m+1$
is valid for all $m\in\Nn$, while
the formula $\deg\langle(x(x+1))^{m}\rangle^{\uparrow}=2m-1$ is valid only for $m>0$.

It is convenient to decompose $U$ using Lemma \ref{p4.6} as follows:
\begin{equation*}
	U=\sum_\si(U_\si^0+U_\si^1),
\end{equation*}
where the sum is taken over all ordinary partitions $\si$,
\begin{equation}\label{f69}
	U_\si^0\g_\rho:=
	\frac\al2
	\sum_{\tau\colon|\tau|=|\rho|-|\si|}c^{\rho}_{\si\tau}
	\big\langle a_\si\big(x(x+1)\big)\big\rangle^\uparrow\g_\tau
\end{equation}
and
\begin{equation}\label{f70}
	U_\si^1\g_\rho:=
	\frac12
	\sum_{\tau\colon|\tau|=|\rho|-|\si|}c^{\rho}_{\si\tau}
	\big\langle x(x+1)\cdot a_\si\big(x(x+1)\big)\big\rangle^\uparrow\g_\tau.
\end{equation}

\begin{rmk}\rm{}\label{p8.10}
	This decomposition $U=U^0+U^1$ is similar to the decomposition of the corresponding
	operator $U_{\theta,z,z'}=U^0_{\theta,z,z'}+U^1_{\theta,z,z'}+U^2_{\theta,z,z'}$ 
	in the paper \cite{Olshanski2009}. 
	The operator $U_{\theta,z,z'}$ is constructed using the up
	transition function
	for the $z$-measures on the Young graph with the Jack edge multiplicities
	(here $\theta>0$ is the Jack parameter), see \cite[Thm. 6.1 (ii)]{Olshanski2009}\footnote{Our
	operator $U$ is related to the multiplicative measures on the Schur 
	graph in the same way, see Lemma \ref{p2.10}.}.
	
	Note that our $U$ is expressed as the sum of two operators while 
	the expression for $U_{\theta,z,z'}$
	has three terms. 
	This is because
	the expression \cite[(4.3)]{Olshanski2009} for the up
	transition function for the $z$-measures involves
	terms of degrees zero, one and two in the (anisotropic) Kerov coordinates
	while the expression (\ref{f7}) for 
	the up transition function for the multiplicative measures
	involves only terms of degrees zero and one 
	in $x(x+1)$.\footnote{See 
	also \cite[Lemma 5.11]{Olshanski2009} and Proposition \ref{p3.7} in the present
	paper, respectively.}
\end{rmk}

\begin{lemma}[Cf. Lemma \ref{p4.7}]\label{p4.12}
	If $\si$ is a nonempty ordinary partition, then
	\begin{equation*}
		\deg U_\si^1\le
		\max_{\rho,\tau}(\ell(\rho)-\ell(\tau)-2\ell(\si)+1),
	\end{equation*}
	where the maximum is taken over all pairs $(\rho,\tau)$
	such that $c^{\rho}_{\si\tau}\ne0$.

	A more rough but simpler estimate is
	\begin{equation*}
		\deg U_\si^1\le-\ell(\si)+1.
	\end{equation*}
\end{lemma}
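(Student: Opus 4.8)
The plan is to follow the proof of Lemma~\ref{p4.7} almost verbatim, since the only structural change is the replacement of $\langle b_\si(y(y+1))\rangle^\downarrow$ by $\langle x(x+1)\cdot a_\si(x(x+1))\rangle^\uparrow$ in the definition (\ref{f70}) of $U_\si^1$. First I would bound the degree of $U_\si^1$ by its worst summand. Since the bracket $\langle x(x+1)\cdot a_\si(x(x+1))\rangle^\uparrow$ depends only on $\si$, we have, for every pair $(\rho,\tau)$ with $c^\rho_{\si\tau}\ne0$,
\[
\deg U_\si^1\le\max_{\rho,\tau}\Big(\deg\big\langle x(x+1)\cdot a_\si(x(x+1))\big\rangle^\uparrow+\deg\g_\tau-\deg\g_\rho\Big).
\]
Using $\deg\g_\rho=2|\rho|-\ell(\rho)$ and $\deg\g_\tau=2|\tau|-\ell(\tau)$ (Corollary~\ref{p3.5}), together with the fact that $c^\rho_{\si\tau}\ne0$ forces $|\rho|=|\si|+|\tau|$, the difference $\deg\g_\tau-\deg\g_\rho$ collapses to $-2|\si|-\ell(\tau)+\ell(\rho)$, exactly as in Lemma~\ref{p4.7}.

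The heart of the argument is then to compute $\deg\big\langle x(x+1)\cdot a_\si(x(x+1))\big\rangle^\uparrow$. By Lemma~\ref{p4.5}, for a nonempty $\si=(\si_1,\dots,\si_{\ell(\si)})$ the polynomial $a_\si(\xi)=\prod_i a_{\si_i}(\xi)$ has degree $\sum_i(\si_i-1)=|\si|-\ell(\si)$ in $\xi$ with nonzero leading coefficient, so $x(x+1)\cdot a_\si(x(x+1))$ has degree $|\si|-\ell(\si)+1$ in $x(x+1)$. Applying the bracket map $\langle(x(x+1))^m\rangle^\uparrow=\g_m$ from (\ref{f64}) and the degree formula $\deg\g_m=2m-1$ from Corollary~\ref{p3.5}, I obtain $\deg\langle x(x+1)\cdot a_\si(x(x+1))\rangle^\uparrow=2(|\si|-\ell(\si)+1)-1=2|\si|-2\ell(\si)+1$. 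The one point that needs care---and the only place this differs from Lemma~\ref{p4.7}---is precisely the subtlety flagged before the statement: the identity $\deg\g_m=2m-1$ holds only for $m>0$. Here the extra factor $x(x+1)$ guarantees that the top exponent $m=|\si|-\ell(\si)+1$ is at least $1$ (as $\si$ is nonempty, $|\si|\ge\ell(\si)$), so we never fall into the degenerate case $m=0$, where $\g_0=1$ has degree $0$ rather than $-1$. Note that this degree is identical to that of $\langle b_\si(y(y+1))\rangle^\downarrow$ in Lemma~\ref{p4.7}: the extra $x(x+1)$ in the ``up'' bracket exactly compensates for the index shift $\langle(y(y+1))^m\rangle^\downarrow=\gh_{m+1}$ built into the ``down'' bracket.

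Substituting this degree back yields
\[
\deg U_\si^1\le\max_{\rho,\tau}\big(2|\si|-2\ell(\si)+1-2|\si|-\ell(\tau)+\ell(\rho)\big)=\max_{\rho,\tau}\big(\ell(\rho)-\ell(\tau)-2\ell(\si)+1\big),
\]
which is the first estimate. For the rougher bound I would invoke the elementary fact that $c^\rho_{\si\tau}\ne0$ implies $\ell(\rho)\le\ell(\si)+\ell(\tau)$, hence $\ell(\rho)-\ell(\tau)\le\ell(\si)$, giving $\deg U_\si^1\le-\ell(\si)+1$. I do not anticipate any genuine obstacle: the whole argument is a transcription of Lemma~\ref{p4.7}, and the single new ingredient is the verification that the exponent $m=|\si|-\ell(\si)+1$ stays strictly positive, so that the degree formula for $\g_m$ applies without the degenerate correction.
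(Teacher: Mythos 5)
Your proof is correct and takes essentially the same route as the paper's: the same reduction as in Lemma \ref{p4.7} to bounding $\deg\big\langle x(x+1)\cdot a_\si\big(x(x+1)\big)\big\rangle^\uparrow-2|\si|-\ell(\tau)+\ell(\rho)$, the same computation of that bracket's degree as $2|\si|-2\ell(\si)+1$ via Lemma \ref{p4.5} and Proposition \ref{p3.7}, and the same use of $\ell(\rho)\le\ell(\si)+\ell(\tau)$ for the rougher bound. Your explicit check that the factor $x(x+1)$ keeps the top exponent $|\si|-\ell(\si)+1\ge1$, so that $\deg\g_m=2m-1$ applies and the degenerate case $\g_0=1$ is avoided, is exactly the point the paper covers by observing that the polynomial has degree $\ge1$ in $x(x+1)$.
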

The case $U^0_\si$ will be investigated
separately, see Lemma \ref{p4.14} below.
\begin{proof}
	Arguing as in Lemma \ref{p4.7}, we get the estimate
	\begin{equation*}
		\deg U^1_\si\le
		\max_{\rho,\tau}\left( \Big\langle
		x(x+1)\cdot a_\si\big(x(x+1)\big)\Big\rangle^\uparrow 
		-2|\si|-\ell(\tau)+\ell(\rho)\right).
	\end{equation*}
	It remains to compute
	$\deg\left\langle x(x+1)\cdot a_\si\big(x(x+1)\big)\right\rangle^\uparrow$.
	Observe that the polynomial $x(x+1)\cdot a_\si\big(x(x+1)\big)$
	has degree $\ge1$ in $x(x+1)$, therefore,
	by Lemma \ref{p4.5} and  Proposition \ref{p3.7} we have
	\begin{equation*}
		\deg\Big\langle
		x(x+1)\cdot a_\si\big(x(x+1)\big)
		\Big\rangle^\uparrow=
		\deg\Big\langle
		\big(x(x+1)\big)^{|\si|-\ell(\si)+1}
		\Big\rangle^\uparrow
		=2|\si|-2\ell(\si)+1,
	\end{equation*}
	this gives the first estimate.
	The second estimate is obtained
	as before,
	because if $c^{\rho}_{\si\tau}\ne0$, then
	$\ell(\rho)\le\ell(\si)+\ell(\tau)$.
\end{proof}
From the second estimate of the above lemma follows a
\begin{corollary}[Cf. Corollary \ref{p4.8}]\label{p4.13}
	If $\ell(\si)\ge3$, then 
	$\deg U_\si^1\le-2$.
\end{corollary}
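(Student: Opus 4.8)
The plan is to deduce the statement immediately from the rough (second) estimate of Lemma \ref{p4.12}. That estimate asserts that whenever $\si$ is a nonempty ordinary partition, the component operator $U_\si^1$ satisfies $\deg U_\si^1\le-\ell(\si)+1$. Under the hypothesis $\ell(\si)\ge3$ the partition $\si$ is in particular nonempty, so the estimate applies, and substituting $\ell(\si)\ge3$ yields $\deg U_\si^1\le-\ell(\si)+1\le-2$. This is precisely the asserted bound, so no further argument is required.

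I expect no genuine obstacle at this stage: all of the degree-counting effort has already been concentrated in Lemma \ref{p4.12}, where the computation $\deg\langle x(x+1)\cdot a_\si(x(x+1))\rangle^\uparrow=2|\si|-2\ell(\si)+1$ (via Lemma \ref{p4.5} and Proposition \ref{p3.7}) together with the inequality $\ell(\rho)\le\ell(\si)+\ell(\tau)$, valid whenever $c^\rho_{\si\tau}\ne0$, already produces the clean bound $-\ell(\si)+1$. The corollary merely specializes that bound to $\ell(\si)\ge3$, exactly paralleling the way Corollary \ref{p4.8} is extracted from the second estimate of Lemma \ref{p4.7}. The one point worth recording is the role this cutoff plays downstream: it is what reduces the remaining analysis of $U^1$ to the finitely many cases $\ell(\si)\in\{0,1,2\}$, with the degree-zero-in-$\al$ piece $U^0_\si$ and the empty-$\si$ case handled separately, mirroring the structure of the computation of $D$ in the previous subsection.
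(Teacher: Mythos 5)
Your proposal is correct and matches the paper exactly: the corollary is stated there as an immediate consequence of the second (rough) estimate $\deg U_\si^1\le-\ell(\si)+1$ of Lemma \ref{p4.12}, which with $\ell(\si)\ge3$ gives $\deg U_\si^1\le-2$. Your remarks on the role of the cutoff (reducing the analysis of $U^1$ to $\ell(\si)\in\{0,1,2\}$, in parallel with Corollary \ref{p4.8} for $D$) also agree with how the paper proceeds.
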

In the next Lemma we deal with the whole operator $\sum_{\si}U_\si^0$.
\begin{lemma}\label{p4.14}
	\begin{equation*}
		\sum_\si U_\si^0=\frac\al2+\al\frac{\partial}{\partial\g_1}{}+{}
		\text{\rm{}operators of degree $\le-2$}.
	\end{equation*}
\end{lemma}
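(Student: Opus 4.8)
The plan is to sort the operators $U^0_\si$ by degree, show that only $\si=\varnothing$ and $\si=(1)$ can contribute in degrees $0$ and $-1$, and compute those two contributions by hand. The whole argument hinges on the degree of $\langle a_\si(x(x+1))\rangle^\uparrow$. Since $a_\si(\xi)=\prod_{i}a_{\si_i}(\xi)$ has degree $|\si|-\ell(\si)$ in $\xi$ by Lemma~\ref{p4.5}, its image under $\langle\cdot\rangle^\uparrow$ is a linear combination of $\g_0=1,\g_1,\dots,\g_{|\si|-\ell(\si)}$ (by~(\ref{f64}) and Proposition~\ref{p3.7}); so its degree is $2(|\si|-\ell(\si))-1$ when $|\si|>\ell(\si)$ and is $0$ when $|\si|=\ell(\si)$, i.e. exactly when $\si=(1^k)$.

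First I would run the degree estimate of Lemma~\ref{p4.12} with $U^0_\si$ in place of $U^1_\si$, using $\deg\g_\rho=2|\rho|-\ell(\rho)$, the relation $|\rho|=|\si|+|\tau|$, and $\ell(\rho)\le\ell(\si)+\ell(\tau)$ whenever $c^{\rho}_{\si\tau}\ne0$. For $\si$ with $|\si|>\ell(\si)$ this gives $\deg U^0_\si\le-\ell(\si)-1\le-2$, and for $\si=(1^k)$ it gives $\deg U^0_{(1^k)}\le-k$. Hence every $U^0_\si$ except $\si=\varnothing$ and $\si=(1)$ lands in the ``operators of degree $\le-2$'', and it remains to treat those two.

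Next I would compute the survivors. For $\si=\varnothing$ one has $a_\varnothing\equiv1$, so $\langle a_\varnothing\rangle^\uparrow=\g_0=1$ and $c^{\rho}_{\varnothing\tau}=\delta_{\rho\tau}$, whence $U^0_\varnothing=\tfrac\al2$; this is the constant term. For $\si=(1)$, Lemma~\ref{p4.5} gives $a_1\equiv2$, so $\langle a_{(1)}(x(x+1))\rangle^\uparrow=2$ and (\ref{f69}) reads $U^0_{(1)}\g_\rho=\al\sum_{\tau}c^{\rho}_{(1)\tau}\g_\tau$. Its degree $-1$ part keeps only the pairs with $\ell(\rho)=\ell(\tau)+1$; from the monomial picture of the product $m_{(1)}m_\tau$ this occurs exactly when a new part equal to $1$ is created, i.e. $\rho=\tau\cup\{1\}$, and then the structure constant $c^{\tau\cup\{1\}}_{(1)\tau}$ equals the number of parts of $\rho$ equal to $1$.

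Finally I would identify this with a derivative: since $\partial/\partial\g_1$ applied to the monomial $\g_\rho=\prod_j\g_{\rho_j}$ brings down the number of factors equal to $\g_1$ and outputs $\g_{\rho\setminus\{1\}}$, the operator $\al\,\partial/\partial\g_1$ reproduces exactly the degree $-1$ part of $U^0_{(1)}\g_\rho$ found above. Summing the three pieces yields $\sum_\si U^0_\si=\tfrac\al2+\al\,\partial/\partial\g_1+(\text{operators of degree}\le-2)$. The hard part will be the explicit evaluation of $c^{\tau\cup\{1\}}_{(1)\tau}$ and its recognition as $\al\,\partial/\partial\g_1$; the degree bookkeeping is routine once $\deg\langle a_\si\rangle^\uparrow$ is known, the one genuinely new feature being that the drop of a degree for $\si=(1^k)$ (caused by $\langle1\rangle^\uparrow=\g_0$ having degree $0$ rather than $-1$) is precisely what lets $\si=(1)$ survive into degree $-1$, in contrast with the operator $D$.
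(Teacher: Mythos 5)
Your proposal is correct and follows essentially the same route as the paper: the degree estimate of Lemma \ref{p4.12} adapted to $U^0_\si$, the observation that $a_\si\big(x(x+1)\big)$ is constant exactly for $\si=(1^k)$ (so that $\langle a_\si\rangle^\uparrow=\g_0$ costs a degree and kills all $\si$ except $\varnothing$ and $(1)$), and the identification of the surviving $\si=(1)$, $\ell(\rho)=\ell(\tau)+1$ contribution with $\al\,\partial/\partial\g_1$. Your explicit evaluation of $c^{\tau\cup\{1\}}_{(1)\tau}$ as the multiplicity of the part $1$ in $\rho$ is a detail the paper leaves implicit, and it is accurate.
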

\begin{proof}
	If $\si$ is empty, then $\rho=\tau$ and $c^{\rho}_{\si\tau}=1$ in (\ref{f69}), and we get
	\begin{equation*}
		U^0_\si\g_\rho=\frac\al2 c^{\rho}_{\si\tau}
		\left\langle 1\right\rangle^\uparrow\g_\rho=\frac\al2\g_\rho
	\end{equation*}
	(compare this to Lemma \ref{p4.9}).

	Assume now that $\si$ is nonempty. Then
	$\si=(\si_1,\dots,\si_{\ell(\si)})$, and $\ell(\si)\ge1$.
	Arguing as in Lemma \ref{p4.12}, we get the estimate
	\begin{equation*}
		\begin{array}{l}
			\displaystyle
			\deg U^0_\si\le\max_{\rho,\tau}
			\left( \deg\left\langle a_\si\big(x(x+1)\big)\right\rangle^\uparrow 
			+2|\tau|-\ell(\tau)-2|\rho|+\ell(\rho)\right)
			\\
			\displaystyle\qquad
			\le
			\max_{\rho,\tau}\left( \deg\left\langle
			a_\si\big(x(x+1)\big)\right\rangle^\uparrow
			-2|\si|+\ell(\si)\right),
		\end{array}
	\end{equation*}
	where the maximum is taken over all pairs $(\rho,\tau)$
	such that $c^{\rho}_{\si\tau}\ne0$.
	The second inequality holds because 
	$\ell(\rho)-\ell(\tau)\le\ell(\si)$.

	If the polynomial $a_\si\big(x(x+1)\big)$
	has degree $\ge1$ in $x(x+1)$, 
	then using Lemma \ref{p4.5} and Proposition \ref{p3.7} we can write
	$$
		\deg\left\langle a_\si\big(x(x+1)\big)\right\rangle^\uparrow
		=\deg\left( \g_{|\si|-\ell(\si)} \right)=
		2|\si|-2\ell(\si)-1,
	$$ 
	which implies 
	\begin{equation*}
		\deg U^0_\si\le-\ell(\si)-1\le-2.
	\end{equation*}
	Thus, it remains to consider the case when
	$\si$ is nonempty and the polynomial
	$a_\si\big(x(x+1)\big)$ has zero degree in $x(x+1)$.
	This case occurs if and only if $\si_1=\dots=\si_{\ell(\si)}=1$, and for 
	$\deg U_\si^0$ we have the estimate:
	$\deg U^0_\si\le-\ell(\si)$.
	
	If $\ell(\si)\ge2$, this is enough to conclude $\deg U^0_\si\le-2$.
	
	Finally, examine the case $\si=(1)$. There are two possibilities:
	$\ell(\rho)=\ell(\tau)+1$ and $\ell(\rho)=\ell(\tau)$.
	In the latter case the estimate of $\deg U^0_\si$
	can be refined because then $\ell(\rho)-\ell(\tau)=0$
	is strictly smaller than $\ell(\si)=1$, which again implies $\deg U^0_\si\le-2$.

	Thus, the only substantial contribution arises when $\si=(1)$,
	$\rho_{\ell(\rho)}=1$ and
	and $\tau=(\rho_1,\dots,\rho_{\ell(\rho)-1})$
	(that is, $\tau$ is obtained from $\rho$ by deleting a singleton).
	This gives rise to the term
	$U^0_{\si}\g_\rho=
	\al{\partial\g_\rho}/{\partial\g_1}$.
\end{proof}
Lemmas \ref{p4.15}, \ref{p4.16} and \ref{p4.17} below are similar to Lemmas 
\ref{p4.9}, \ref{p4.10} and \ref{p4.11}, respectively, and
deal with the operator $\sum_{\si}U^1_\si$.\footnote{It follows from Corollary \ref{p4.13} that it suffices 
to examine the cases when $\ell(\si)=0$ (that is, $\si=\varnothing$),
$\ell(\si)=2$, and $\ell(\si)=1$. We perform this consequently.}
\begin{lemma}\label{p4.15}
	$U_\varnothing^1=\displaystyle\frac12\g_1$.
\end{lemma}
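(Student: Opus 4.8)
The plan is to specialize the defining formula (\ref{f70}) for $U^1_\si$ to the empty partition $\si=\varnothing$; this is the direct analogue of the computation of $D_\varnothing$ in Lemma \ref{p4.9}, and in fact slightly simpler. Because $\varnothing$ is the least complicated index, both the structure-constant sum and the bracket $\langle\cdot\rangle^\uparrow$ will trivialize, so none of the degree estimates used in Lemmas \ref{p4.12}--\ref{p4.14} are needed here.

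First I would observe that for $\si=\varnothing$ we have $|\si|=0$, so in (\ref{f70}) the partition $\tau$ ranges over those with $|\tau|=|\rho|$. Since $m_\varnothing=1$, the identity $m_\varnothing m_\tau=m_\tau$ gives $c^\rho_{\varnothing\tau}=\delta_{\rho\tau}$, and hence the sum over $\tau$ collapses to the single term $\tau=\rho$ with coefficient $1$. Next, because $a_\si$ is by definition a product over the parts of $\si$, the empty partition yields $a_\varnothing(\xi)\equiv1$, so that $x(x+1)\cdot a_\varnothing(x(x+1))=(x(x+1))^1$. Applying the definition (\ref{f64}) of $\langle\cdot\rangle^\uparrow$ then gives $\langle x(x+1)\cdot a_\varnothing(x(x+1))\rangle^\uparrow=\langle(x(x+1))^1\rangle^\uparrow=\g_1$.

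Putting these together produces $U^1_\varnothing\g_\rho=\tfrac12\g_1\g_\rho$ for every ordinary partition $\rho$, and since the elements $\g_\rho$ form a linear basis of $\Gamma$ this identifies $U^1_\varnothing$ with multiplication by $\tfrac12\g_1$, as asserted. There is essentially no obstacle: the only point worth flagging is that, in contrast with Lemma \ref{p4.9} (where $\langle1\rangle^\downarrow=\gh_1$ forces one to invoke $\gh_1=\g_1$ from Proposition \ref{p3.4}), the extra factor $x(x+1)$ here lands us directly on $\g_1$, so Proposition \ref{p3.4} is not even needed.
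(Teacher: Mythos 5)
Your proposal is correct and takes essentially the same route as the paper: specialize (\ref{f70}) to $\si=\varnothing$, collapse the sum using $c^\rho_{\varnothing\tau}=\delta_{\rho\tau}$, and compute $\big\langle x(x+1)\cdot a_\varnothing\big(x(x+1)\big)\big\rangle^\uparrow=\big\langle x(x+1)\big\rangle^\uparrow=\g_1$. Your closing observation is also accurate: unlike in Lemma \ref{p4.9}, the bracket lands directly on $\g_1$, so Proposition \ref{p3.4} is indeed not needed, exactly as in the paper's proof.
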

\begin{proof}
	The proof is similar to that of Lemma \ref{p4.9}.
	We should only note that
	$$
		\left\langle x(x+1)\cdot a_{\varnothing}\big(x(x+1)\big)\right\rangle^\uparrow=
		\big\langle x(x+1)\big\rangle^\uparrow=\g_1
	$$
	in (\ref{f70}), therefore, $U_\varnothing^1$ reduces to multiplication by $\g_1/2$.
\end{proof}
\begin{lemma}\label{p4.16}
	\begin{equation*}
		\sum_{\si\colon\ell(\si)=2}U_\si^1=
		\sum_{r,s\ge1}(2r-1)(2s-1)
		\g_{r+s-1}\frac{\partial^2}{\partial\g_r\partial\g_s}
		{}+{}\text{\rm{}operators of degree $\le-2$}.
	\end{equation*}
\end{lemma}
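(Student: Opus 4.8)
The plan is to reproduce the proof of Lemma \ref{p4.10} almost verbatim, working from the definition (\ref{f70}) of $U^1_\si$ and restricting, as the statement dictates, to partitions $\si=(\si_1,\si_2)$ with $\si_1\ge\si_2\ge1$. First I would invoke Lemma \ref{p4.12} to discard all pairs $(\rho,\tau)$ with $\ell(\rho)<\ell(\tau)+2$, since these contribute operators of degree $\le-2$; the surviving terms are exactly those with $\si_1=\rho_i$, $\si_2=\rho_j$ for some $1\le i<j\le\ell(\rho)$ and $\tau$ obtained from $\rho$ by deleting the two parts $\rho_i,\rho_j$.

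The single computation that differs from Lemma \ref{p4.10} is the evaluation of the upward bracket in (\ref{f70}). Using Lemma \ref{p4.5} I would expand $a_{\si_1}(\xi)a_{\si_2}(\xi)=4(2\si_1-1)(2\si_2-1)\xi^{\si_1+\si_2-2}+(\text{lower order in }\xi)$; multiplying by $\xi=x(x+1)$ and applying the bracket rule (\ref{f64}) together with Proposition \ref{p3.7} then yields
\begin{equation*}
	\big\langle x(x+1)\cdot a_\si(x(x+1))\big\rangle^\uparrow
	=4(2\si_1-1)(2\si_2-1)\,\g_{\si_1+\si_2-1}+(\text{terms of strictly smaller degree in }\Gamma).
\end{equation*}
This mirrors the identity $\langle b_\si(y(y+1))\rangle^\downarrow=4(2\si_1-1)(2\si_2-1)\gh_{\si_1+\si_2-1}$ of Lemma \ref{p4.10}; the extra factor $x(x+1)$ is precisely what compensates for the rule $\langle\xi^m\rangle^\uparrow=\g_m$ carrying the same index $m$ (whereas $\langle\xi^m\rangle^\downarrow=\gh_{m+1}$ raises it), so that the two indices agree at $\si_1+\si_2-1$.

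Substituting this into (\ref{f70}), the prefactor $\frac12$ absorbs the $4$ to leave $2(2\rho_i-1)(2\rho_j-1)$, and summing over $1\le i<j\le\ell(\rho)$ while using the relation (\ref{f68}) between $c^\rho_{\si\tau}\g_{\rho\setminus\left\{\rho_i,\rho_j\right\}}$ and $\partial^2\g_\rho/\partial\g_{\rho_i}\partial\g_{\rho_j}$ assembles exactly the operator $\sum_{r,s\ge1}(2r-1)(2s-1)\g_{r+s-1}\partial^2/\partial\g_r\partial\g_s$. A welcome simplification over Lemma \ref{p4.10} is that the bracket delivers $\g_{\si_1+\si_2-1}$ directly rather than $\gh_{\si_1+\si_2-1}$, so no final replacement of $\gh_k$ by $\g_k$ via Proposition \ref{p3.4} is required; every discarded piece already sits in degree $\le-2$.

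I do not anticipate a genuine obstacle, as the whole argument runs parallel to Lemma \ref{p4.10}. The only point demanding care is the degree bookkeeping: one must confirm both that the lower-order tail of the bracket and the $\ell(\rho)<\ell(\tau)+2$ contributions fall into degree $\le-2$, and that the retained operator $\g_{r+s-1}\partial^2/\partial\g_r\partial\g_s$ genuinely has degree $-1$ (so it lies above the threshold and is kept). This degree count, together with the combinatorial identity (\ref{f68}), is what makes the collapse into the stated differential operator exact.
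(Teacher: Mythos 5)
Your proposal is correct and follows essentially the same route as the paper's own proof: restrict to $\si=(\si_1,\si_2)$ in (\ref{f70}), use Lemma \ref{p4.12} to discard the $\ell(\rho)<\ell(\tau)+2$ contributions, compute $\big\langle x(x+1)\cdot a_\si(x(x+1))\big\rangle^\uparrow=4(2\si_1-1)(2\si_2-1)\g_{\si_1+\si_2-1}+\mbox{lower degree}$ via Lemma \ref{p4.5} and Proposition \ref{p3.7}, and assemble the differential operator through (\ref{f68}). Your two side observations --- that the extra factor $x(x+1)$ exactly offsets the index shift between $\langle\xi^m\rangle^\uparrow=\g_m$ and $\langle\xi^m\rangle^\downarrow=\gh_{m+1}$, and that consequently no $\gh_k\to\g_k$ substitution via Proposition \ref{p3.4} is needed here (unlike in Lemma \ref{p4.10}) --- match the paper's argument precisely.
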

\begin{proof}
	The proof
	is similar to the proof of Lemma \ref{p4.10}.
	Instead of Lemma \ref{p4.7}, we refer to 
	its analogue, Lemma \ref{p4.12}.

	Let $\ell(\si)=2$ in (\ref{f70}),
	that is, $\si=(\si_1,\si_2)$ with $\si_1\ge\si_2\ge1$.
	Using Lemma \ref{p4.5} and Proposition \ref{p3.7}, we get
	\begin{equation*}
		\begin{array}{l}\displaystyle
			\Big\langle
			x(x+1)\cdot a_\si\big(x(x+1)\big)
			\Big\rangle^\uparrow
			\\\rule{0pt}{16pt}\displaystyle\qquad
			=4(2\si_1-1)(2\si_2-1)\g_{\si_1+\si_2-1}{}+{}\mbox{terms of degree $\le2|\si|-5$}.
		\end{array}
	\end{equation*}
	Next,
	\begin{equation*}
		\begin{array}{rcl}\displaystyle
			\left( \sum_{\si\colon\ell(\si)=2}U_\si^1 \right)\g_\rho&=& \displaystyle
			\sum_{1\le i<j\le\ell(\rho)}2(2\rho_i-1)(2\rho_j-1)c^{\rho}_{\si\tau}\g_{\rho_i+\rho_j-1}\g_{\rho\setminus\left\{ \rho_i,\rho_j \right\}}
			\\&&\qquad\displaystyle{}+{}\mbox{terms of degree $\le2|\rho|-\ell(\rho)-3$},
		\end{array}
	\end{equation*}
	and using (\ref{f68}), we conclude the proof.
\end{proof}
\begin{lemma}\label{p4.17}
	\begin{equation*}
		\begin{array}{rcl}\displaystyle
			\sum_{\si\colon\ell(\si)=1}U_\si^1&=& \displaystyle
			\sum_{r\ge1}(2r-1)\g_r\frac{\partial}{\partial\g_r}
			+\sum_{r,s\ge1}(r+s-1)\g_r\g_s\frac{\partial}{\partial\g_{r+s}}\\&&\displaystyle\qquad
			{}+{}\text{\rm{}operators of degree $\le-2$}.\qquad
		\end{array}
	\end{equation*}
\end{lemma}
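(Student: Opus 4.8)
The plan is to follow the proof of Lemma~\ref{p4.11} almost verbatim, since $\sum_{\si\colon\ell(\si)=1}U^1_\si$ is the ``up'' analogue of the corresponding piece of $D$. Set $\si=(s)$, $s\in\N$, in (\ref{f70}). The only input that changes is the coefficient $\langle\cdot\rangle^\uparrow$: by Lemma~\ref{p4.5} the polynomial $x(x+1)\cdot a_s(x(x+1))$ has degree $s$ in $x(x+1)$ with top coefficient $2(2s-1)$, so Proposition~\ref{p3.7} (i.e.\ the rule (\ref{f64})) gives
\[
  \big\langle x(x+1)\cdot a_s(x(x+1))\big\rangle^\uparrow
  =2(2s-1)\g_s+\text{terms of degree}\le 2s-3.
\]

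By Lemma~\ref{p4.12} only the cases $\ell(\rho)=\ell(\tau)+1$ and $\ell(\rho)=\ell(\tau)$ can contribute terms of degree $>-2$, and I would treat them in turn. When $\ell(\rho)=\ell(\tau)+1$ one has $s=\rho_i$ and $\tau=\rho\setminus\{\rho_i\}$, with $c^\rho_{\si\tau}\g_{\rho\setminus\{\rho_i\}}=\partial\g_\rho/\partial\g_{\rho_i}$. Keeping only the leading term $2(2\rho_i-1)\g_{\rho_i}$ (any lower tail drops the index by one, hence the degree by at least two) yields the operator $\sum_{r\ge1}(2r-1)\g_r\,\partial/\partial\g_r$, which is the first term of the claim.

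When $\ell(\rho)=\ell(\tau)$, the partition $\tau$ is obtained from $\rho$ by reducing a single part $\rho_i=r$ to $r-s$ with $1\le s\le r-1$; exactly as in Lemma~\ref{p4.11} this produces $\sum_{r\ge2}\sum_{1\le s\le r-1}(2s-1)\g_s\g_{r-s}\,\partial/\partial\g_r$, with only the leading $\g_s$ surviving above degree $-2$. Reindexing by $a=s$, $b=r-s$ turns this into $\sum_{a,b\ge1}(2a-1)\g_a\g_b\,\partial/\partial\g_{a+b}$, and since $\g_a\g_b$ is symmetric in $a,b$ I would symmetrize the coefficient, replacing $2a-1$ by $\tfrac12\big((2a-1)+(2b-1)\big)=a+b-1$. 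This gives the second term $\sum_{r,s\ge1}(r+s-1)\g_r\g_s\,\partial/\partial\g_{r+s}$, completing the proof.

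The one genuine difference from Lemma~\ref{p4.11}, and the point to watch, is the second bullet noted at the start of \S\ref{s4.5}: here $x(x+1)\cdot a_s(x(x+1))$ is \emph{not} a single monomial in $x(x+1)$ (whereas $b_s$ was), so $\langle\cdot\rangle^\uparrow$ carries genuine lower-order tails. The main obstacle is therefore purely bookkeeping---confirming through the degree estimate of Lemma~\ref{p4.12} that those tails land in the ``operators of degree $\le-2$'' bin in both cases---after which, pleasantly, one obtains $\g_s$ directly, so no passage from $\gh$ to $\g$ (as was needed in Lemma~\ref{p4.11}) is required. The symmetrization in the second case is the only manipulation that has no counterpart in the ``down'' computation.
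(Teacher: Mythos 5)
Your proposal is correct and follows the paper's proof essentially verbatim: the same split into the cases $\ell(\rho)=\ell(\tau)+1$ and $\ell(\rho)=\ell(\tau)$, the same leading-term computation $\big\langle x(x+1)\cdot a_\si\big(x(x+1)\big)\big\rangle^\uparrow=2(2s-1)\g_s+\mbox{terms of degree $\le 2s-3$}$ via Lemma \ref{p4.5} and Proposition \ref{p3.7}, with Lemma \ref{p4.12} consigning the tails to degree $\le-2$, and your explicit reindexing plus symmetrization $(2a-1)\mapsto a+b-1$ is exactly the ``simple transformation'' the paper leaves implicit. One cosmetic remark: the symmetrization is not unique to the up case --- the proof of Lemma \ref{p4.11} ends with the same implicit transformation (there the $\gh_r\to\g_r-\g_{r-1}\g_1-\dots$ substitution combines with the second sum and is then symmetrized to yield the coefficient $r+s$) --- so the only genuine novelty on the up side is, as you correctly note, that $\langle\cdot\rangle^\uparrow$ produces $\g$'s directly and no $\gh\to\g$ passage is needed.
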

\begin{proof}
	The proof is similar to that of Lemma \ref{p4.11}
	(and we again use Lemma \ref{p4.12} instead of Lemma \ref{p4.7}).
	
	Let $\ell(\si)=1$ in (\ref{f70}), that is, $\si=(s)$ for some $s\in\N$.
	Again, two cases are possible: either
	$\ell(\rho)=\ell(\tau)+1$, or $\ell(\rho)=\ell(\tau)$.

	Assume first that $\ell(\rho)=\ell(\tau)+1$. Using Lemma \ref{p4.5} and Proposition \ref{p3.7}, 
	we get
	\begin{equation*}
		\Big\langle
		x(x+1)\cdot a_\si\big(x(x+1)\big)
		\Big\rangle^\uparrow=
		2(2s-1)\g_s{}+{}\mbox{terms of degree $\le 2s-3$}.
	\end{equation*}
	This gives rise to the terms
	\begin{equation*}
		\sum_{r\ge1}(2r-1)\g_r\frac\partial{\partial\g_r}{}+{}\text{operators of degree $\le-2$}.
	\end{equation*}

	If $\ell(\rho)=\ell(\tau)$, similarly to the proof of Lemma \ref{p4.11},
	we get the terms
	\begin{equation*}
		\sum_{\textstyle\genfrac{}{}{0pt}{}{r\ge2}{1\le s\le r-1}}
		(2s-1)\g_s\g_{r-s}\frac\partial{\partial\g_r}{}+{}
		\text{operators of order $\le-2$}.
	\end{equation*}
	
	To conclude the proof it remains to perform a simple transformation.
\end{proof}
Theorem \ref{p4.2} (2) now follows from
Lemmas \ref{p4.14}, \ref{p4.15}, \ref{p4.16} and \ref{p4.17}.

\section{The operator $T_n$ in differential form}\label{s5}
Let $T_n\colon\mathrm{Fun}(\Sb_n)\to\mathrm{Fun}(\Sb_n)$, $n\in\N$,
be the operator from Definition \ref{p10.6}.
In \S\ref{s2} we have obtained a formula for the action of $T_n$ on Schur's $\mQ$-functions
(Theorem \ref{p2.7}).
In this section using the results 
of \S\ref{s3} and 
Theorem \ref{p4.2} we prove another formula for $T_n$:
\begin{thm}\label{p5.1}
	There exists a unique operator $\widetilde B\colon\Gamma\to\Gamma$ such that
	\begin{equation*}
		(T_n-\mathbf{1})f_n=\frac{(\widetilde Bf)_n}{(n+\al/2)(n+1)}
	\end{equation*}
	for all $f\in\Gamma$.
	
	The operator $\widetilde B$ has zero degree. Under the identification of $\Gamma$ with the polynomial 
	algebra $\mathbb{R}\left[ p_1,p_3,p_5,\dots \right]$, the zero-degree
	homogeneous component of $\widetilde B$, 
	the operator $B\colon\Gamma\to\Gamma$, has the form
	\begin{equation}\label{f75}
		\left.
		\begin{array}{l}
			\displaystyle
			B=\sum_{i,j=2}^{\infty}(2i-1)(2j-1)
			(p_1p_{2i+2j-3}-p_{2i-1}p_{2j-1})\frac{\partial^2}{\partial p_{2i-1}\partial p_{2j-1}}
			\\\displaystyle\qquad
			+2\sum_{i,j=1}^{\infty}(2i+2j-1)p_1p_{2i-1}p_{2j-1}\frac{\partial}{\partial p_{2i+2j-1}}
			\\\displaystyle\qquad
			-\sum_{i=2}^{\infty}(2i-1)\left(2i-2+\frac\al2\right)p_{2i-1}\frac{\partial}{\partial p_{2i-1}}.
		\end{array}
		\right.
	\end{equation}
\end{thm}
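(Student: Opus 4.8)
The plan is to read $\widetilde B$ off directly from the identity~(\ref{f41}). Since $T_n=U_{n,n+1}\circ D_{n+1,n}$, Lemmas~\ref{p2.9} and~\ref{p2.10} give $T_nf_n=(UDf)_n/[(n+1)(n+\al/2)]$, so~(\ref{f41}) reads $(T_n-\mathbf 1)f_n=\big[(UDf)_n-(n+1)(n+\al/2)f_n\big]/[(n+1)(n+\al/2)]$. The key observation is that, by~(\ref{f54}), the function $\g_1$ restricts to the constant $2n$ on $\Sb_n$, so $(n+1)(n+\al/2)$ is the restriction to $\Sb_n$ of the element $(\tfrac{\g_1}2+1)(\tfrac{\g_1}2+\tfrac\al2)\in\Gamma$. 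I would therefore \emph{define}
\[
\widetilde B:=UD-\Big(\tfrac{\g_1}2+1\Big)\Big(\tfrac{\g_1}2+\tfrac\al2\Big),
\]
the last summand being multiplication by a fixed element of $\Gamma$. This maps $\Gamma$ to $\Gamma$ and satisfies the required relation by construction. Uniqueness is then immediate: as noted in \S\ref{s2.3}, $\Gamma$ separates the points of $\Sb$, so an element of $\Gamma$ is determined by its restrictions to all the $\Sb_n$, and these are prescribed.

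Next I would split $U$ and $D$ into homogeneous components by degree using Theorem~\ref{p4.2}. Writing $D=D_1+D_0+D_{-1}+(\deg\le-2)$ and likewise for $U$, one reads off $D_1=U_1=\tfrac12\g_1$ (multiplication), $D_0=-E$ and $U_0=\tfrac\al2+E$, where $E:=\sum_{r\ge1}(2r-1)\g_r\partial/\partial\g_r$ is the Euler (degree) operator. Hence $UD$ has degree $\le2$, with degree-$2$ part $U_1D_1=\tfrac14\g_1^2$ and degree-$1$ part $U_1D_0+U_0D_1=(\tfrac\al2+1)\tfrac{\g_1}2$; here one uses the exact commutation $ED_1=D_1E+D_1$. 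These agree with the degree-$2$ and degree-$1$ parts of the multiplication operator $(\tfrac{\g_1}2+1)(\tfrac{\g_1}2+\tfrac\al2)$, so they cancel in $\widetilde B$, proving $\deg\widetilde B\le0$.

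The heart of the argument is to extract the degree-$0$ component $B=U_1D_{-1}+U_0D_0+U_{-1}D_1-\tfrac\al2$. The delicate point is that $U_{-1}$ is a genuine differential operator, so when it acts past the multiplication $D_1=\tfrac12\g_1$ the Leibniz rule produces extra first-order terms; likewise $U_0D_0=-\tfrac\al2E-E^2$ contributes a second-order piece through $E^2$. Carrying out this bookkeeping, I expect the second-order part to collect into
\[
\sum_{r,s\ge1}(2r-1)(2s-1)\big(\g_1\g_{r+s-1}-\g_r\g_s\big)\frac{\partial^2}{\partial\g_r\partial\g_s},
\]
the transport first-order part into $\tfrac12\sum_{r,s\ge1}(2r+2s-1)\g_1\g_r\g_s\,\partial/\partial\g_{r+s}$, and the diagonal first-order coefficient of $\g_r\partial/\partial\g_r$ into $\tfrac\al2\delta_{r,1}+(2r-1)-\tfrac\al2(2r-1)-(2r-1)^2$, which equals $-(2r-1)(2r-2+\tfrac\al2)$ for $r\ge2$ and \emph{vanishes at $r=1$}; the two scalar contributions $+\tfrac\al2$ (from $U_{-1}D_1$) and $-\tfrac\al2$ cancel as well.

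Finally, since $\g_k=2p_{2k-1}+(\text{lower degree})$ by Propositions~\ref{p3.3} and~\ref{p3.4}, the top-degree symbols satisfy $\overline{\g_k}=2\,\overline{p_{2k-1}}$ in the associated graded algebra $\mathrm{gr}\,\Gamma$. As $B$ is degree-preserving it descends to $\mathrm{gr}\,\Gamma$, where the substitution $\g_k\mapsto2p_{2k-1}$, $\partial/\partial\g_k\mapsto\tfrac12\,\partial/\partial p_{2k-1}$ is legitimate; applying it turns the three groups of terms above into exactly the three lines of~(\ref{f75}), the factors of $2$ combining correctly, and the $i=1$ or $j=1$ second-order terms dropping out because $\g_1\g_s-\g_1\g_s=0$ (equivalently $p_1p_{2j-1}-p_1p_{2j-1}=0$), which is why the first sum in~(\ref{f75}) starts at $i,j\ge2$. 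The main obstacle is the degree-$0$ computation of the previous paragraph: tracking all Leibniz/commutator contributions and checking the two cancellations (the $r=1$ diagonal term and the scalar $\tfrac\al2$); everything else is a routine bookkeeping of homogeneous components.
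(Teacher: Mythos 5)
Your proposal is correct and follows essentially the same route as the paper's own proof: the paper likewise sets $\widetilde B=UD-\frac14(\g_1+\al)(\g_1+2)$ using (\ref{f41}) and (\ref{f54}), gets uniqueness from the fact that $\Gamma\subset\mathrm{Fun}(\Sb)$ separates points, cancels the degree-$2$ and degree-$1$ parts of $UD$ and computes the degree-$0$ part by exactly your commutator bookkeeping (Lemma \ref{p5.3}, where $[U_{-1},\g_1]=\al+2E$ and $U_0D_0=-\frac{\al}{2}E-E^2$), and then converts from the $\g_k$ to the $p_{2k-1}$ via the substitution of Lemma \ref{p5.4}. All your intermediate formulas --- including the vanishing of the $r=1$ diagonal coefficient, the cancellation of the two $\al/2$ scalars, and the equivalence of the ranges $r,s\ge1$ and $r,s\ge2$ in the second-order sum --- agree with the paper's computation.
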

By the zero-degree homogeneous component of the operator $\widetilde B$
we mean the unique homogeneous operator $B\colon\Gamma\to\Gamma$ of zero degree
such that 
$$
	\widetilde B=B+\mbox{operators of degree $\le-1$}.
$$

First, we note an important corollary of Theorem \ref{p5.1}:
\begin{corollary}\label{p5.2}
	The operator $B\colon\Gamma\to\Gamma$ commutes with the operator of multiplication
	by the element $p_1\in\Gamma$.
\end{corollary}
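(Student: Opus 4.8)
The plan is to read the statement off directly from the explicit formula (\ref{f75}) for $B$, together with the elementary commutation calculus in the polynomial algebra $\Gamma=\mathbb R[p_1,p_3,p_5,\dots]$. First I would record the two basic relations: multiplication operators by any two elements of $\Gamma$ commute, and, writing $M_{p_1}$ for multiplication by $p_1$, one has
\[
  \left[\frac{\partial}{\partial p_{2k-1}},\,M_{p_1}\right]=\delta_{k,1}\cdot\mathbf 1,\qquad k\in\N,
\]
with $\mathbf 1$ the identity operator. Consequently, any differential operator with coefficients in $\Gamma$ that does not involve the derivation $\partial/\partial p_1$ automatically commutes with $M_{p_1}$.

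The second step is a direct inspection of the three sums in (\ref{f75}). In the first (second-order) sum the indices run over $i,j\ge2$, so both differentiations are with respect to $p_{2i-1},p_{2j-1}$ with indices $\ge3$; in the second sum the derivation is $\partial/\partial p_{2i+2j-1}$ with $i,j\ge1$, hence $2i+2j-1\ge3$; and in the third sum $i\ge2$, so again $2i-1\ge3$. Thus every derivation appearing in $B$ is of the form $\partial/\partial p_{2k-1}$ with $k\ge2$, while all coefficients $p_1p_{2i+2j-3}-p_{2i-1}p_{2j-1}$, $p_1p_{2i-1}p_{2j-1}$ and $p_{2i-1}$ lie in $\Gamma$. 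In particular $\partial/\partial p_1$ never occurs.

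To conclude I would expand $[B,M_{p_1}]$ termwise. For a first-order summand $c\,\partial/\partial p_{2k-1}$ (with $c\in\Gamma$) the coefficient commutes with $M_{p_1}$, so the commutator equals $c\,[\partial/\partial p_{2k-1},M_{p_1}]=c\,\delta_{k,1}$, which vanishes because $k\ge2$. For a second-order summand $c\,\partial^2/(\partial p_{2i-1}\partial p_{2j-1})$ the Leibniz identity
\[
  \Big[\tfrac{\partial}{\partial p_{2i-1}}\tfrac{\partial}{\partial p_{2j-1}},\,M_{p_1}\Big]
  =\tfrac{\partial}{\partial p_{2i-1}}\Big[\tfrac{\partial}{\partial p_{2j-1}},M_{p_1}\Big]
  +\Big[\tfrac{\partial}{\partial p_{2i-1}},M_{p_1}\Big]\tfrac{\partial}{\partial p_{2j-1}}
\]
again gives $0$ since both indices exceed $1$. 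Summing over all the (in each fixed degree, finitely many) contributions yields $[B,M_{p_1}]=0$.

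I do not expect a genuine obstacle here: the entire content of the corollary is the absence of any $\partial/\partial p_1$ among the terms of (\ref{f75}), a feature already guaranteed by the statement of Theorem \ref{p5.1}. The only care needed is to carry out the second-order Leibniz expansion correctly and to keep track of the index ranges $i,j\ge2$, $i,j\ge1$, and $i\ge2$ in the three sums, all of which force the differentiated index to be $\ge3$.
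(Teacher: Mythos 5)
Your proposal is correct and is essentially the paper's own argument: the proof of Corollary \ref{p5.2} in the paper consists precisely of the observation that the expression (\ref{f75}) for $B$ contains no partial derivative $\partial/\partial p_1$, which you verify by inspecting the index ranges in the three sums and then spell out via the standard commutator calculus $[\partial/\partial p_{2k-1},M_{p_1}]=\delta_{k,1}\mathbf 1$. Your termwise Leibniz expansion (with the remark that only finitely many terms act in each degree, so the formal sum causes no difficulty) is a careful elaboration of the same one-line idea, not a different route.
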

\begin{proof}
	This follows from the fact that the expression (\ref{f75}) for $B$
	does not contain partial derivatives with respect to $p_1$.
\end{proof}	
In the rest of this section we prove Theorem \ref{p5.1}.

First, (\ref{f41}) and (\ref{f54}) imply
\begin{equation*}
	(T_n-{\bf1})f_n=\frac{\big( (UD-\frac14(\g_1+\al)(\g_1+2))f\big)_n}{(n+\al/2)(n+1)}
\end{equation*}
for all $f\in\Gamma$.
Thus, $\widetilde B=UD-\frac14(\g_1+\al)(\g_1+2)$, and the uniqueness of $\widetilde B$ follows from the 
fact that the algebra $\Gamma\subset\mathrm{Fun}(\Sb)$ separates points.

Now using Theorem \ref{p4.2} 
we write the operator $\widetilde B$ as a formal differential operator with respect
to the generators $\g_k$, $k\in\N$, of the algebra $\Gamma$:
\begin{lemma}\label{p5.3}
	Under the identification of the algebra $\Gamma$ 
	with the polynomial algebra $\mathbb{R}\left[ \g_1,\g_2,\dots \right]$,
	the operator $\widetilde B=UD-\frac14(\g_1+\al)(\g_1+2)$ looks as follows:
	\begin{equation*}
		\left.
		\begin{array}{l}\displaystyle
			\widetilde B=
			\sum_{r,s=2}^{\infty}(2r-1)(2s-1)(\g_1\g_{r+s-1}-\g_r\g_s)\frac{\partial^2}{\partial\g_r\partial\g_s}
			\\\displaystyle\qquad
			+\sum_{r,s=1}^{\infty}(r+s-1/2)\g_1\g_r\g_s\frac{\partial}{\partial\g_{r+s}}\\\displaystyle\qquad
			-
			\sum_{r=2}^{\infty}
			(2r-1)\left(2r-2+\frac\al2\right)\g_r\frac{\partial}{\partial\g_r}

			{}+{}\mbox{\rm{}operators of degree $\le-1$}.
		\end{array}
		\right.
	\end{equation*}
\end{lemma}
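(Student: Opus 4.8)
The plan is to substitute the explicit differential forms of $U$ and $D$ furnished by Theorem \ref{p4.2} into $\widetilde B=UD-\frac14(\g_1+\al)(\g_1+2)$ and to compute the composition $UD$ as a formal differential operator in the generators $\g_k$, tracking all contributions down to degree $0$. First I would split each of $U$ and $D$ into homogeneous components for the grading $\deg\g_m=2m-1$. From Theorem \ref{p4.2} one reads off that the top component of both $U$ and $D$ is multiplication by $\frac12\g_1$ (degree $1$); that the degree-$0$ part of $D$ is $-\sum_r(2r-1)\g_r\frac{\partial}{\partial\g_r}$ while that of $U$ is $\frac\al2+\sum_r(2r-1)\g_r\frac{\partial}{\partial\g_r}$ (note the opposite sign of the Euler term); and that the degree-$(-1)$ parts are the second-order and cubic-coefficient terms, together with $\al\frac{\partial}{\partial\g_1}$ in the case of $U$. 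Since composition of homogeneous operators adds degrees, only finitely many products contribute to $(UD)_k$ for $k\ge0$: namely $U_1D_1$ in degree $2$, $U_1D_0+U_0D_1$ in degree $1$, and $U_1D_{-1}+U_0D_0+U_{-1}D_1$ in degree $0$.

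Each product is evaluated by the standard rule for composing differential operators, pushing the derivatives of the left factor through the coefficients of the right factor via $\frac{\partial}{\partial\g_r}\g_s=\delta_{rs}$. In degree $2$ one gets $U_1D_1=\frac14\g_1^2$, exactly the degree-$2$ part of $\frac14(\g_1+\al)(\g_1+2)$, so these cancel. In degree $1$ the commutator arising when the Euler term of $U_0$ differentiates the factor $\frac12\g_1$ of $D_1$ produces an extra $\frac12\g_1$; combining this with $U_1D_0$ and the constant $\frac\al2$ of $U_0$ gives $(UD)_1=\frac14(\al+2)\g_1$, which again matches the degree-$1$ part of $\frac14(\g_1+\al)(\g_1+2)$. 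This shows $\widetilde B$ has degree $\le0$, with degree-$0$ component $B=(UD)_0-\frac\al2$.

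The substance is the degree-$0$ term $(UD)_0=U_1D_{-1}+U_0D_0+U_{-1}D_1$. Here $U_1D_{-1}$ contributes only multiplication-type coefficients (as $U_1$ carries no derivatives); $U_0D_0$ contributes a genuine second-order piece together with an Euler piece $-\sum_r(2r-1)^2\g_r\frac{\partial}{\partial\g_r}$ coming from $\frac{\partial}{\partial\g_r}\g_s=\delta_{rs}$; and $U_{-1}D_1$ is where the derivatives of $U_{-1}$ strike $\frac12\g_1$, producing the constant $\frac\al2$ (from $\al\frac{\partial}{\partial\g_1}$), an extra $\sum_r(2r-1)\g_r\frac{\partial}{\partial\g_r}$ (from the second-order term hitting $\g_1$ once), and the remaining coefficient terms. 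Collecting by differential type yields the second-order part $\sum_{r,s\ge1}(2r-1)(2s-1)(\g_1\g_{r+s-1}-\g_r\g_s)\frac{\partial^2}{\partial\g_r\partial\g_s}$, the cubic-coefficient part $\sum_{r,s\ge1}(r+s-\tfrac12)\g_1\g_r\g_s\frac{\partial}{\partial\g_{r+s}}$, the Euler part $-\sum_r(2r-1)(2r-2+\tfrac\al2)\g_r\frac{\partial}{\partial\g_r}$, and a constant $\frac\al2$ killed by the subtraction. A point worth isolating is that the summands with $r=1$ or $s=1$ either vanish or cancel: in the second-order sum the coefficient $\g_1\g_{s}-\g_1\g_s$ is zero, in the Euler sum the non-$\al$ coefficient $(2r-1)(2r-2)$ vanishes at $r=1$, and the leftover $\al$-part at $r=1$ is cancelled by the separate $\frac\al2\g_1\frac{\partial}{\partial\g_1}$ coming from $U_{-1}D_1$; hence all three sums may be restricted to $r,s\ge2$ where indicated, which is exactly the stated form of $B$.

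The main obstacle is purely organizational: arranging the compositions $U_0D_0$ and $U_{-1}D_1$ so that the several commutator terms of the same differential type are correctly merged, and verifying the delicate cancellations at $r=1$ (and $s=1$) that shift the ranges of the second-order and Euler sums and annihilate the residual constant. Once the homogeneous decomposition is in place, each product is a short application of $\frac{\partial}{\partial\g_r}\g_s=\delta_{rs}$, and the existence and uniqueness of $\widetilde B$ follow from the representation $\widetilde B=UD-\frac14(\g_1+\al)(\g_1+2)$ together with the fact, noted in \S\ref{s2.3}, that $\Gamma\subset\mathrm{Fun}(\Sb)$ separates points.
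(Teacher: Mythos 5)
Your proposal is correct and takes essentially the same route as the paper: the paper's proof likewise decomposes $U=\frac12\g_1+U_0+U_{-1}+\dots$ and $D=\frac12\g_1+D_0+D_{-1}+\dots$ into homogeneous components read off from Theorem \ref{p4.2}, checks that the degree-$2$ and degree-$1$ terms of $UD$ cancel against $\frac14(\g_1+\al)(\g_1+2)$, and assembles the degree-$0$ part from $\frac12\g_1 D_{-1}+U_0D_0+\frac12U_{-1}\g_1-\frac\al2$ using the commutator $\left[ U_{-1},\g_1 \right]$, exactly as you describe. Your explicit bookkeeping of the boundary cases $r=1$, $s=1$ (vanishing second-order coefficient, vanishing non-$\al$ Euler coefficient, and the $\frac\al2\g_1\frac{\partial}{\partial\g_1}$ cancellation) is the same cancellation the paper effects by writing the sums directly with the restricted ranges.
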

Note that the operators 
$U$ and $D$ both have degree $1$
in the sense of Definition \ref{p4.1}.
However, it turns out that the operator $\widetilde B$
has zero degree instead of degree $2$, because higher degree terms cancel out.
\begin{proof}
	We write
	\begin{equation*}
		U=\frac12\g_1+U_0+U_{-1}+\dots,\qquad
		D=\frac12\g_1+D_0+D_{-1}+\dots,
	\end{equation*}
	where dots stand for operators of degree $\le-2$,
	\begin{equation*}
		U_0:=\frac12\al+\sum_{r=1}^{\infty}(2r-1)\g_r\frac\partial{\partial\g_r},
		\qquad 
		D_0:=-\sum_{r=1}^{\infty}(2r-1)\g_r\frac\partial{\partial\g_r}
	\end{equation*}
	(these are the zero degree parts), 
	\begin{equation*}
		\begin{array}{l}
			\displaystyle
			U_{-1}:=\al\frac\partial{\partial\g_1}+\sum_{r,s=1}^{\infty}
			(2r-1)(2s-1)\g_{r+s-1}\frac{\partial^2}{\partial\g_r\partial\g_s}
			\\\displaystyle\qquad
			+\sum_{r,s=1}^{\infty}(r+s-1)\g_r\g_s\frac\partial{\partial\g_{r+s}},\\
			\displaystyle
			D_{-1}:=\sum_{r,s=1}^{\infty}(2r-1)(2s-1)\g_{r+s-1}\frac{\partial^2}{\partial\g_r\partial\g_s}
			+\sum_{r,s=1}^{\infty}(r+s)\g_r\g_s\frac\partial{\partial\g_{r+s}}
		\end{array}
	\end{equation*}
	(these are the parts of degree $-1$).
	
	We compute the top degree 
	terms of the operator $\widetilde B=UD-\frac14(\g_1+\al)(\g_1+2)$
	consequently.

	Terms of degree $2$:
	\begin{equation*}
		\frac14\g_1^2-\frac14\g_1^2=0.
	\end{equation*}

	Terms of degree $1$ are equal to
	\begin{equation*}
		\frac12\g_1D_0+\frac12U_0\g_1-\frac14(\al+2)\g_1.
	\end{equation*}
	Because the operator $\sum_{r=2}^{\infty}(2r-1)\g_r\frac{\partial}{\partial\g_r}$ commutes
	with the multiplication by $\g_1$, we have
	\begin{equation*}
		\left.
		\begin{array}{l}
			\displaystyle
			\g_1D_0+U_0\g_1=-\g_1
			\left( \g_1\frac{\partial}{\partial\g_1}+
			\sum_{r=2}^{\infty}(2r-1)\g_r\frac{\partial}{\partial\g_r} \right)
			\\\displaystyle\qquad\qquad	
			+\left( \frac\al2+\g_1\frac{\partial}{\partial\g_1}+ 
			\sum_{r=2}^{\infty}(2r-1)\g_r\frac{\partial}{\partial\g_r} \right)\g_1\\\displaystyle\qquad=
			-\g_1^2\frac{\partial}{\partial\g_1}+\frac\al2\g_1+\g_1\frac{\partial}{\partial\g_1}\g_1
			=\frac\al2\g_1+\g_1,
		\end{array}
		\right.
	\end{equation*}
	and we see that the terms of degree $1$ also cancel out.

	It remains to compute terms of degree $0$. They are equal to
	\begin{equation*}
		\frac12\g_1D_{-1}+U_0D_0+\frac12U_{-1}\g_1-\frac\al2.
	\end{equation*}
	Observe that
	\begin{equation*}
		\left.
		\begin{array}{l}
			\displaystyle
			\left[ U_{-1},\g_1 \right]=
			\left[ \al\frac{\partial}{\partial\g_1},\g_1 \right]+
			\left[ \g_1\frac{\partial^2}{\partial\g_1^2},\g_1 \right]
			+
			2\sum_{r=2}^{\infty}(2r-1)\g_r\frac{\partial}{\partial\g_r}
			\left[ \frac{\partial}{\partial\g_1},\g_1 \right]\\\displaystyle\qquad=
			\al+
			2\g_1\frac{\partial}{\partial\g_1}
			+2\sum_{r=2}^{\infty}(2r-1)\g_r\frac{\partial}{\partial\g_r}.
		\end{array}
		\right.
	\end{equation*}
	It can be readily verified that
	\begin{equation*}
		\begin{array}{l}\displaystyle
			U_0D_0\\\displaystyle\quad=-\left( \frac\al2+\g_1\frac{\partial}{\partial\g_1}+\sum_{r=2}^{\infty} 
			(2r-1)\g_r\frac{\partial}{\partial\g_r}\right)
			\left( \g_1\frac{\partial}{\partial\g_1}+
			\sum_{r=2}^{\infty}(2r-1)\g_r\frac{\partial}{\partial\g_r}\right)
			\\\displaystyle\quad=
			-\frac12\al\g_1\frac{\partial}{\partial\g_1}-
			           \g_1\frac{\partial}{\partial\g_1}-
				   \g_1^2\frac{\partial^2}{\partial\g_1^2}
     			-2\sum_{r=2}^{\infty}(2r-1)\g_1\g_r\frac{\partial^2}{\partial\g_1\partial\g_r}
			\\\displaystyle\quad\qquad
			-\frac\al2\sum_{r=2}^{\infty}
			(2r-1)\g_r\frac{\partial}{\partial\g_r}-
			\sum_{r,s=2}^{\infty}(2r-1)(2s-1)\g_r\g_s\frac{\partial^2}{\partial\g_r\partial\g_s}\\\displaystyle
			\qquad\quad-
			\sum_{r=2}^{\infty}(2r-1)^2\g_r\frac{\partial}{\partial\g_r}
		\end{array}
	\end{equation*}
	and
	\begin{equation*}
		\left.
		\begin{array}{l}\displaystyle
			\frac12(D_{-1}+U_{-1})=\frac12\al\frac{\partial}{\partial\g_1}+
			\sum_{r,s=2}^{\infty}(2r-1)(2s-1)\g_{r+s-1}\frac{\partial^2}{\partial\g_r\partial\g_s}
			\\\displaystyle\qquad+
			\g_1\frac{\partial^2}{\partial\g_1^2}+2\sum_{r=2}^{\infty}(2r-1)\g_r\frac{\partial^2}{\partial\g_r\partial\g_1}
			+\sum_{r,s=1}^{\infty}(r+s-1/2)\g_r\g_s\frac{\partial}{\partial\g_{r+s}}.
		\end{array}
		\right.
	\end{equation*}
	Now we finally are able to compute the terms of degree $0$:
	\begin{equation*}
		\begin{array}{l}\displaystyle
			\frac12\g_1D_{-1}+U_0D_0+\frac12U_{-1}\g_1-\frac\al2
			\\\displaystyle\qquad=
			\frac12\g_1(D_{-1}+U_{-1})+U_0D_0+
			\g_1\frac{\partial}{\partial\g_1}
			+\sum_{r=2}^{\infty}(2r-1)\g_r\frac{\partial}{\partial\g_r}.
		\end{array}
	\end{equation*}
	Combining three above formulas, we get the desired expression.
\end{proof}

To prove Theorem \ref{p5.1}
it remains to substitute 
in the expression for $\widetilde B$ given by the previous Lemma
the inhomogeneous
generators $\g_k$, $k\in\N$, 
by the homogeneous generators $p_{2m-1}$, $m\in\N$.
This should be done according to the next Lemma:
\begin{lemma}\label{p5.4}
	{\rm{}(1)\/} $\g_k=2p_{2k-1}+{}\mbox{\rm{}terms of degree $\le(2k-1)$}$, \ $k\in\N$;

	{\rm{}(2)\/} Let $f\in\Gamma$, then\footnote{Note 
	that both $\pd f/\pd \g_k$ and $\pd f/\pd p_{2k-1}$ have degree $\big(\deg f-(2k-1)\big)$.}
	\begin{equation*}
		\frac{\pd f}{\pd \g_k}=\frac12\frac{\pd f}{\pd p_{2k-1}}+{}\mbox{\rm{}terms of degree $\le\big(\deg f-(2k-1)\big)$},\qquad
		k\in\N.
	\end{equation*}
\end{lemma}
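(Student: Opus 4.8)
The plan is to handle the two parts in turn: part~(1) follows by tracking top-degree terms through the structural results already proved, and part~(2) then follows from part~(1) by a single application of the chain rule together with a degree count.

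For part~(1), I would combine Propositions~\ref{p3.3} and~\ref{p3.4}. By Proposition~\ref{p3.3}, $\p_m = 2m\,p_{2m-1} + (\mbox{terms of degree}\le 2m-3)$, so the leading term of $\p_m$ with respect to the filtration~(\ref{f45}) is $2m\,p_{2m-1}$. Proposition~\ref{p3.4} gives $k\g_k = \p_k + \p_{k-1}\g_1 + \dots + \p_1\g_{k-1}$, and by Corollary~\ref{p3.5} every summand $\p_{k-j}\g_j$ with $1\le j\le k-1$ has degree $(2(k-j)-1)+(2j-1)=2k-2$, which is strictly below $\deg\p_k = 2k-1$. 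Hence the degree-$(2k-1)$ component of $k\g_k$ comes only from $\p_k$, so $\g_k = 2p_{2k-1} + (\mbox{terms of degree}\le 2k-2)$. This proves~(1), in fact with the sharper bound $2k-2$, which is what part~(2) will need.

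For part~(2), the crucial structural fact is that the passage from the homogeneous generators $p_{2m-1}$ to the inhomogeneous generators $\g_m$ is triangular with respect to the filtration. Indeed, since $\g_m$ lies in $\mathbb{R}[p_1,p_3,\dots,p_{2m-1}]$, part~(1) lets me write $\g_m = 2p_{2m-1} + \psi_m$ with $\psi_m \in \mathbb{R}[p_1,p_3,\dots,p_{2m-3}]$ and $\deg\psi_m \le 2m-2$; note that $\psi_m$ cannot involve $p_{2m-1}$, as that variable alone already has degree $2m-1>2m-2$. Viewing $f\in\Gamma$ as a polynomial in the $\g$'s and then substituting $\g_m = \g_m(p_1,p_3,\dots)$, the chain rule gives
\[
	\frac{\pd f}{\pd p_{2k-1}} = \sum_{m\ge1}\frac{\pd f}{\pd\g_m}\cdot\frac{\pd\g_m}{\pd p_{2k-1}} = 2\,\frac{\pd f}{\pd\g_k} + \sum_{m>k}\frac{\pd f}{\pd\g_m}\cdot\frac{\pd\psi_m}{\pd p_{2k-1}},
\]
where I used that $\g_m$ does not involve $p_{2k-1}$ for $m<k$, that $\pd\g_k/\pd p_{2k-1}=2$, and that $\pd\g_m/\pd p_{2k-1}=\pd\psi_m/\pd p_{2k-1}$ for $m>k$. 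Solving for $\pd f/\pd\g_k$ yields the leading term $\frac12\,\pd f/\pd p_{2k-1}$ plus the correction $-\frac12\sum_{m>k}(\pd f/\pd\g_m)(\pd\psi_m/\pd p_{2k-1})$.

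It remains to estimate the degree of this correction, which is the only delicate point. Differentiation by a generator lowers the filtration degree by that generator's degree, so $\deg(\pd f/\pd\g_m)\le\deg f-(2m-1)$ and, using $\deg\psi_m\le 2m-2$, $\deg(\pd\psi_m/\pd p_{2k-1})\le (2m-2)-(2k-1)=2(m-k)-1$. Hence each summand of the correction has degree at most $(\deg f-(2m-1))+(2(m-k)-1)=\deg f-2k$, which is $\le\deg f-(2k-1)$, exactly as claimed. I expect this bookkeeping to be the main obstacle; it is routine once one notices that the single chain-rule identity above already isolates $\pd f/\pd\g_k$, so that no inversion of the change of variables is required.
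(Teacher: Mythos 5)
Your proposal is correct and takes essentially the same route as the paper: part (1) is exactly the paper's intended derivation from Propositions \ref{p3.3} and \ref{p3.4} (your sharper bound $2k-2$ is indeed what the degree count in part (2) silently relies on), and part (2) is the same triangularity-plus-chain-rule-plus-degree-bookkeeping argument. The only cosmetic difference is orientation: the paper expands $\frac{\pd f}{\pd \g_k}=\sum_{l\ge k}\frac{\pd(2p_{2l-1})}{\pd \g_k}\frac{\pd f}{\pd(2p_{2l-1})}$ directly, using the inverse change of generators, whereas you expand $\frac{\pd f}{\pd p_{2k-1}}$ through the $\g_m$'s and solve the resulting triangular identity for $\frac{\pd f}{\pd\g_k}$; both yield the same correction terms of degree $\le\deg f-2k$.
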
	
\begin{proof} Claim (1) directly follows from Propositions \ref{p3.3} and \ref{p3.4}.

	To prove claim (2) observe that
	\begin{equation*}
		\left.
		\begin{array}{c}
			\displaystyle
			\frac{\pd f}{\pd \g_k}=
			\sum_{l\ge k}\frac{\pd(2p_{2l-1})}{\pd \g_k}\frac{\pd f}{\pd(2p_{2l-1})}=
			\frac12\frac{\pd f}{\pd p_{2k-1}}+\sum_{l>k}\frac{\pd(2p_{2l-1}-\g_l)}{\pd \g_k}
			\frac{\pd f}{\pd(2p_{2l-1})}.
		\end{array}
		\right.
	\end{equation*}
	The $l$th summand in the last sum has degree
	\begin{equation*}
		\le (2l-3)-(2k-1)+\deg f-(2l-1)=\deg f-(2k-1)-2<\deg f-(2k-1).
	\end{equation*}
	This concludes the proof.
\end{proof}	
Together Lemmas \ref{p5.3} and \ref{p5.4} imply Theorem \ref{p5.1}.

\section{The limit diffusion}\label{s6}
In this section we prove that the Markov chains $T_n$
from Definition \ref{p10.6}
converge (within a certain time scaling) to a continuous time
Markov process $\X{\al}(t)$, $t\ge0$, in the simplex $\sk$.
Using Theorem \ref{p5.1} we prove the expression (\ref{f0.1})
(from Introduction)
for its the pre-generator.
In \S\ref{s70.7}
we study some further properties of $\X{\al}(t)$.

In \S\ref{s8.1}
we discuss the embedding of
the simplex $\sk$ into the Thoma simplex~$\Omega$
introduced in \cite{GnedinIntern.Math.ResearchNotices2006Art.ID5196839pp.}
(see also \S\ref{s0.2}). This construction leads to
another proof of one of the results
from this section (namely, the first claim of Proposition \ref{p6.6})
but it is also of separate interest.

\subsection{An operator semigroup approximation theorem}\label{s6.1}
We begin by stating a well-known general result on approximations of continuous contraction semigroups by
discrete ones.
We formulate it in a form (Theorem~\ref{p6.4}) best suitable
for the application to our concrete situation.
We refer to the paper \cite{Trotter1958} 
and the book \cite{Ethier1986}. 
In the book one can also find additional 
references.

Let $L$ and $L_n$, $n\in\N$, be real Banach spaces.\footnote{The norms
of $L$ and of each $L_n$ are denoted by the same symbols $\|\cdot\|$.}
Let $\pi_n\colon L\to L_n$, $n\in\N$, 
be bounded linear operators such that $\sup_n\|\pi_n\|<\infty$.
\begin{df}\rm{}\label{p6.1}
	We say that a sequence of elements 
	$\left\{ f_n\in L_n \right\}$ {\em{}converges\/} to an element $f\in L$ if
	$\lim\limits_{n\to\infty}\|\pi_n f-f_n\|=0$.
	We write $f_n\to f$.
\end{df}	
It our concrete situation described below in \S\ref{s6.2} the additional condition
\begin{equation}\label{f80}
	\lim_{n\to\infty}\|\pi_n f\|=\|f\|\qquad\mbox{for all $f\in L$}
\end{equation}
is satisfied. This condition implies that any 
sequence $\left\{ f_n\in L_n \right\}$ may have at most one limit in $L$.

\begin{df}\rm{}\label{p6.3}
	An operator $D$ in $L$ is called {\em{}dissipative\/}
	if $\|(s\mathbf{1}-D)f\|\ge s\|f\|$ for all $s\ge0$ and all $f$ from the domain of $D$,
	where ${\bf1}$ denotes the identity operator.
\end{df}

Now, assume that for all $n\in\N$ we are given
a contraction operator $T_n$ in $L_n$.
Suppose that 
$\{\varepsilon_n\}$ is a sequence 
of positive numbers converging to 
zero.
Assume that there exists a dense subspace $\Fc\subset L$ and an operator $A\colon \Fc\to L$
such that in the sense of Definition \ref{p6.1}
\begin{equation*}
	\varepsilon_n^{-1}(T_n-\mathbf{1})\pi_nf\to Af
	\qquad\mbox{for all $f\in\Fc$}.
\end{equation*}
\begin{thm}\label{p6.4}
	If 

	$\bullet$ The operator $A\colon\Fc\to L$ is dissipative;
	
	$\bullet$ For some $s>0$ the range of $(s\mathbf{1}-A)$ is dense in $L$,

	Then the operator $A$ is closable in $L$; its 
	closure generates a strongly continuous
	contraction semigroup $\left\{ T(t) \right\}_{t\ge0}$ in $L$;
	and (again in the sense of Definition \ref{p6.1}) 
	\begin{equation}\label{f81}
		T_n^{[\varepsilon_n^{-1}t]}\pi_nf\to T(t)f\qquad\mbox{for all $f\in L$},
	\end{equation}
	for $t\ge0$ uniformly on bounded intervals.
\end{thm}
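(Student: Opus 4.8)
The plan is to obtain the statement from two standard facts about contraction semigroups, both available in \cite{Ethier1986}: the Lumer--Phillips generation theorem and the Trotter--Kurtz approximation theorem for families of discrete semigroups. Accordingly I would separate the argument into a \emph{generation} step (producing $\{T(t)\}$) and an \emph{approximation} step (establishing the convergence (\ref{f81})).

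For generation I would invoke the Lumer--Phillips theorem in the form: a densely defined operator $A$ is closable and its closure $\overline A$ generates a strongly continuous contraction semigroup $\{T(t)\}_{t\ge0}$ as soon as $A$ is dissipative and $\mathrm{Range}(s\mathbf 1-A)$ is dense in $L$ for some $s>0$. These are precisely the two bulleted hypotheses, so this step is essentially a citation. Were I to make it self-contained, I would note that dissipativity gives $\|(\lambda\mathbf 1-A)f\|\ge\lambda\|f\|$ for $\lambda>0$, whence $\lambda\mathbf 1-A$ is injective with inverse of norm $\le\lambda^{-1}$ on its range; the density of the range for the single value $\lambda=s$ then allows one to extend this inverse to an everywhere-defined resolvent $(\lambda\mathbf 1-\overline A)^{-1}$ of the closure, with $\|(\lambda\mathbf 1-\overline A)^{-1}\|\le\lambda^{-1}$, and the Hille--Yosida theorem (via the Yosida approximants $A_\mu=\mu^2(\mu\mathbf 1-\overline A)^{-1}-\mu\mathbf 1$) manufactures the semigroup.

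For approximation, put $A_n:=\varepsilon_n^{-1}(T_n-\mathbf 1)$. Because each $T_n$ is a contraction, $A_n$ generates the contraction semigroup $e^{tA_n}=e^{-t/\varepsilon_n}\sum_{k\ge0}(t/\varepsilon_n)^k T_n^{\,k}/k!$, a Poisson average of the contractions $T_n^{\,k}$. I would then apply the Trotter--Kurtz discrete approximation theorem: given that $\Fc$ is a core for $\overline A$, the convergence of iterates (\ref{f81}) is equivalent to the existence, for each $f$ in that core, of $f_n\in L_n$ with $f_n\to f$ and $A_nf_n\to\overline A f$ in the sense of Definition \ref{p6.1}. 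Here $\Fc$ is automatically a core, since $\overline A$ is by construction the closure of $A\colon\Fc\to L$. Choosing $f_n:=\pi_n f$ for $f\in\Fc$, the convergence $f_n\to f$ is trivial, while $A_nf_n=\varepsilon_n^{-1}(T_n-\mathbf 1)\pi_n f\to\overline A f=Af$ is exactly the hypothesis imposed just before the theorem. This verifies the core condition and produces (\ref{f81}).

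The difficulty does not lie in matching the hypotheses, which is routine, but inside the Trotter--Kurtz step, whose proof I would not reproduce. Its core is the passage from the infinitesimal convergence $A_n\pi_n f\to\overline A f$ to convergence of the \emph{iterates} $T_n^{[\varepsilon_n^{-1}t]}$: one first upgrades the infinitesimal statement to resolvent convergence $(\lambda\mathbf 1-A_n)^{-1}\pi_n f\to(\lambda\mathbf 1-\overline A)^{-1}f$, then to $e^{tA_n}\pi_n f\to T(t)f$ uniformly on bounded time intervals, and finally replaces the continuous semigroup $e^{tA_n}$ by the discrete iterate $T_n^{[\varepsilon_n^{-1}t]}$ via the classical estimate $\|T_n^{m}g-e^{m(T_n-\mathbf 1)}g\|\le\sqrt m\,\|(T_n-\mathbf 1)g\|$ for contractions, taking $m=[\varepsilon_n^{-1}t]$ so that $\sqrt m\,\|(T_n-\mathbf 1)\pi_n f\|=O(\sqrt{\varepsilon_n})\to0$. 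The standing assumption $\sup_n\|\pi_n\|<\infty$ is what keeps the framework well posed, and the condition (\ref{f80}), although not needed for the theorem itself, guarantees that limits in the sense of Definition \ref{p6.1} are unique.
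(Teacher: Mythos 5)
Your proof is correct, but it runs the paper's argument in the opposite order and distributes the two hypotheses differently, so it is worth recording the comparison. The paper does not start from Lumer--Phillips: it first introduces the maximal extension $\hat A$ of $A$, defined on all $f\in L$ for which $\varepsilon_n^{-1}(T_n-\mathbf{1})\pi_nf$ converges in the sense of Definition \ref{p6.1}, observes that $\hat A$ is dissipative because each $\varepsilon_n^{-1}(T_n-\mathbf{1})$ is, and applies Trotter's Theorem 5.3 \cite{Trotter1958} (with $M=1$, $K=0$) to $\hat A$; that single application yields closability of $\hat A$, generation of the semigroup, and the pointwise form of (\ref{f81}) simultaneously, after which uniformity on bounded intervals comes from the implication (b)$\Rightarrow$(a) of \cite[Ch.~1, Thm.~6.5]{Ethier1986}, strong continuity from Hille--Yosida, and contractivity of each $T(t)$ from (\ref{f80}) combined with (\ref{f81}). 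Only at the very end does the paper spend the range-density hypothesis: since the range of $s\mathbf{1}-A$ is dense and $\Fc$ is dense, $\Fc$ is a core for $\overline{\hat A}$, and \cite[Ch.~1, Prop.~3.1]{Ethier1986} then shows that $A$ itself is closable with $\overline A=\overline{\hat A}$, which is what the theorem asserts. You instead spend the range condition up front, inside Lumer--Phillips, to generate the contraction semigroup directly from $A$; then $\Fc$ is tautologically a core for $\overline A$, and the full equivalence of \cite[Ch.~1, Thm.~6.5]{Ethier1986} with the choice $f_n=\pi_nf$ delivers (\ref{f81}). Each route uses the range hypothesis exactly once, just at opposite ends of the proof. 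Your version is somewhat more economical --- it never needs the auxiliary operator $\hat A$, and, as you correctly note, it never needs (\ref{f80}), which the paper does invoke (both for contractivity of $T(t)$ and, implicitly, for dissipativity of $\hat A$); the paper's version has the complementary merit that the limit semigroup is manufactured directly out of the approximating chains by Trotter's theorem, with the core argument serving only to identify its generator as $\overline A$. Your remark that $\Fc$ is automatically a core because $\overline A$ is by construction the closure of $A$ with domain $\Fc$ is exactly right, and it is the step that makes the inverted order work.
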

\begin{proof}
	Let $\hat A\colon L\to L$ be the operator defined as
	$f\mapsto \lim\limits_{n\to\infty}\varepsilon_n^{-1}(T_n-\mathbf{1})\pi_nf$
	for all $f\in L$ such that this limit exists in the 
	sense of Definition \ref{p6.1}. The domain of 
	$\hat A$ consists of such $f\in L$. Clearly, $\hat A\mid_{\Fc}=A$.

	Since each $T_n$ is a contraction, each operator $\varepsilon_n^{-1}(T_n-{\bf1})$ is dissipative.
	Hence $\hat A$ is dissipative, too.

	The operator $\hat A$ satisfies the conditions of \cite[Thm. 5.3]{Trotter1958} with $M=1$ and $K=0$.
	Hence $\hat A$ is closable, its closure $\overline{\hat A}$ generates a semigroup $\left\{ T(t) \right\}_{t\ge0}$
	in $L$, and the convergence (\ref{f81}) holds pointwise (with respect to $t$).
	The uniform convergence follows from the implication 
	(b)$\Rightarrow$(a) of \cite[Ch. 1, Thm. 6.5]{Ethier1986}.

	Because each operator $T_n$ is a contraction, 
	the fact that each 
	$T(t)$ is a contraction follows
	from (\ref{f80}) and (\ref{f81}). 
	By the Hille-Yosida theorem 
	(see, e.g., \cite[Ch. 1,Thm. 2.6]{Ethier1986}), 
	the dissipativity of $\hat A$ implies 
	that the semigroup $T(t)$ is strongly continuous.

	Since $\Fc$ is dense in $L$ and for some $s>0$
	the range of $s\mathbf{1}-\overline{\hat A}\mid_\Fc=s\mathbf{1}-A$
	is dense in $L$, the subspace $\Fc\subset L$
	is a {\em{}core\/} for $\overline{\hat A}$ in the sense of \cite[Ch. 1, Sect. 3]{Ethier1986}.
	By \cite[Ch. 1, Prop. 3.1]{Ethier1986}, the operator $A$ is closable in $L$
	and $\overline{\hat A}=\overline A$. This concludes the proof.
\end{proof}

\subsection{The simplex $\sk$}\label{s6.2}
We return to our concrete situation. 
As $L_n$, $n\in\N$, we take the finite-dimensional vector space 
$\mathrm{Fun}(\Sb_n)$ of real-valued 
functions on $\Sb_n$
with the supremum norm.
As $T_n$ we take the Markov transition operators from Definition \ref{p10.6}.
Clearly, each $T_n$ is a contraction.
As the scaling factors we take $\varepsilon_n:=1/n^2$.
To define the space $L$ and the operators $\pi_n\colon L\to L_n$ we
need some extra notation.

Let $\sk$ be the subset of the infinite-dimensional cube $\left[ 0,1 \right]^{\infty}$
defined as
\begin{equation*}
	\sk:=\left\{ \x=(\x_1,\x_2,\dots)\in\left[ 0,1 \right]^{\infty}\colon \x_1\ge\x_2\ge\dots\ge0,\ \sum_{i}\x_i\le1 \right\},
\end{equation*}
We equip the cube $\left[ 0,1 \right]^{\infty}$ with the standard product topology.
The subset~$\sk\subset\left[ 0,1 \right]^{\infty}$ is a compact, metrizable and separable space. 
As $L$ we take the Banach space $C(\sk)$ of all real continuous functions on $\sk$ with pointwise operations and the supremum norm.

For $n\in\N$, we define an embedding $\iota_n$
of the set $\Sb_n$ into the space $\sk$:
\begin{equation}\label{f81.1}
	\iota_n\colon\Sb_n\hookrightarrow\sk,\qquad \la=(\la_1,\dots,\la_{\ell},0,0,\dots)\mapsto
	\left( \frac{\la_1}n,\dots,\frac{\la_{\ell}}n,0,0,\dots \right)\in\sk.
\end{equation}
Using $\iota_n$ we define the operators $\pi_n\colon L\to L_n$, that is, $\pi_n\colon C(\sk)\to\mathrm{Fun}(\Sb_n)$:
\begin{equation*}
	(\pi_nf)(\la):=f(\iota_n(\la)),\qquad\mbox{where $f\in C(\sk)$ and $\la\in\Sb_n$}.
\end{equation*}
Clearly, $\|\pi_n\|\le1$.
Moreover, in our situation the condition (\ref{f80}) is satisfied
because the space $\sk$ is approximated
by the sets $\iota_n(\Sb_n)\subset\sk$ in the sense that every open subset
of $\sk$ has a nonempty intersection with $\iota_n(\Sb_n)$ for all $n$ large enough.

\subsection{Moment coordinates}\label{s6.3}
Here we define the dense subspace $\Fc\subset L=C(\sk)$.

To every point $\x\in\sk$ we assign a probability measure
\begin{equation*}
	\nu_\x:=\sum_{i=1}^{\infty}\x_i\delta_{\x_i}+\gamma\delta_0,\qquad \gamma:=1-\sum_{i=1}^{\infty}\x_i
\end{equation*}
on $\left[ 0,1 \right]$, where by $\delta_s$ we denote the Dirac measure at a point $s$.
Denote by $\mq_k=\mq_k(\x)$ the $k$th moment of the measure $\nu_\x$:
\begin{equation*}
	\mq_k(\x):=\int_0^1u^k\nu_\x(du)=\sum_{i=1}^{\infty}\x_i^{k+1},\qquad k=1,2,\dots.
\end{equation*}
Following \cite{Borodin2007}, we call $\mq_1,\mq_2,\dots$ the {\em{}moment coordinates\/}
of the point $\x\in\sk$.
They are continuous functions on $\sk$.\footnote{Observe 
that the function $\x\mapsto\sum_{i=1}^{\infty}\x_i$ is not continuous in $\x\in\sk$.}

Note that the functions $\mq_1,\mq_2,\dots$ are algebraically independent as functions
on $\sk$. Clearly, any subcollection of $\left\{ \mq_1,\mq_2,\dots \right\}$ is also 
algebraically independent.
As $\Fc$ we take
the subalgebra of the Banach algebra $C(\sk)$ freely generated by the {\em{}even\/} moment coordinates:
\begin{equation*}
	\Fc:=\mathbb{R}\left[ \mq_2,\mq_4,\mq_6,\dots \right]\subset C(\sk).
\end{equation*}
\begin{prop}\label{p6.6}
	The functions $\mq_2,\mq_4,\mq_6,\dots$ separate points of $\sk$.
	Moreover, 
	any infinite subcollection
	of $\left\{ \mq_1,\mq_2,\dots \right\}$
	also possesses this property.
\end{prop}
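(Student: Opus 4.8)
The plan is to prove the stronger assertion directly: for any strictly increasing sequence of indices $k_1<k_2<\dots$, the functions $\mq_{k_1},\mq_{k_2},\dots$ already separate points of $\sk$. The first claim is the special case $k_j=2j$, and the ``moreover'' part is exactly this stronger statement, so it suffices to show that a point $\x\in\sk$ is uniquely determined by the numbers $\mq_{k_j}(\x)=\sum_{i\ge1}\x_i^{k_j+1}$, $j\ge1$.

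The first and main step is to recover the largest coordinate $\x_1$ by a limiting procedure. Using $\sum_i\x_i\le1$ one has the elementary sandwich
\begin{equation*}
	\x_1^{\,k+1}\le\mq_k(\x)=\sum_i\x_i\cdot\x_i^{\,k}\le\x_1^{\,k}\sum_i\x_i\le\x_1^{\,k},
\end{equation*}
so that $\x_1\le\mq_k(\x)^{1/(k+1)}\le\x_1^{\,k/(k+1)}$; since $0\le\x_1\le1$, both bounds tend to $\x_1$ and hence $\x_1=\lim_{k\to\infty}\mq_k(\x)^{1/(k+1)}$. Because the $k_j$ tend to infinity, the limit may be taken along the subsequence, so $\x_1$ is determined by the given data. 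This is precisely where the restriction to an \emph{arbitrary} infinite subcollection matters: a M\"untz--Sz\'asz type density argument would require $\sum_j k_j^{-1}=\infty$ and is unavailable for sparse subsequences, whereas the $\ell^{\,p}$-to-$\ell^\infty$ limit above uses nothing but $k_j\to\infty$. I expect this to be the only genuinely delicate point.

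Having found $\x_1$, I would next recover the multiplicity $m$ of the largest value. Writing $\x_1=\dots=\x_m>\x_{m+1}$, the same idea gives
\begin{equation*}
	\frac{\mq_k(\x)}{\x_1^{\,k+1}}=\sum_{i}\Big(\tfrac{\x_i}{\x_1}\Big)^{k+1}=m+\sum_{i>m}\Big(\tfrac{\x_i}{\x_1}\Big)^{k+1}\xrightarrow[k\to\infty]{}m,
\end{equation*}
the tail tending to $0$ because $\x_{m+1}/\x_1<1$ and $\sum_i\x_i/\x_1\le1/\x_1<\infty$; again the limit may be taken along the $k_j$. Thus $m$ too is determined by the data (provided $\x_1>0$; if all $\mq_{k_j}(\x)=0$ then $\x=0$).

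Finally I would iterate. Subtracting the contribution of the top value, the numbers $\mq_{k_j}(\x)-m\,\x_1^{\,k_j+1}=\sum_{i>m}\x_i^{\,k_j+1}$ are exactly the corresponding data for the truncated point $(\x_{m+1},\x_{m+2},\dots)\in\sk$, to which the same procedure applies. By induction this recovers, in order, all the distinct positive values of $\x$ together with their multiplicities, and hence the whole sequence $\x$. Consequently two points with identical data $(\mq_{k_j}(\cdot))_{j\ge1}$ coincide, which is the desired separation. The only points needing care are the degenerate cases---$\x=0$, finitely many nonzero parts, and repeated values---all handled transparently by the multiplicity step and by the observation that the truncated sequence remains in $\sk$.
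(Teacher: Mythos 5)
Your proposal is correct and follows essentially the same route as the paper: both arguments reconstruct the coordinates of $\x$ one after another by subtracting the contributions of the already-recovered top coordinates from $\mq_{k_n}(\x)$ and taking roots, using nothing beyond $k_n\to\infty$. The only (cosmetic) difference is that the paper's single formula $\bigl(\mq_{k_n}(\x)-\sum_{j=1}^{m}\x_j^{\,k_n+1}\bigr)^{1/(k_n+1)}\to\x_{m+1}$ handles repeated values automatically, whereas you recover each distinct value together with its multiplicity in a separate step.
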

\begin{proof}
	Let $\left\{ \mq_{k_1},\mq_{k_2},\dots \right\}$
	be any infinite subcollection of $\left\{ \mq_1,\mq_2,\dots \right\}$.
	It suffices to show that a point $\x\in\sk$
	is uniquely determined by the sequence
	$\left\{ \mq_{k_1}(\x),\mq_{k_2}(\x),\dots \right\}$. 
	Observe that for every $m=0,1,2,\dots$ we have
	\begin{equation*}
		\Big(\mq_{k_n}(\x)-\sum_{j=1}^{m}\x_j^{k_n}\Big)^{1/k_n}=
		\x_{m+1}\cdot\Big(1+\sum_{i=m+1}^{\infty}
		(\x_i/\x_{m+1})^{k_n}\Big)^{1/k_n}\to\x_{m+1}
	\end{equation*}
	as $n\to\infty$ (the convergence is pointwise). 
	Here if $m=0$, then by agreement there is no sum $\sum_{j=1}^{m}$ in the LHS.
	Using this convergence, one can reconstruct the 
	coordinates
	$\x_1,\x_2,\x_3,\dots$
	one after another using the sequence 
	$\left\{ \mq_{k_1}(\x),\mq_{k_2}(\x),\dots \right\}$.
	This concludes the proof.
\end{proof}
Since the subalgebra $\Fc\subset C(\sk)$ separates points and contains the function $1$, it is dense in $C(\sk)$.

Next, recall the algebra $\Gamma=\mathbb{R}\left[ p_1,p_3,p_5,\dots \right]$ of 
doubly symmetric functions
introduced in \S\ref{s2.1}. 
Let $I:=(p_1-1)\Gamma$ be the principal ideal in $\Gamma$ generated by $(p_1-1)$. Set $\Gamma^\circ:=\Gamma/I$.
To every element $f\in\Gamma$ corresponds an image in $\Gamma^\circ$ denoted by $f^\circ$.
In particular, $p_1^\circ=1$, and $\Gamma^\circ$ is freely generated
(as a commutative unital algebra) by the elements $p_3^\circ,p_5^\circ,p_7^\circ,\dots$. 

Observe also that 
\begin{equation}\label{f83}
	\Gamma=\mathbb{R}\left[ p_1,p_3,p_5,p_7,\dots \right]=I\oplus\mathbb{R}\left[ p_3,p_5,p_7,\dots \right],
\end{equation}
and hence $\Fc\cong\mathbb{R}\left[ p_3,p_5,p_7,\dots \right]$. We will use this fact below.

The correspondence 
\begin{equation*}
	p_{2k+1}^{\circ}\longleftrightarrow \mq_{2k},\qquad k=1,2,\dots
\end{equation*}
establishes an isomorphism between the algebras $\Gamma^\circ$ and $\Fc$.
We will identify elements
$g\in\Gamma^\circ$ and the corresponding continuous functions $g(\x)$ on $\sk$.
Moreover, to every element $\phi\in\Gamma$ corresponds a continuous function
on $\sk$. Denote this function by $\phi^\circ(\x)$.
Equivalently, the map $\phi\to\phi^\circ(\cdot)$ is determined by setting
\begin{equation*}
	p_1^\circ(\x):\equiv 1,\qquad
	p_{2k-1}^\circ(\x):=\sum_{i=1}^{\infty}\x_i^{2k-1},\qquad k=2,3,\dots.
\end{equation*}

\subsection{The limit theorem for coherent systems}\label{s6.4}
At this point it is convenient to formulate the following theorem
about coherent systems on $\Sb$. 

\begin{thm}\label{p6.7}
	Let $\left\{ M_n \right\}$ be a coherent system on $\Sb$ (see \S\ref{s1.3} for the definition).
	Then the push-forward of the measure $M_n$ under the embedding $\iota_n$ (defined in \S\ref{s6.2})
	weakly converges, as $n\to\infty$, to a probability measure $\mathsf{P}$ on $\sk$.
	The measure $\mathsf{P}$ is called the {\em{}boundary measure\/} of the system $\left\{ M_n \right\}$.

	Conversely, any coherent system on $\Sb$ can be reconstructed from its boundary measure as follows:
	\begin{equation*}
		M_n(\la)=2^{-|\la|}\dhh(\la)\int_{\sk}\mQ_\la^\circ(\x)\mathsf{P}(d\x)\qquad\mbox{for all $n\in\N$ and $\la\in\Sb_n$}.
	\end{equation*}
	Here $\dhh(\la)$ is given by (\ref{f1}) and $\mQ_\la^\circ$ 
	is the image in $\Gamma^\circ$ of doubly symmetric $\mQ$-Schur function
	defined in \S\ref{s2.1}.
\end{thm}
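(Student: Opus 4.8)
The plan is to reduce both assertions of the theorem to a single exact computation of the $M_n$-average of the functions $(\mQ_\mu^*)_n$, and then to pass to the scaling limit. First I would observe that it suffices to prove the convergence of $\int_\sk \mQ_\mu^\circ\,d\big((\iota_n)_*M_n\big)$ for every fixed $\mu\in\Sb$: indeed, the functions $\mQ_\mu^\circ$ linearly span $\Gamma^\circ\cong\Fc$, which is dense in $C(\sk)$ by Proposition~\ref{p6.6}, and each $(\iota_n)_*M_n$ is a probability measure on the compact space $\sk$. Hence a standard $3\varepsilon$-argument (using $\sup_n\|(\iota_n)_*M_n\|=1$) upgrades convergence on the dense subspace to convergence of $\int f\,d\big((\iota_n)_*M_n\big)$ for every $f\in C(\sk)$; the resulting limit functional is linear, positive, and sends $1$ to $1$, so by Riesz--Markov it is integration against a unique probability measure $\mathsf P$, and $(\iota_n)_*M_n\to\mathsf P$ weakly. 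The reconstruction formula will then drop out by reading off the value of $\int_\sk\mQ_\la^\circ\,d\mathsf P$.

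The exact computation proceeds as follows. Iterating the coherency relation (\ref{f3}) and telescoping the down transition function (\ref{f2}) along oriented paths, one obtains, for all $n\ge k:=|\mu|$,
\begin{equation*}
	M_k(\mu)=\sum_{\la\in\Sb_n}\frac{\dhh(\mu)}{\dhh(\la)}\,\dhh(\mu,\la)\,M_n(\la),
\end{equation*}
where I use that the multi-step down transition from $\la$ to $\mu$ equals $\dhh(\mu)\dhh(\mu,\la)/\dhh(\la)$ (the $\dhh$-ratios telescope, while the products of the $\kappa$'s over vertex-paths sum to $\dhh(\mu,\la)$ by its very definition). Substituting Ivanov's formula (\ref{f36}), $\dhh(\mu,\la)/\dhh(\la)=2^{-k}(\mQ_\mu^*)_n(\la)/n^{\downarrow k}$, turns this into the closed form
\begin{equation*}
	\sum_{\la\in\Sb_n}(\mQ_\mu^*)_n(\la)\,M_n(\la)=\frac{2^{k}\,n^{\downarrow k}}{\dhh(\mu)}\,M_k(\mu),
\end{equation*}
which is the $M_n$-expectation of $(\mQ_\mu^*)_n$ and, crucially, is proportional to $n^{\downarrow k}$ with an $n$-independent coefficient.

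Finally I would match $(\mQ_\mu^*)_n$ against the continuous function $\mQ_\mu^\circ\circ\iota_n$. Writing $\mQ_\mu^*=\mQ_\mu+g$ with $\deg g<k$ (\S\ref{s2.1}) and using that $p_{2j-1}(\la)=n^{2j-1}\sum_i\x_i^{2j-1}$ for $\x=\iota_n(\la)$, together with $p_1(\iota_n\la)=\sum_i\la_i/n=1$, the homogeneity of each graded piece gives $(\mQ_\mu^*)_n(\la)=n^{k}\mQ_\mu^\circ(\iota_n\la)+\sum_{d<k}n^{d}g_{(d)}^\circ(\iota_n\la)$. Dividing by $n^{\downarrow k}$ and using that all the $\circ$-functions are bounded on the compact space $\sk$, I get $(\mQ_\mu^*)_n(\la)/n^{\downarrow k}=\mQ_\mu^\circ(\iota_n\la)+\varepsilon_n(\la)$ with $\sup_\la|\varepsilon_n(\la)|\to0$. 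Integrating against $M_n$ and invoking the closed form above yields
\begin{equation*}
	\lim_{n\to\infty}\int_\sk\mQ_\mu^\circ\,d\big((\iota_n)_*M_n\big)=\frac{2^{k}}{\dhh(\mu)}\,M_k(\mu),
\end{equation*}
which is exactly $\int_\sk\mQ_\mu^\circ\,d\mathsf P$; rewriting with $\la$ in place of $\mu$ and $n=|\la|$ gives the reconstruction formula (taking $\mu=\varnothing$ confirms $\mathsf P(\sk)=1$). The one genuinely delicate point is this last step: one must control the subleading terms $g_{(d)}^\circ$ \emph{uniformly} in $\la$ and correctly interpret the non-continuous power sum $p_1$ through its normalized image $p_1^\circ\equiv1$ on the lattice points $\iota_n(\Sb_n)$ (where $\sum_i\x_i=1$ exactly, while $\sum_i\x_i<1$ can occur only in the limit, as mass escapes to $0$). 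Everything else is either an exact algebraic identity or the standard compactness/density machinery.
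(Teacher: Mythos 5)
Your proof is correct and is, in substance, exactly the argument the paper relies on: the paper proves Theorem \ref{p6.7} by citing Theorem B of \cite{Kerov1998}, whose scheme --- iterated coherency with the telescoped down transition function, the relative dimension formula (Ivanov's relation (\ref{f36}) in place of \cite[Theorem 6.1]{Kerov1998}), the uniform top-degree approximation $(\mQ_\mu^*)_n(\la)/n^{\downarrow|\mu|}=\mQ_\mu^\circ(\iota_n\la)+o(1)$, and the density of $\Fc$ in $C(\sk)$ with a Riesz--Markov extraction of the limit measure --- is precisely what you have written out in detail. Nothing to correct.
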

\begin{proof}
	This theorem can be proved exactly as Theorem B of the paper 
	\cite{Kerov1998} with the two following changes:
	instead of $\theta$-shifted Jack polynomials one should use 
	factorial Schur's $\mQ$-functions, and 
	instead of \cite[Theorem 6.1]{Kerov1998} 
	one should refer to the relation (\ref{f36}) (proved in the paper by V.~Ivanov \cite{IvanovNewYork3517-3530}).
\end{proof}

To the multiplicative coherent system $\left\{ M_n^{\al} \right\}$ with parameter $\al\in\left( 0,+\infty \right)$
corresponds a measure $\mathsf{P}^{(\al)}$ on $\sk$. We may call it the 
{\em{}multiplicative boundary measure\/}.

\subsection{Doubling of shifted Young diagrams\\ and the Thoma simplex}\label{s8.1}
In this subsection we discuss the embedding
of $\sk$
into the Thoma simplex~$\Omega$
introduced in \cite{GnedinIntern.Math.ResearchNotices2006Art.ID5196839pp.}
(see also \S\ref{s0.2}).

\subsubsection{Modified Frobenius coordinates and the Thoma simplex}
Here we recall some definitions from \cite[\S3.1 and \S3.3]{Borodin2007}.

Let $\si$ be an ordinary partition.\footnote{Ordinary partitions are 
identified with ordinary Young diagrams as in \cite[Ch. I, \S1]{Macdonald1995}}
Denote by $a_1,\dots,a_k,b_1,\dots,b_k$ its 
{\em{}modified Frobenius coordinates\/}. That is, 
$k$ is the number of diagonal boxes in $\si$,
$a_i$ equals $\frac12$ plus the number of boxes in the $i$th row 
to the right of the diagonal, and $b_j$ equals $\frac12$ plus the 
number of boxes in the $j$th column below the diagonal.
We write $\si=\left( a_1,\dots,a_k\mid b_1,\dots,b_k \right)$.
Note that $\sum(a_i+b_i)=|\si|$, the number of boxes in the diagram $\si$.
Note also that each of the sequences $\left\{ a_1,\dots,a_k \right\}$ and
$\left\{ b_1,\dots,b_k \right\}$ is strictly decreasing.
Recall that by $\mathbb{Y}_n$ we denote the set of all ordinary partitions
of weight $n$.

Let $\Omega$
be the {\em{}Thoma simplex\/}, that is, the space of couples
$(\omega;\omega')\in\left[ 0,1 \right]^{\infty}\times\left[ 0,1 \right]^{\infty}$ 
satisfying the following conditions:
\begin{equation*}
	\omega_1\ge\omega_2\ge\dots\ge0,\qquad
	\omega'_1\ge\omega'_2\ge\dots\ge0,\qquad
	\sum_{i}\omega_i+
	\sum_{j}\omega'_j\le1.
\end{equation*}
Here $\left[ 0,1 \right]^{\infty}$ is equipped with the product topology
and hence the space 
$\Omega$ is a compact subset of $\left[ 0,1 \right]^{\infty}\times\left[ 0,1 \right]^{\infty}$.

Consider for all $n\in\N$ embeddings:
\begin{equation*}
	\hat\iota_n\colon\mathbb{Y}_n\hookrightarrow\Omega,\qquad \si\mapsto
	\left( \frac{a_1}n,\dots,\frac{a_k}n,0,\dots;
	       \frac{b_1}n,\dots,\frac{b_k}n,0,\dots\right),
\end{equation*}
where $\si=(a_1,\dots,a_k\mid b_1,\dots,b_k)$ is written in terms of the modified Frobenius
coordinates. 

\subsubsection{Doubling of shifted Young diagrams}
Let $\la=(\la_1,\dots,\la_{\ell})$ be a shifted Young diagram.
By $\Df\la$ denote the {\em{}double\/} of $\la$,
that is, the ordinary Young diagram that is 
written in the modified Frobenius coordinates as
$$
	\Df\la=\left( \la_1+\frac12,\dots,\la_\ell+\frac12\mid\la_1-\frac12,\dots,\la_{\ell}-\frac12 \right),
$$
see \cite[Ch.~I,~\S1, Example~9]{Macdonald1995}.\footnote{By
agreement, $\Df\varnothing=\varnothing$.}
In \cite{Hoffman1992} this object is called the
{\em{}shift-symmetric diagram\/} associated to
a strict partition $\la$.
The number of boxes in $\Df\la$ clearly equals 
twice the number of boxes in $\la$.
In this way we obtain embeddings $\Sb_n\hookrightarrow \mathbb{Y}_{2n}$ for all $n\in\Nn$
(and hence the whole Schur graph is embedded into the Young graph).

\subsubsection{The embedding $T\colon\sk\hookrightarrow\Omega$}
The sets $\hat\iota_n(\mathbb{Y}_n)$ approximate the Thoma simplex $\Omega$
in the same sense as the sets $\iota_n(\Sb_n)$ approximate $\sk$ (see \S\ref{s6.2}). 
Thus, it is natural to consider the following ``limit''
of the embeddings $\Sb_n\hookrightarrow\mathbb{Y}_{2n}$
as $n\to\infty$:
\begin{equation*}
	T\x=(\omega;\omega')=\left( 
	\frac{\x_1}2,\frac{\x_2}2,\dots;
	\frac{\x_1}2,\frac{\x_2}2,\dots\right),\qquad \x=(\x_1,\x_2,\dots)\in\sk.
\end{equation*}
The image of $\sk$ is the whole diagonal subset
$\left\{ (\omega,\omega')\colon\omega=\omega' \right\}$
of $\Omega$. Moreover, $T$ is a homeomorphism
between $\sk$ and this subset. The points $\x\in\sk$ such that $\sum\x_i=1$ map 
to the points $(\omega;\omega)$ such that $\sum(\omega_i+\omega_i)=1$.
This embedding $T$ was introduced
in \cite[\S8.6]{GnedinIntern.Math.ResearchNotices2006Art.ID5196839pp.}.

The property that $T\colon\sk\hookrightarrow\Omega$
is a limit in some sense of the embeddings $\Sb_n\hookrightarrow\mathbb{Y}_{2n}$
may be expressed as follows:
\begin{prop}
	Let $\left\{ \la(n) \right\}$, $n=1,2,\dots$, be a sequence of shifted Young diagrams, 
	$\la(n)\in\Sb_n$, such that, as $n\to\infty$, the points $\iota_n(\la(n))$
	tend to some point $\x\in\sk$.
	Then the points $\hat\iota_{2n}(\Df\la(n))$ tend to $T\x\in\Omega$.
\end{prop}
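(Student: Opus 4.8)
The plan is to make precise the intuitive statement that $T$ is the ``limit'' of the doubling maps $\Sb_n\hookrightarrow\mathbb{Y}_{2n}$ by directly comparing coordinates. Fix the sequence $\la(n)=(\la(n)_1,\la(n)_2,\dots)\in\Sb_n$ and write its modified Frobenius coordinates for the double. By the very definition of $\Df$, the double $\Df\la(n)\in\mathbb{Y}_{2n}$ has modified Frobenius coordinates
\begin{equation*}
	a_i=\la(n)_i+\tfrac12,\qquad b_i=\la(n)_i-\tfrac12,\qquad i=1,\dots,\ell(\la(n)).
\end{equation*}
Under $\hat\iota_{2n}$ these get divided by the weight $|\Df\la(n)|=2n$, so the two halves of the image $\hat\iota_{2n}(\Df\la(n))=(\omega(n);\omega'(n))$ are
\begin{equation*}
	\omega(n)_i=\frac{\la(n)_i+\tfrac12}{2n},\qquad \omega'(n)_i=\frac{\la(n)_i-\tfrac12}{2n}.
\end{equation*}

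First I would observe that the hypothesis $\iota_n(\la(n))\to\x$ in the product topology means exactly that each coordinate converges: $\la(n)_i/n\to\x_i$ as $n\to\infty$, for every fixed $i\ge1$. Then for each fixed $i$ I would compute, using the displayed formulas,
\begin{equation*}
	\omega(n)_i=\frac{\la(n)_i}{2n}+\frac1{4n}\longrightarrow\frac{\x_i}2,
	\qquad
	\omega'(n)_i=\frac{\la(n)_i}{2n}-\frac1{4n}\longrightarrow\frac{\x_i}2,
\end{equation*}
since the extra terms $\pm 1/(4n)$ vanish in the limit. These are precisely the coordinates of $T\x=(\x_1/2,\x_2/2,\dots;\x_1/2,\x_2/2,\dots)$. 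Because $\Omega\subset[0,1]^\infty\times[0,1]^\infty$ carries the product topology, coordinatewise convergence of the two sequences $\{\omega(n)_i\}$ and $\{\omega'(n)_i\}$ for every fixed $i$ is exactly the definition of $\hat\iota_{2n}(\Df\la(n))\to T\x$ in $\Omega$, which is the desired conclusion.

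The only point that needs a little care — and the one I would flag as the main (minor) obstacle — is a bookkeeping subtlety about how many nonzero coordinates the double has. For a fixed index $i$, once $n$ is large enough we have $\ell(\la(n))\ge i$ precisely when $\x_i>0$ is being approximated by genuine parts; but if $\x_i=0$ one must check that the $i$th coordinate of the image still tends to $0=\x_i/2$. When $i>\ell(\la(n))$ the corresponding Frobenius coordinates are simply absent (equivalently, the padding zeros in $\hat\iota$), so $\omega(n)_i=\omega'(n)_i=0$, which is consistent with the limit $\x_i/2=0$; and when $i\le\ell(\la(n))$ the formula above applies and gives $\la(n)_i/n\to\x_i=0$. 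In either regime the $i$th coordinate tends to $\x_i/2$, so no difficulty arises. I would therefore handle the argument coordinate by coordinate, noting that the boundary terms $\pm 1/(4n)$ are uniformly $O(1/n)$, and conclude convergence in the product topology without needing any uniformity in $i$.
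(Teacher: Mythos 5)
Your proof is correct and follows essentially the same route as the paper: both compare the coordinates of $\hat\iota_{2n}(\Df\la(n))$ with $\la(n)_i/(2n)$ in each half of the Thoma simplex and note that the discrepancies $\pm\tfrac{1}{4n}$ vanish coordinatewise, which suffices since $\Omega$ carries the product topology. Your extra care about indices $i>\ell(\la(n))$ is harmless and in fact subsumed by the uniform bound $\lvert\omega(n)_i-\la(n)_i/(2n)\rvert\le 1/(4n)$, exactly the observation the paper makes when it notes that the sequence with entries $1/(2n)$ tends to $0$ in $\Omega$.
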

\begin{proof}
	Clearly, $T\iota_n(\lambda(n)))\to T\x$ as $n\to\infty$.

	For any $\mu=(\mu_1,\dots,\mu_\ell)\in\Sb_n$ we have
	\begin{equation*}
		T\iota_n(\mu)=\left( 
		\frac{\mu_1}{2n},\dots,\frac{\mu_\ell}{2n},0,\dots; 
		\frac{\mu_1}{2n},\dots,\frac{\mu_\ell}{2n},0,\dots\right)
	\end{equation*}
	and 
	\begin{equation*}
		\hat\iota_{2n}(\Df\mu)=\left(\frac{\mu_1+1/2}{2n},\dots,\frac{\mu_\ell+1/2}{2n},0,\dots; 
		\frac{\mu_1-1/2}{2n},\dots,\frac{\mu_\ell-1/2}{2n},0,\dots\right).
	\end{equation*}
	To conclude the proof observe that
	\begin{equation*}
		\Big( \underbrace{\frac1{2n},\dots,\frac1{2n}}_n,0,\dots;
		      \underbrace{\frac1{2n},\dots,\frac1{2n}}_n,0,\dots\Big)\to0,\qquad n\to\infty
	\end{equation*}
	in the topology of $\Omega$.
\end{proof}	
Informally, one can say that the previous Proposition states
\begin{equation*}
	T\iota_n(\la)\approx \hat\iota_{2n}(\Df\la),\qquad \la\in\Sb_n.
\end{equation*}

\subsubsection{Symmetric Thoma's measures}
Here we use the above construction to give another proof of the first claim
of Proposition \ref{p6.6}.

Let us recall the definition 
of the moment coordinates on the Thoma simplex
\cite[\S3.4]{Borodin2007}.
To every point $(\omega;\omega')\in\Omega$ one can assign the following
probability measure on $\left[ -1,1 \right]$:
\begin{equation*}
	\hat\nu_{(\omega;\omega')}:=
	\sum_{i=1}^{\infty}\omega_i\delta_{\omega_i}+
	\sum_{i=1}^{\infty}\omega_i'\delta_{-\omega_i'}+
	\hat\gamma(\omega;\omega')\delta_0,
\end{equation*}
where $\hat\gamma(\omega;\omega')=1-\sum_{i=1}^{\infty}(\omega_i+\omega'_i)$
and $\delta_s$ denotes the Dirac measure at a point~$s$.
This measure is called {\em{}Thoma's measure\/}.
The moments of Thoma's measure $\hat\nu_{(\omega;\omega')}$
are called the moment coordinates of $(\omega;\omega')\in\Omega$:
\begin{equation*}
	\hat\mq_m(\omega;\omega')=\sum_{i=1}^{\infty}\omega_i^{m+1}+
	(-1)^{m}\sum_{j=1}^{\infty}\omega_j'^{m+1},\qquad m=1,2,\dots
\end{equation*}
(compare these
definitions of $\hat\nu_{(\omega;\omega')}$ 
and $\hat\mq_m(\omega;\omega')$ 
to the definitions
of $\nu_\x$ and $\mq_m(\x)$ from~\S\ref{s6.3}).

If $\x\in\sk$, then Thoma's measure $\hat\nu_{T\x}$ is symmetric 
with respect to the origin. Hence the odd moments of $\hat\nu_{T\x}$ vanish.
More precisely,
\begin{equation*}
	\hat\mq_m(T\x)=\left\{
	\begin{array}{ll}
		2^{-m}\mq_m(\x),&\mbox{if $m$ is even};\\
		0,&              \mbox{if $m$ is odd}.
	\end{array}
	\right.
\end{equation*}
Here $\mq_m(\x)$ are the moment coordinates on $\sk$.

A probability measure on $\left[ -1,1 \right]$
is uniquely determined by its moments. Hence the functions $\hat\mq_1,\hat\mq_2,\dots$
separate points of $\Omega$. 
It follows that a point $T\x\in\Omega$ (where $\x\in\sk$) is uniquely determined by its
moment coordinates
$\hat\mq_2(T\x),\hat\mq_4(T\x),\hat\mq_6(T\x),\dots$ (it suffices to
take only even coordinates because the 
odd coordinates vanish). 
This is the same as to say that a point $\x\in\sk$ is uniquely determined
by its even moment coordinates 
$\mq_2(\x),\mq_4(\x),\mq_6(\x),\dots$.
Hence
the first claim of Proposition \ref{p6.6}
holds.

\subsection{Convergence of generators}\label{s6.5}
In this subsection 
we prove the convergence of the operators
$n^2(T_n-{\bf1})$ to 
an operator $A\colon \Fc\to\Fc$. 
In the next section using this convergence we apply the abstract Theorem
\ref{p6.4} to our situation and prove the convergence
of Markov chains
corresponding to $n^2(T_n-{\bf1})$ to the Markov process $\X\al(t)$ in $\sk$.
\begin{prop}\label{p6.8}
	In the sense of Definition \ref{p6.1} we have
	\begin{equation*}
		n^2(T_n-\mathbf{1})\pi_nf\to Af\qquad\mbox{for all $f\in\Fc$},
	\end{equation*}
	where the operator $A\colon\Fc\to\Fc$ can be written in one of the two following ways:

	{\rm{}(1)} As a formal differential operator\footnote{See also Remark
	\ref{p40.2} about formal differential operators in polynomial algebras.}
	in the algebra $\Fc=\mathbb{R}\left[ \mq_2,\mq_4,\mq_6,\dots \right]$:
	\begin{equation}\label{f85}
		\left.
		\begin{array}{l}
			\displaystyle
			A=\sum_{i,j=1}^{\infty}(2i+1)(2j+1)\left( \mq_{2i+2j}-\mq_{2i}\mq_{2j} \right)
			\frac{\partial^2}{\partial\mq_{2i}\partial\mq_{2j}}\\\displaystyle\qquad+
			2\sum_{i,j=0}^{\infty}\left( 2i+2j+3 \right)\mq_{2i}\mq_{2j}\frac{\partial}{\partial \mq_{2i+2j+2}}
			-\sum_{i=1}^{\infty}(2i+1)\left( 2i+\frac\al2 \right)
			\mq_{2i}\frac{\partial}{\partial\mq_{2i}},
		\end{array}
		\right.
	\end{equation}
	where, by agreement, $\mq_0:=1$; 

	{\rm{}(2)} 
	As an operator acting on functions $\mQ_\mu^\circ\in\Fc\subset C(\sk)$:\footnote{Observe that 
	the functions $\mQ_\mu^\circ\in\Gamma^\circ\cong\Fc$, $\mu\in\Sb$,
	are not linearly independent. However, their linear span is $\Fc$ because 
	the system $\left\{ \mQ_\mu \right\}_{\mu\in\Sb}$ is a basis for $\Gamma$.
	However, from the claim (1)
	it follows that the formula (\ref{f86})
	for 
	the action of $A$ on $\mQ_\mu^\circ$, $\mu\in\Sb$,
	is consistent.}
	\begin{equation}\label{f86}
			\displaystyle
			A\mQ_\mu^\circ=-|\mu|(|\mu|+\al/2-1)\mQ_\mu^\circ+
			\sum_{y\in Y(\mu)}\big(y(y+1)+\al\big)\mQ_{\mu-\square(y)}^\circ;
	\end{equation}
\end{prop}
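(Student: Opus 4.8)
The plan is to transport the formula for $T_n-\mathbf1$ from Theorem \ref{p5.1} (for claim (1)) and from Theorem \ref{p2.7} (for claim (2)) to the simplex $\sk$ by letting $n\to\infty$, using the isomorphism $\Gamma^\circ\cong\Fc$ of \S\ref{s6.3}. The device that makes the two worlds commute is an elementary observation which I would record as a lemma: if $g\in\Gamma$ is homogeneous of degree $d$ (with $\deg p_{2m-1}=2m-1$), then $\pi_n(g^\circ)=n^{-d}(g)_n$ \emph{exactly}. Indeed, for a monomial $\prod_i p_{2m_i-1}$ evaluated at $\la\in\Sb_n$ one has $n^{-d}\prod_i p_{2m_i-1}(\la)=\prod_i\sum_j(\la_j/n)^{2m_i-1}$, and since $\sum_j(\la_j/n)=|\la|/n=1=p_1^\circ$ the factors with $2m_i-1=1$ are automatically sent to $1$, which is precisely the effect of passing to $\Gamma^\circ$. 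For a general $g$ with top degree $D=\deg g$ this gives $n^{-D}(g)_n=\pi_n((g^{[D]})^\circ)+\sum_{d<D}n^{d-D}\pi_n((g^{[d]})^\circ)$, where $g^{[d]}$ denotes the degree-$d$ homogeneous component; since the $\pi_n(\cdots)$ are uniformly bounded (continuous functions on the compact $\sk$) and $n^{d-D}\to0$, this shows $n^{-D}(g)_n\to(g^{[D]})^\circ$ in the sense of Definition \ref{p6.1}.

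For claim (1) I would lift each basis monomial $\mq_{2k_1}\cdots\mq_{2k_r}\in\Fc$ to the homogeneous element $\phi:=p_{2k_1+1}\cdots p_{2k_r+1}\in\mathbb R[p_3,p_5,\dots]$ of degree $d=\sum_i(2k_i+1)$, so that $\pi_n(\phi^\circ)=n^{-d}(\phi)_n$. Combining this with Theorem \ref{p5.1},
\begin{equation*}
	n^2(T_n-\mathbf1)\pi_n\phi^\circ=\frac{n^{2-d}}{(n+1)(n+\al/2)}(\widetilde B\phi)_n .
\end{equation*}
Writing $\widetilde B\phi=B\phi+\text{(degree $<d$)}$ with $B\phi$ the homogeneous degree-$d$ part, the degree-$d$ contribution equals $\frac{n^2}{(n+1)(n+\al/2)}\pi_n((B\phi)^\circ)\to(B\phi)^\circ$, while every lower component carries an extra factor $n^{d'-d}\to0$ and thus drops out. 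Hence the limit exists, $A\phi^\circ=(B\phi)^\circ$, and extending by linearity over the monomial basis defines $A\colon\Fc\to\Fc$. To recognize the explicit shape (\ref{f85}) I would invoke Corollary \ref{p5.2}: since $B$ commutes with multiplication by $p_1$, it maps $I=(p_1-1)\Gamma$ into itself (because $B((p_1-1)g)=(p_1-1)Bg$) and therefore descends to $\Gamma^\circ=\Gamma/I$; under (\ref{f83}) this descent is computed on $\mathbb R[p_3,p_5,\dots]$ simply by setting $p_1=1$ in (\ref{f75}). The substitution $p_1\mapsto1$, $p_{2m-1}\mapsto\mq_{2m-2}$ (with $\mq_0:=1$), together with the re-indexing $i\mapsto i-1$, $j\mapsto j-1$, turns the three groups of terms of (\ref{f75}) into the three groups of (\ref{f85}); this is a direct check that I would carry out term by term.

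For claim (2) I would feed Theorem \ref{p2.7} into the same limiting machine. Because $\mQ_\mu^*=\mQ_\mu+g$ with $\deg g<|\mu|$ and $\mQ_\mu$ homogeneous of degree $|\mu|$, the lemma gives $n^{-|\mu|}(\mQ_\mu^*)_n\to\mQ_\mu^\circ$ and, for each $y\in Y(\mu)$, $n^{-(|\mu|-1)}(\mQ_{\mu-\square(y)}^*)_n\to\mQ_{\mu-\square(y)}^\circ$. Multiplying (\ref{f32}) by $n^{2-|\mu|}$ and using $\frac{n^2}{(n+1)(n+\al/2)}\to1$ and $\frac{n(n-|\mu|+1)}{(n+1)(n+\al/2)}\to1$, the right-hand side converges exactly to the right-hand side of (\ref{f86}). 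It then remains to identify this limit with $A\mQ_\mu^\circ=\lim n^2(T_n-\mathbf1)\pi_n\mQ_\mu^\circ=\lim n^2(T_n-\mathbf1)\bigl[n^{-|\mu|}(\mQ_\mu)_n\bigr]$: the difference between $(\mQ_\mu)_n$ and $(\mQ_\mu^*)_n$ is $(g)_n$ with $\deg g<|\mu|$, and applying $n^2(T_n-\mathbf1)$ to $n^{-|\mu|}(g)_n=\sum_{d<|\mu|}n^{d-|\mu|}\pi_n((g^{[d]})^\circ)$ produces only convergent sequences weighted by $n^{d-|\mu|}\to0$, hence contributes nothing in the limit. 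Thus (\ref{f86}) holds, and it is automatically consistent with (\ref{f85}) because both describe the single operator $A$ fixed by the convergence.

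The main obstacle I anticipate is not any single hard estimate but the bookkeeping of the two different scalings — the homogeneous-versus-inhomogeneous degree counting and the precise cancellation of the denominator $(n+1)(n+\al/2)$ against $n^2$ — together with the point that $n^2(T_n-\mathbf1)$ is \emph{not} uniformly bounded, so that lower-degree remainders must be shown to vanish through the explicit factors $n^{d-D}$ rather than by a soft boundedness argument. The cleanest safeguard is to work throughout with homogeneous lifts, where $\pi_n(g^\circ)=n^{-d}(g)_n$ is an exact identity, and to reduce everything else to this case by linearity.
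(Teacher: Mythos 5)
Your proposal is correct and follows essentially the same route as the paper: your exact identity $\pi_n(g^\circ)=n^{-d}(g)_n$ for homogeneous $g$ is precisely Lemma \ref{p6.10} (stated there via the automorphism $n^{-G}$), your treatment of lower-degree remainders reproduces Lemma \ref{p6.9}, and the two claims are then deduced from Theorem \ref{p5.1} together with Corollary \ref{p5.2}, and from Theorem \ref{p2.7}, exactly as in the paper's proof. No gaps; the only differences are cosmetic bookkeeping choices.
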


First, we prove two Lemmas.
\begin{lemma}\label{p6.9}
	Let $\phi\in\Gamma$ and $\deg\phi\le m-1$ for some $m\in\N$. Then
	\begin{equation*}
		\frac1{n^m}\phi_n\to0,\qquad n\to\infty
	\end{equation*}
	in the sense of Definition \ref{p6.1}.\footnote{Recall that
	$(\cdots)_n$ denotes the restriction of a function from the algebra
	$\Gamma\subset\mathrm{Fun}(\Sb)$ 
	to the subset $\Sb_n\subset\Sb$.}
\end{lemma}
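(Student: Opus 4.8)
The plan is to unwind Definition \ref{p6.1} applied to the target element $0\in C(\sk)$. Since $\pi_n 0=0$, the assertion $\tfrac1{n^m}\phi_n\to0$ means exactly that $\tfrac1{n^m}\|\phi_n\|\to0$, where $\|\cdot\|$ is the supremum norm on $\mathrm{Fun}(\Sb_n)$. Thus it suffices to produce a uniform bound of the form $\sup_{\la\in\Sb_n}|\phi(\la)|\le C\,n^{m-1}$ with a constant $C$ depending only on $\phi$ (not on $n$ or $\la$); then $\tfrac1{n^m}\|\phi_n\|\le C/n\to0$, as desired.

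The one elementary estimate I would establish first is that for every $\la\in\Sb_n$ and every $k\in\N$ one has $0\le p_k(\la)\le n^k$. Indeed, $\la$ is a partition of $n$, so $\la_1\le n$ and $\sum_i\la_i=n$; hence
\begin{equation*}
	p_k(\la)=\sum_i\la_i^k=\sum_i\la_i\,\la_i^{k-1}\le\la_1^{k-1}\sum_i\la_i\le n^{k-1}\cdot n=n^k.
\end{equation*}
Next I would reduce to monomials by expanding $\phi$ in the homogeneous basis $\{p_1^{r_1}p_3^{r_3}\cdots\}$ of $\Gamma$ described in \S\ref{s2.1}. Since $\deg\phi\le m-1$, every monomial $p_1^{r_1}p_3^{r_3}\cdots$ occurring in $\phi$ has degree $d=r_1+3r_3+5r_5+\cdots\le m-1$, and the estimate above gives, for all $\la\in\Sb_n$,
\begin{equation*}
	\Big|(p_1^{r_1}p_3^{r_3}\cdots)(\la)\Big|=\prod_{k\ \mathrm{odd}}|p_k(\la)|^{r_k}\le\prod_{k\ \mathrm{odd}}(n^k)^{r_k}=n^{\,r_1+3r_3+\cdots}=n^{d}\le n^{m-1}.
\end{equation*}
Summing over the finitely many monomials of $\phi$ and taking $C$ to be the sum of the absolute values of their coefficients yields $\sup_{\la\in\Sb_n}|\phi(\la)|\le C\,n^{m-1}$, which completes the argument.

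There is no genuine obstacle here; the proof is a matching of the crude power-sum bound $p_k(\la)\le n^k$ against the normalization $n^{-m}$. The point worth keeping in mind is that the filtration degree $\deg p_{2j-1}=2j-1$ (Corollary \ref{p3.5}) is precisely calibrated so that a monomial of filtration degree $d$ grows like $n^{d}$ on $\Sb_n$; the hypothesis $\deg\phi\le m-1$ then leaves one spare power of $n$ in the denominator, forcing the limit to vanish.
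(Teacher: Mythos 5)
Your proposal is correct and takes essentially the same route as the paper, whose proof consists of exactly the observations you make: unwinding Definition \ref{p6.1} to the uniform bound $\sup_{\la\in\Sb_n}|\phi_n(\la)|\le \mathrm{Const}\cdot n^{m-1}$, obtained from $\la_i\le n$ for $\la\in\Sb_n$. Your explicit estimate $p_k(\la)\le n^k$ (using $\sum_i\la_i=n$) and the monomial-by-monomial degree count merely fill in details the paper leaves implicit.
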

\begin{proof}
	The convergence to zero means that
	\begin{equation*}
		\sup_{\la\in\Sb_n}\frac1{n^m}\left|\phi_n(\la)\right|\to0,\qquad n\to\infty.
	\end{equation*}
	Observe that 
	for all $\la\in\Sb_n$
	we have
	$\la_i\le n$, $i=1,\dots,\ell(\la)$.
	Hence $\left|\phi_n(\la)\right|\le \mathrm{Const}\cdot n^{m-1}$.
\end{proof}
Let $G$ denote the operator $\sum_{i=1}^{\infty}(2i-1)p_{2i-1}\frac{\partial}{\partial p_{2i-1}}$ in the algebra $\Gamma$.
In other words,
on the homogeneous component of degree $m$ of the algebra $\Gamma$, $m\in\Nn$, 
the operator $G$ acts as multiplication by $m$.
Let for all $s>0$ the operator $s^G\colon\Gamma\to\Gamma$ be the automorphism of $\Gamma$ which reduces to
multiplication by $s^m$ on the $m$th homogeneous component of $\Gamma$.

Recall the maps $\pi_n\colon C(\sk)\to\mathrm{Fun}(\Sb_n)$ defined in \S\ref{s6.2}.
\begin{lemma}\label{p6.10}
	Let $g\in\Gamma$ and $f=g^\circ\in\Fc$. For all $n\in\N$ we have
	\begin{equation*}
		\pi_nf=\left( n^{-G}g \right)_n.
	\end{equation*}
\end{lemma}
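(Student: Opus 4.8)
The plan is to regard both sides of the asserted identity as functions of $g\in\Gamma$ and to observe that each one defines an algebra homomorphism $\Gamma\to\mathrm{Fun}(\Sb_n)$. Since $\Gamma=\mathbb{R}\left[ p_1,p_3,p_5,\dots \right]$ is freely generated by the odd power sums, it then suffices to verify the equality $\pi_n(p_{2k-1}^\circ)=(n^{-G}p_{2k-1})_n$ when $g$ runs over the generators $p_1,p_3,p_5,\dots$; this reduces the lemma to a one-line computation for each generator.

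First I would check that $g\mapsto\pi_n(g^\circ)$ is an algebra homomorphism. The map $g\mapsto g^\circ$ is the composition of the quotient homomorphism $\Gamma\to\Gamma^\circ=\Gamma/I$ with the algebra isomorphism $\Gamma^\circ\cong\Fc\subset C(\sk)$ from \S\ref{s6.3}, and $\pi_n\colon C(\sk)\to\mathrm{Fun}(\Sb_n)$ is an algebra homomorphism by its definition (it is the pullback along $\iota_n$). Likewise $g\mapsto(n^{-G}g)_n$ is an algebra homomorphism: the operator $n^{-G}=(1/n)^G$ is the grading automorphism of $\Gamma$ (it is multiplicative because it scales the degree-$m$ component by $n^{-m}$ and degrees add under multiplication), and the restriction map $(\cdot)_n$ is an algebra homomorphism through the embedding $\Gamma\subset\mathrm{Fun}(\Sb)$ of \S\ref{s2.3}.

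It then remains to evaluate both sides on the generators. For $p_{2k-1}$ with $k\ge2$ the element $p_{2k-1}$ is homogeneous of degree $2k-1$, so $(n^{-G}p_{2k-1})_n(\la)=n^{-(2k-1)}p_{2k-1}(\la)=\sum_i(\la_i/n)^{2k-1}$, while $\pi_n(p_{2k-1}^\circ)(\la)=p_{2k-1}^\circ(\iota_n(\la))=\sum_i(\la_i/n)^{2k-1}$ by the formula for $\iota_n$ and the definition of $p_{2k-1}^\circ$; the two agree. The only delicate case is $p_1$: here $p_1^\circ\equiv1$, so the left side is the constant function $1$, whereas the right side is $(n^{-1}p_1)_n$, which on $\la\in\Sb_n$ equals $|\la|/n=1$ since $|\la|=n$ on $\Sb_n$. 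Thus the scaling by $n^{-G}$ exactly compensates for passing to the quotient by the ideal $I=(p_1-1)\Gamma$.

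I do not expect a genuine obstacle, as the argument is a bookkeeping exercise once the homomorphism property is in place. The one point requiring care is the generator $p_1$ and the compatibility of both sides with the quotient $\Gamma^\circ$: since $g^\circ$ depends only on $g$ modulo $I$, one should confirm that $(n^{-G}g)_n$ shares this property. This follows from the same computation, because $n^{-G}\big((p_1-1)h\big)=(n^{-1}p_1-1)\,n^{-G}h$ restricts to $0$ on $\Sb_n$ thanks to $p_1(\la)=n$; I would include this remark to make the descent to $\Gamma^\circ$ transparent.
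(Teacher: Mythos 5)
Your proof is correct and in substance identical to the paper's: the paper fixes $\la\in\Sb_n$ and factors the single homomorphism $\Gamma\to\mathbb{R}$, $p_{2m-1}\mapsto n^{-(2m-1)}\sum_i\la_i^{2m-1}$, in two ways (through $n^{-G}$ followed by restriction, and through the quotient $\Gamma\to\Gamma^\circ$ followed by evaluation at $\iota_n(\la)$), which is exactly your observation that both sides are algebra homomorphisms agreeing on the free generators, with $p_1(\la)=n$ handling the generator $p_1$. Your closing remark on the compatibility of $(n^{-G}\cdot)_n$ with the ideal $I=(p_1-1)\Gamma$ is a nice explicit check of a point the paper leaves implicit in its factorization.
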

\begin{proof}
	Fix $\la\in\Sb_n$.
	Consider the homomorphism $\Gamma\to\mathbb{R}$ 
	defined on the generators as follows:
	\begin{equation*}
		p_{2m-1}\to\frac1{n^{2m-1}}{\sum_{i=1}^{\ell(\la)}\la_i^{2m-1}},\quad m=1,2,3,\dots.
	\end{equation*}
	On one hand, this homomorphism is a composition of the automorphism $n^{-G}$ of $\Gamma$
	and the map $\Gamma\to\mathbb{R}$, $\phi\mapsto \phi_n(\la)$, hence 
	$g\in\Gamma$ maps to $(n^{-G}g)_n(\la)$.
	On the other hand, since $p_1$ maps to $1$, this homomorphism can be viewed as a composition
	of the canonical map $\Gamma\to\Gamma^\circ$ and the map $\Gamma^\circ\to\mathbb{R}$, $\psi\mapsto\psi(\iota_n(\la))$
	(here $\iota_n$ is defined in \S\ref{s6.2}).
	Hence $g\in\Gamma$ maps to $f(\iota_n(\la))$.
	This concludes the proof.
\end{proof}

\par\noindent{\em{}Proof of Proposition \ref{p6.8}.\/} 
Fix arbitrary $f\in\Fc$. 
Let $g\in\Gamma$ be such that $f=g^\circ$.
Theorem \ref{p5.1} and Lemma \ref{p6.10} imply
\begin{equation}\label{f90}
	n^2(T_n-{\bf1})\pi_n(f)=
	n^2(T_n-{\bf1})\left( n^{-G}g \right)_n=
	\frac{n^2}{(n+\al/2)(n+1)}\big(\widetilde Bn^{-G}g\big)_n,
\end{equation}
where $\widetilde B$ is a zero
degree operator in the algebra $\Gamma$ 
with the top degree homogeneous part $B\colon\Gamma\to\Gamma$
given by (\ref{f75}).

Because $B-\widetilde B$ has degree $-1$, we can 
replace $\widetilde B$ by $B$ in (\ref{f90}), 
this will affect only negligible terms.\footnote{One can argue as follows.
Without loss of generality, assume that $g$ is homogeneous of degree $m\in\N$.
Thus, $n^{-G}g=n^{-m}g$. Moreover, $\deg(B-\widetilde B)g\le m-1$ and hence
by Lemma \ref{p6.9},
$\big( (B-\widetilde B)n^{-G}g\big)_n=\frac1{n^m}\big( (B-\widetilde B)g\big)_n\to 0$, $n\to\infty$.}
We can also remove the
factor $\frac{n^2}{(n+\al/2)(n+1)}$.
Thus, we have
\begin{equation*}
	n^2(T_n-{\bf1})\pi_n(f)-(Bn^{-G}g)_n\to0,\qquad n\to\infty.
\end{equation*}
The operator $B\colon \Gamma\to\Gamma$ is homogeneous, therefore, $Bn^{-G}=n^{-G}B$, and
by Lemma \ref{p6.10} we have
\begin{equation*}
	\begin{array}{l}\displaystyle
		(Bn^{-G}g)_n=(n^{-G}Bg)_n=
		\pi_n\big( (Bg)^{\circ} \big)
	\end{array}
\end{equation*}

Recall that the operator $B\colon\Gamma\to\Gamma$ commutes with the multiplication by $p_1$
(Corollary \ref{p5.2}), therefore,
it induces an operator $A\colon\Fc\to\Fc$,
$f\mapsto (Bg)^\circ$, where $g\in\Gamma$ is such that $f=g^{\circ}$.
Clearly, $Af$ does not depend on the choice of $g$.

Since $\Fc\cong\mathbb{R}\left[ p_3,p_5,\dots \right]$, 
we get (\ref{f85}) from (\ref{f75})
by replacing $p_1$ by $1$ and each $p_{2m+1}$ by $\mq_{2m}$, $m\in\N$.

It remains to prove (\ref{f86}).
Fix $\mu\in\Sb$. Multiply (\ref{f32}) by $n^{-|\mu|}n^2$:
\begin{equation*}
	\begin{array}{l}\displaystyle
		n^{-|\mu|}n^2(T_n-1)(\mQ_\mu^*)_n\\\displaystyle\qquad=
		\frac{n^2}{(n+\al/2)(n+1)}
		\Bigg[-|\mu|(|\mu|+\al/2-1)n^{-|\mu|}(\mQ_\mu^*)_n\phantom{\Bigg].}\\\displaystyle\qquad\qquad
		+\frac{n-|\mu|+1}{n}\sum_{y\in Y(\mu)}\big(y(y+1)+\al\big)n^{-|\mu|+1}(\mQ_{\mu-\square(y)}^*)_n
		\Bigg].
	\end{array}
\end{equation*}
Since $\deg(\mQ_{\la}^*-\mQ_\la)\le|\la|-1$ (see \S\ref{s2.1}), each
function of the form $(\mQ_\la^*)_n$ in the RHS can be replaced by $(\mQ_\la)_n$,
this affects only negligible terms. We can also remove fractions containing $n$. Thus,
\begin{equation*}
	\begin{array}{l}\displaystyle
		\Big(n^{-|\mu|}n^2(T_n-1)(\mQ_\mu)_n+
		|\mu|(|\mu|+\al/2-1)n^{-|\mu|}(\mQ_\mu)_n
		\\\qquad\qquad\qquad\displaystyle
		-\sum_{y\in Y(\mu)}\big(y(y+1)+\al\big)n^{-|\mu|+1}(\mQ_{\mu-\square(y)})_n
		\Big)\to0,\qquad n\to\infty
	\end{array}
\end{equation*}
in the sense of Definition \ref{p6.1}.
To conclude the proof of Proposition \ref{p6.8}
observe that $n^{-|\la|}(\mQ_\la)_n=\left( n^{-G}\mQ_\la \right)_n=\pi_n(\mQ_\la^\circ)$.
\qed

\subsection{Convergence of semigroups\\ and the existence of the process}\label{s6.6}
To apply Theorem \ref{p6.4} to our situation 
and finally get 
the existence of the process $\X\al(t)$ in $\sk$
it remains to prove the following:
\begin{lemma}\label{p6.11}
	{\rm{}(1)\/} 
	The operator $A\colon\Fc\to\Fc$ from Proposition \ref{p6.8} is dissipative. 
	
	{\rm{}(2)\/} For all $s>0$, the range of $s{\bf1}-A$ is dense in $C(\sk)$.
\end{lemma}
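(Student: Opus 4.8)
The plan is to verify the two hypotheses of Theorem \ref{p6.4} separately, since together with Proposition \ref{p6.8} they are exactly what is needed to conclude the existence of the process. For claim (1), dissipativity, the natural route is to exploit the probabilistic origin of $A$ rather than to attack the differential operator (\ref{f85}) directly. Recall that each $T_n$ is a Markov transition operator, hence a contraction on $\mathrm{Fun}(\Sb_n)$ with the supremum norm, so each operator $n^2(T_n-\mathbf 1)$ is dissipative (this is the standard fact that $s\mathbf 1 - c(T-\mathbf 1)$ is norm-expanding for a contraction $T$ and $c,s\ge 0$). Since dissipativity is preserved under the limit in the sense of Definition \ref{p6.1} -- if $\|(s\mathbf 1 - n^2(T_n-\mathbf 1))\pi_n f\|\ge s\|\pi_n f\|$ for all $n$, one passes to the limit using Proposition \ref{p6.8} and the norm-convergence (\ref{f80}) -- the limiting operator $A$ inherits $\|(s\mathbf 1 - A)f\|\ge s\|f\|$ for all $f\in\Fc$ and $s\ge 0$. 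This is precisely the argument encoded in the proof of Theorem \ref{p6.4} itself (where $\hat A$ is shown dissipative), so the content here is just to record that $A=\hat A\mid_{\Fc}$ and invoke that computation.

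For claim (2), the density of the range of $s\mathbf 1 - A$, I would use the eigenstructure revealed by formula (\ref{f86}). The operator $A$ acts on the spanning family $\{\mQ_\mu^\circ\}_{\mu\in\Sb}$ in \emph{triangular} form: by (\ref{f86}),
\begin{equation*}
	A\mQ_\mu^\circ=-|\mu|(|\mu|+\al/2-1)\mQ_\mu^\circ+\sum_{y\in Y(\mu)}\big(y(y+1)+\al\big)\mQ_{\mu-\square(y)}^\circ,
\end{equation*}
so $A$ preserves each finite-dimensional subspace $\Fc^{(m)}$ spanned by the $\mQ_\mu^\circ$ with $|\mu|\le m$, and on the quotient by lower levels it acts as the scalar $-|\mu|(|\mu|+\al/2-1)$. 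Since $\al>0$, these eigenvalues are $\le 0$, and the only vanishing one occurs at $|\mu|=0$ (the constants), so for $s>0$ the operator $s\mathbf 1 - A$ is invertible on every $\Fc^{(m)}$: its diagonal entries $s+|\mu|(|\mu|+\al/2-1)$ are all strictly positive. Hence $(s\mathbf 1 - A)\Fc^{(m)}=\Fc^{(m)}$ for every $m$, which gives $(s\mathbf 1 - A)\Fc=\Fc$. Because $\Fc$ is dense in $C(\sk)$ (established after Proposition \ref{p6.6}), the range of $s\mathbf 1 - A$ is dense in $C(\sk)$, as required.

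The step I expect to be the main obstacle is the careful justification that the triangular action (\ref{f86}) genuinely makes $s\mathbf 1 - A$ surjective onto $\Fc$, rather than merely injective. The subtlety is that the functions $\mQ_\mu^\circ$ are \emph{not} linearly independent in $\Gamma^\circ\cong\Fc$ (as flagged in the footnote to Proposition \ref{p6.8}), so one cannot naively treat $A$ as an upper-triangular matrix in a basis. The clean way around this is to work in the filtered algebra $\Fc$ directly: order a genuine basis by degree, note from (\ref{f86}) that $A$ lowers degree by a controlled amount while its leading (degree-preserving) part is diagonal with the scalars $-|\mu|(|\mu|+\al/2-1)$, and then solve $(s\mathbf 1 - A)g=f$ degree by degree -- the top-degree component is determined by inverting the strictly positive scalar $s+|\mu|(|\mu|+\al/2-1)$, and lower-degree corrections are obtained recursively. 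Once this triangular-solvability is set up correctly on each $\Fc^{(m)}$, surjectivity onto $\Fc$ is immediate, and the appeal to density of $\Fc$ in $C(\sk)$ closes the argument.
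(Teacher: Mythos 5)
Your proposal is correct and is in substance the paper's own proof: part (1) rests, as in the paper, on the dissipativity of the operators $n^2(T_n-\mathbf{1})$ (each $T_n$ being a Markov transition operator) combined with the norm convergence (\ref{f80}) --- the paper merely packages the limit passage through the finite-dimensional subspaces $\Fc^m$, using via (\ref{f32}) that $T_n$ does not increase degree, whereas you pass to the limit directly as in the proof of Theorem \ref{p6.4} --- and part (2) is exactly the derivation from (\ref{f86}) that the paper itself names as one of its two admissible routes. Your additional care about the linear dependence of the functions $\mQ_\mu^\circ$, replacing naive matrix triangularity by the statement that $A$ preserves $\Fc^m$ and acts on $\Fc^m/\Fc^{m-1}$ as the strictly negative scalar $-m(m-1+\al/2)$, $m\ge1$, is correct and is precisely the content of formula (\ref{f8080}) in Remark \ref{p90.1}, a detail the paper's one-line proof of claim (2) leaves implicit.
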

\begin{proof} (cf. \cite[Proof of Proposition 1.4]{Borodin2007}) 
	(1)
	Consider the filtration of the algebra $\Fc=\Gamma/(p_1-1)\Gamma$ inherited from 
	the natural filtration (\ref{f45}) of
	$\Gamma$:
	\begin{equation*}
		\Fc=\bigcup_{m=0}^{\infty}\Fc^{m},\qquad \Fc^0\subset\Fc^1\subset\Fc^2\subset\dots\subset\Fc.
	\end{equation*}
	It is clear that for fixed $m$
	the operator $\pi_n\colon C(\sk)\to\mathrm{Fun}(\Sb_n)$ 
	is injective on $\Fc^m$ for all $n$ large enough (this is true because each $\Fc^m$ is finite-dimensional
	and the spaces $\iota_n(\Sb_n)$ approximate the space $\sk$, see \S\ref{s6.2}).
	Thus, we can identify $\Fc^m$ with $\pi_n(\Fc^m)$ for such $n$.

	It follows from (\ref{f32}) that 
	the operator $T_n$ does not increase the degree of functions.
	Therefore, 
	we can think of $T_n$ (and, clearly, of $n^2(T_n-{\bf1})$)
	as an operator in $\Fc^m$. 
	The convergence $n^2(T_n-\mathbf{1})\to A$ 
	established in Proposition~\ref{p6.8} implies that $n^2(T_n-\mathbf{1})$ converges
	to $A$ in every finite-dimensional space $\Fc^m$, $m\in\Nn$.

	Fix $m\in\Nn$.
	For all $n$ large enough the operator 
	$n^2(T_n-\mathbf{1})$ (viewed as an operator in $\Fc^m$)
	is dissipative with respect to the norm of $\mathrm{Fun}(\Sb_n)$
	because the operator $T_n$ is a transition operator of a Markov chain.
	Since the norms of $\mathrm{Fun}(\Sb_n)$ converge to the norm of $C(\sk)$ 
	(in the sense of (\ref{f80})),
	we conclude that $A$ is dissipative.

	(2) For every $m$ and $s>0$ the operator $s{\bf1}-A$ maps $\Fc^m$ onto itself
	(this fact can be derived either from (\ref{f86}) 
	or from the above proof of the claim (1) of the present Lemma).
	Thus, $(s{\bf1}-A)\Fc=\Fc$ and therefore $s{\bf1}-A$ has a dense range.
\end{proof}
Now, from
Theorem \ref{p6.4}
it follows that the operator $A$ (given by Proposition \ref{p6.8})
is closable in $C(\sk)$ 
and its closure generates a strongly 
continuous contraction semigroup $\left\{ T(t) \right\}_{t\ge0}$.

We also have the convergence of semigroups $\left\{ \mathbf{1},T_n,T_n^2,\dots \right\}$
to $\left\{ T(t) \right\}$ (\ref{f81}).
Hence the 
semigroup $\left\{ T(t) \right\}$ 
preserves the cone of nonnegative functions and the
constant
function $1$ because each $T_n$ possesses this property.

From \cite[Chapter 4, Theorem 2.7]{Ethier1986} follows that the semigroup $\left\{ T(t) \right\}$
gives rise to a strong Markov process $\X{\al}(t)$ in $\sk$. This process 
has c\`adl\`ag sample paths and can start from any point and any probability distribution on $\sk$.

The operator $A$ is called the {\em{}pre-generator\/} of the process $\X{\al}(t)$, $t\ge0$.

\subsection{Some properties of the process $\X{\al}(t)$}\label{s70.7}
Here we state some properties of the process $\X{\al}(t)$.
They follow from the construction of 
$\X{\al}(t)$ and from the formulas (\ref{f85}) and (\ref{f86})
for its pre-generator.
We do not give the proofs because they are similar to 
those from the paper \cite{Borodin2007}.
\begin{rmk}\rm{}\label{p90.1}
	The formula 
	(\ref{f86})	
	for the action of the pre-generator
	$A$ on functions $\mQ_\mu^\circ\in\Fc\subset C(\sk)$, $\mu\in\Sb$,
	is not formally necessary 
	for the convergence of the up/down Markov chains from Definition \ref{p10.6}
	to a continuous time Markov process in $\sk$, 
	as well as for the properties
	of the limit diffusion $\X\al(t)$ that are listed 
	in this subsection.\footnote{However,
	(\ref{f86}) allows to argue in a more straightforward way at some 
	points. It can be also interesting to compare this formula 
	with similar ones
	in other models
	(namely, \cite[(5.1)]{Borodin2007} and \cite[(14)]{Petrov2007}).}
	Indeed, from (\ref{f85}) one can easily obtain that 
	\begin{equation}\label{f8080}
		Af=-m(m-1+\al/2)f+g,\qquad\mbox{where $g\in\Fc^{m-1}$}
	\end{equation}
	for all $f\in \Fc^m$, $m\in\Nn$. 
	In other words, the action of $A$ on $\Fc^m$ 
	up to lower degree terms (that is, terms from $\Fc^{m-1}$)
	is the multiplication
	by $-m(m-1+\al/2)$.
	It suffices to check (\ref{f8080}) for $f=p_{\si_1}^\circ\dots p_{\si_\ell}^\circ$, 
	where $p_i$'s are the Newton power sums and $\si=(\si_1,\dots,\si_\ell)$ is an odd partition
	without parts equal to one. Indeed, for each $m\in\Nn$
	the functions of this form 
	with $|\si|\le m$
	constitute a basis for $\Fc^m$.
	For such $f$ 
	the relation (\ref{f8080})
	can be easily checked directly
	using (\ref{f85}) 
	(note that $p^{\circ}_{2i+1}=\mq_{2i}$, $i\ge1$)
\end{rmk}

\medskip
{\bf{}Continuity of sample paths.\/}
{\em{}The process $\X{\al}(t)$ has continuous sample paths.\/}

\smallskip

This is proved exactly as in 
\cite[Coroll. 6.4 and Thm. 7.1]{Borodin2007}, the proof uses the expression (\ref{f85}) for $A$.

\medskip

{\bf{}The invariant symmetrizing measure.\/}
{\em{}The multiplicative boundary measure $\mathsf{P}^{(\al)}$ defined in Theorem \ref{p6.7}
is an invariant measure for $\X{\al}(t)$. The process is reversible with
respect to $\mathsf{P}^{(\al)}$.\/}

\smallskip

This follows from the facts that 

$\bullet$ For all $n\in\N$ the measure $M_n^{\al}$ is an 
invariant symmetrizing distribution for the Markov chain $T_n$ (see \S\ref{s1});

$\bullet$ The measures $M_n^\al$ approximate the measure $\mathsf{P}^{(\al)}$ in the sense of Theorem \ref{p6.7};

$\bullet$ The chains $T_n$ approximate the process $\X{\al}(t)$ (see \S\ref{p6.6}).

See also \cite[Prop. 1.6, 1.7, Thm. 7.3 (2)]{Borodin2007}.

\medskip

{\bf{}Convergence of finite-dimensional distributions\/}
(Cf. \cite[Prop. 1.8]{Borodin2007}).
{\em{}Let $\X{\al}(t)$ and all the chains $T_n$ are viewed in equilibrium (that is, starting
from the invariant distribution). Then the finite-dimensional distributions for the
$n$th chain converge, as $n\to\infty$, to the corresponding finite-dimensional distributions
of the process $\X{\al}(t)$. Here we assume a natural scaling of time: one step of $n$th
Markov chain $T_n$ corresponds to a small time interval $\Delta t\sim 1/n^2$.\/}

\medskip

{\bf{}The spectrum of the Markov generator in $L^2(\sk,\mathsf{P}^{(\al)})$.\/}
{\em{}The space $\Fc$ 
viewed as the subspace of the Hilbert space
$L^2(\sk,\mathsf{P}^{(\al)})$
is decomposed into the orthogonal
direct sum of eigenspaces of the operator $A\colon\Fc\to\Fc$.
The eigenvalues of $A$ are 
\begin{equation}\label{f99}
	\left\{ 0 \right\}\cup\left\{ -m\left( m-1+\frac\al2 \right) \colon m=2,3,\dots\right\}.
\end{equation}
The eigenvalue $0$ is simple, and the multiplicity of each $m$th eigenvalue 
is equal to the number of odd
partitions of $m$ without parts equal to one, that is, to the number of solutions of the equation
\begin{equation*}
	3n_3+5n_5+7n_7+\ldots=m
\end{equation*}
in nonnegative integers.\/}

\smallskip

The reversibility property of the process $\X{\al}(t)$ implies that
the operator
$A$ is symmetric with respect to the inner product inherited from $L^2(\sk,\mathsf{P}^{(\al)})$
Moreover, $A$
preserves the filtration of $\Fc$ (defined in \S\ref{s6.6}). 
Indeed, this follows from the expression (\ref{f86}) for the pre-generator.

The fact that the eigenvalues of $A$ are described by (\ref{f99}) follows from (\ref{f86}). Indeed,
$\mQ_\mu^\circ\in\Fc^{|\mu|}$ for all $\mu\in\Sb$,
and (\ref{f86}) is rewritten as
$A\mQ_\mu^\circ=-|\mu|(|\mu|-1+\al/2)\mQ_\mu^\circ+g$, where $g\in\Fc^{|\mu|-1}$.
Thus, the eigenvalue $0$ is simple, and the multiplicity of each $-m(m-1+\al/2)$ is 
equal to $(\dim\Fc^m-\dim\Fc^{m-1})$,
$m\in\N$.

Since $\Fc\cong\mathbb{R}\left[ p_3,p_5,p_7,\dots \right]$, finite products of the form 
$p_3^{r_3}p_5^{r_5}p_7^{r_7}\dots$ constitute a linear basis for $\Fc$. This basis is compatible
with the filtration $\left\{ \Fc^m \right\}$.
Hence $(\dim\Fc^m-\dim\Fc^{m-1})$ is equal to 
the number of basis vectors of degree $m$, $m\in\N$, which is exactly the number
of odd partitions of $m$ without parts equal to one.

\medskip

{\bf{}The uniqueness of the invariant measure\/} 
(Cf. \cite[Thm. 7.3 (1)]{Borodin2007}).
{\em{}The measure $\mathsf{P}^{(\al)}$
is a unique invariant measure for the process $\X{\al}(t)$.\/}

\medskip

{\bf{}Ergodicity.\/}
{\em{}The process $\X{\al}(t)$ is ergodic with respect to the measure $\mathsf{P}^{(\al)}$.\/}

\smallskip

This follows from the existence of a spectral gap of the process’ generator, see
the eigenstructure above. See also \cite[Thm. 7.3 (3)]{Borodin2007}.

\bigskip

Dobrushin Mathematics Laboratory, 
Kharkevich Institute for Information Transmission Problems,
Bolshoy Karetny 19, 127994 Moscow GSP-4, Russia.

\medskip

E-mail: \texttt{lenia.petrov@gmail.com}

\begin{thebibliography}{10}
\bibitem{Berele2009} A.~Berele and B.~Tenner, \emph{Doubly Symmetric Functions}, 2009,
  arXiv:0903.5306v1 [math.CO].

\bibitem{Borodin1997}
A.~Borodin, \emph{Multiplicative central measures on the {S}chur graph},
  Jour. Math. Sci. (New York) \textbf{96} (1999), no.~5, 3472–3477, in
  Russian: Zap. Nauchn. Sem. POMI {\bf{}240\/} (1997), 44-52, 290-291.

\bibitem{Borodin2007}
A.~Borodin and G.~Olshanski, \emph{Infinite-dimensional diffusions as
  limits of random walks on partitions}, Prob. Theor. Rel. Fields \textbf{144}
  (2009), no.~1, 281--318, arXiv:0706.1034v2 [math.PR].

\bibitem{Ethier1986}
S.~N.~Ethier and T.~G.~Kurtz, \emph{Markov processes:
  {C}haracterization and convergence}, Wiley-Interscience, New York, 1986.

\bibitem{Ewens1979}
W.~J.~Ewens, \emph{{M}athematical {P}opulation {G}enetics}, Springer-Verlag,
  Berlin, 1979.

\bibitem{Fulman2005}
J.~Fulman, \emph{Stein’s method and {P}lancherel measure of the symmetric
  group}, Trans. Amer. Math. Soc. \textbf{357} (2005), 555--570,
  arXiv:math/0305423v3 [math.RT].

\bibitem{Fulman2007}
\bysame, \emph{Commutation relations and {M}arkov chains}, Prob. Theory Rel.
  Fields \textbf{144} (2009), no.~1, 99--136, arXiv:0712.1375v2 [math.PR].

\bibitem{GnedinIntern.Math.ResearchNotices2006Art.ID5196839pp.}
A.~Gnedin and G.~Olshanski, \emph{Coherent permutations with
  descent statistic and the boundary problem for the graph of zigzag diagrams},
  Intern. Math. Research Notices (2006), Art. ID 51968, 39pp.,
  arXiv:math/0508131v2 [math.CO].

\bibitem{Hoffman1992}
P.~N.~Hoffman and J.~F.~Humphreys, \emph{Projective representations of the
  symmetric groups}, Oxford Univ. Press, 1992.

\bibitem{IvanovNewYork3517-3530}
V.~N.~Ivanov, \emph{The {D}imension of {S}kew {S}hifted {Y}oung
  {D}iagrams, and {P}rojective {C}haracters of the {I}nfinite {S}ymmetric
  {G}roup}, Journal of Mathematical Sciences (New York) \textbf{96} (1999),
  no.~5, 3517--3530, in Russian: Zap. Nauchn. Sem. POMI {\bf{}240\/} (1997),
  115-135.

\bibitem{Kerov2000}
S.~Kerov, \emph{Anisotropic {Y}oung diagrams and {J}ack symmetric
  functions}, Functional Analysis and Its Applications \textbf{34} (2000),
  no.~1, 41--51, arXiv:math/9712267v1 [math.CO].

\bibitem{Kerov1998}
S.~Kerov, A.~Okounkov, and G.~Olshanski, \emph{{T}he boundary of
  {Y}oung graph with {J}ack edge multiplicities}, Intern. Math. Research
  Notices \textbf{4} (1998), 173--199, arXiv:q-alg/9703037v1.

\bibitem{Kerov1993}
S.~Kerov, G.~Olshanski, and A.~Vershik, \emph{Harmonic analysis
  on the infinite symmetric group. {A} deformation of the regular
  representation}, Comptes Rendus Acad. Sci. Paris Ser. I \textbf{316} (1993),
  773--778.

\bibitem{Kerov2004}
\bysame, \emph{Harmonic analysis on the infinite symmetric group}, Invent.
  Math. \textbf{158} (2004), no.~3, 551--642, arXiv: math.RT/0312270.

\bibitem{Kerov1990}
S.~Kerov and A.~Vershik, \emph{The {G}rothendieck {G}roup of the
  {I}nfinite {S}ymmetric {G}roup and {S}ymmetric {F}unctions with the
  {E}lements of the ${K}_0$-functor theory of {AF}-algebras}, Adv. Stud.
  Contemp. Math., Gordon and Breach, \textbf{7} (1990), 36--114.

\bibitem{Kingman1978}
J.~F.~C.~Kingman, \emph{Random partitions in population genetics}, Proc. R.
  Soc. London, A \textbf{361} (1978), 1--20.

\bibitem{Macdonald1995}
I.~G.~Macdonald, \emph{Symmetric functions and {H}all polynomials}, 2nd ed.,
  Oxford University Press, 1995.

\bibitem{Nazarov1992}
M.~Nazarov, \emph{Projective representations of the infinite symmetric
  group}, Representation theory and dynamical systems (A. M. Vershik, ed.),
  Advances in Soviet Mathematics, Amer. Math. Soc. \textbf{9} (1992), 115--130.

\bibitem{Okounkov2001a}
A.~Okounkov, \emph{{S}{L}(2) and z-measures}, in: Random matrix models and
  their applications (P.~M. Bleher and A.~R. Its, eds.), Mathematical Sciences
  Research Institute Publications, vol.~{\bf{}40\/}, Cambridge Univ. Press,
  2001, pp.~407--420, arXiv:math/0002135v1 [math.RT].

\bibitem{Olshanski2009}
G.~Olshanski, \emph{Anisotropic {Y}oung diagrams and infinite-dimensional
  diffusion processes with the {J}ack parameter},  (2009), arXiv:0902.3395v1
  [math.PR].

\bibitem{Petrov2007}
L.~Petrov, \emph{A two-parameter family of infinite-dimensional diffusions
  in the {K}ingman simplex}, Functional Analysis and Its Applications, to
  appear (2007), arXiv:0708.1930v3 [math.PR].

\bibitem{Pitman1992}
J.~Pitman, \emph{The two-parameter generalization of {E}wens’ random
  partition structure}, Technical report 345, Dept. Statistics, U. C. Berkeley,
  1992, http://www.stat.berkeley.edu/tech-reports/.

\bibitem{Pitman2002}
J.~Pitman, \emph{Combinatorial stochastic processes: Ecole d'eté de
  probabilités de saint-flour xxxii - 2002}, Lect. Notes in Math. 1875,
  Springer-Verlag, Berlin, 2006, http://works.bepress.com/jim\_pitman/1.

\bibitem{Schur1911}
I.~Schur, \emph{{\"U}ber die {D}arstellung der symmetrischen und der
  alternierenden {G}ruppe durch gebrocheme lineare {S}ubstitionen}, J. Reine
  Angew. Math. \textbf{139} (1911), 155--250.

\bibitem{Stembridge1985}
J.~Stembridge, \emph{A characterization of supersymmetric polynomials}, J.
  Algebra \textbf{95} (1985), 439--444.

\bibitem{Stembridge1989} 
J.~Stembridge, \emph{Shifted tableaux and the projective representations of symmetric groups},
Advances in Math \textbf{74} (1989), 87--134.

\bibitem{Stembridge1992} 
J.~Stembridge, \emph{On Schur's Q-functions 
and the primitive idempotents of a commutative Hecke 
algebra}, J. Algebraic Combin \textbf{1} (1992), 71--95.

\bibitem{Thoma1964}
E.~Thoma, \emph{Die unzerlegbaren, positive-definiten {K}lassenfunktionen der
  abz\"ahlbar unendlichen, symmetrischen {G}ruppe}, Math. Zeitschr \textbf{85}
  (1964), 40--61.

\bibitem{Trotter1958}
H.~F.~Trotter, \emph{Approximation of {S}emigroups of {O}perators}, Pacific J.
  Math \textbf{8} (1958), 887--919.

\end{thebibliography}
\end{document}